\newtheorem{theorem}{Theorem}[section]
\newtheorem{defn}[theorem]{Definition}
\newtheorem{lemma}[theorem]{Lemma}
\newtheorem{eple}[theorem]{Example}
\newtheorem{rmk}[theorem]{Remarks}
\newtheorem{dsc}[theorem]{Discussion}
\newtheorem{nota}[theorem]{Notation}
\newsavebox{\indbin}
\savebox{\indbin}{\begin{picture}(0,0)
\newlength{\gnu}
\settowidth{\gnu}{$\smile$} \setlength{\unitlength}{.5\gnu}
\put(-1,-.65){$\smile$} \put(-.25,.1){$|$}
\end{picture}}
\newcommand{\be}{\begin{enumerate}}
\newcommand{\bd}{\begin{defn}}
\newcommand{\bt}{\begin{theorem}}
\newcommand{\bl}{\begin{lemma}}
\newcommand{\ee}{\end{enumerate}}
\newcommand{\ed}{\end{defn}}
\newcommand{\et}{\end{theorem}}
\newcommand{\el}{\end{lemma}}
\begin{document}
\title{Some Arguments for the Wave Equation in Quantum Theory 2}
\author{Tristram de Piro}
\address{Flat 3, Redesdale House, 85 The Park, Cheltenham, GL50 2RP}
\begin{abstract}
We prove that if the frame $S$ is decaying surface non-radiating, in the sense of Definition \ref{strongly}, then if $(\rho,\overline{J})$ is analytic, either $\rho=0$ and $\overline{J}=\overline{0}$, or $S$ is non-radiating, in the sense of \cite{dep1}. In particularly, by the result there, the charge and current satisfy certain wave equations in all the frames $S_{\overline{v}}$ connected to $S$ by a real velocity vector $\overline{v}$, with $|\overline{v}|<c$.

\end{abstract}
\maketitle
\begin{section}{The Surface Radiating Condition}

\begin{defn}
\label{strongly}
Using notation as in the paper \cite{dep1}, we say that $S$ is decaying surface non-radiating, for a given smooth real pair $(\rho,\overline{J})$ satisfying the continuity equation if;\\

$(i)$. $S$ is surface non-radiating with respect to $(\rho,\overline{J})$\\

that is there exist solutions $(\overline{E}_{\overline{v}},\overline{B}_{\overline{v}})$ to Maxwell's equations for the transformed current and charge $(\rho_{\overline{v}},\overline{J}_{\overline{v}})$ in the frames $S_{\overline{v}}$, connected to $S$ by a real velocity vector $\overline{v}$, with $|\overline{v}|<c$, such that $div_{S_{\overline{v}}}(\overline{E}_{\overline{v}}\times \overline{B}_{\overline{v}})=0$.\\

$(ii)$. The solutions $(\overline{E}_{\overline{v}},\overline{B}_{\overline{v}})$ decay at infinity in the frames $S_{\overline{v}}$, that is for coordinates $(x,y,z,t)$ in $S_{\overline{v}}$, for given $t\in\mathcal{R}_{>0}$, we have that $lim_{|\overline{x}|\rightarrow\infty}\overline{E}(\overline{x},t)
=lim_{|\overline{x}|\rightarrow\infty}\overline{B}(\overline{x},t)=0$\\

\end{defn}

\begin{defn}
\label{reflections}
We let $C^{1}(\mathcal{R}^{3},\mathcal{R}_{>0})$ denote the continuously differentiable functions $f$ in the variables $(x,y,z,t)$. We let $x_{0}=t$, $x_{1}=x$, $x_{2}=y$ and $x_{3}=z$. Given $g\in O(3)$, which defines a coordinate transformation $(x',y',z')=g(x,y,z)$, and $f\in C^{1}(\mathcal{R}^{3},\mathcal{R}_{>0})$, we define $f^{g}$ by;\\

$f^{g}(x_{0}',x_{1}',x_{2}',x_{3}')=f(x_{0},x_{1},x_{2},x_{3})$\\

where $x_{0}'=x_{0}$ and $g(x_{1},x_{2},x_{3})=(x_{1}',x_{2}',x_{3}')$. Define;\\

${\partial f^{g}\over \partial x_{0}'}|_{\overline{x}'}={\partial f\over \partial x_{0}}|_{\overline{x}}$\\

${\partial f^{g}\over \partial x_{i}'}|_{\overline{x}'}=(Df)|_{\overline{x}}\centerdot (g^{-1})_{*}\overline{e}_{i}$, for $1\leq i\leq 3$.\\

We observe, using the matrix representations $g_{ij}$ and $(g^{-1})_{ij}$, with $1\leq i,j\leq 3$,  for $g$ and $g^{-1}$ respectively, and the fact that $g^{*}=g^{-1}$, that $(g^{-1})_{ji}=g_{ij}$. It follows;\\

${\partial f^{g}\over \partial x_{i}'}|_{\overline{x}'}=\sum_{j=1}^{3}(g^{-1})_{ji}{\partial f\over \partial x_{j}}|_{\overline{x}}=\sum_{j=1}^{3}g_{ij}{\partial f\over \partial x_{j}}|_{\overline{x}}$ $(*)$\\

Given a vector field $\overline{F}$, with components $(f_{1},f_{2},f_{3})$, we define $\overline{F}^{g}$ with components $(f_{1}',f_{2}',f_{3}')$ by;\\

$f_{i}'=\sum_{j=1}^{3}g_{ij}f_{j}^{g}$, for $1\leq i\leq 3$\\

We adopt the convention that if $\{g_{1},g_{2}\}\subset O(3)$, then;\\

$f^{g_{1}g_{2}}=(f^{g_{2}})^{g_{1}}$, $\overline{F}^{g_{1}g_{2}}=(\overline{F}^{g_{2}})^{g_{1}}$\\

\end{defn}

\begin{lemma}
\label{divcurl}
Given $g\in O(3)$, $f\in C^{1}(\mathcal{R}^{3},\mathcal{R}_{>0})$ and vector fields $\{\overline{F},\overline{H}\}$, we have that;\\

$(i).$ ${\partial \overline{F}^{g}\over \partial t'}=({\partial\overline{F}\over \partial t})^{g}$\\

$(ii).$ $\bigtriangledown'(f^{g})=(\bigtriangledown(f))^{g}$\\

$(iii).$ $\bigtriangledown'\centerdot\overline{F}^{g}=(\bigtriangledown\centerdot\overline{F})^{g}$\\

$(iv).$ $\bigtriangledown'\times \overline{F}^{g}=sign(g)(\bigtriangledown\times \overline{F})^{g}$\\

$(v).$ $(\overline{F}^{g}\times\overline{H}^{g})=sign(g)(\overline{F}\times\overline{H})^{g}$\\

where $sign(g)=det(g)$ and can take values $1$ or $-1$, as $g\in O(3)$.

\end{lemma}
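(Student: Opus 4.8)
The plan is to verify each of the five identities by expanding both sides into coordinates, using the transformation rule $(*)$ together with the component definition $f_i'=\sum_{j=1}^3 g_{ij}f_j^g$, and then collapsing the resulting index sums with the orthogonality relations $g^Tg=gg^T=I$, i.e. $\sum_i g_{ij}g_{ik}=\delta_{jk}$ and $\sum_l g_{il}g_{pl}=\delta_{ip}$. Throughout I will use the elementary observation that $h\mapsto h^g$ is just precomposition with the (invertible, affine) coordinate change of Definition \ref{reflections}, so it commutes with finite sums and with products of scalar functions: $(h_1+h_2)^g=h_1^g+h_2^g$ and $(h_1h_2)^g=h_1^g h_2^g$. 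Parts $(i)$, $(ii)$, $(iii)$ are then essentially bookkeeping, while $(iv)$ and $(v)$ carry the only real content, namely the appearance of $sign(g)=\det(g)$.

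For $(i)$, since $x_0'=x_0=t$, differentiating $f_i'=\sum_j g_{ij}f_j^g$ in $t'$ and using $\partial f_j^g/\partial x_0'=(\partial f_j/\partial x_0)^g$ gives $i$-th component $\sum_j g_{ij}(\partial f_j/\partial t)^g$, which is exactly the $i$-th component of $(\partial\overline{F}/\partial t)^g$. For $(ii)$, the $i$-th component of $\bigtriangledown'(f^g)$ is $\partial f^g/\partial x_i'$, which by $(*)$ equals $\sum_j g_{ij}(\partial f/\partial x_j)^g$, the $i$-th component of $(\bigtriangledown f)^g$. For $(iii)$, I expand $\bigtriangledown'\cdot\overline{F}^g=\sum_i \partial f_i'/\partial x_i'$, insert $f_i'=\sum_k g_{ik}f_k^g$, apply $(*)$ to each $f_k$, and obtain $\sum_{i,j,k}g_{ik}g_{ij}(\partial f_k/\partial x_j)^g$; summing first over $i$ and using $\sum_i g_{ik}g_{ij}=\delta_{kj}$ collapses this to $\sum_k(\partial f_k/\partial x_k)^g=(\bigtriangledown\cdot\overline{F})^g$.

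The substantive step is $(iv)$, where I expect the only difficulty to be correct index bookkeeping with the Levi-Civita symbol $\epsilon_{ijk}$. Writing $(\bigtriangledown\times\overline{F})_i=\sum_{j,k}\epsilon_{ijk}\,\partial f_k/\partial x_j$ and expanding as in $(iii)$ gives
\[ (\bigtriangledown'\times\overline{F}^g)_i=\sum_{m,n}\Big(\sum_{j,k}\epsilon_{ijk}\,g_{jn}\,g_{km}\Big)\Big(\frac{\partial f_m}{\partial x_n}\Big)^g. \]
The key is the determinant identity $\sum_{p,q,r}\epsilon_{pqr}\,g_{pa}g_{qb}g_{rc}=\det(g)\,\epsilon_{abc}$; multiplying by $g_{ia}$, summing over $a$, and using $\sum_a g_{ia}g_{pa}=\delta_{ip}$ rearranges it to
\[ \sum_{j,k}\epsilon_{ijk}\,g_{jn}\,g_{km}=\det(g)\sum_l g_{il}\,\epsilon_{lnm}. \]
Substituting this and pulling $\det(g)\sum_l g_{il}$ outside, I recognise $\sum_{m,n}\epsilon_{lnm}(\partial f_m/\partial x_n)^g=((\bigtriangledown\times\overline{F})_l)^g$ (since $h\mapsto h^g$ respects sums), so $(\bigtriangledown'\times\overline{F}^g)_i=\det(g)\sum_l g_{il}((\bigtriangledown\times\overline{F})_l)^g$, which is the $i$-th component of $sign(g)(\bigtriangledown\times\overline{F})^g$.

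Part $(v)$ is structurally identical and I would treat it immediately after $(iv)$, reusing the same rearranged Levi-Civita identity. Expanding $(\overline{F}^g\times\overline{H}^g)_i=\sum_{m,n}\big(\sum_{j,k}\epsilon_{ijk}g_{jm}g_{kn}\big)f_m^g h_n^g$, applying $\sum_{j,k}\epsilon_{ijk}g_{jm}g_{kn}=\det(g)\sum_l g_{il}\epsilon_{lmn}$, and using that $h\mapsto h^g$ respects products to write $f_m^g h_n^g=(f_m h_n)^g$, reduces the right-hand side to $\det(g)\sum_l g_{il}((\overline{F}\times\overline{H})_l)^g$, the $i$-th component of $sign(g)(\overline{F}\times\overline{H})^g$. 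The single reusable ingredient carrying all the geometry is thus the transformation law of $\epsilon_{ijk}$ under $O(3)$, and verifying that its contraction against two copies of $g$ produces $\det(g)$ times a single copy of $g$ against $\epsilon$ is the one place where care is genuinely required.
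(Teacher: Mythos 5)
Your proof is correct, and for parts (i)--(iii) it coincides with the paper's own argument: componentwise expansion via the rule $(*)$ of Definition \ref{reflections} and collapse of the resulting double sums through the orthogonality relation $\sum_{i}g_{ij}g_{ik}=\delta_{jk}$. The difference lies in (iv) and (v). The paper writes the curl using the cyclic permutation $\sigma$ of $(1,2,3)$, recognises the combinations $g_{\sigma^{2}(i),j}g_{\sigma(i),k}-g_{\sigma(i),j}g_{\sigma^{2}(i),k}$ as signed cofactor entries $\tau(j,k)\,cof(g)_{i,jk}$, and then invokes $g_{ij}=sign(g)(cof(g))_{ij}$, valid for $g\in O(3)$ by the adjugate formula; part (v) is then dispatched by an appeal to similarity. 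You instead run the standard Levi-Civita calculus: the determinant identity $\sum_{p,q,r}\epsilon_{pqr}g_{pa}g_{qb}g_{rc}=\det(g)\,\epsilon_{abc}$, contracted with one further copy of $g$ using orthogonality, yields $\sum_{j,k}\epsilon_{ijk}g_{jn}g_{km}=\det(g)\sum_{l}g_{il}\epsilon_{lnm}$, which you substitute into both the curl and the cross product. The two routes rest on the same underlying fact --- for orthogonal $g$ the cofactor matrix equals $\det(g)\,g$, and your contracted $\epsilon$-identity is precisely that statement in index form --- but your packaging has two genuine advantages: the sign bookkeeping (the paper's $\tau(j,k)$) is absorbed automatically into $\epsilon_{ijk}$, and parts (iv) and (v) literally share one reusable lemma rather than (v) being handled by analogy. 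The paper's version, conversely, makes explicit which $2\times 2$ minors arise, at the cost of heavier notation. Both are complete proofs of the statement.
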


\begin{proof}
For the first part, using components $(f_{1},f_{2},f_{3})$ for $\overline{F}$, we have, for $1\leq i\leq 3$, that;\\

$({\partial \overline{F}^{g}\over \partial t'})_{i}|_{\overline{x}'}={\partial\over \partial t'}(\sum_{j=1}^{3}g_{ij}f_{j}^{g})|_{\overline{x}'}$\\

$=\sum_{j=1}^{3}g_{ij}{\partial f_{j}^{g}\over \partial t'}|_{\overline{x}'}$\\

$=\sum_{j=1}^{3}g_{ij}{\partial f_{j}\over \partial t}|_{\overline{x}}$\\

$=\sum_{j=1}^{3}g_{ij}({\partial f_{j}\over \partial t})^{g}|_{\overline{x}'}$\\

$=({\partial \overline{F}\over \partial t})^{g}_{i}|_{\overline{x}'}$\\

For the second part, we have, using the observation $(*)$ in Definition \ref{reflections} that;\\

$\bigtriangledown'(f^{g})_{i}|_{\overline{x}'}={\partial f^{g}\over \partial x_{i}'}|_{\overline{x}'}$\\

$=\sum_{j=1}^{3}g_{ij}{\partial f\over \partial x_{j}}|_{\overline{x}}$\\

$=\sum_{j=1}^{3}g_{ij}\bigtriangledown(f)_{j}|_{\overline{x}}$\\

$=(\bigtriangledown(f))^{g}_{i}|_{\overline{x}'}$\\

For the third part, we have that;\\

$\bigtriangledown'\centerdot\overline{F}^{g}|_{\overline{x}'}=\sum_{i=1}^{3}{\partial (\overline{F}^{g})_{i}\over \partial x_{i}'}|_{\overline{x}'}$\\

$=\sum_{i=1}^{3}{\partial\over\partial x_{i}'}(\sum_{j=1}^{3}g_{ij}f_{j}^{g})|_{\overline{x}'}$\\

$=\sum_{j=1}^{3}\sum_{i=1}^{3}g_{ij}{\partial f_{j}^{g}\over \partial x_{i}'}|_{\overline{x}'}$\\

$=\sum_{j=1}^{3}\sum_{i=1}^{3}g_{ij}\sum_{k=1}^{3}g_{ik}{\partial f_{j}\over \partial x_{k}}|_{\overline{x}}$\\

$=\sum_{j=1}^{3}\sum_{i,k=1}^{3}(g^{-1})_{ji}g_{ik}{\partial f_{j}\over \partial x_{k}}|_{\overline{x}}$\\

$=\sum_{j=1}^{3}\sum_{k=1}^{3}\delta_{jk}{\partial f_{j}\over \partial x_{k}}|_{\overline{x}}$\\

$=\sum_{j=1}^{3}{\partial f_{j}\over \partial x_{j}}|_{\overline{x}}$\\

$=(\bigtriangledown\centerdot\overline{F})^{g}|_{\overline{x}'}$\\

For the fourth part, we let $\sigma$ be the permutation of $(1,2,3)$, with $\sigma(1)=2$, $\sigma(2)=3$ and $\sigma(3)=1$. Then, for $1\leq i\leq 3$, we have;\\

$(\bigtriangledown'\times\overline{F}^{g})_{i}|_{\overline{x}'}=({\partial (\overline{F}^{g})_{\sigma^{2}(i)}\over \partial x_{\sigma(i)}'}-{\partial (\overline{F}^{g})_{\sigma(i)}\over \partial x_{\sigma^{2}(i)}'})|_{\overline{x}'}$\\

$=({\partial \over \partial x_{\sigma(i)}'}(\sum_{j=1}^{3}g_{\sigma^{2}(i),j}f_{j}^{g})-{\partial \over \partial x_{\sigma^{2}(i)}'}(\sum_{j=1}^{3}g_{\sigma(i),j}f_{j}^{g}))|_{\overline{x}'}$\\

$=(\sum_{j=1}^{3}g_{\sigma^{2}(i),j}(\sum_{k=1}^{3}g_{\sigma(i),k}{\partial f_{j}\over \partial x_{k}})-\sum_{j=1}^{3}g_{\sigma(i),j}(\sum_{k=1}^{3}g_{\sigma^{2}(i),k}{\partial f_{j}\over \partial x_{k}}))|_{\overline{x}}$\\

$=\sum_{j,k=1}^{3}(g_{\sigma^{2}(i),j}g_{\sigma(i),k}-g_{\sigma(i),j}g_{\sigma^{2}(i),k}){\partial f_{j}\over \partial x_{k}}|_{\overline{x}}$\\

$=\sum_{j,k=1,j\neq k}^{3}(g_{\sigma^{2}(i),j}g_{\sigma(i),k}-g_{\sigma(i),j}g_{\sigma^{2}(i),k}){\partial f_{j}\over \partial x_{k}}|_{\overline{x}}$\\

$=\sum_{j,k=1,j\neq k}^{3}\tau(j,k)cof(g)_{i,jk}{\partial f_{j}\over \partial x_{k}}|_{\overline{x}}$\\

$=sign(g)\sum_{j,k=1,j\neq k}^{3}\tau(j,k)g_{i,jk}{\partial f_{j}\over \partial x_{k}}|_{\overline{x}}$\\

$=sign(g)\sum_{l=1}^{3}g_{il}({\partial f_{\sigma^{2}(l)}\over \partial x_{\sigma(l)}}-{\partial f_{\sigma(l)}\over \partial x_{\sigma^{2}(l)}})|_{\overline{x}}$\\

$=sign(g)\sum_{l=1}^{3}g_{il}(\bigtriangledown\times\overline{F})_{l}|_{\overline{x}}$\\

$=sign(g)(\bigtriangledown\times\overline{F})^{g}_{i}|_{\overline{x}'}$\\

where $\tau(1,3)=\tau(2,1)=\tau(3,2)=1$ and $\tau(1,2)=\tau(2,3)=\tau(3,1)=-1$, $jk$ denotes the remaining element in the tuple $(1,2,3)$, $cof(g)_{ij}$ is the representation of the cofactor matrix of $g$, and we have used the fact that $g_{ij}=(g^{-1})_{ji}=sign(g)(cof(g)^{*})_{ji}=sign(g)(cof(g))_{ij}$.\\

The proof of the fifth part is similar to the fourth, replacing the components of $\overline{F}$ with $\overline{H}$ and those of $\bigtriangledown$ with $\overline{F}$, using the fact that $\bigtriangledown'=\bigtriangledown^{g}$.
\end{proof}

\begin{lemma}
\label{maxwells}
Given a tuple $(\rho,\overline{J},\overline{E},\overline{B})$ satisfying Maxwell's equations in the rest frame $S$, and $g\in O(3)$, then the tuple $(\rho^{g},\overline{J}^{g},\overline{E}^{g},sign(g)\overline{B}^{g})$ also satisfies Maxwell's equations, and the tuple $(\rho^{g},\overline{J}^{g})$ satisfies the continuity equation in the rotated or reflected frame $S'$. Moreover;\\

$\bigtriangledown'\centerdot(\overline{E}^{g}\times sign(g)\overline{B}^{g})=(\bigtriangledown\centerdot(\overline{E}\times\overline{B}))^{g}$\\

In particular;\\

$\bigtriangledown'\centerdot(\overline{E}^{g}\times sign(g)\overline{B}^{g})=0$ iff $\bigtriangledown\centerdot(\overline{E}\times\overline{B})=0$\\
\end{lemma}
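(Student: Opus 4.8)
The plan is to verify each of the four Maxwell equations and the continuity equation one at a time: in every case I start from the identity that holds in $S$, apply $(\cdot)^g$ to both sides, and rewrite using the transformation rules (i)--(v) of Lemma \ref{divcurl}. Two structural facts make this routine. First, $(\cdot)^g$ acts on scalars by the relabelling of Definition \ref{reflections}, so the physical constants are scalars that pass through $(\cdot)^g$ unchanged, and the scalar time-derivative relation $\partial_{t'}f^g=(\partial_t f)^g$ is built into that definition. Second, $f\mapsto f^g$ is linear and satisfies $f^g=0$ iff $f=0$; the last fact will hand us the ``in particular'' clause at the end.

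For the two divergence equations there is nothing to track: from $\bigtriangledown\centerdot\overline{E}=\rho$ (up to the usual constant) and rule (iii) we get $\bigtriangledown'\centerdot\overline{E}^g=(\bigtriangledown\centerdot\overline{E})^g=\rho^g$, while $\bigtriangledown\centerdot\overline{B}=0$ gives $\bigtriangledown'\centerdot(sign(g)\overline{B}^g)=sign(g)(\bigtriangledown\centerdot\overline{B})^g=0$. The real bookkeeping is in the two curl equations, and this is where the explicit factor $sign(g)$ on the magnetic field is forced. For Faraday's law, rule (iv) gives $\bigtriangledown'\times\overline{E}^g=sign(g)(\bigtriangledown\times\overline{E})^g=-sign(g)(\partial_t\overline{B})^g$, while rule (i) gives $-\partial_{t'}(sign(g)\overline{B}^g)=-sign(g)(\partial_t\overline{B})^g$, so the two sides agree. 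For Amp\`ere's law, rule (iv) applied to $\overline{B}$ produces a second factor $sign(g)$, and since $sign(g)^2=1$ the corrected field $sign(g)\overline{B}^g$ is exactly right: $\bigtriangledown'\times(sign(g)\overline{B}^g)=sign(g)^2(\bigtriangledown\times\overline{B})^g=(\bigtriangledown\times\overline{B})^g$, which by (i) matches the transformed source $\overline{J}^g+\partial_{t'}\overline{E}^g$. The continuity equation is immediate from the scalar time-derivative relation of Definition \ref{reflections} together with (iii), since $(\partial_t\rho+\bigtriangledown\centerdot\overline{J})^g=0^g=0$.

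For the ``moreover'' statement I would first collapse the two signs on the Poynting vector: by rule (v), $\overline{E}^g\times sign(g)\overline{B}^g=sign(g)(\overline{E}^g\times\overline{B}^g)=sign(g)^2(\overline{E}\times\overline{B})^g=(\overline{E}\times\overline{B})^g$, and then rule (iii) gives $\bigtriangledown'\centerdot(\overline{E}^g\times sign(g)\overline{B}^g)=(\bigtriangledown\centerdot(\overline{E}\times\overline{B}))^g$ at once. The ``in particular'' equivalence follows because $h^g=0$ iff $h=0$.

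The only genuine obstacle is sign bookkeeping: the magnetic field is a pseudovector, so under a reflection ($sign(g)=-1$) it must flip relative to the naive componentwise transform $\overline{B}^g$, and the content of the lemma is that the single correction factor $sign(g)$ simultaneously repairs Faraday's law (one curl giving one factor of $sign(g)$, cancelled against the field's own sign) and Amp\`ere's law (the curl on $\overline{B}$ contributing $sign(g)$ that combines with the field's sign to $sign(g)^2=1$). Making these cancellations land consistently across both curl equations is the whole point, and it works precisely because $sign(g)^2=1$.
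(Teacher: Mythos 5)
Your proposal is correct and follows essentially the same route as the paper: each Maxwell equation is checked by applying the rules of Lemma \ref{divcurl}, the Poynting-divergence identity comes from rule (v) together with $sign(g)^{2}=1$ followed by rule (iii), and the equivalence at the end rests on the invertibility of the relabelling $f\mapsto f^{g}$ (the paper phrases this as applying the identity to $g$ and $g^{-1}$, you phrase it as $h^{g}=0$ iff $h=0$ --- the same observation). The only cosmetic difference is that you verify the continuity equation directly by transforming it with $(\cdot)^{g}$, whereas the paper notes it follows from the transformed Maxwell equations themselves; both are immediate.
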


\begin{proof}
The second claim follows immediately from the first. For the first claim, we check the conditions using Lemma \ref{divcurl}. We have that;\\

$(i).$ $\bigtriangledown'\centerdot\overline{E}^{g}=(\bigtriangledown\centerdot\overline{E})^{g}=({\rho\over \epsilon_{0}})^{g}={\rho^{g}\over \epsilon_{0}}$\\

$(ii).$ $\bigtriangledown'\times \overline{E}^{g}=sign(g)(\bigtriangledown\times \overline{E})^{g}=sign(g)(-{\partial \overline{B}\over \partial t})^{g}=-{\partial (sign(g)\overline{B}^{g})\over \partial t'}$\\

$(iii).$ $\bigtriangledown'\centerdot (sign(g)\overline{B}^{g})=sign(g)(\bigtriangledown\centerdot\overline{B})^{g}=0^{g}=0$\\

$(iv).$ $\bigtriangledown'\times (sign(g)\overline{B}^{g})=sign(g)sign(g)(\bigtriangledown\times\overline{B})^{g}$\\

$=(\mu_{0}\overline{J}+\mu_{0}\epsilon_{0}{\partial\overline{E}\over \partial t})^{g}=\mu_{0}\overline{J}^{g}+\mu_{0}\epsilon_{0}{\partial\overline{E}^{g}\over \partial t'}$\\

For the penultimate claim, using Lemma \ref{divcurl} again, we have that;\\

$\bigtriangledown'\centerdot(\overline{E}^{g}\times sign(g)\overline{B}^{g})
=\bigtriangledown'\centerdot(sign(g)sign(g)(\overline{E}\times\overline{B})^{g})$\\

$=\bigtriangledown'\centerdot(\overline{E}\times\overline{B})^{g}=(\bigtriangledown\centerdot(\overline{E}\times\overline{B}))^{g}$\\

The final claim follows immediately from the penultimate claim, applied to the transformations $g$ and $g^{-1}$.\\
\end{proof}

\begin{lemma}
\label{conjugation}
Let $g\in O(3)$, and let $\overline{v}$ define a boost, with matrices $R_{g}$ and $B_{\overline{v}}$ respectively in the Lorentz group, then;\\

$R_{g}B_{\overline{v}}=B_{g(\overline{v})}R_{g}$\\

Moreover, the representation is unique, in the sense that if;\\

$R_{g}B_{\overline{v}}=R_{h}B_{\overline{w}}$\\

for $\{g,h\}\subset O(3)$, and $\{\overline{v},\overline{w}\}$ velocities defining boosts, then $g=h$ and $\overline{v}=\overline{w}$.\\

\end{lemma}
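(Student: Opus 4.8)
The plan is to prove the commutation relation $R_g B_{\overline{v}} = B_{g(\overline{v})} R_g$ by a direct computation with the $4\times 4$ matrices in the Lorentz group, working in the convention where $x_0 = t$ is the time coordinate and rotations/reflections $g \in O(3)$ act on the spatial block. First I would write $R_g$ as the block-diagonal matrix $\mathrm{diag}(1, g)$, where $g$ is the $3\times 3$ orthogonal matrix acting on $(x_1,x_2,x_3)$, and I would write the boost $B_{\overline{v}}$ in its standard form, decomposed into the direction $\hat{v} = \overline{v}/|\overline{v}|$ and the rapidity determined by $|\overline{v}|$. The key structural fact is that a boost is built canonically from the single spatial vector $\overline{v}$: the time-time entry is $\gamma$, the time-space and space-time blocks are $-\gamma \overline{v}/c$ (up to sign conventions), and the space-space block is $I + (\gamma-1)\,\hat{v}\hat{v}^{T}$, a function only of the projection onto $\hat{v}$.

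The main computation is then to verify that conjugating this boost by the rotation sends $\overline{v}$ to $g(\overline{v})$ and leaves the rapidity (hence $\gamma$) unchanged. Concretely I would compute $R_g B_{\overline{v}} R_g^{-1}$ and check it equals $B_{g(\overline{v})}$, which is equivalent to the stated identity since $R_g$ is invertible. The time-time entry $\gamma$ is untouched by the block-diagonal conjugation. The time-space block $-\gamma\overline{v}/c$ becomes $-\gamma\, g(\overline{v})/c$ because $g$ acts linearly on the spatial vector, and similarly for the space-time block. For the space-space block the computation reduces to $g\bigl(I + (\gamma-1)\hat{v}\hat{v}^{T}\bigr)g^{-1} = I + (\gamma-1)\,g\hat{v}(g\hat{v})^{T}$, where I would use orthogonality $g^{-1} = g^{T}$ to rewrite $g\,\hat{v}\hat{v}^{T}g^{T} = (g\hat{v})(g\hat{v})^{T}$, together with the fact that $|g\hat{v}| = |\hat{v}| = 1$ since $g \in O(3)$ preserves the norm; this is precisely the unit direction vector of $g(\overline{v})$. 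Since $|g(\overline{v})| = |\overline{v}|$, the rapidity and $\gamma$ match, and we obtain $B_{g(\overline{v})}$ exactly.

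For the uniqueness claim, suppose $R_g B_{\overline{v}} = R_h B_{\overline{w}}$. The plan is to exploit the polar-type decomposition of Lorentz transformations: a rotation $R_g$ is orthogonal (its $4\times 4$ matrix is in $O(4)$ after absorbing the time coordinate, or more precisely it preserves the spatial norm and fixes time), whereas a pure boost $B_{\overline{v}}$ is symmetric and positive on the relevant block. I would rearrange to $R_h^{-1} R_g = B_{\overline{w}} B_{\overline{v}}^{-1}$ and observe that the left side is a rotation/reflection (a product of two elements of the image of $O(3)$) while the right side, being a product of boosts, has a distinguishing property — most cleanly, one compares the action on the timelike unit vector $\overline{e}_0 = (1,0,0,0)$. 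A rotation $R_g$ fixes $\overline{e}_0$, so $R_g \overline{e}_0 = \overline{e}_0$, whereas $B_{\overline{v}}\overline{e}_0$ encodes $\overline{v}$ in its spatial part via the $\gamma\overline{v}/c$ entries. Applying both sides of $R_g B_{\overline{v}} = R_h B_{\overline{w}}$ to $\overline{e}_0$ and using that $R_g, R_h$ fix the time axis direction, I would read off first that $\overline{v} = \overline{w}$ (from equating the boost images, whose spatial parts determine the velocity), and then, cancelling the now-equal boosts, that $R_g = R_h$, hence $g = h$.

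The hard part will be pinning down the sign and normalization conventions so that the conjugation genuinely produces $B_{g(\overline{v})}$ rather than $B_{-g(\overline{v})}$ or a boost with an altered rapidity; in particular I must confirm that reflections ($\det g = -1$) are handled correctly, since the off-diagonal boost entries are linear in $\overline{v}$ and transform covariantly under all of $O(3)$, not merely $SO(3)$. Once the block structure of $B_{\overline{v}}$ as a function of $\overline{v}$ is fixed, both the commutation identity and the uniqueness reduce to linear-algebraic bookkeeping using orthogonality of $g$ and the injectivity of $\overline{v} \mapsto B_{\overline{v}}$.
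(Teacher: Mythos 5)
Your proposal is correct in substance but takes a genuinely different, and cleaner, route than the paper. For the commutation relation $R_{g}B_{\overline{v}}=B_{g(\overline{v})}R_{g}$, the paper first verifies the axis case $\overline{v}=v\overline{e}_{1}$ by an entry-by-entry coordinate computation (relegated to a footnote), and then reduces the general case to the axis case by choosing $g'\in SO(3)$ with $\overline{v}=g'(v\overline{e}_{1})$ and manipulating the resulting conjugations. Your block-dyadic computation — writing the space-space block of the boost as $I+(\gamma-1)\hat{v}\hat{v}^{T}$ and using $g\,\hat{v}\hat{v}^{T}g^{T}=(g\hat{v})(g\hat{v})^{T}$ together with $|g\hat{v}|=|\hat{v}|$, so that $\gamma$ is untouched — handles arbitrary $\overline{v}$ and all of $O(3)$, reflections included, in a single stroke, with no axis reduction. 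Nothing essential is lost by bypassing the paper's method: the later extension to complex velocities (Lemma \ref{preserved}) relies on the identities being rational-algebraic in the parameters, and your dyadic identity is equally algebraic, so it continues generically in the same way.

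For uniqueness there is one slip in your description, though it is repairable within your own setup. Applying the original equation $R_{g}B_{\overline{v}}=R_{h}B_{\overline{w}}$ to $\overline{e}_{0}$ does \emph{not} yield $\overline{v}=\overline{w}$: since $R_{g}$ rotates the spatial part of $B_{\overline{v}}\overline{e}_{0}$, what you read off is $\gamma_{v}=\gamma_{w}$ and $g(\overline{v})=h(\overline{w})$. The correct move is the one you already set up: apply the rearranged equation $R_{h}^{-1}R_{g}=B_{\overline{w}}B_{\overline{v}}^{-1}$ to $\overline{e}_{0}$. The left side fixes $\overline{e}_{0}$, so $B_{-\overline{v}}\overline{e}_{0}=B_{\overline{w}}^{-1}\overline{e}_{0}=B_{-\overline{w}}\overline{e}_{0}$, i.e. $(\gamma_{v},\gamma_{v}\overline{v}/c)=(\gamma_{w},\gamma_{w}\overline{w}/c)$, giving $\overline{v}=\overline{w}$ directly, after which cancelling the equal boosts gives $R_{g}=R_{h}$ and $g=h$. (Alternatively, keep your original order but combine $g(\overline{v})=h(\overline{w})$ with the commutation relation just proved, so both sides become $B_{g(\overline{v})}R_{g}=B_{h(\overline{w})}R_{h}$, and cancel.) With that correction your argument is complete, and it replaces the paper's expansion of $B_{\overline{w}}B_{-\overline{v}}$ via Ungar's formula and the solution of the resulting scalar equations $(\dag),(\dag\dag)$ — which is the same underlying fact (the zeroth row/column of a rotation is $(1,0,0,0)$) extracted in a far more laborious way.
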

\begin{proof}
We first prove this as a footnote in the case when $\overline{v}=v\overline{e}_{1}$, (\footnote{$B_{v\overline{e}_{1}}$ is given in coordinates by;\\

$(B_{v\overline{e}_{1}})_{00}=(B_{\overline{v}})_{11}=\gamma(v)$\\

$(B_{v\overline{e}_{1}})_{22}=(B_{\overline{v}})_{33}=1$\\

$(B_{v\overline{e}_{1}})_{10}=(B_{\overline{v}})_{01}=-{v\gamma(v)\over c}$\\

$(B_{v\overline{e}_{1}})_{ij}=0$ otherwise, $0\leq i\leq j\leq 3$\\

and $R_{g}$ is given in coordinates by;\\

$(R_{g})_{00}=1$\\

$(R_{g})_{ij}=g_{ij}$, $1\leq i\leq j\leq 3$\\

$(R_{g})_{ij}=0$ otherwise, $0\leq i\leq j\leq 3$\\

where;\\

$\sum_{i=1}^{3}g_{ij}^{2}=1$, for $1\leq j\leq 3$\\

$\sum_{i=1}^{3}g_{ij}g_{ik}=0$, for $1\leq j,k\leq 3$, $j\neq k$ $(*)$\\

so that;\\

$(R_{g}B_{v\overline{e}_{1}})_{00}=\gamma$\\

$(R_{g}B_{v\overline{e}_{1}})_{01}=-{v\gamma\over c}$\\

$(R_{g}B_{v\overline{e}_{1}})_{02}=(R_{g}B_{\overline{v}})_{03}=0$\\

$(R_{g}B_{v\overline{e}_{1}})_{i0}=-{v\gamma g_{i1}\over c}$, for $1\leq i\leq 3$\\

$(R_{g}B_{v\overline{e}_{1}})_{i1}=\gamma g_{i1}$, for $1\leq i\leq 3$\\

$(R_{g}B_{v\overline{e}_{1}})_{ij}=g_{ij}$, for $1\leq i\leq 3$, $2\leq j\leq 3$\\

Using the formula in \cite{L} for a general boost with velocity $\overline{w}$, using $(*)$, we can compute $B_{g(v\overline{e}_{1})}^{-1}=B_{-vg(\overline{e}_{1})}$, to obtain;\\

$(B_{-vg(\overline{e}_{1})})_{00}=\gamma$\\

$(B_{-vg(\overline{e}_{1})})_{0i}={\gamma v\over c}g_{i1}\sum_{k=1}^{3}g_{k1}^{2}={\gamma v\over c}g_{i1}$, $1\leq i\leq 3$\\

$(B_{-vg(\overline{e}_{1})})_{i0}={\gamma v\over c}g_{i1}$, $1\leq i\leq 3$\\

$(B_{-vg(\overline{e}_{1})})_{ii}=(\gamma-1)g_{i1}^{2}+1$, $1\leq i\leq 3$\\

$(B_{-vg(\overline{e}_{1})})_{21}=(B_{-vg(\overline{e}_{1})})_{12}=(\gamma-1)g_{11}g_{21}$\\

$(B_{-vg(\overline{e}_{1})})_{31}=(B_{-vg(\overline{e}_{1})})_{13}=(\gamma-1)g_{11}g_{31}$\\

$(B_{-vg(\overline{e}_{1})})_{32}=(B_{-vg(\overline{e}_{1})})_{23}=(\gamma-1)g_{21}g_{31}$\\

Finally, using $(*)$ and the identity $\gamma^{2}(1-{v^{2}\over c^{2}})=1$, we compute $B_{-vg(\overline{e}_{1})}R_{g}B_{v\overline{e}_{1}}$ in coordinates, to obtain;\\

$(B_{-vg(\overline{e}_{1})}R_{g}B_{v\overline{e}_{1}})_{00}=\gamma^{2}-{v^{2}\gamma^{2}\over c^{2}}(\sum_{k=1}^{3}g_{k1}^{2})=1$\\

$(B_{-vg(\overline{e}_{1})}R_{g}B_{v\overline{e}_{1}})_{01}=-{v\gamma^{2}\over c}+{v\gamma^{2}\over c}(\sum_{k=1}^{3}g_{k1}^{2})=0$\\

$(B_{-vg(\overline{e}_{1})}R_{g}B_{v\overline{e}_{1}})_{0j}={\gamma v\over c}(\sum_{k=1}^{3}g_{k1}g_{kj})=0$ $(2\leq j\leq 3)$\\

$(B_{-vg(\overline{e}_{1})}R_{g}B_{v\overline{e}_{1}})_{i0}=g_{i1}({\gamma^{2} v\over c}-{\gamma v\over c}-{(\gamma-1)\gamma v\over c})=0$ $(1\leq i\leq 3)$\\

$(B_{-vg(\overline{e}_{1})}R_{g}B_{v\overline{e}_{1}})_{i1}=g_{i1}(\gamma(\gamma-1)+\gamma-{\gamma^{2} v^{2}\over c^{2}})=g_{i1}$, $(1\leq i\leq 3)$\\

$(B_{-vg(\overline{e}_{1})}R_{g}B_{v\overline{e}_{1}})_{ij}=(\gamma-1)g_{i1}(\sum_{k=1}^{3}g_{k1}g_{kj})+g_{ij}=g_{ij}$\\

$(1\leq i\leq 3, 2\leq j\leq 3)$\\

as required}). For the general case, let $\overline{v}$ be an arbitrary velocity, and choose $g\in SO(3)$ with $\overline{v}=g(v\overline{e}_{1})$. By the result proved in the footnote, we have that $B_{\overline{v}}R_{g}=R_{g}B_{v\overline{e}_{1}}$, $(**)$. Now let $h\in O(3)$, then, using $(**)$;\\

$R_{h}B_{\overline{v}}=R_{h}R_{g}B_{v\overline{e}_{1}}R_{g}^{-1}$ $(***)$\\

and the claim that $R_{h}B_{\overline{v}}=B_{h(\overline{v})}R_{h}$ is equivalent, by $(***)$, to;\\

$R_{h}R_{g}B_{v\overline{e}_{1}}R_{g}^{-1}=B_{h(\overline{v})}R_{h}$ or $R_{g}B_{v\overline{e}_{1}}R_{g}^{-1}=R_{h}^{-1}B_{h(\overline{v})}R_{h}$ $(****)$\\

We have that $h(\overline{v})=hg(v\overline{e}_{1})$ and $hg\in O(3)$, so it is sufficient, from $(****)$ to prove that;\\

$R_{g}B_{v\overline{e}_{1}}R_{g}^{-1}=R_{h}^{-1}B_{hg(v\overline{e}_{1})}R_{h}$ or $B_{v\overline{e}_{1}}=R_{g}^{-1}R_{h}^{-1}B_{hg(v\overline{e}_{1})}R_{h}R_{g}=R_{hg}^{-1}B_{hg(v\overline{e}_{1})}R_{hg}$\\

$(*****)$\\

Clearly, the claim $(*****)$ or equivalently, $R_{hg}B_{v\overline{e}_{1}}=B_{hg(v\overline{e}_{1})}R_{hg}$ follows from the proof in the footnote, as required. The second claim is noted in \cite{U2}, and is straightforward to prove. We have that;\\

$R_{h}^{-1}R_{g}=R_{h^{-1}g}=B_{\overline{w}}B_{\overline{v}}^{-1}=B_{\overline{w}}B_{-\overline{v}}$\\

The following formula is given in \cite{U2} for the boost matrix $B_{\overline{w}}$;\\

$B_{\overline{w}}=I+{\gamma_{w}b_{\overline{w}}\over c}+{\gamma_{w}^{2}b_{\overline{w}}^{2}\over c^{2}(\gamma_{w}+1)}$\\

where, (\footnote{Ungar's definition of $b_{\overline{w}}$ differs by a minus sign, as he relates unprimed to primed coordinates in the definition of the boost matrix, which seems to go slightly against the usual convention. We have also changed his formula slightly for the case when $x_{0}=tc$, rather than $x_{0}=t$. });\\

$(b_{\overline{w}})_{0j}=(b_{\overline{w}})_{j0}=-w_{j}$ for $1\leq j\leq 3$\\

$(b_{\overline{w}})_{ij}=0$ otherwise $(0\leq i,j\leq 3)$\\

It follows that;\\

$B_{\overline{w}}B_{-\overline{v}}=(I+{\gamma_{w}b_{\overline{w}}\over c}+{\gamma_{w}^{2}b_{\overline{w}}^{2}\over c^{2}(\gamma_{w}+1)})(I-{\gamma_{v}b_{\overline{v}}\over c}+{\gamma_{v}^{2}b_{\overline{v}}^{2}\over c^{2}(\gamma_{v}+1)})$\\

$=I-{\gamma_{v}b_{\overline{v}}\over c}+{\gamma_{w}b_{\overline{w}}\over c}+{\gamma_{v}^{2}b_{\overline{v}}^{2}\over c^{2}(\gamma_{v}+1)}+{\gamma_{w}^{2}b_{\overline{w}}^{2}\over c^{2}(\gamma_{w}+1)}-{\gamma_{v}\gamma_{w}b_{\overline{v}}b_{\overline{w}}\over c^{2}}+{\gamma_{v}^{2}\gamma_{w}b_{\overline{v}}^{2}b_{\overline{w}}\over c^{3}(\gamma_{v}+1)}-{\gamma_{w}^{2}\gamma_{v}b_{\overline{w}}^{2}b_{\overline{v}}\over c^{3}(\gamma_{w}+1)}$\\

$+{\gamma_{w}^{2}\gamma_{v}^{2}b_{\overline{w}}^{2}b_{\overline{v}}^{2}\over c^{4}(\gamma_{w}+1)(\gamma_{v}+1)}$\\

Using the fact that we must have;\\

$(R_{h^{-1}g})_{00}=(B_{\overline{w}}B_{-\overline{v}})_{00}=1$\\

$(R_{h^{-1}g})_{0j}=(B_{\overline{w}}B_{-\overline{v}})_{0j}=0$, $(1\leq j\leq 3)$\\

we obtain the equations;\\

${\gamma_{v}^{2}v^{2}\over c^{2}(\gamma_{v}+1)}+{\gamma_{w}^{2}v^{2}\over c^{2}(\gamma_{w}+1)}-{\gamma_{v}\gamma_{w}\overline{v}\centerdot\overline{w}\over c^{2}}+{\gamma_{v}^{2}\gamma_{w}^{2}v^{2}w^{2}\over c^{4}(\gamma_{v}+1)(\gamma_{w}+1)}=0$ $(\dag)$\\

${\gamma_{w}w_{j}\over c}-{\gamma_{v}v_{j}\over c}=0$ $(1\leq j\leq 3)$ $(\dag\dag)$\\

It follows from $(\dag\dag)$ that;\\

$v^{2}={\gamma_{w}^{2}w^{2}\over \gamma_{v}^{2}}$ and $\overline{v}\centerdot\overline{w}={\gamma_{w} w^{2}\over \gamma_{v}}$\\

and, substituting into $(\dag)$, using the relation ${\gamma_{w}^{2}w^{2}\over c^{2}}={\gamma_{w}^{2}-1}$, we obtain that $\gamma_{w}=\gamma_{v}$, so that, from $(\dag\dag)$, $\overline{w}=\overline{v}$ as required.

\end{proof}

\begin{defn}
\label{rotationsreflections}
Let $g\in O(3)$ be a rotation or reflection, then, as in Definition \ref{reflections}, if $f\in C^{1}(\mathcal{R}^{3},\mathcal{R}_{>0})$, we let $f^{g}$ be defined by;\\

$f^{g}(t',x',y',z')=f(t,x,y,z)$\\

where $t'=t$ and $(x',y',z')=g(x,y,z)$. For a vector field $\overline{V}$ with components $(v_{1},v_{2},v_{3})$, we let $\overline{V}^{g}$ be defined by $(v_{1}',v_{2}',v_{3}')$ where;\\

$v_{i}'=\sum_{1\leq j\leq 3}g_{ij}v^{g}_{j}$, $(1\leq i\leq 3)$\\

and $(g_{ij})_{1\leq i,j\leq 3}$ is the matrix representation of $g$. For a four vector $\overline{W}$ with components $(w_{0},w_{1},w_{2},w_{3})$, we, similarly, let $\overline{W}^{g}$ be defined by $(w_{0}',w_{1}',w_{2}',w_{3}')$, where;\\

$w_{0}'=w_{0}^{g}$\\

$w_{i}'=\sum_{1\leq j\leq 3}g_{ij}w^{g}_{j}$, $(1\leq i\leq 3)$\\

We introduce rotated frames $S'$ relative to a fixed frame $S$, with coordinates $(t',x',y',z')$ linked to the coordinates $(t,x,y,z)$ in $S$, by the relations;\\

$t'=t$\\

$(x',y',z')=g(x,y,z)$\\

where $g\in O(3)$. We define the Lorentz group as generated by boosts in a given direction $\overline{v}$ and by rotations defined by $g\in SO(3)$. By the augmented Lorentz group, we mean the group generated by boosts and elements $g\in O(3)$. It is shown in \cite{U2} that the composition of boosts is equivalent to a boost followed or preceded by a rotation, the so called Thomas rotation. It follows that any element $L$ of the (augmented) Lorentz group can be written (uniquely) as $L=R_{g}B_{\overline{v}}=B_{g(\overline{v})}R_{g}$, where $g\in SO(3)$ (or $g\in O(3)$).

\end{defn}

\begin{lemma}
\label{potential}
Given potentials $(V,\overline{A})$ in the rest frame $S$, satisfying the relations;\\

$\overline{E}=-(\bigtriangledown(V)-{\partial \overline{A}\over \partial t})$\\

$\overline{B}=\bigtriangledown\times\overline{A}$ $(*)$\\

for electric and magnetic fields $\{\overline{E},\overline{B}\}$, the four vector $({V\over c},\overline{A})$ transforms covariantly with respect to elements of the Lorentz group, augmented by $O(3)$, when the transformation rules are given by;\\

$({V'\over c},\overline{A}')=({V\over c},\overline{A})^{g}$\\

when $g\in O(3)$ is a rotation or a reflection, and by;\\

$({V'\over c},\overline{A}')=(\gamma({V\over c}-{<\overline{v},\overline{A}_{||}>\over c}),\gamma(\overline{A}_{||}-{V\over c^{2}}\overline{v})+\overline{A}_{\perp})$\\

when $\overline{v}$ defines a boost, see \cite{L}. Let the frame $S'$ be defined relative to the base frame $S$ by an element $\tau$ of the Lorentz group, which is either a reflection, rotation or a boost. Let $(\overline{E}',\overline{B}')$ be defined by;\\

$(\overline{E}',\overline{B}')=(\overline{E}^{\tau},sign(\tau)\overline{B}^{\tau})$\\

when $\tau\in O(3)$ is a rotation or a reflection, and by;\\

$(\overline{E}',\overline{B}')=(\overline{E}_{||}+\gamma(\overline{E}_{\perp}+\overline{v}\times\overline{B}),
\overline{B}_{||}+\gamma(\overline{B}_{\perp}-{\overline{v}\over c^{2}}\times\overline{E}))$\\

when $\tau$ defines a boost with velocity $\overline{v}$, see \cite{L}. Then with the transformed vector $({V'\over c},\overline{A}')$, we still have that;\\

$\overline{E}'=-(\bigtriangledown'(V')-{\partial \overline{A'}\over \partial t'})$\\

$\overline{B}'=\bigtriangledown'\times\overline{A'}$ $(**)$\\

There exists a well defined transformation of $(\overline{E},\overline{B})$ for an arbitrary element $\tau$ of the Lorentz group, augmented by orthogonal transformations. Moreover, if $\tau$ is represented by two distinct products of boosts, rotations and reflections, then the transformation of $(\overline{E},\overline{B})$ coincides with the two transformations obtained by iteration in these representations.
\end{lemma}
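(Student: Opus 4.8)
The plan is to reduce the entire statement to the three basic cases --- rotation, reflection, and boost --- using the factorization $\tau=R_{g}B_{\overline{v}}=B_{g(\overline{v})}R_{g}$ supplied by Definition \ref{rotationsreflections} and Lemma \ref{conjugation}, to verify the relations $(**)$ directly in each basic case, and then to obtain both the existence of a transformation for an arbitrary $\tau$ and the coincidence of transformations for distinct representations as a formal consequence of two facts: that the four-potential $(V/c,\overline{A})$ transforms covariantly, and that the fields are completely determined by the potential through $(*)$.

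First I would treat $\tau=g\in O(3)$, a rotation or reflection. Here $V'=V^{g}$, $\overline{A}'=\overline{A}^{g}$, $\overline{E}'=\overline{E}^{g}$ and $\overline{B}'=sign(g)\overline{B}^{g}$. Since the operation $(\cdot)^{g}$ is linear on vector fields, parts $(i)$ and $(ii)$ of Lemma \ref{divcurl} give $\bigtriangledown'(V')=(\bigtriangledown V)^{g}$ and $\partial\overline{A}'/\partial t'=(\partial\overline{A}/\partial t)^{g}$, whence $-(\bigtriangledown'(V')-\partial\overline{A}'/\partial t')=(-(\bigtriangledown V-\partial\overline{A}/\partial t))^{g}=\overline{E}^{g}=\overline{E}'$, which is the first relation in $(**)$. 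For the second, part $(iv)$ of Lemma \ref{divcurl} gives $\bigtriangledown'\times\overline{A}'=\bigtriangledown'\times\overline{A}^{g}=sign(g)(\bigtriangledown\times\overline{A})^{g}=sign(g)\overline{B}^{g}=\overline{B}'$. Thus $(**)$ holds for every element of $O(3)$ with no computation beyond Lemma \ref{divcurl}.

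The boost case is where the real work lies, and I expect it to be the main obstacle. When $\tau$ is a boost of velocity $\overline{v}$, the primed derivatives $\bigtriangledown'$ and $\partial/\partial t'$ are the linear combinations of $\bigtriangledown$ and $\partial/\partial t$ dictated by the chain rule through $B_{\overline{v}}$, while the potential transforms by the stated boost rule. The task is to substitute the boost rule for $(V/c,\overline{A})$ into $-(\bigtriangledown'(V')-\partial\overline{A}'/\partial t')$ and into $\bigtriangledown'\times\overline{A}'$, carry out the parallel/perpendicular bookkeeping and the cancellations of $\gamma$-factors (using $\gamma^{2}(1-v^{2}/c^{2})=1$), and recover exactly $\overline{E}'=\overline{E}_{||}+\gamma(\overline{E}_{\perp}+\overline{v}\times\overline{B})$ and $\overline{B}'=\overline{B}_{||}+\gamma(\overline{B}_{\perp}-(\overline{v}/c^{2})\times\overline{E})$. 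This is the standard derivation recorded in \cite{L}, which I would cite rather than reproduce in full; the only genuine care required is consistency of the sign and $x_{0}$ conventions with Definition \ref{rotationsreflections}.

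It remains to pass from the basic cases to an arbitrary $\tau$ and to establish the coincidence claim. Write $\tau$ as any product $\tau=\tau_{1}\cdots\tau_{k}$ of boosts, rotations and reflections, define the transformation of $(\overline{E},\overline{B})$ by iterating the basic field rules along the factors, and define the transformation of $(V/c,\overline{A})$ by iterating the basic potential rules. The key point is that each basic potential rule is precisely the covariant action of the corresponding Lorentz transformation on the four-potential field, so that by associativity together with the covariance asserted at the outset of the lemma (and the composition conventions of Definitions \ref{reflections} and \ref{rotationsreflections}) the iterated potential transformation equals the action of the single element $\tau$; in particular the transformed potential depends only on $\tau$ and not on the factorization. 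Applying $(**)$ at each step maintains the invariant that the current fields are the gradient/curl combination of the current potential: the base case is the hypothesis $(*)$, and each inductive step is exactly $(**)$ for the corresponding basic factor. Hence after all $k$ steps the transformed fields are the fields determined through $(**)$ by the transformed potential. Since those relations determine the fields single-valuedly from the potential, and the transformed potential depends only on $\tau$, the transformed fields depend only on $\tau$. This simultaneously yields a well-defined transformation of $(\overline{E},\overline{B})$ for arbitrary $\tau$ and shows that two distinct product representations of the same $\tau$ produce identical transformations, as required.
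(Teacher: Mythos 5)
Your proposal is correct and follows the same overall skeleton as the paper: verify $(**)$ for rotations and reflections via Lemma \ref{divcurl}, verify $(**)$ for boosts, then deduce well-definedness and coincidence for arbitrary $\tau$ from covariance of the four-potential together with the fact that $(**)$, iterated factor by factor, forces the fields to be the ones determined by the transformed potential. The one place where your route genuinely differs is the boost case, which you correctly identify as the real work. The paper never verifies $(**)$ for a general-direction boost directly: it carries out the explicit component computation only for $\overline{v}=v\overline{e}_{1}$, and then handles an arbitrary boost by writing $B_{\overline{v}}=R_{g}B_{v\overline{e}_{1}}R_{g}^{-1}$ with $g\in SO(3)$ (Lemma \ref{conjugation}) and iterating the two cases already established, so the general case costs nothing beyond the conjugation identity and the rotation case. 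You instead propose to do (or rather cite from \cite{L}) the full parallel/perpendicular computation for general $\overline{v}$. Both are legitimate: the paper's reduction keeps the proof self-contained, since the only hard computation is one-dimensional and is actually written out, while your version avoids invoking Lemma \ref{conjugation} at this stage but leaves the central verification resting on a citation rather than on an argument; a referee would ask you either to reproduce that computation or to notice, as the paper does, that the conjugation trick spares you from ever having to do it in more than one dimension. Your treatment of the final well-definedness claim is, if anything, slightly more explicit than the paper's, since you spell out why the iterated potential transformation depends only on $\tau$ (associativity plus covariance), which the paper asserts without comment.
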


\begin{proof}
The fact that the $({V\over c},\overline{A})$ transforms covariantly with respect to Lorentz boosts is noted in \cite{L}, alternatively it can be shown by writing the transformation rule in matrix form and comparing it with the corresponding Lorentz matrix, the details are left to the reader. For rotations or reflections, the result is immediate from the definition. To check the second claim in the case of a rotation or reflection $\tau\in O(3)$, we have, using the definitions in the statement of the Lemma, Definition \ref{reflections} and Lemma \ref{divcurl}, that;\\

$-(\bigtriangledown'(V')+{\partial\overline{A}'\over \partial t'})$\\

$=-((\bigtriangledown(V))^{\tau}+({\partial\overline{A}\over \partial t})^{\tau})$\\

$=(-(\bigtriangledown(V)+{\partial\overline{A}\over \partial t}))^{\tau}$\\

$=\overline{E}^{\tau}=\overline{E}'$\\

and, similarly;\\

$\bigtriangledown'\times\overline{A}'$\\

$=sign(\tau)(\bigtriangledown\times \overline{A})^{\tau}$\\

$=sign(\tau)\overline{B}^{\tau}=\overline{B}'$\\

We check the second claim in the case of a boost when $\overline{v}=v\overline{e}_{1}$. We have, using the transformation rules above, the transformation rules for derivatives, see \cite{L}, with components $\{(a_{1},a_{2},a_{3}),(a_{1}',a_{2}',a_{3}')\}$ for $\{\overline{A},\overline{A}'\}$ respectively, that;\\

$(a_{1}',a_{2}',a_{3}')=(\gamma a_{1}-{\gamma vV\over c^{2}},a_{2},a_{3})$\\

$V'=\gamma V-\gamma va_{1}$\\

$\bigtriangledown'=(\gamma({\partial\over\partial x}+{v\over c^{2}}{\partial\over\partial t},{\partial\over\partial y},{\partial\over\partial z}))$\\

${\partial\over\partial t'}=\gamma({\partial\over\partial t}+v{\partial\over\partial x})$\\

Using $(*)$ from the statement of the Lemma, with components $\{(e_{1},e_{2},e_{3}),(b_{1},b_{2},b_{3})\}$ for $\{\overline{E},\overline{B}\}$ respectively, we have;\\

$(e_{1},e_{2},e_{3})=(-{\partial a_{1}\over\partial t}-{\partial V\over\partial x},-{\partial a_{2}\over\partial t}-{\partial V\over\partial y},-{\partial a_{3}\over\partial t}-{\partial V\over\partial z})$\\

$(b_{1},b_{2},b_{3})=({\partial a_{3}\over\partial y}-{\partial a_{2}\over\partial z},{\partial a_{1}\over\partial z}-{\partial a_{3}\over\partial x},{\partial a_{2}\over\partial x}-{\partial a_{1}\over\partial y})$\\

We then compute:\\

$(-{\partial \overline{A}'\over \partial t'}-\bigtriangledown'(V'))_{1}=-\gamma({\partial \over\partial t}+v{\partial \over\partial x})(\gamma a_{1}-{\gamma vV\over c^{2}})-\gamma({\partial \over\partial x}+{v\over c^{2}}{\partial \over\partial t})(\gamma V-\gamma va_{1})$\\

$=-\gamma^{2}{\partial a_{1}\over \partial t}+{\gamma^{2}v\over c^{2}}{\partial V\over \partial t}-\gamma^{2}v{\partial a_{1}\over \partial x}+{\gamma^{2}v^{2}\over c^{2}}{\partial V\over \partial x}-\gamma^{2}{\partial V\over \partial x}-{\gamma^{2}v\over c^{2}}{\partial V\over \partial t}+\gamma^{2}v{\partial a_{1}\over \partial x}+{\gamma^{2}v^{2}\over c^{2}}{\partial a_{1}\over \partial t}$\\

$=-{\partial V\over \partial x}-\gamma^{2}{\partial a_{1}\over \partial t}+{\gamma^{2}v^{2}\over c^{2}}{\partial a_{1}\over \partial t}$\\

$=-{\partial V\over \partial x}-{\partial a_{1}\over \partial t}=e_{1}$\\

$(-{\partial \overline{A}'\over \partial t'}-\bigtriangledown'(V'))_{2}=-\gamma({\partial \over\partial t}+v{\partial \over\partial x})a_{2}-{\partial \over\partial y}(\gamma V-\gamma va_{1})$\\

$=-\gamma{\partial a_{2}\over \partial t}-\gamma v{\partial a_{2}\over \partial x}-\gamma{\partial V\over \partial y}+\gamma v{\partial a_{1}\over \partial y}$\\

$=\gamma e_{2}+\gamma v({\partial a_{1}\over \partial y}-{\partial a_{2}\over \partial x})$\\

$=\gamma e_{2}-\gamma vb_{3}$\\

$(-{\partial \overline{A}'\over \partial t'}-\bigtriangledown'(V'))_{3}=-\gamma({\partial \over\partial t}+v{\partial \over\partial x})a_{3}-{\partial \over\partial z}(\gamma V-\gamma va_{1})$\\

$=-\gamma{\partial a_{3}\over \partial t}-\gamma v{\partial a_{3}\over \partial x}-\gamma{\partial V\over \partial z}+\gamma v{\partial a_{1}\over \partial z}$\\

$=\gamma e_{3}+\gamma v({\partial a_{1}\over \partial z}-{\partial a_{3}\over \partial x})$\\

$=\gamma e_{3}+\gamma vb_{2}$\\

so that;\\

$-{\partial \overline{A}'\over \partial t'}-\bigtriangledown'(V')=(e_{1},\gamma e_{2}-\gamma vb_{3},\gamma e_{3}+\gamma vb_{2})=\overline{E}'$\\

as required. Similarly;\\

$(\bigtriangledown'\times \overline{A}')_{1}={\partial a_{3}\over \partial y}-{\partial a_{2}\over\partial z}=b_{1}$\\

$(\bigtriangledown'\times \overline{A}')_{2}={\partial\over\partial z}(\gamma a_{1}-{\gamma vV\over c^{2}})-\gamma({\partial\over\partial x}+{v\over c^{2}}{\partial\over\partial t})a_{3}=\gamma b_{2}-{\gamma v\over c^{2}}({\partial V\over\partial z}+{\partial a_{3}\over \partial t})$\\

$=\gamma b_{2}+{\gamma v e_{3}\over c^{2}}$\\

$(\bigtriangledown'\times \overline{A}')_{3}=\gamma({\partial\over\partial x}+{v\over c^{2}}{\partial\over\partial t})a_{2}-{\partial\over\partial y}(\gamma a_{1}-{\gamma vV\over c^{2}})=\gamma b_{3}+{\gamma v\over c^{2}}({\partial a_{2}\over\partial t}+{\partial V\over\partial y})$\\

$=\gamma b_{3}-{\gamma ve_{2}\over c^{2}}$\\

so that;\\

$\bigtriangledown'\times \overline{A}'=(b_{1},\gamma b_{2}+{\gamma v e_{3}\over c^{2}},\gamma b_{3}-{\gamma ve_{2}\over c^{2}})=\overline{B}'$\\

as required. In the general case of a boost defined by a velocity vector $\overline{v}$, we can find $g\in SO(3)$, with $\overline{v}=g(v\overline{e}_{1})$. Then, by Lemma \ref{conjugation}, we have that;\\

 $B_{\overline{v}}=R_{g}B_{v\overline{e}_{1}}R_{g}^{-1}=R_{g}B_{v\overline{e}_{1}}R_{g^{-1}}$\\

 By the two cases checked so far, we then have that the property $(**)$, in the statement of the Lemma, holds by iteration. If $\tau$ is an arbitrary element of the Lorentz group, it can be written uniquely as the product of a boost and a rotation or a reflection. By iteration, we can then define a transformation $(\overline{E}^{\tau},\overline{B}^{\tau})$, of $(\overline{E},\overline{B})$. If $\tau$ is represented by two distict products, then by the covariant property of the potential $({V\over c},\overline{A})$, the two transformations, $({V_{1}\over c},\overline{A}_{1})$ and $({V_{2}\over c},\overline{A}_{2})$, obtained by iteration in the representations, coincide. In particularly, we have that $V_{1}=V_{2}$ and $\overline{A}_{1}=\overline{A}_{2}$. Again, iterating the relation $(**)$, we must have that;\\

$\overline{E}_{1}=-(\bigtriangledown'(V_{1})+{\partial \overline{A}_{1}\over \partial t'})=-(\bigtriangledown'(V_{2})+{\partial \overline{A}_{2}\over \partial t'})=\overline{E}_{2}$\\

$\overline{B}_{1}=\bigtriangledown'\times \overline{A}_{1}=\bigtriangledown'\times \overline{A}_{2}=\overline{B}_{2}$\\

where $\bigtriangledown'$ and ${\partial\over \partial t'}$ are differential operators in the new frame $S'$ and $\{\overline{E}_{1},\overline{E}_{2},\overline{B}_{1},\overline{B}_{2}\}$ are again obtained by iteration in the representations.

\end{proof}
\begin{lemma}
\label{twist1}

Let $(\rho,\overline{J},\overline{E},\overline{B})$ be a solution to Maxwell's equations in the rest frame $S$, let $g\in O(3)$, and let $(\rho^{g},\overline{J}^{g},\overline{E}^{g},sign(g)\overline{B}^{g})$ in $S'$ be as given in Lemma \ref{maxwells}. Let $\overline{v}$ be a velocity, with corresponding image $\overline{w}=g(\overline{v})$. Let $S''$ and $S'''$ be the frames defined by $\overline{v}$ and $\overline{w}$, relative to $S$ and $S'$ respectively. Then, if $(\rho'',\overline{J}'',\overline{E}'',\overline{B}'')$ is the solution to Maxwell's equations, corresponding to $(\rho,\overline{J},\overline{E},\overline{B})$ in $S''$ and $(\rho''',\overline{J}''',\overline{E}''',\overline{B}''')$ is the solution to Maxwell's equations, corresponding to $(\rho^{g},\overline{J}^{g},\overline{E}^{g},sign(g)\overline{B}^{g})$, in $S'''$, then;\\

$[\bigtriangledown''\centerdot(\overline{E}''\times\overline{B}'')]^{g}=\bigtriangledown'''\centerdot(\overline{E}'''\times\overline{B}''')$\\

In particular, we have that;\\

$\bigtriangledown''\centerdot(\overline{E}''\times\overline{B}'')=0$ iff $\bigtriangledown'''\centerdot(\overline{E}'''\times\overline{B}''')=0$\\

\end{lemma}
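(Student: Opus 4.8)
The plan is to reduce the statement to the conjugation identity of Lemma \ref{conjugation} together with the well-definedness of the field transformation established in Lemma \ref{potential}, after which the claim follows immediately from Lemma \ref{maxwells}. First I would record the two composite Lorentz transformations that are in play. The frame $S''$ is obtained from $S$ by the boost $B_{\overline{v}}$, so that $(\rho'',\overline{J}'',\overline{E}'',\overline{B}'')$ is the image of $(\rho,\overline{J},\overline{E},\overline{B})$ under $B_{\overline{v}}$. The frame $S'''$ is obtained from $S$ by first applying the rotation $R_{g}$, passing to $S'$ where the tuple is $(\rho^{g},\overline{J}^{g},\overline{E}^{g},sign(g)\overline{B}^{g})$, and then applying the boost $B_{\overline{w}}$ with $\overline{w}=g(\overline{v})$; thus $(\rho''',\overline{J}''',\overline{E}''',\overline{B}''')$ is the image of $(\rho,\overline{J},\overline{E},\overline{B})$ under the composite $B_{\overline{w}}R_{g}$.

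The central step is to apply Lemma \ref{conjugation}, which gives $B_{\overline{w}}R_{g}=B_{g(\overline{v})}R_{g}=R_{g}B_{\overline{v}}$. Consequently the two routes from $S$ to the final frame, namely $B_{\overline{w}}R_{g}$ and $R_{g}B_{\overline{v}}$, are equal as elements of the augmented Lorentz group of Definition \ref{rotationsreflections}, and the Lorentz transformation carrying $S''$ to $S'''$ is $(B_{\overline{w}}R_{g})(B_{\overline{v}})^{-1}=R_{g}B_{\overline{v}}B_{\overline{v}}^{-1}=R_{g}$, a pure rotation. So $S'''$ differs from $S''$ only by the orthogonal transformation $g$.

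I would then invoke the final assertion of Lemma \ref{potential}, that the induced transformation of $(\overline{E},\overline{B})$ is well defined and agrees for any two representations of the same Lorentz element. Applying this to $B_{\overline{w}}R_{g}=R_{g}B_{\overline{v}}$, the tuple $(\overline{E}''',\overline{B}''')$ computed along the route $B_{\overline{w}}R_{g}$ coincides with the one computed along $R_{g}B_{\overline{v}}$, that is, with the result of first boosting by $B_{\overline{v}}$ to reach $S''$ and then rotating by $R_{g}$. Since this last step is a pure $O(3)$ transformation by $g$, the rotation/reflection rule of Lemma \ref{potential} yields $(\overline{E}''',\overline{B}''')=((\overline{E}'')^{g},sign(g)(\overline{B}'')^{g})$, and likewise $(\rho''',\overline{J}''')=((\rho'')^{g},(\overline{J}'')^{g})$. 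Feeding this into the penultimate identity of Lemma \ref{maxwells}, applied to the rotation $g$ relating $S''$ and $S'''$, gives $\bigtriangledown'''\centerdot(\overline{E}'''\times\overline{B}''')=\bigtriangledown'''\centerdot((\overline{E}'')^{g}\times sign(g)(\overline{B}'')^{g})=[\bigtriangledown''\centerdot(\overline{E}''\times\overline{B}'')]^{g}$, which is exactly the asserted equality; the \emph{in particular} statement then follows by applying $(\cdot)^{g}$ and $(\cdot)^{g^{-1}}$ as in Lemma \ref{maxwells}.

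The main obstacle I anticipate is the bookkeeping in the first two paragraphs: one must check that the boost directions and the order of composition match the conventions of Definition \ref{rotationsreflections}, so that $B_{\overline{w}}R_{g}$ is genuinely the transformation defining $S'''$ from $S$, and that Lemma \ref{conjugation} is invoked in the correct order $R_{g}B_{\overline{v}}=B_{g(\overline{v})}R_{g}$ rather than a transposed variant. Once the identity $S''\to S'''$ is pinned down as the pure rotation $R_{g}$, the remainder is a direct appeal to the covariance and well-definedness already in hand.
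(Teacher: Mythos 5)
Your proposal is correct and follows essentially the same route as the paper's own proof: both rest on the conjugation identity $B_{\overline{w}}R_{g}=R_{g}B_{\overline{v}}$ from Lemma \ref{conjugation}, the well-definedness of the field transformation from Lemma \ref{potential} to identify $(\overline{E}''',\overline{B}''')=((\overline{E}'')^{g},sign(g)(\overline{B}'')^{g})$, and then the divergence identity (which you cite via the penultimate claim of Lemma \ref{maxwells}, while the paper recomputes it directly from Lemma \ref{divcurl} --- the same calculation). Your extra bookkeeping identifying the map $S''\to S'''$ as the pure rotation $R_{g}$ simply makes explicit what the paper leaves terse.
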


\begin{proof}

The final claim follows immediately from the first. By Lemma \ref{conjugation}, we have that $B_{w}R_{g}=R_{g}B_{v}$, and by Lemma \ref{potential}, we have that $\overline{E}'''=\overline{E}''^{g}$, $\overline{B}'''=sign(g)\overline{B}''^{g}$. Then a straightforward calculation, using Lemma \ref{divcurl} shows that;\\

$\bigtriangledown'''\centerdot(\overline{E}'''\times\overline{B}''')
=\bigtriangledown'''\centerdot(\overline{E}''^{g}\times sign(g)\overline{B}''^{g})$\\

$=\bigtriangledown'''\centerdot(\overline{E}''\times\overline{B}'')^{g}$\\

$=[\bigtriangledown''\centerdot(\overline{E}''\times\overline{B}'')]^{g}$\\

\end{proof}

\begin{lemma}
\label{velocity}
Let the frame $S'$ move with velocity $\overline{u}$ relative to $S$, and let the frame $S''$ move with velocity $\overline{v}$ relative to $S'$, then the velocity of $S''$ computed in $S$ is given by;\\

$\overline{u}*\overline{v}={\overline{u}+\overline{v}\over 1+{\overline{u}\centerdot\overline{v}\over c^{2}}}+{\gamma_{u}\over c^{2}(\gamma_{u}+1)}{\overline{u}\times (\overline{u}\times\overline{v})\over 1+{\overline{u}\centerdot\overline{v}\over c^{2}}}$\\

Moreover, there exists $g\in SO(3)$ defining a rotation $R_{g}$, such that;\\

$B_{\overline{v}}B_{\overline{u}}=R_{g}B_{\overline{u}*\overline{v}}=B_{\overline{v}*\overline{u}}R_{g}$\\

where $R_{g}(\overline{u}*\overline{v})=\overline{v}*\overline{u}$ and the velocity of $S$ computed in $S''$ is $-(\overline{v}*\overline{u})$.

\end{lemma}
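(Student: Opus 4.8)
The plan is to treat the two assertions — the closed form for the composed velocity, and the factorisation of the product of boosts into a boost and a Thomas rotation — separately, using the explicit boost matrices of \cite{U2} recalled in Lemma \ref{conjugation} together with the unique decomposition $L=R_{g}B_{\overline{s}}$ of the (augmented) Lorentz group established in Lemma \ref{conjugation} and Definition \ref{rotationsreflections}.

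First I would derive the velocity formula. Since $B_{\overline{u}}$ carries $S$ to $S'$ and $B_{\overline{v}}$ carries $S'$ to $S''$, the composite $L:=B_{\overline{v}}B_{\overline{u}}$ carries $S$ to $S''$, and the velocity of $S''$ in $S$ is the relativistic transform of the velocity $\overline{v}$ of $S''$ in $S'$ under the boost $B_{\overline{u}}$. Writing $\overline{v}=\overline{v}_{||}+\overline{v}_{\perp}$ for its components parallel and perpendicular to $\overline{u}$, the standard boost rule for velocities gives $(\overline{u}+\overline{v}_{||})/(1+\overline{u}\centerdot\overline{v}/c^{2})$ for the parallel part and $\overline{v}_{\perp}/(\gamma_{u}(1+\overline{u}\centerdot\overline{v}/c^{2}))$ for the perpendicular part. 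Recombining these and eliminating $\overline{v}_{||},\overline{v}_{\perp}$ by means of the identity $\overline{u}\times(\overline{u}\times\overline{v})=(\overline{u}\centerdot\overline{v})\overline{u}-|\overline{u}|^{2}\overline{v}$, together with $|\overline{u}|^{2}/c^{2}=(\gamma_{u}^{2}-1)/\gamma_{u}^{2}$, reproduces exactly the stated closed form for $\overline{u}*\overline{v}$; equivalently, this is Ungar's Einstein sum recorded in \cite{U2}.

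Next I would obtain the matrix factorisation. As $L=B_{\overline{v}}B_{\overline{u}}$ lies in the Lorentz group, the decomposition recalled from \cite{U2} writes it uniquely as $L=R_{g}B_{\overline{s}}$ with $g\in SO(3)$ and a boost velocity $\overline{s}$. To identify $\overline{s}$ I would use that a rotation $R_{g}$ fixes the time axis, so $L^{-1}=B_{-\overline{s}}R_{g^{-1}}$ sends the worldline $(\tau,\overline{0})$ of the origin of $S''$ to $B_{-\overline{s}}(\tau,\overline{0})$, that is to a point moving with velocity $\overline{s}$ in $S$; hence $\overline{s}$ is the velocity of $S''$ in $S$, which by the first step is $\overline{u}*\overline{v}$. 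This gives $B_{\overline{v}}B_{\overline{u}}=R_{g}B_{\overline{u}*\overline{v}}$, and applying the conjugation identity of Lemma \ref{conjugation} yields $R_{g}B_{\overline{u}*\overline{v}}=B_{g(\overline{u}*\overline{v})}R_{g}$; setting $\overline{v}*\overline{u}:=g(\overline{u}*\overline{v})$ gives the second equality together with $R_{g}(\overline{u}*\overline{v})=\overline{v}*\overline{u}$. Finally, for the velocity of $S$ in $S''$ I would apply the same reading to $L^{-1}=B_{-\overline{u}}B_{-\overline{v}}$, whose boost part is $(-\overline{v})*(-\overline{u})$; a short check that $\gamma_{-\overline{v}}=\gamma_{\overline{v}}$, that the denominator $1+\overline{u}\centerdot\overline{v}/c^{2}$ is unchanged, and that the triple product changes sign, shows $(-\overline{v})*(-\overline{u})=-(\overline{v}*\overline{u})$, as claimed.

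The main obstacle I anticipate is reconciling the vector $\overline{v}*\overline{u}$ produced as $g(\overline{u}*\overline{v})$ by the uniqueness of the decomposition with the sum obtained from the closed-form expression by interchanging $\overline{u}$ and $\overline{v}$: these agree only because $g$ is precisely the Thomas (gyration) rotation, and pinning $g$ down either requires an explicit computation of the rotation block of $B_{\overline{v}}B_{\overline{u}}B_{\overline{u}*\overline{v}}^{-1}$ — routine but lengthy, in the spirit of the footnote to Lemma \ref{conjugation} — or an appeal to the gyrocommutative law of \cite{U2}. A secondary point requiring care is the bookkeeping of which boost acts first and the active-versus-passive convention, since these determine whether the velocity of $S''$ in $S$ is $\overline{u}*\overline{v}$ or its swap, and hence on which side the rotation $R_{g}$ falls.
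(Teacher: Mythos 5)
Your derivation of the closed form for $\overline{u}*\overline{v}$ is correct and is essentially the paper's own computation in different clothing: the paper obtains $v_{i}'={dx_{i}'\over dt'}$ directly from the boost matrix $B_{-\overline{u}}$ and then uses exactly the identities you invoke ($\overline{u}\times(\overline{u}\times\overline{v})=(\overline{u}\centerdot\overline{v})\overline{u}-u^{2}\overline{v}$ and ${1\over\gamma_{u}}=1-{u^{2}\gamma_{u}\over c^{2}(\gamma_{u}+1)}$), while you phrase the same calculation via the parallel/perpendicular velocity-addition rule. Your route to the factorisation $B_{\overline{v}}B_{\overline{u}}=R_{g}B_{\overline{u}*\overline{v}}$ is, however, genuinely different: you take the abstract existence and uniqueness of the decomposition $L=R_{g}B_{\overline{s}}$ (recorded in Definition \ref{rotationsreflections} and Lemma \ref{conjugation}) and identify the boost part $\overline{s}$ by tracking the worldline of the origin of $S''$, whereas the paper quotes Ungar's explicit factorisation from \cite{U2} and spends a footnote converting his sign conventions. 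Your identification argument is sound and arguably cleaner, since it needs no formula from \cite{U2} beyond what the first part of the lemma already gives; the final claim (velocity of $S$ in $S''$ equals $-(\overline{v}*\overline{u})$ via $(-\overline{v})*(-\overline{u})$) also matches the paper.

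The genuine gap is the step you yourself flag: you never prove $R_{g}(\overline{u}*\overline{v})=\overline{v}*\overline{u}$, but instead \emph{define} $\overline{v}*\overline{u}:=g(\overline{u}*\overline{v})$. Since $\overline{v}*\overline{u}$ is already fixed by the closed formula with $\overline{u}$ and $\overline{v}$ interchanged, this identity is precisely the nontrivial content of the second claim, and as written your proof does not deliver it, nor the equality $B_{\overline{v}}B_{\overline{u}}=B_{\overline{v}*\overline{u}}R_{g}$ with the formula-defined product. Moreover, the two repairs you propose (an explicit computation of the rotation block, or the gyrocommutative law of \cite{U2}) are both unnecessary: the pieces you already have close the gap in a few lines, and this is exactly what the paper does. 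Apply the left-handed factorisation to the pair $(-\overline{v},-\overline{u})$, i.e.\ to $L^{-1}=B_{-\overline{u}}B_{-\overline{v}}=R_{h}B_{(-\overline{v})*(-\overline{u})}=R_{h}B_{-(\overline{v}*\overline{u})}$ (your sign check $(-\overline{v})*(-\overline{u})=-(\overline{v}*\overline{u})$ is what is needed here, with $\overline{v}*\overline{u}$ formula-defined); inverting gives $B_{\overline{v}}B_{\overline{u}}=B_{\overline{v}*\overline{u}}R_{h^{-1}}$. Comparing this with your $B_{\overline{v}}B_{\overline{u}}=R_{g}B_{\overline{u}*\overline{v}}=B_{g(\overline{u}*\overline{v})}R_{g}$ (the last equality by the conjugation identity of Lemma \ref{conjugation}) and invoking the uniqueness clause of Lemma \ref{conjugation} forces $R_{h^{-1}}=R_{g}$ and $g(\overline{u}*\overline{v})=\overline{v}*\overline{u}$, which is the paper's argument: a substitution plus uniqueness, with no computation of $g$ and no gyrocommutativity.
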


\begin{proof}
The first formula is given in \cite{U2}. We use the original formula there for the boost matrix;\\

$B_{\overline{u}}=I+\gamma_{u}b+{\gamma_{u}^{2}\over \gamma_{u}+1}b^{2}$\\

as it is unnecessary to introduce the variable $x_{0}=ct$, but see the footnote in Lemma \ref{conjugation}. The frame $S$ moves with velocity $-\overline{u}$ relative to $S'$ and, using primed coordinates for $S$, and unprimed for $S'$, we have, using the matrix $B_{-\overline{u}}$ to relate the two frames, that;\\

$dt'=\gamma_{u}dt+c^{-2}\gamma_{u}u_{1}dx_{1}+c^{-2}\gamma_{u}u_{2}dx_{2}+c^{-2}\gamma_{u}u_{3}dx_{3}$\\

$dx_{1}'=\gamma_{u}u_{1}dt+(1+{\gamma_{u}^{2}u_{1}^{2}\over c^{2}(\gamma_{u}+1)})dx_{1}+{\gamma_{u}^{2}u_{1}u_{2}\over c^{2}(\gamma_{u}+1)}dx_{2}+{\gamma_{u}^{2}u_{1}u_{3}\over c^{2}(\gamma_{u}+1)}dx_{3}$\\

$dx_{2}'=\gamma_{u}u_{2}dt+{\gamma_{u}^{2}u_{1}u_{2}\over c^{2}(\gamma_{u}+1)}dx_{1}+(1+{\gamma_{u}^{2}u_{2}^{2}\over c^{2}(\gamma_{u}+1)})dx_{2}+{\gamma_{u}^{2}u_{2}u_{3}\over c^{2}(\gamma_{u}+1)}dx_{3}$\\

$dx_{3}'=\gamma_{u}u_{3}dt+{\gamma_{u}^{2}u_{1}u_{3}\over c^{2}(\gamma_{u}+1)}dx_{1}+{\gamma_{u}^{2}u_{2}u_{3}\over c^{2}(\gamma_{u}+1)}dx_{2}+(1+{\gamma_{u}^{2}u_{3}^{2}\over c^{2}(\gamma_{u}+1)})dx_{3}$\\

Using the facts that $v_{i}={dx_{i}\over dt}$, for $1\leq i\leq 3$, ${1\over \gamma_{u}}=1-{u^{2}\gamma_{u}\over c^{2}(\gamma_{u}+1)}$ and the formula $\overline{u}\times (\overline{u}\times\overline{v})=\overline{u}(\overline{u}\centerdot\overline{v})-\overline{v}(\overline{u}\centerdot\overline{u})$\\

we compute;\\

$v_{1}'={dx_{1}'\over dt'}={\gamma_{u}u_{1}dt+(1+{\gamma_{u}^{2}u_{1}^{2}\over c^{2}(\gamma_{u}+1)})dx_{1}+{\gamma_{u}^{2}u_{1}u_{2}\over c^{2}(\gamma_{u}+1)}dx_{2}+{\gamma_{u}^{2}u_{1}u_{3}\over c^{2}(\gamma_{u}+1)}dx_{3}\over \gamma_{u}dt+c^{-2}\gamma_{u}u_{1}dx_{1}+c^{-2}\gamma_{u}u_{2}dx_{2}+c^{-2}\gamma_{u}u_{3}dx_{3}}$\\

$={\gamma_{u}u_{1}+(1+{\gamma_{u}^{2}u_{1}^{2}\over c^{2}(\gamma_{u}+1)})v_{1}+{\gamma_{u}^{2}u_{1}u_{2}\over c^{2}(\gamma_{u}+1)}v_{2}+{\gamma_{u}^{2}u_{1}u_{3}\over c^{2}(\gamma_{u}+1)}v_{3}\over \gamma_{u}+c^{-2}\gamma_{u}u_{1}v_{1}+c^{-2}\gamma_{u}u_{2}v_{2}+c^{-2}\gamma_{u}u_{3}v_{3}}$\\

$={v_{1}+u_{1}(\gamma_{u}+{\gamma_{u}^{2}\overline{u}\centerdot\overline{v}\over c^{2}(\gamma_{u}+1)})\over \gamma_{u}(1+{\overline{u}\centerdot\overline{v}\over c^{2}})}$\\

$={{v_{1}\over \gamma_{u}}+u_{1}(1+{\gamma_{u}\overline{u}\centerdot\overline{v}\over c^{2}(\gamma_{u}+1)})\over 1+{\overline{u}\centerdot\overline{v}\over c^{2}}}$\\

$={(1-{u^{2}\gamma_{u}\over c^{2}(\gamma_{u}+1)})v_{1}\over 1+{\overline{u}\centerdot\overline{v}\over c^{2}}}+{(1+{\gamma_{u}\overline{u}\centerdot\overline{v}\over c^{2}(\gamma_{u}+1)})u_{1}\over 1+{\overline{u}\centerdot\overline{v}\over c^{2}}}$\\

$={u_{1}+v_{1}\over 1+{\overline{u}\centerdot\overline{v}\over c^{2}}}+{\gamma_{u}((\overline{u}\centerdot\overline{v})u_{1}-u^{2}v_{1})\over 1+{\overline{u}\centerdot\overline{v}\over c^{2}}}$\\

$={u_{1}+v_{1}\over 1+{\overline{u}\centerdot\overline{v}\over c^{2}}}+{\gamma_{u}(\overline{u}\times(\overline{u}\times\overline{v}))_{1}\over 1+{\overline{u}\centerdot\overline{v}\over c^{2}}}$\\

and similarly;\\

$v_{i}'={u_{i}+v_{i}\over 1+{\overline{u}\centerdot\overline{v}\over c^{2}}}+{\gamma_{u}(\overline{u}\times(\overline{u}\times\overline{v}))_{i}\over 1+{\overline{u}\centerdot\overline{v}\over c^{2}}}$, for $2\leq i\leq 3$\\

so the result follows. The first of the second set of formulae is also given in \cite{U2}, (\footnote{In fact Ungar claims that $B_{\overline{u}}B_{\overline{v}}=B(\overline{u}*\overline{v})R_{h}$, for some $h\in SO(3)$. Remembering that the boost matrix we use in this paper, for $t$ coordinates, reverses the signs of $\overline{u}$, ${\overline{v}}$ and $\overline{u}*\overline{v}$, we obtain;\\

$B_{\overline{u}}^{-1}B_{\overline{v}}^{-1}=B_{-\overline{u}}B_{-\overline{v}}=B_{-(\overline{u}*\overline{v})}R_{h}=
B_{\overline{u}*\overline{v}}^{-1}R_{h}$\\

so that;\\

$B_{\overline{v}}B_{\overline{u}}=R_{h^{-1}}B_{\overline{u}*\overline{v}}$ $(*)$\\

and we can take $g=h^{-1}$. This formula also holds for the boost matrices with $x_{0}=ct$ coordinates, as letting $A_{c}$ be defined by;\\

$(A_{c})_{00}=c$\\

$(A_{c})_{ii}=1$, for $1\leq i\leq 3$\\

$(A_{c})_{ij}=0$, otherwise, for $0\leq i,j\leq 3$\\

we obtain from $(*)$ that;\\

$(A_{c}B_{\overline{v}}A_{c}^{-1})(A_{c}B_{\overline{u}}A_{c}^{-1})=(A_{c}R_{g}A_{c}^{-1})(A_{c}B_{\overline{u}*\overline{v}}A_{c}^{-1})=R_{g}(A_{c}B_{\overline{u}*\overline{v}}A_{c}^{-1})$\\

and the boost matrices in $x_{0}=ct$ coordinates are given by $\{A_{c}B_{\overline{u}}A_{c}^{-1},A_{c}B_{\overline{v}}A_{c}^{-1},A_{c}B_{\overline{u}*\overline{v}}A_{c}^{-1}\}$\\

The explicit representation of $h$ and, therefore, $g$ is known, and included by Ungar. The formula is given by;\\

$R_{g}=R_{h^{-1}}=I-c_{1}\Omega+c_{2}\Omega^{2}$\\

where;\\

$(\Omega)_{ii}=0$, $1\leq i\leq 3$\\

$(\Omega)_{ij}=(-1)^{i+j}\omega_{ij}$, $1\leq i<j\leq 3$\\

$(\Omega)_{ij}=-(\Omega)_{ji}$, $1\leq j<i\leq 3$\\

$\omega=\overline{u}\times\overline{v}=(\omega_{1},\omega_{2},\omega_{3})$\\

$c_{1}={-\gamma_{u}\gamma_{v}(\gamma_{u}+\gamma_{v}+\gamma_{u*v}+1)\over c^{2}(\gamma_{u}+1)(\gamma_{v}+1)(\gamma_{u*v}+1)}$\\

$c_{2}={\gamma_{u}^{2}\gamma_{v}^{2}\over c^{4}(\gamma_{u}+1)(\gamma_{v}+1)(\gamma_{u*v}+1)}$\\

}). We can deduce the second formula from the first. We have that;\\

$B_{\overline{u}*\overline{v}}B_{-\overline{u}}B_{-\overline{v}}=R_{g^{-1}}$\\

and, as this holds for any velocities $\{\overline{u},\overline{v}\}$, making the substitutions $-\overline{v}$ for $\overline{u}$ and $-\overline{u}$ for $\overline{v}$, and, using the fact that $-(\overline{v}*\overline{u})=-\overline{v}*-\overline{u}$, we can find $h\in SO(3)$ with;\\

$B_{-(\overline{v}*\overline{u})}B_{\overline{v}}B_{\overline{u}}=B_{-\overline{v}*-\overline{u}}B_{\overline{v}}B_{\overline{u}}=R_{h^{-1}}$\\

so that;\\

$B_{\overline{v}}B_{\overline{u}}=B_{\overline{v}*\overline{u}}R_{h^{-1}}$ $(\dag)$\\

By the first part of Lemma \ref{conjugation}, and using the first result, we have that;\\

$B_{\overline{v}}B_{\overline{u}}=R_{g}B_{\overline{u}*\overline{v}}=B_{R_{g}(\overline{u}*\overline{v})}R_{g}$\\

and by the uniqueness part of Lemma \ref{conjugation} and $(\dag)$, we have that;\\

$R_{h^{-1}}=R_{g}$ and $\overline{v}*\overline{u}=R_{g}(\overline{u}*\overline{v})$\\

It then follows from $(\dag)$ that;\\

$B_{\overline{v}}B_{\overline{u}}=B_{\overline{v}*\overline{u}}R_{g}$\\

as well. For the final claim, observe that $S'$ moves with velocity $-\overline{v}$ relative to $S''$ and $S$ moves with velocity $-\overline{u}$ relative to $S'$. Using the first claim, we have that the velocity of $S$ computed in the frame $S''$ is $-\overline{v}*-\overline{u}=-(\overline{v}*\overline{u})$ as required.

\end{proof}

\begin{rmk}
\label{paradox}
The presence of the Thomas rotation resolves the so called Mocanu paradox, explained in \cite{U1}, that the relative velocities of two frames $S$ and $S''$, connected by two successive boosts $\{\overline{u},\overline{v}\}$, do not differ by a minus sign, when computed in $S$ and $S''$ respectively. An interesting perspective on the relativistic effects of rotations is given in \cite{F}.

\end{rmk}

\begin{lemma}
\label{existence}
Given a scalar $v\in\mathcal{R}$, with $|v|<c$, and a velocity $\overline{u}$, with $u<c$, there exists a unique velocity $\overline{w}$, such that;\\

$B_{\overline{w}}B_{\overline{u}}=R_{g}B_{v\overline{e}_{1}}$\\

where $g\in SO(3)$. Moreover, if $\overline{u}$ is fixed and $v\rightarrow\infty$, when $u_{1}\neq 0$ and $\overline{u}\neq u_{1}\overline{e}_{1}$, we have that;\\

$\overline{w}\rightarrow iv(0,u_{2},u_{3})$\\

When, $u_{1}\neq 0$ and $\overline{u}=u_{1}\overline{e}_{1}$, we have that;\\

$\overline{w}\rightarrow -{c^{2}\overline{e}_{1}\over u_{1}}$\\

When $u_{1}=0$\\

$\overline{w}\rightarrow v(1-{iu_{1}\over c},-{iu_{2}\over c},-{iu_{3}\over c})$\\

\end{lemma}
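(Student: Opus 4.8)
The plan is to separate the two assertions: the existence--uniqueness statement, which holds on the physical range $|v|<c$, and the three asymptotic formulae, which are statements about the analytic continuation of $\overline{w}$ in the parameter $v$.

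For existence and uniqueness I would first reduce the matrix equation to a single velocity-addition equation. By Lemma \ref{velocity}, $B_{\overline{w}}B_{\overline{u}}=R_{k}B_{\overline{u}*\overline{w}}$ for a uniquely determined Thomas rotation $R_{k}$, $k\in SO(3)$. Comparing this with the required right-hand side $R_{g}B_{v\overline{e}_{1}}$ and appealing to the uniqueness of the decomposition $L=R_{g}B_{\overline{x}}$ established in Lemma \ref{conjugation}, the equation $B_{\overline{w}}B_{\overline{u}}=R_{g}B_{v\overline{e}_{1}}$ holds if and only if $\overline{u}*\overline{w}=v\overline{e}_{1}$ and $g=k$. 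Thus the task is to solve $\overline{u}*\overline{w}=v\overline{e}_{1}$ for $\overline{w}$. For fixed $\overline{u}$ the map $\overline{w}\mapsto\overline{u}*\overline{w}$ is a bijection of the open velocity ball onto itself --- injectivity is exactly the uniqueness clause of Lemma \ref{conjugation} applied to left multiplication by $B_{\overline{u}}$ --- so there is a unique solution, given explicitly by the left-cancellation formula \[ \overline{w}=(-\overline{u})*(v\overline{e}_{1}), \] and then $g=k$ is determined. This settles the first claim.

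For the asymptotics I would work with explicit matrices rather than with the velocity-addition formula, since the continuation past $c$ is cleanest at the matrix level. Writing $B_{\overline{w}}=R_{g}B_{v\overline{e}_{1}}B_{-\overline{u}}$ and forming $M=B_{v\overline{e}_{1}}B_{-\overline{u}}$ from the boost entries recorded in Lemma \ref{velocity}, one reads off from the first column \[ \gamma_{w}=M_{00}=\gamma_{u}\gamma_{v}\Big(1-\tfrac{vu_{1}}{c^{2}}\Big),\qquad (M_{10},M_{20},M_{30})=\big(\gamma_{u}\gamma_{v}(u_{1}-v),\ \gamma_{u}u_{2},\ \gamma_{u}u_{3}\big), \] with $\overline{w}$ recovered from $-\gamma_{w}\overline{w}=g\,(M_{10},M_{20},M_{30})$ together with the normalisation $\gamma_{w}=(1-w^{2}/c^{2})^{-1/2}$. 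I would then continue $v$ beyond $c$, where $\gamma_{v}=(1-v^{2}/c^{2})^{-1/2}$ becomes imaginary, fixing the branch $\gamma_{v}\sim -ic/v$ as $v\to\infty$, and isolate the leading term of each expression in the three regimes. When $u_{1}=0$ one has $\gamma_{w}=\gamma_{u}\gamma_{v}\to 0$, so the normalisation forces $|\overline{w}|\to\infty$; collecting the $O(v)$ terms gives $\overline{w}\sim v(1-\tfrac{iu_{1}}{c},-\tfrac{iu_{2}}{c},-\tfrac{iu_{3}}{c})$. When $u_{1}\neq 0$ and $\overline{u}=u_{1}\overline{e}_{1}$, the transverse entries $M_{20},M_{30}$ vanish identically, the incipient blow-up cancels, and one is left with the finite real limit $-c^{2}\overline{e}_{1}/u_{1}$.

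The main obstacle is the generic case $u_{1}\neq 0$, $\overline{u}\neq u_{1}\overline{e}_{1}$, where the transverse data survives and the claimed imaginary leading term $iv(0,u_{2},u_{3})$ must be produced. Here a naive substitution into the real velocity-addition formula would return a bounded limit, so the delicate point is to track the rotation $g$ under the continuation past $c$ and show that the continued $g$ rotates the (bounded) spatial first column of $M$ into a direction along which the components grow linearly in $v$ and acquire the factor $i$. Handling the branch of $\gamma_{v}$ consistently across the three regimes, and pinning down $g$ in the limit rather than only $|\overline{w}|$, is where essentially all of the work sits; the remaining algebra is routine once the dominant term in each regime has been identified.
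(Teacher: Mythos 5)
Your existence--uniqueness argument is correct and is essentially the paper's: Lemma \ref{velocity} reduces the matrix equation to the single velocity equation $\overline{u}*\overline{w}=v\overline{e}_{1}$, and uniqueness follows from the decomposition uniqueness of Lemma \ref{conjugation}. Moreover, your formula $\overline{w}=(-\overline{u})*(v\overline{e}_{1})$ is the correct solution, and it can be verified in one line inside the paper's framework: Lemma \ref{velocity} applied to the pair $(-\overline{u},v\overline{e}_{1})$ gives $B_{v\overline{e}_{1}}B_{-\overline{u}}=R_{k}B_{(-\overline{u})*(v\overline{e}_{1})}$, so $B_{(-\overline{u})*(v\overline{e}_{1})}B_{\overline{u}}=R_{k}^{-1}B_{v\overline{e}_{1}}B_{-\overline{u}}B_{\overline{u}}=R_{k}^{-1}B_{v\overline{e}_{1}}$. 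The paper instead takes the transposed vector $\overline{w}=v\overline{e}_{1}*(-\overline{u})$; since $*$ is non-commutative, the two orderings differing by the Thomas gyration, that vector fails to satisfy the equation whenever $\overline{u}$ is not parallel to $\overline{e}_{1}$ (the paper's ``rearranging'' step implicitly passes the rotation $R_{g}$ through a boost, which is not valid). So on this half you are correct precisely where the paper's own proof is not.

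The genuine gap is the asymptotic half, which in your proposal is a plan rather than a proof: the three limits are never derived, and you defer the generic case to ``tracking the rotation $g$'' under continuation past $c$. That step cannot be carried out, because the claimed limits contradict the uniqueness you have just established. Your solution $(-\overline{u})*(v\overline{e}_{1})$ involves only $\gamma_{u}$ --- in the paper's addition formula the $\gamma$ factor belongs to the first argument --- so it is a rational function of $v$ with real coefficients, with no branch point at $v=c$: for $u_{1}\neq 0$ it converges, as $v\rightarrow\infty$, to the finite real vector $-{c^{2}\over\gamma_{u}u_{1}}\overline{e}_{1}-{\gamma_{u}\over\gamma_{u}+1}\overline{u}$, and for $u_{1}=0$ it equals ${v\over\gamma_{u}}\overline{e}_{1}-\overline{u}$ exactly, staying real. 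Independently of any formula, equating $(0,0)$ entries in $B_{\overline{w}}=R_{g}B_{v\overline{e}_{1}}B_{-\overline{u}}$ forces $\gamma_{w}=\gamma_{u}\gamma_{v}(1-{vu_{1}\over c^{2}})$, which for $u_{1}\neq 0$ tends to the finite non-zero value $\pm i\gamma_{u}u_{1}/c$, so that $\overline{w}^{2}=c^{2}(1-\gamma_{w}^{-2})$ remains bounded; this is incompatible with $\overline{w}\rightarrow iv(0,u_{2},u_{3})$, whose square diverges. Hence no choice of branch or limiting rotation yields the stated generic-case limit, and your assertion that collecting $O(v)$ terms in the case $u_{1}=0$ gives $v(1-{iu_{1}\over c},-{iu_{2}\over c},-{iu_{3}\over c})$ is likewise mistaken. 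The lemma's limits are in fact those of the paper's transposed expression $v\overline{e}_{1}*(-\overline{u})$, whose $\gamma$ factor is $\gamma_{v}$ and genuinely becomes imaginary past $v=c$; and even for that expression the generic computation drops a factor of $v$, the correct limit there being the finite vector $-{c^{2}\over u_{1}}\overline{e}_{1}\mp{ic\over u_{1}}(0,u_{2},u_{3})$. Of the three stated limits, only the collinear one, $-c^{2}\overline{e}_{1}/u_{1}$, is attained by the true solution; your matrix set-up handles that case correctly, and, carried through honestly, it refutes the other two claims rather than proving them.
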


\begin{proof}
By Lemma \ref{velocity}, we have that;\\

$B_{\overline{u}}B_{-v\overline{e}_{1}}=R_{g}B_{-v\overline{e}_{1}*\overline{u}}$\\

where $g\in SO(3)$, and, rearranging;\\

$B_{v\overline{e}_{1}*-\overline{u}}B_{\overline{u}}=B_{-(-v\overline{e}_{1}*\overline{u})}B_{\overline{u}}=B_{-v\overline{e}_{1}*\overline{u}}^{-1}B_{\overline{u}}=
R_{g}B_{-v\overline{e}_{1}}^{-1}=R_{g}B_{v\overline{e}_{1}}$\\

so it is sufficient to take $\overline{w}=v\overline{e}_{1}*-\overline{u}$. The uniqueness claim is clear by the second part of Lemma \ref{conjugation}. By the formula in Lemma \ref{velocity}, we have that;\\

$\overline{w}={v\overline{e}_{1}-\overline{u}\over 1-{v{\overline{e}}_{1}\centerdot\overline{u}\over c^{2}}}-{\gamma_{v}(v\overline{e}_{1}\times(v\overline{e}_{1}\times \overline{u}))\over c^{2}(\gamma_{v}+1)({1-v{\overline{e}}_{1}\centerdot\overline{u}\over c^{2}})}$\\

$={v\overline{e}_{1}-\overline{u}\over 1-{u_{1}v\over c^{2}}}-{u_{1}v^{2}\overline{e}_{1}-v^{2}\overline{u}\over c^{2}(1-{u_{1}v\over c^{2}})(1+\sqrt{1-{v^{2}\over c^{2}}})}$\\

For the first case, taking the limit as $v\rightarrow\infty$, keeping $\overline{u}$ fixed, we obtain;\\

$\overline{w}_{\infty}={\overline{e}_{1}\over -{u_{1}\over c^{2}}}-{vu_{1}\over {-iu_{1}\over c}}\overline{e}_{1}+{v\over {-iu_{1}\over c}}\overline{u}$\\

$=-icv\overline{e}_{1}+icv\overline{e}_{1}+icv(0,{u_{2}\over u_{1}},{u_{3}\over u_{1}})$\\

$=iv(0,u_{2},u_{3})$\\

In the second case, we obtain the finite limit;\\

$\overline{w}_{\infty}={\overline{e}_{1}\over -{u_{1}\over c^{2}}}-icv\overline{e}_{1}+icv\overline{e}_{1}=-{c^{2}\overline{e}_{1}\over u_{1}}$\\

and, in the final case, we obtain;\\

$\overline{w}_{\infty}=lim_{v\rightarrow\infty}(v\overline{e}_{1}-\overline{u}+{v^{2}\overline{u}\over c^{2}(1+\sqrt{1-{v^{2}\over c^{2}}})})$\\

$=v\overline{e}_{1}+{v\overline{u}\over c^{2}({i\over c})}$\\

$=v(1-{iu_{1}\over c},-{iu_{2}\over c},-{iu_{3}\over c})$\\

as required.

\end{proof}

\begin{defn}
\label{extension}
We extend the definition of boost matrices in Lemma \ref{conjugation} to include complex vectors $\overline{v}$, with $\overline{v}^{2}\neq c^{2}$, where $\overline{v}^{2}=v_{1}^{2}+v_{2}^{2}+v_{3}^{2}$, and $\gamma_{\overline{v}}={1\over \sqrt{1-{\overline{v}^{2}\over c^{2}}}}$, where we can take either square root, provided we do so consistently in the definition. We denote the two complex boost matrices obtained from a complex vector $\overline{v}$ by $B_{\overline{v}}^{1}$ and $B_{\overline{v}}^{2}$. The fact that $(B_{\overline{v}}^{i})^{-1}=(B_{-\overline{v}}^{i})$, for $1\leq i\leq 2$, follows easily from the real case and the fact that it holds generically, noting that $(-\overline{v})^{2}=\overline{v}^{2}$, so we can take compatible square roots. Similarly, we extend the definition of $\overline{u}*\overline{v}$, to include complex vectors $\overline{u}$ and $\overline{v}$, with $\overline{u}^{2}\notin \{0,c^{2}\}$, $\overline{v}^{2}\neq c^{2}$, and $1+{\overline{u}\centerdot\overline{v}\over c^{2}}\neq 0$, taking $\overline{u}\centerdot\overline{v}$ to be $u_{1}v_{1}+u_{2}v_{2}+u_{3}v_{3}$, and noting there are two possibilities, $(\overline{u}*\overline{v})^{1}$ and $(\overline{u}*\overline{v})^{2}$, depending on the choice of square root in $\gamma_{\overline{u}}$. We extend the orthogonal group $O(3)$, to consist of complex transformations $R_{g}$ in the spatial coordinates with $R_{g}R_{g}^{t}=R_{g}^{t}R_{g}=I$, where $t$ denotes transpose. We denote this group by $G(3)$, noting that it is a group, as $R_{g}R_{h}(R_{g}R_{h})^{t}=R_{g}R_{h}R_{h}^{t}R_{g}^{t}=R_{g}R_{g}^{t}=I$, and, similarly, $(R_{g}R_{h})^{t}R_{g}R_{h}=I$. We denote by $SG(3)$ the subgroup of $G(3)$ consisting of complex transformations with $det(R_{g})=1$. We let $C^{an}(\mathcal{C}^{4})$ denote the set of analytic functions in the variables $(x,y,z,t)$, and, given $f\in C^{an}(\mathcal{C}^{4}), g\in G(3)$, define $f^{g}$ by extension from the real case. Using analytic derivatives, we similarly define the transformation $\overline{F}^{g}$ of a complex vector field. We extend the definition of the operator $\bigtriangledown$ using analytic derivatives. Let the frame $S_{\overline{v}}$ be connected to the base frame $S$ by one of the boost matrices $B_{\overline{v}}^{i}$, $1\leq i\leq 2$, for the complex vector $\overline{v}$, with $\overline{v}^{2}\neq c^{2}$. Given a real valued tuple $(\rho,\overline{J})$, and a complex valued tuple $(\overline{E},\overline{B})$, such that $(\rho,\overline{J},\overline{E},\overline{B})$ satisfy Maxwell's equations in $S$, we extend the transformation rules, given in \cite{L}, for $(\rho,\overline{J})$ and $(\overline{E},\overline{B})$. We use the above definitions of $\overline{u}\centerdot\overline{v}$ and $\gamma_{\overline{v}}$, for complex vectors $\{\overline{u},\overline{v}\}$, with $\overline{v}^{2}\neq c^{2}$, and use the choice of square root, determined by $i$. We can link the coordinates with the boost matrix $B_{\overline{v}}^{i}$, and we are only interested in the coordinates of $\mathcal{C}^{4}$ determined by the image of $\mathcal{R}^{3}\times \mathcal{R}_{\geq 0}$, under the boost matrix. Similarly, we define the transformation of derivatives from $S_{\overline{v}}$ to $S$, using the boost matrix $(B_{\overline{v}}^{i})^{-1}$, in coordinates $(x_{0}',x_{1}',x_{2}',x_{3}')$ for the frame $S_{\overline{v}}$, by the rule ${\partial \over \partial x_{i}'}=\sum_{j=0}^{4}((B_{\overline{v}}^{i})^{-1})_{ji}{\partial\over\partial x_{j}}$.
\end{defn}

\begin{lemma}
\label{preserved}
With the notation of Definition \ref{extension}, we still have that the results of Lemmas \ref{divcurl}, \ref{conjugation} and \ref{velocity} hold. That is, the results $(i)-(v)$ of Lemma \ref{divcurl} hold, with analytic derivatives, the extension of the $\bigtriangledown$ operator, and taking $g\in G(3)$. Moreover, noting $g(\overline{v})^{2}=\overline{v}^{2}$, so we can take compatible square roots;\\

$R_{g}B_{\overline{v}}^{i}=B_{g(\overline{v})}^{i}R_{g}$, for $1\leq i\leq 2$\\

when $g\in G(3)$, $\overline{v}^{2}\neq c^{2}$, and we still have uniqueness of representation, that is if;\\

$R_{g}B_{\overline{v}}^{i_{1}}=R_{h}B_{\overline{w}}^{i_{2}}$\\

for $\{g,h\}\subset G(3)$, and $\{\overline{v},\overline{w}\}$, with $\overline{v}^{2}\neq c^{2}$ and $\overline{w}^{2}\neq c^{2}$, then $g=h$, $\overline{v}=\overline{w}$ and $i_{1}=i_{2}$ in the sense of taking compatible square roots in $\gamma_{\overline{v}}$ or $\gamma_{\overline{w}}$. Finally, for $\{\overline{u},\overline{v}\}$, with $\{\overline{u}^{2},\overline{v}^{2}\}\cap \{0,c^{2}\}=\emptyset$, $1+{\overline{u}\centerdot\overline{v}\over c^{2}}\neq 0$ and $(1+{\overline{u}\centerdot\overline{v}\over c^{2}})^{2}-(1-{\overline{u}^{2}\over c^{2}})(1-{\overline{v}^{2}\over c^{2}})\neq 0$, $(\dag)$,  taking;\\

$\gamma_{\overline{u}*\overline{v}}=\gamma_{\overline{v}*\overline{u}}=\gamma_{\overline{u}}\gamma_{\overline{v}}(1+{\overline{u}\centerdot\overline{v}\over c^{2}})$ $(*)$\\

we have that there exists $g_{i_{1}i_{2}}\in SG(3)$, with;\\

$B_{\overline{v}}^{i_{2}}B_{\overline{u}}^{i_{1}}=R_{g_{i_{1}i_{2}}}B_{(\overline{u}*\overline{v})^{i_{3}}}^{i_{4}}
=B_{(\overline{v}*\overline{u})^{i_{5}}}^{i_{4}}R_{g_{i_{1}i_{2}}}$\\

and;\\

$R_{g_{i_{1}i_{2}}}((\overline{u}*\overline{v})^{i_{3}})=(\overline{v}*\overline{u})^{i_{5}}$\\

where $i_{3}$ is determined by the choice of $i_{1}$, $i_{5}$ is determined by the choice of $i_{2}$, $i_{4}$ is determined by the formula $(*)$, and $g_{i_{1}i_{2}}$ is determined by the choices of $i_{1}$ and $i_{2}$.\\

Let the frames $S_{\overline{v}}$ and $S$ as in the above definition, be connected by the boost matrices $B_{\overline{v}}^{i}$ and $B_{-\overline{v}}^{i}$. Then, if $(\rho_{\overline{v}}^{i},\overline{J}_{\overline{v}}^{i})$ and $(\overline{E}_{\overline{v}}^{i},\overline{B}_{\overline{v}}^{i})$ are the transformed quantities, we have that;\\

$\rho=(\rho_{\overline{v}}^{i})_{-\overline{v}}^{i}$, $\overline{J}=(\overline{J}_{\overline{v}}^{i})_{-\overline{v}}^{i}$, $\overline{E}=(\overline{E}_{\overline{v}}^{i})_{-\overline{v}}^{i}$, $\overline{B}=(\overline{B}_{\overline{v}}^{i})_{-\overline{v}}^{i}$\\

Let $\{\overline{u},\overline{v}\}$ satisfy the conditions $(\dag)$ above, with frames $\{S,S',S'',S''',S''''\}$, such that $S'$ is connected to $S$ by $B_{\overline{u}}^{i_{1}}$, $S''$ is connected to $S'$ by $B_{\overline{v}}^{i_{2}}$, $S'''$ is connected to $S$ by $B_{(\overline{u}*\overline{v})_{i_{3}}}^{i_{4}}$, $S''''$ is connected to $S$ by $R_{g_{i_{1}i_{2}}}$, $S''$ is connected to $S'''$ by $R_{g_{i_{1}i_{2}}}$, $S''$ is connected to $S''''$ by $B_{(\overline{v}*\overline{u})_{i_{5}}}^{i_{4}}$, for the appropriate choice of $\{i_{1},i_{2},i_{3},i_{4},i_{5}\}$, then we have, for the transformations, corresponding to  the pairs $(\rho,\overline{J})$ and  $(\overline{E},\overline{B})$, that;\\

$((\rho_{\overline{u}}^{i_{1}})_{\overline{v}}^{i_{2}},(\overline{J}_{\overline{u}}^{i_{1}})_{\overline{v}}^{i_{2}})
=((\rho_{(\overline{u}*\overline{v})_{i_{3}}}^{i_{4}})_{g_{{i_{1}}i_{2}}},(\overline{J}_{(\overline{u}*\overline{v})_{i_{3}}}^{i_{4}})_{g_{i_{1}{i_{2}}}})
=((\rho_{g_{{i_{1}}i_{2}}})_{(\overline{v}*\overline{u})_{i_{5}}}^{i_{4}},(\overline{J}_{g_{{i_{1}}i_{2}}})_{(\overline{v}*\overline{u})_{i_{5}}}^{i_{4}})$\\

$((\overline{E}_{\overline{u}}^{i_{1}})_{\overline{v}}^{i_{2}},(\overline{B}_{\overline{u}}^{i_{1}})_{\overline{v}}^{i_{2}})
=((\overline{E}_{(\overline{u}*\overline{v})_{i_{3}}}^{i_{4}})_{g_{{i_{1}}i_{2}}},(\overline{B}_{(\overline{u}*\overline{v})_{i_{3}}}^{i_{4}})_{g_{i_{1}{i_{2}}}})
=((\overline{E}_{g_{{i_{1}}i_{2}}})_{(\overline{v}*\overline{u})_{i_{5}}}^{i_{4}},(\overline{B}_{g_{{i_{1}}i_{2}}})_{(\overline{v}*\overline{u})_{i_{5}}}^{i_{4}})$\\

Moreover, the transformation of derivatives from $S''$ to $S$ is independent of the path taken, in the sense that for coordinates $\{x'',y'',z'',t''\}$ in $S''$;\\

$(B_{(\overline{u}*\overline{v})_{i_{3}}}^{i_{4}})^{-1}(R_{g_{i_{1}i_{2}}})^{-1}({\partial\over \partial x_{i}''})=(B_{\overline{u}}^{i_{1}})^{-1}(B_{\overline{v}}^{i_{2}})^{-1}({\partial\over \partial x_{i}''})=(R_{g_{i_{1}i_{2}}})^{-1}(B_{(\overline{v}*\overline{u})_{i_{5}}}^{i_{4}})^{-1}({\partial\over \partial x_{i}''})$\\

\end{lemma}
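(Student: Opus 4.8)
The plan is to reduce every assertion to the corresponding real statement already established in Lemmas \ref{divcurl}, \ref{conjugation} and \ref{velocity}, using one of two principles. Either the identity is purely algebraic, invoking only the defining relations of $G(3)$ and the formal rule $\gamma_{\overline{v}}^{2}(1-{\overline{v}^{2}\over c^{2}})=1$, in which case the original argument transcribes verbatim; or it is a holomorphic identity in the complex data $(g_{ij},\overline{u},\overline{v},\gamma_{\overline{u}},\gamma_{\overline{v}})$ that already holds on the real locus, in which case I would extend it by the identity theorem for holomorphic functions. Concretely, for Lemma \ref{divcurl} I would note that its proof uses only the relations $(g^{-1})_{ji}=g_{ij}$, $\sum_{i}g_{ij}g_{ik}=\delta_{jk}$ and $g_{ij}=sign(g)cof(g)_{ij}$. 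The first two are exactly the conditions $R_{g}R_{g}^{t}=R_{g}^{t}R_{g}=I$ defining $G(3)$, while the third follows from the adjugate formula together with $det(R_{g})^{2}=det(R_{g}R_{g}^{t})=1$, so that $sign(g)=det(R_{g})=\pm1$ remains meaningful. Since complex analytic derivatives obey the same linearity, product and chain rules, and $\bigtriangledown$ is defined by the identical formula, every line of the proof of Lemma \ref{divcurl} survives the replacement of $\mathcal{R}$ by $\mathcal{C}$ and $O(3)$ by $G(3)$, giving $(i)$--$(v)$.

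For the conjugation identity $R_{g}B_{\overline{v}}^{i}=B_{g(\overline{v})}^{i}R_{g}$ and the composition identity $B_{\overline{v}}^{i_{2}}B_{\overline{u}}^{i_{1}}=R_{g_{i_{1}i_{2}}}B_{(\overline{u}*\overline{v})^{i_{3}}}^{i_{4}}=B_{(\overline{v}*\overline{u})^{i_{5}}}^{i_{4}}R_{g_{i_{1}i_{2}}}$, I would argue entrywise. Once compatible square roots are fixed, $g$ preserves the form $\overline{v}^{2}$, so $\gamma_{g(\overline{v})}=\gamma_{\overline{v}}$ and $\gamma_{\overline{u}*\overline{v}}$ equals the quantity $(*)$; both sides then become polynomial in the $g_{ij}$, the components of $\overline{u},\overline{v}$ and the $\gamma$'s, subject to the relations of $G(3)$ and $\gamma_{\overline{v}}^{2}(c^{2}-\overline{v}^{2})=c^{2}$. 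These identities hold on the real locus by Lemmas \ref{conjugation} and \ref{velocity}; fixing first a real $g$ and letting $\overline{u},\overline{v}$ range over a real ball where $\gamma>0$, then letting $g$ range over $O(3,\mathcal{R})$, which is Zariski dense in $G(3)=O(3,\mathcal{C})$, the identity theorem would propagate them to all admissible complex data. The uniqueness statement would follow from the same density argument applied to the comparison of the $(0,0)$ and $(0,j)$ entries in Lemma \ref{conjugation}, which forces $\gamma_{\overline{w}}^{2}=\gamma_{\overline{v}}^{2}$ and $\overline{w}=\overline{v}$; the residual sign ambiguity in $\gamma_{\overline{w}}^{2}=\gamma_{\overline{v}}^{2}$ is precisely the clause $i_{1}=i_{2}$. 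That the Thomas factor $R_{g_{i_{1}i_{2}}}:=B_{\overline{v}}^{i_{2}}B_{\overline{u}}^{i_{1}}(B_{(\overline{u}*\overline{v})^{i_{3}}}^{i_{4}})^{-1}$ lies in $SG(3)$ I would obtain from $R_{g}R_{g}^{t}=I$ and $det(R_{g})=1$, again holomorphic identities true on the real locus; here condition $(\dag)$ ensures $(\overline{u}*\overline{v})^{2}\neq0$, so that the factors $\gamma_{u*v}+1$ in the explicit formula of Lemma \ref{velocity} do not vanish.

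Finally, the recovery relations $\rho=(\rho_{\overline{v}}^{i})_{-\overline{v}}^{i}$, and likewise for $\overline{J},\overline{E},\overline{B}$, I would deduce from $(B_{\overline{v}}^{i})^{-1}=B_{-\overline{v}}^{i}$ (Definition \ref{extension}) together with functoriality: $(\rho,\overline{J})$ transforms as a four-vector and $(\overline{E},\overline{B})$ as the field tensor, so transformation by $B_{\overline{v}}^{i}$ followed by $B_{-\overline{v}}^{i}$ is transformation by $B_{-\overline{v}}^{i}B_{\overline{v}}^{i}=I$. The two displayed path-independence identities would then follow from the same functoriality, in its complex form obtained by analytic continuation of Lemma \ref{potential}, with $det(R_{g_{i_{1}i_{2}}})=1$ so that no sign enters on $\overline{B}$: the piecewise-defined transformations coincide with tensorial transformation by the composite matrix, and the three composite matrices agree by the previous paragraph. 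The derivative statement is the dual assertion: each route transforms $\partial/\partial x_{i}''$ by the inverse of the relevant matrix product, so the equality of $(B_{\overline{u}}^{i_{1}})^{-1}(B_{\overline{v}}^{i_{2}})^{-1}$, $(B_{(\overline{u}*\overline{v})^{i_{3}}}^{i_{4}})^{-1}(R_{g_{i_{1}i_{2}}})^{-1}$ and $(R_{g_{i_{1}i_{2}}})^{-1}(B_{(\overline{v}*\overline{u})^{i_{5}}}^{i_{4}})^{-1}$ is just the inversion of the matrix identity proved above. The genuinely delicate part, rather than any new analytic input, will be the bookkeeping of the square-root branches: verifying that $i_{3}$ is forced by $i_{1}$, $i_{4}$ by $(*)$ and $i_{5}$ by $i_{2}$, and checking that the chosen real locus is Zariski dense in each relevant branch so that the identity theorem may legitimately be applied.
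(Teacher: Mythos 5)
Your proposal is correct and, in essentials, it is the paper's own proof: parts $(i)$--$(v)$ by verbatim transcription using $(g^{-1})_{ji}=g_{ij}$, the adjugate formula $A^{-1}={1\over det(A)}(cof(A))^{t}$ and complex linearity of analytic derivatives; the velocity-addition/Thomas-rotation identities by extension from the real locus; the recovery and path-independence claims from four-vector and field-tensor functoriality together with inversion of the matrix identity. In particular, what you phrase as ``identity theorem plus Zariski density'' is exactly what the paper does for the composition part, only in algebraic dress: it forms the open subvariety $V\subset\mathcal{C}^{6}$ and the double cover $C$ on which $\gamma_{\overline{u}},\gamma_{\overline{v}}$ become regular functions, notes that the sixteen rational identities $R_{i}=0$ hold generically because they hold on the real locus, and concludes they vanish identically on $C$ since they have no poles there. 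Where you genuinely diverge is the conjugation identity and uniqueness: the paper does not obtain $R_{g}B_{\overline{v}}^{i}=B_{g(\overline{v})}^{i}R_{g}$ by continuation, but re-proves the footnote computation for $v\overline{e}_{1}$ directly over the complex field (observing that $\gamma_{v\overline{e}_{1}}^{2}(1-{v^{2}\over c^{2}})=1$ holds for either root) and then explicitly constructs $g\in SG(3)$ with $g(\overline{e}_{1})={\overline{v}\over v}$ by a complex Gram--Schmidt process (columns ${\overline{v}\over v}$, a vector $\overline{h}$ with ${\overline{v}\over v}\centerdot\overline{h}=0$ and $\overline{h}\centerdot\overline{h}=1$, and their complex cross product); likewise uniqueness is handled by redoing the entry-by-entry comparison of $B_{\overline{w}}^{i_{2}}B_{-\overline{v}}^{i_{1}}$ with a rotation matrix. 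Your uniform continuation argument is tidier, but it places real weight on the branch bookkeeping you defer to the last sentence: one must check that the double cover is irreducible, so that the single real branch with $\gamma>0$ is Zariski dense in the whole cover, before the identities can be asserted for both choices of square root; the paper's direct computations sidestep this for those two sub-claims. Note also that uniqueness is an implication, not an identity, so density alone cannot deliver it; what actually closes it --- in the paper, and implicitly in your entry-comparison remark --- is that Ungar's expansion of the product is definitional over the complex field, so the equations forcing $\gamma_{\overline{w}}=\gamma_{\overline{v}}$, $\overline{w}=\overline{v}$ and compatibility of the roots can be derived and solved by the same algebra as in the real case.
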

\begin{proof}
For the first part, in $(i)$, we use the complex linearity of the analytic derivatives. For $(ii)$, we still have the property $(g^{-1})_{ji}=g_{ij}$ for the complex entries of $g\in G(3)$, the rest follows using the chain rule for analytic functions. For $(iii)$, we again use the properties mentioned to prove $(i),(ii)$. $(iv)$ is similar, using the definition of the inverse of a complex matrix $A^{-1}={1\over det(A)}(cof(A))^{t}$. $(v)$ is similar to $(iv)$. For the second part, the fact that $g(\overline{v})^{2}=\overline{v}^{2}$ for $g\in G(3)$ is a simple calculation using the definitions. We first prove the footnote relations;\\

$R_{g}B_{v\overline{e}_{1}}^{i}=B_{g(v\overline{e}_{1})}^{i}R_{g}$ $(1\leq i\leq 2)$\\

for any $v\in\mathcal{C}$ with $v^{2}\neq c^{2}$. These follow, using the properties of $g\in G(3)$ and the fact that the identity $\gamma_{v\overline{e}_{1}}^{2}(1-{v^{2}\over c^{2}})=1$ holds for either choice of square root in $\gamma_{v\overline{e}_{1}}$. The rest of the proof follows similarly to Lemma \ref{conjugation}. We note that if $\overline{v}^{2}\neq c^{2}$, then for a choice $v\in\mathcal{C}$ of the square root of $\overline{v}^{2}$, we have that $({\overline{v}\over v})^{2}=1$. We can then find $g\in SG(3)$ with $g(\overline{e}_{1})={\overline{v}\over v}$, by taking a matrix with first column ${\overline{v}\over v}$, then choosing $\overline{h}$, with ${\overline{v}\over v}\centerdot \overline{h}=0$, and $h_{1}^{2}+h_{2}^{2}+h_{3}^{2}=1$ as the second column, and finally taking the complex cross product ${\overline{v}\over v}\times \overline{h}$ as the third column. Interchanging the second and third columns if necessary, we can ensure $det(g)=1$. Then $g(v\overline{e}_{1})=\overline{v}$ by linearity and $v^{2}\neq c^{2}$. For the uniqueness claim, we can compute $B_{\overline{w}}^{i_{2}}B_{-\overline{v}}^{i_{2}}$ and using the definition of $G(3)$, obtain the same relations, including;\\

$\gamma_{\overline{w}}w_{j}=\gamma_{\overline{v}}v_{j}$, for $(1\leq j\leq 3)$ $(*)$\\

for the appropriate choices of a square root in $\gamma_{\overline{v}}$ and $\gamma_{\overline{w}}$. As in the proof, we conclude that $\gamma_{\overline{v}}=\gamma_{\overline{w}}$, which from $(*)$ implies that $\overline{v}=\overline{w}$ and that we have taken compatible square roots $i_{1}$ and $i_{2}$. For the final part, we note that, using the real formula for $\overline{u}*\overline{v}$, the matrices $\{B_{\overline{u}},B_{\overline{v}},B_{\overline{u}*\overline{v}},R_{g}\}$ depend algebraically and rationally on the parameters $\{\overline{u},\overline{v},\gamma_{\overline{u}},\gamma_{\overline{v}},\gamma_{\overline{u}*\overline{v}}\}$, where $\{\overline{u},\overline{v}\}$ are real vectors with $max(\overline{u}^{2},\overline{v}^{2})<c^{2}$. It follows that the identity;\\

$B_{\overline{v}}B_{\overline{u}}=R_{g}B_{\overline{u}*\overline{v}}$ $(**)$\\

amounts to a set of rational  algebraic identities $R_{i}=0$, $1\leq i\leq 16$, involving $\{\overline{u},\overline{v},\gamma_{\overline{u}},\gamma_{\overline{v}},\gamma_{\overline{u}*\overline{v}}\}$ as well. As noted in \cite{U2}, these identities are still true when we make the substitution $\gamma_{\overline{u}}\gamma_{\overline{v}}(1+{\overline{u}\centerdot\overline{v}\over c^{2}})$, $(***)$, for $\gamma_{\overline{u}*\overline{v}}$. We let $V\subset \mathcal{C}^{6}$, be the open subvariety defined by;\\

$V=\{(\overline{u},\overline{v})\in \mathcal{C}^{6}:\{\overline{u}^{2},\overline{v}^{2}\}\cap \{0,c^{2}\}=\emptyset,1+{\overline{u}\centerdot\overline{v}\over c^{2}}\neq 0,(1+{\overline{u}\centerdot\overline{v}\over c^{2}})^{2}$\\

$-(1-{\overline{u}^{2}\over c^{2}})(1-{\overline{v}^{2}\over c^{2}})\neq 0\}$\\

and $C\subset V\times \mathcal{C}^{2}$ be the double cover of $V$ defined by;\\

$C=\{(\overline{u},\overline{v},w_{1},w_{2})\in V\times \mathcal{C}^{2}:w_{1}^{2}={1\over 1-{\overline{u}^{2}\over c^{2}}},w_{2}^{2}={1\over 1-{\overline{v}^{2}\over c^{2}}}\}$\\

Making the substitutions $w_{1}$ for $\gamma_{\overline{u}}$ and $w_{2}$ for $\gamma_{\overline{v}}$, the closed rational algebraic relations $R_{i}(\overline{u},\overline{v},w_{1},w_{2})=0$ hold generically on $C$. The conditions on $\{\overline{u},\overline{v}\}$ are necessary to ensure the denominators in the definitions of $\{B_{\overline{u}},B_{\overline{v}},B_{\overline{u}*\overline{v}},R_{g}\}$ are non-zero on $C$, so, by construction the rational functions $R_{i}$ have no poles. It follows that the $R_{i}$ are identically zero  on $C$. In particular, the identity $(**)$ holds for all complex vectors $\{\overline{u},\overline{v}\}$ with $\{\overline{u}^{2},\overline{v}^{2}\}\cap \{0,c^{2}\}=\emptyset$, $1+{\overline{u}\centerdot\overline{v}\over c^{2}}\neq 0$, $(1+{\overline{u}\centerdot\overline{v}\over c^{2}})^{2}-(1-{\overline{u}^{2}\over c^{2}})(1-{\overline{v}^{2}\over c^{2}})\neq 0$, and choices of square root in $\{\gamma_{\overline{u}},\gamma_{\overline{v}}\}$, $(\dag)$. We need to take $\gamma_{\overline{u}*\overline{v}}=\gamma_{\overline{u}}\gamma_{\overline{v}}(1+{\overline{u}\centerdot\overline{v}\over c^{2}})$, and the coefficients $\{c_{1},c_{2}\}$ in $g_{i_{1}i_{2}}$, see the footnote in Lemma \ref{velocity}, must be determined by the choices in $(\dag)$. Again, formulating the property $g\in SG(3)$ as a set of closed conditions, and using the fact that they holds generically, we obtain that $g_{i_{1}i_{2}}\in SG(3)$. Finally, we need to check that the identity $(***)$ holds up to a minus sign, for any choice of root in $\{\gamma_{\overline{u}},\gamma_{\overline{v}},\gamma_{\overline{u}*\overline{v}}\}$, and for $\{\overline{u},\overline{v}\}$ satisfying the usual conditions. This follows by verifying the identity given in \cite{U2};\\

$(\overline{u}*\overline{v})^{2}={(\overline{u}+\overline{v})^{2}\over (1+{\overline{u}\centerdot\overline{v}\over c^{2}})^{2}}-{1\over c^{2}}{(\overline{u}\times\overline{v})^{2}\over (1+{\overline{u}\centerdot\overline{v}\over c^{2}})^{2}}$\\

This is a straightforward calculation involving verifying the identities;\\

$\overline{u}\times (\overline{u}\times\overline{v})=(\overline{u}\centerdot\overline{v})^{2}-\overline{u}^{2}\overline{v}^{2}$,  $(\overline{u}\times\overline{v})^{2}=\overline{u}^{2}\overline{v}^{2}-(\overline{u}\centerdot\overline{v})^{2}$\\

for complex vectors. Then, computing $\gamma_{\overline{u}*\overline{v}}$ gives the result. We thus obtain the relation;\\

$B_{\overline{v}}^{i_{2}}B_{\overline{u}}^{i_{1}}=R_{g_{i_{1}i_{2}}}B_{(\overline{u}*\overline{v})^{i_{3}}}^{i_{4}}$ $(\dag\dag)$\\

for some $g_{i_{1}i_{2}}\in SG(3)$. For the rest of the proof, we can follow the argument in Lemma \ref{velocity}. Note that the conditions in this Lemma on $\{\overline{u},\overline{v}\}$ are symmetric, so that making the substitutions, $-\overline{v}$ for $\overline{u}$ and $-\overline{u}$ for $\overline{v}$, observing the choice of root for $\gamma_{\overline{v}*\overline{u}}$ is admissible for $\gamma_{-\overline{v}*-\overline{u}}$ or $\gamma_{-(\overline{v}*\overline{u})}$, we can use the first part of this lemma, to conclude that there exists $h\in SG(3)$ with;\\

$B_{\overline{v}}^{i_{2}}B_{\overline{u}}^{i_{1}}=B_{(\overline{v}*\overline{u})^{i_{5}}}^{i_{6}}R_{h}$ $(\dag\dag\dag)$\\

where $i_{6}$ is determined by the formula $(*)$ in the statement of the Lemma. Then, we can use the conjugation result $(\dag\dag)$, applied to $(\dag)$, noting that $((\overline{u}*\overline{v})^{i_{3}})^{2}\neq c^{2}$, by $(*)$, to obtain;\\

$B_{\overline{v}}^{i_{2}}B_{\overline{u}}^{i_{1}}
=B_{g_{i_{1}i_{2}}((\overline{u}*\overline{v})^{i_{3}})}^{i_{4}}R_{g_{i_{1}i_{2}}}$\\

By the uniqueness of representation, noting that $((\overline{v}*\overline{v})^{i_{5}})^{2}\neq c^{2}$ and $(g_{i_{1}i_{2}}((\overline{u}*\overline{v})^{i_{3}}))^{2}\neq c^{2}$, we conclude that $h=g_{i_{1}i_{2}}$, $i_{4}=i_{6}$, and $g_{i_{1}i_{2}}((\overline{u}*\overline{v})^{i_{3}})=(\overline{v}*\overline{u})^{i_{5}}$ as required.\\

The next claim can be seen by following through the computation for real $\overline{v}$, with $|\overline{v}|<c$. For the penultimate claim, we can use the fact that $(\rho,\overline{J})$ transforms as a four-vector, and the components of $\{\overline{E},\overline{B}\}$ transform as part of the covariant field tensor, see \cite{G}. Then we can use the result that the identities holds for generically independent real $\{\overline{u},\overline{v}\}$, with $|\overline{u}|<c$ and $|\overline{v}|<c$. For the last claim, we can just use the identification of matrices in $(**)$ of the lemma.
\end{proof}

\begin{lemma}{Limit Frames}
\label{limit}

Given a series of frames $S_{f(s)\overline{v}}$, where $|\overline{v}|=1$, connected to the base frame $S$ by the boost matrices $B_{f(s)\overline{v}}$, with $lim_{s\rightarrow \infty}f(s)=\infty$, $f$  smooth and positive real-valued on $\mathcal{R}_{>0}$, the boost matrix $lim_{s\rightarrow \infty,f(s)\neq c}B_{f(s)\overline{v}}$ exists, with a given choice of square root, and defines a limit frame $S_{\infty}$. Given a series of transformations $(\rho_{f(s)\overline{v}},\overline{J}_{f(s)\overline{v}})$ and $(\overline{E}_{f(s)\overline{v}},\overline{B}_{f(s)\overline{v}})$ of $(\rho,\overline{J})$ and $(\overline{E},\overline{B})$ from the base frame $S$, we define the transformation to $S_{\infty}$ by taking the limit as $s\rightarrow\infty$, of the transformation rules, see \cite{L}, for the pairs $(\rho_{f(s)\overline{v}},\overline{J}_{f(s)\overline{v}})$ and $(\overline{E}_{f(s)\overline{v}},\overline{B}_{f(s)\overline{v}})$. If $((\rho,\overline{J})$ is a real pair and $(\overline{E},\overline{B})$ is a complex valued pair, then the limit exists at the corresponding coordinates in $S_{\infty}$. Defining the transformation of derivatives from $S_{\infty}$ to $S$ in the usual way by $(lim_{s\rightarrow \infty,f(s)\neq c}B_{f(s)\overline{v}})^{-1}$, we have that the transformation is given by $lim_{s\rightarrow \infty,f(s)\neq c}((B_{f(s)\overline{v}})^{-1})$. In particularly, the limit definitions are independent of the choice of $f$ with $lim_{s\rightarrow \infty}f(s)=\infty$.

\end{lemma}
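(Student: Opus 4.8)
The plan is to reduce the whole statement to the explicit entrywise behaviour of the boost matrix $B_{f(s)\overline{v}}$ as $f(s)\to\infty$, exploiting the fact that although the speed $w=f(s)$ diverges, the product $\gamma_w w$ stays bounded. Writing $\overline{w}=f(s)\overline{v}$ and using the representation $B_{\overline{w}}=I+\frac{\gamma_w}{c}b_{\overline{w}}+\frac{\gamma_w^2}{c^2(\gamma_w+1)}b_{\overline{w}}^2$ from Lemma \ref{conjugation}, I would first record the three basic limits that drive everything. With the fixed choice of square root, $\gamma_w=1/\sqrt{1-f(s)^2/c^2}\to 0$, while $\gamma_w f(s)\to \mp ic$ and $\frac{\gamma_w^2 f(s)^2}{c^2(\gamma_w+1)}\to -1$; note that for large $s$ one has $f(s)>c$, so the excluded value $f(s)=c$ plays no role. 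Substituting into $(B_{\overline{w}})_{00}=\gamma_w$, $(B_{\overline{w}})_{0j}=(B_{\overline{w}})_{j0}=-\gamma_w f(s)v_j/c$ and $(B_{\overline{w}})_{ij}=\delta_{ij}+\frac{\gamma_w^2 f(s)^2}{c^2(\gamma_w+1)}v_iv_j$ yields a finite limit matrix $B_\infty$ with $(B_\infty)_{00}=0$, $(B_\infty)_{0j}=(B_\infty)_{j0}=\pm iv_j$ and $(B_\infty)_{ij}=\delta_{ij}-v_iv_j$. Each of these limits depends only on the unit direction $\overline{v}$ and the fixed square root, not on $f$; this simultaneously gives existence of $B_\infty$ (hence of $S_\infty$) and the final $f$-independence claim for the matrix.

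For the field and current pairs I would substitute $\overline{w}=f(s)\overline{v}$ into the transformation rules of Lemma \ref{potential} (and the analogous four-current rule from \cite{L}) and apply the same three limits. Since the parallel/perpendicular split is taken with respect to the fixed direction $\overline{v}$, the coefficient $\gamma_w$ multiplying $\overline{E}_\perp,\overline{B}_\perp,\overline{J}_\perp$ tends to $0$, while the cross and contraction terms carry the bounded factor $\gamma_w\overline{w}=\gamma_w f(s)\overline{v}\to\mp ic\,\overline{v}$; one reads off finite limits such as $\overline{E}'\to\overline{E}_{||}\mp ic\,\overline{v}\times\overline{B}$ and $\rho'\to\pm\frac{i}{c}\,\overline{v}\cdot\overline{J}$. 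The coordinate map is $X=(B_{f(s)\overline{v}})^{-1}X'$, and since $(B_{f(s)\overline{v}})^{-1}$ converges (see below), for each fixed $X'$ the argument $X$ converges to a complex point; using that $(\rho,\overline{J})$ is real-analytic and $(\overline{E},\overline{B})$ complex-analytic in the sense of Definition \ref{extension}, evaluation at these converging arguments is continuous, so the transformed quantities converge at the corresponding coordinates of $S_\infty$. Again each limiting coefficient depends only on $\overline{v}$ and the square root, giving independence of $f$.

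For the derivative claim I would use $(B_{\overline{w}})^{-1}=B_{-\overline{w}}$, so that $(B_{f(s)\overline{v}})^{-1}=B_{f(s)(-\overline{v})}$ falls under the same computation with $\overline{v}$ replaced by $-\overline{v}$ and the compatible square root; its limit $\tilde B_\infty$ has $(\tilde B_\infty)_{00}=0$, $(\tilde B_\infty)_{0j}=\mp iv_j$ and $(\tilde B_\infty)_{ij}=\delta_{ij}-v_iv_j$. It then remains to verify $B_\infty\tilde B_\infty=I$, which follows from the block computation using $\overline{v}^T\overline{v}=|\overline{v}|^2=1$: the scalar block gives $(\pm i)(\mp i)\,\overline{v}^T\overline{v}=1$, the two off-diagonal blocks vanish because $(I-\overline{v}\,\overline{v}^T)\overline{v}=0$, and the spatial block gives $\overline{v}\,\overline{v}^T+(I-\overline{v}\,\overline{v}^T)^2=I$. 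Hence $B_\infty$ is invertible with $B_\infty^{-1}=\tilde B_\infty=\lim_{s\to\infty}(B_{f(s)\overline{v}})^{-1}$, which is exactly the asserted identity $(\lim B)^{-1}=\lim(B^{-1})$.

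The main obstacle is this last step: the limit matrix $B_\infty$ has vanishing $(0,0)$ entry and so looks degenerate, and the entire derivative-transformation claim hinges on checking that $B_\infty$ is nevertheless invertible and that its inverse coincides with the limit of the inverses rather than the limit being singular. The other delicate point, present throughout, is bookkeeping the two square-root branches of $\gamma$ consistently (the matrices $B^1,B^2$ of Definition \ref{extension}), so that the sign of $\mp ic$ in $\gamma_w f(s)$ is the same branch used when passing to $B_{-f(s)\overline{v}}$; keeping this choice fixed is precisely what makes $B_\infty\tilde B_\infty=I$ come out correctly, and what guarantees that all the limiting objects are determined by $\overline{v}$ and the chosen root alone.
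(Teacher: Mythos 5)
Your proposal is correct and follows essentially the same route as the paper: the same representation $B_{\overline{w}}=I+\frac{\gamma_w}{c}b_{\overline{w}}+\frac{\gamma_w^{2}}{c^{2}(\gamma_w+1)}b_{\overline{w}}^{2}$, the same three driving limits $\gamma_s\to 0$, $\gamma_s f(s)\to \mp ic$, $\gamma_s^{2}f(s)^{2}\to -c^{2}$ applied both to the matrix entries and to the charge/current and field transformation rules, and the same device of obtaining the inverse by replacing $\overline{v}$ with $-\overline{v}$ and taking the limit again. Your explicit block verification that $B_{\infty}\tilde{B}_{\infty}=I$ (using $(I-\overline{v}\,\overline{v}^{T})\overline{v}=0$ and $|\overline{v}|=1$) merely spells out what the paper compresses into ``the formula for the inverse of a matrix, and elementary properties of limits,'' and is a welcome clarification of the one delicate point, namely that the limit matrix with vanishing $(0,0)$ entry is nonetheless invertible.
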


\begin{proof}
For the first claim, using the formula for the boost matrix $B_{f(s)\overline{v}}$, given in Lemma \ref{conjugation};\\

$B_{f(s)\overline{v}}=I+{\gamma_{s} b_{f(s)\overline{v}}\over c}+{\gamma_{s}^{2}b_{f(s)\overline{v}}^{2}\over c^{2}(\gamma_{s}+1)}$\\

where $\gamma_{s}={1\over \sqrt{1-{f(s)^{2}\over c^{2}}}}$. We can compute the limit, taking a positive square root;\\

$lim_{s\rightarrow \infty,f(s)\neq c}(\gamma_{s}+1)=1+lim_{s\rightarrow \infty,f(s)\neq c}{1\over \sqrt{1-{f(s)^{2}\over c^{2}}}}$\\

$=1+lim_{s\rightarrow \infty,f(s)\neq c}{1\over f(s)\sqrt{{1\over f(s)}^{2}-{1\over c^{2}}}}=1$\\

$lim_{s\rightarrow \infty,f(s)\neq c}f(s)\gamma_{s}=lim_{s\rightarrow \infty,f(s)\neq c}{f(s)\over \sqrt{1-{f(s)^{2}\over c^{2}}}}$\\

$=lim_{s\rightarrow \infty,f(s)\neq c}{1\over \sqrt{{1\over f(s)^{2}}-{1\over c^{2}}}}=-ic$\\

$lim_{s\rightarrow \infty,f(s)\neq c}f(s)^{2}\gamma_{s}^{2}=lim_{s\rightarrow \infty,f(s)\neq c}{f(s)^{2}\over 1-{f(s)^{2}\over c^{2}}}=-c^{2}$\\

It is then straightforward to see that that $lim_{s\rightarrow \infty,f(s)\neq c}({\gamma_{s} b_{f(s)\overline{v}}\over c})_{ij}$ and $lim_{s\rightarrow \infty,f(s)\neq c}({\gamma_{s}^{2}b_{f(s)\overline{v}}^{2}\over c^{2}(\gamma_{s}+1)})_{ij}$ exist for $0\leq i,j\leq 4$, as required. For the second claim, we have that;\\

$\rho_{f(s)\overline{v}}=\gamma_{s}(\rho-{<f(s)\overline{v},\overline{J}>\over c^{2}})$\\

$\overline{J}_{f(s)\overline{v}}=\gamma_{s}(\overline{J}_{||,s}-f(s)\overline{v}\rho)+\overline{J}_{\perp,s}$\\

We have , for a vector field $\overline{F}$ and scalar $\rho$, that;\\

$lim_{s\rightarrow\infty}\overline{F}_{||,s}=lim_{s\rightarrow\infty}{<\overline{F},f(s)\overline{v}>f(s)\overline{v}\over f(s)^{2}}=<\overline{F},\overline{v}>\overline{v}=\overline{F}_{||}$\\

so that;\\

 $lim_{s\rightarrow\infty,f(s)\neq c}\overline{F}_{\perp,s}=\overline{F}-\overline{F}_{||,s}=\overline{F}_{\perp}$\\

and $lim_{s\rightarrow\infty,f(s)\neq c}\gamma_{s}\overline{F}_{||,s}=lim_{s\rightarrow\infty}\gamma_{s}{<\overline{F},f(s)\overline{v}>f(s)\overline{v}\over f(s)^{2}}=0$\\

As above, we have that $lim_{s\rightarrow\infty,f(s)\neq c}\gamma_{s}\rho=0$, and\\

$lim_{s\rightarrow\infty,f(s)\neq c}\gamma_{s}<f(s)\overline{v},\overline{F}>=lim_{s\rightarrow\infty}\gamma_{s}f(s)<\overline{v},\overline{F}>=-ic<\overline{v},\overline{F}>$\\

$lim_{s\rightarrow\infty,f(s)\neq c}\gamma_{s}f(s)\overline{v}\rho=-ic\overline{v}\rho$\\

so that;\\

$lim_{s\rightarrow\infty,f(s)\neq c}\rho_{f(s)\overline{v}}={i\over c}<\overline{v},\overline{J}>$\\

$lim_{s\rightarrow\infty,f(s)\neq c}\overline{J}_{f(s)\overline{v}}=ic\overline{v}\rho+\overline{J}_{\perp}$\\

Similarly, we have that;\\

$\overline{E}_{f(s)\overline{v}}=\overline{E}_{||,s}+\gamma_{s}(\overline{E}_{\perp,s}+f(s)\overline{v}\times \overline{B})$\\

$\overline{B}_{f(s)\overline{v}}=\overline{B}_{||,s}+\gamma_{s}(\overline{B}_{\perp,s}-{f(s)\overline{v}\times \overline{E}\over c^{2}})$\\

so that, using the above computations again, replacing $<\overline{v},\overline{F}>$ by $\overline{v}\times \overline{F}$;\\

$lim_{s\rightarrow\infty,f(s)\neq c}\overline{E}_{f(s)\overline{v}}=\overline{E}_{||}-ic(\overline{v}\times \overline{B})$\\

$lim_{s\rightarrow\infty,f(s)\neq c}\overline{B}_{f(s)\overline{v}}=\overline{B}_{||}+{i\over c}(\overline{v}\times \overline{E})$\\

For the penultimate claim we have that $lim_{s\rightarrow \infty,f(s)\neq c}B_{f(s)\overline{v}}$ is invertible, which can be seen from the inverse boost matrix, replacing $\overline{v}$ by $-\overline{v}$, and noting the limit exists again. Then, we can use the formula for the inverse of a matrix, and elementary properties of limits. The last claim is clear from the above calculation.

\end{proof}

\begin{defn}
\label{stressenergy}
In the context of special relativity, we choose coordinates $x^{0}=ct$, $x^{1}=x$, $x^{2}=y$, $x^{3}=z$. We have the coordinate relationship for the Lorentz transformation;\\

$x^{0'}=\gamma(x^{0}-{vx^{1}\over c})$\\

$x^{1'}=\gamma(x^{1}-{vx^{0}\over c})$\\

$x^{2'}=x^{2}$\\

$x^{3'}=x^{3}$\\

where $v$ is the velocity of a boost in the $x$-direction, and $\gamma={1\over \sqrt{1-{v^{2}\over c^{2}}}}$.\\

We can encode the transformation with the Lorentz matrix given by $\overline{\Lambda}$, which is defined by;\\

$(\overline{\Lambda})_{00}=(\overline{\Lambda})_{11}=\gamma$\\

$(\overline{\Lambda})_{01}=(\overline{\Lambda})_{10}=-\gamma\beta$\\

$(\overline{\Lambda})_{22}=(\overline{\Lambda})_{33}=1$\\

$(\overline{\Lambda})_{ij}=0$, otherwise, for $0\leq i,j\leq 3$\\

where $\beta={v\over c}$. We let;\\

$\sigma={\epsilon_{0}\over 2}(e^{2}+c^{2}b^{2})$\\

where $e=|\overline{E}|$, $b=|\overline{B}|$, and $\{\overline{E},\overline{B}\}$ are electric and magnetic fields, satisfying Maxwell's equations, in the rest frame. We let;\\

$\overline{g}=(g_{1},g_{2},g_{3})=\epsilon_{0}(\overline{E}\times\overline{B})$\\

For $1\leq i,j\leq 3$, we let;\\

$p_{ij}=-\epsilon_{0}(e_{i}e_{j}+c^{2}b_{i}b_{j}-{1\over 2}\delta_{ij}(e^{2}+c^{2}b^{2}))$\\

be Maxwell's stress tensor, where $\overline{E}=(e_{1},e_{2},e_{3})$ and $\overline{B}=(b_{1},b_{2},b_{3})$. The stress energy tensor is given by $\overline{M}$, defined by;\\

$(\overline{M})_{00}=\sigma$\\

$(\overline{M})_{i0}=cg_{i}$, for $1\leq i\leq 3$\\

$(\overline{M})_{0j}=cg_{j}$, for $1\leq j\leq 3$\\

$(\overline{M})_{ij}=p_{ij}$, for $1\leq i,j\leq 3$\\

It transforms between inertial frames, using the summation rule, see \cite{R};\\

$(\overline{M}')_{i'j'}=(\overline{\Lambda})_{ii'}(\overline{\Lambda})_{jj'}(\overline{M})_{ij}$\\

\end{defn}

\begin{lemma}
\label{families}
Suppose that $(\rho,\overline{J})$ satisfies the continuity equation and is surface non-radiating in the sense of Definition 2.8 of \cite{dep1}, then there exist $3$ real families of electric and magnetic fields, indexed by $v\in\mathcal{R}$, $(\overline{E}^{1}_{v},\overline{B}^{1}_{v})$, $(\overline{E}^{2}_{v},\overline{B}^{2}_{v})$, $(\overline{E}^{3}_{v},\overline{B}^{3}_{v})$, satisfying Maxwell's equations in the rest frame $S$ and the additional equations;\\

$\alpha_{v}{\partial \sigma_{v}^{1}\over \partial x}+\beta_{v}{\partial g_{1,v}^{1}\over \partial x}+\gamma_{v}{\partial p_{11,v}^{1}\over \partial x}+\delta_{v}{\partial \sigma_{v}^{1}\over \partial t}+\epsilon_{v}{\partial g_{1,v}^{1}\over \partial t}+\xi_{v}{\partial p_{11,v}^{1}\over \partial t}+\eta_{v} div(\overline{g}_{v}^{1})$\\

$+\theta_{v}(f_{1,v}^{1}+{\partial g_{1,v}^{1}\over \partial t})=0$\\

$\alpha_{v}{\partial \sigma_{v}^{2}\over \partial y}+\beta_{v}{\partial g_{2,v}^{2}\over \partial y}+\gamma_{v}{\partial p_{22,v}^{2}\over \partial y}+\delta_{v}{\partial \sigma_{v}^{2}\over \partial t}+\epsilon_{v}{\partial g_{2,v}^{2}\over \partial t}+\xi_{v}{\partial p_{22,v}^{2}\over \partial t}+\eta_{v} div(\overline{g}_{v}^{2})$\\

$+\theta_{v}(f_{2,v}^{2}+{\partial g_{2,v}^{2}\over \partial t})=0$\\

$\alpha_{v}{\partial \sigma_{v}^{3}\over \partial z}+\beta_{v}{\partial g_{3,v}^{3}\over \partial z}+\gamma_{v}{\partial p_{33,v}^{3}\over \partial z}+\delta_{v}{\partial \sigma_{v}^{3}\over \partial t}+\epsilon_{v}{\partial g_{3,v}^{3}\over \partial t}+\xi_{v}{\partial p_{33,v}^{3}\over \partial t}+\eta_{v} div(\overline{g}_{v}^{3})$\\

$+\theta_{v}(f_{3,v}^{3}+{\partial g_{3,v}^{3}\over \partial t})=0$ $(\dag)$\\

where;\\

$\alpha_{v}={-\beta\gamma^{3}\over c}, \beta_{v}=((\beta^{2}+1)\gamma^{3}-\gamma), \gamma_{v}=({\gamma \beta\over c}-{\gamma^{3}\beta\over c}), \delta_{v}={-\beta\gamma^{3} v\over c^{3}}$\\

$\epsilon_{v}={(\beta^{2}+1)\gamma^{3} v\over c^{2}}, \xi_{v}={-\beta\gamma^{3} v\over c^{3}}, \eta_{v}=\gamma, \theta_{v}={\gamma\beta\over c}$\\

Moreover, we can take a fixed pair $(\overline{E},\overline{B})$ in the rest frame, with $div(\overline{E}\times\overline{B})=0$, such that $\overline{E}=\overline{E}^{1}_{0}=\overline{E}^{2}_{0}=\overline{E}^{3}_{0}$ and $\overline{B}=\overline{B}^{1}_{0}=\overline{B}^{2}_{0}=\overline{B}^{3}_{0}$.

\end{lemma}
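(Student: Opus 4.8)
The plan is to realise each of the three families as the pull-back to the rest frame $S$ of the zero-Poynting-divergence solutions whose existence is guaranteed by the surface non-radiating hypothesis, and then to recognise that the three displayed equations $(\dag)$ are nothing more than the three conditions $div_{S_{v\overline{e}_k}}(\overline{E}_{v\overline{e}_k}\times\overline{B}_{v\overline{e}_k})=0$, for $k=1,2,3$, rewritten entirely in terms of rest-frame quantities. First, for each coordinate direction $\overline{e}_k$ and each real $v$ with $|v|<c$, surface non-radiating (Definition 2.8 of \cite{dep1}, which is the content of Definition \ref{strongly}$(i)$ specialised to the boost $v\overline{e}_k$) supplies fields $(\overline{E}_{v\overline{e}_k},\overline{B}_{v\overline{e}_k})$ solving Maxwell's equations for $(\rho_{v\overline{e}_k},\overline{J}_{v\overline{e}_k})$ in $S_{v\overline{e}_k}$ with vanishing Poynting divergence there. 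Transforming these back to $S$ by the inverse boost and invoking the covariance established in Lemmas \ref{maxwells} and \ref{potential}, I obtain rest-frame fields $(\overline{E}^k_v,\overline{B}^k_v)$ solving Maxwell's equations for the original $(\rho,\overline{J})$; these define family $k$, and the associated rest-frame densities $\sigma^k_v$, $g^k_{i,v}$, $p^k_{ij,v}$ are the stress-energy quantities of Definition \ref{stressenergy} built from them.

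The heart of the argument is the translation step. Using the stress-energy transformation law $(\overline{M}')_{i'j'}=(\overline{\Lambda})_{ii'}(\overline{\Lambda})_{jj'}(\overline{M})_{ij}$ from Definition \ref{stressenergy} for the boost $v\overline{e}_1$, the boosted Poynting components $g'_1,g'_2,g'_3$ are expressed as explicit linear combinations of the rest-frame $\sigma$, $g_i$ and $p_{i1}$ ($1\le i\le 3$); combining this with the chain rule $\partial_{x'}=\gamma(\partial_x+(\beta/c)\partial_t)$, $\partial_{y'}=\partial_y$, $\partial_{z'}=\partial_z$ expands $div_{S_{v\overline{e}_1}}(\overline{g}')$ into a sum of derivatives of rest-frame densities. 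The diagonal derivatives $\partial_x\sigma$, $\partial_x g_1$, $\partial_x p_{11}$ together with the time derivatives $\partial_t\sigma$, $\partial_t g_1$, $\partial_t p_{11}$ produce the coefficients $\alpha_v,\beta_v,\gamma_v,\delta_v,\epsilon_v,\xi_v$ once the rest-frame divergence $div(\overline{g})$ is split off with coefficient $\eta_v=\gamma$; this splitting is exactly what accounts for the correction $-\gamma$ in $\beta_v$ and the matching of the stray $\partial_y g_2,\partial_z g_3$ contributions. The off-diagonal terms $\partial_y p_{21},\partial_z p_{31}$ do not appear in $(\dag)$ directly: they are eliminated using the momentum-balance identity $\partial_x p_{11}+\partial_y p_{12}+\partial_z p_{13}=-(f_1+\partial_t g_1)$, which is the spatial part of stress-energy conservation and follows from Maxwell's equations together with $p_{ij}=-T_{ij}$ and the symmetry $p_{ij}=p_{ji}$. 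Substituting this in produces exactly the term $\theta_v(f^1_{1,v}+\partial_t g^1_{1,v})$ with $\theta_v=\gamma\beta/c$, and simultaneously shifts the coefficient of $\partial_x p_{11}$ from $-\gamma^3\beta/c$ to $\gamma_v=\gamma\beta/c-\gamma^3\beta/c$. The equations for $k=2,3$ are obtained identically from boosts in $\overline{e}_2,\overline{e}_3$.

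With the structure fixed, verifying that the collected coefficients coincide with the stated $\alpha_v,\dots,\theta_v$ (using $\beta=v/c$, $v=\beta c$, and $\gamma^2(1-\beta^2)=1$) is a routine if lengthy bookkeeping, which I would carry out component by component. For the final assertion, note that at $v=0$ every boost $0\cdot\overline{e}_k$ is the identity, so the surface non-radiating hypothesis at $\overline{v}=\overline{0}$ yields a single rest-frame solution $(\overline{E},\overline{B})$ with $div(\overline{E}\times\overline{B})=0$; choosing this same solution as the $v=0$ member of all three families forces $\overline{E}=\overline{E}^1_0=\overline{E}^2_0=\overline{E}^3_0$ and likewise for $\overline{B}$. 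This is consistent since at $v=0$ one has $\gamma=1$, $\beta=0$, whence all coefficients except $\eta_v=1$ vanish and $(\dag)$ degenerates to $div(\overline{g})=0$.

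The main obstacle I anticipate is the translation step, and specifically the correct invocation of the momentum-balance identity: one must keep careful track of the sign convention in $p_{ij}=-T_{ij}$ and of the placement of the factor $c$ (since $x^0=ct$ here, whereas the densities $g_i$ and the derivative $\partial_t$ are written in $t$), because it is precisely this step that closes the off-diagonal stress derivatives into the force density and fixes both $\theta_v$ and the final form of $\gamma_v$. A secondary point needing care is confirming that the pull-back of a boosted-frame solution genuinely solves Maxwell's equations for the unaltered $(\rho,\overline{J})$ in $S$, which I would justify by applying the covariance of Lemma \ref{potential} to the inverse boost $B_{-v\overline{e}_k}$.
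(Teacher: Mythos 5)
Your proposal takes essentially the same route as the paper's own proof: both realise the three families as pull-backs of the zero-Poynting-divergence solutions supplied by the surface non-radiating hypothesis, expand $div'(\overline{g}')$ using the stress-energy transformation law of Definition \ref{stressenergy} together with the chain rule for boosted derivatives, split off $\gamma\, div(\overline{g})$ to produce $\beta_{v}$, and close the off-diagonal stress derivatives with the momentum-balance identity $div(\overline{T}_{1})+{\partial g_{1}\over \partial t}=-f_{1}$, which yields $\theta_{v}$ and shifts the coefficient of ${\partial p_{11}\over \partial x}$ to $\gamma_{v}$ exactly as you describe. Your treatment of the final claim (at $v=0$ all coefficients except $\eta_{v}=1$ vanish, so $(\dag)$ degenerates to $div(\overline{g})=0$ and the rest-frame solution can be shared by all three families) likewise matches the paper's appeal to the definition of surface non-radiating.
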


\begin{proof}

Transforming between frames, and corresponding fields $(\overline{E}',\overline{B}')$ in $S'$ and $(\overline{E}^{1}_{v},\overline{B}^{1}_{v})$ in $S$, we have, dropping the index $v$ throughout the proof, that;\\

$\sigma'=(\overline{M}')_{00}=(\overline{\Lambda})_{i0}(\overline{\Lambda})_{j0}(\overline{M})_{ij}$\\

$=\gamma^{2}\sigma-\gamma^{2}\beta cg_{1}-\gamma^{2}\beta cg_{1}+\gamma^{2}\beta^{2}p_{11}$\\

$=\gamma^{2}(\sigma-2\beta cg_{1}+\beta^{2}p_{11})$\\

$cg_{1}'=(\overline{M}')_{10}=(\overline{\Lambda})_{i1}(\overline{\Lambda})_{j0}(\overline{M})_{ij}$\\

$=-\gamma^{2}\beta\sigma+\gamma^{2}\beta^{2}cg_{1}+\gamma^{2}cg_{1}-\gamma^{2}\beta p_{11}$\\

$=\gamma^{2}(-\beta\sigma+(\beta^{2}c+c)g_{1}-\beta p_{11})$\\

$cg_{2}'=(\overline{M}')_{20}=(\overline{\Lambda})_{i2}(\overline{\Lambda})_{j0}(\overline{M})_{ij}$\\

$=\gamma cg_{2}-\gamma\beta p_{21}$\\

$cg_{3}'=(\overline{M}')_{30}=(\overline{\Lambda})_{i3}(\overline{\Lambda})_{j0}(\overline{M})_{ij}$\\

$=\gamma cg_{3}-\gamma\beta p_{31}$ $(*)$\\

The condition that $div'(\overline{E}'\times\overline{B}')=0$ in the frame $S'$ is equivalent to;\\

$\bigtriangledown'\centerdot (g_{1}',g_{2}',g_{3}')=0$ $(**)$\\

We have the transformation rule for $\bigtriangledown'$, given in \cite{L};\\

${\partial\over\partial x'}=\gamma({\partial\over\partial x}+{v\over c^{2}}{\partial\over\partial t})$\\

${\partial\over\partial y'}={\partial\over\partial y}$\\

${\partial\over\partial z'}={\partial \over\partial z}$\\

Applying this to $(*),(**)$, we obtain;\\

$\gamma({\partial\over\partial x}+{v\over c^{2}}{\partial \over\partial t})g_{1}'+{\partial g_{2}'\over \partial y}+{\partial g_{3}'\over \partial z}$\\

$={\gamma\over c}{\partial\over\partial x}(\gamma^{2}(-\beta\sigma+(\beta^{2}c+c)g_{1}-\beta p_{11}))$\\

$+{\gamma v\over c^{3}}{\partial\over\partial t}(\gamma^{2}(-\beta\sigma+(\beta^{2}c+c)g_{1}-\beta p_{11}))$\\

$+{1\over c}{\partial \over \partial y}(\gamma cg_{2}-\gamma\beta p_{21})$\\

$+{1\over c}{\partial \over \partial z}(\gamma cg_{3}-\gamma\beta p_{31})=0$, $(***)$\\

and, rearranging, we have that;\\

${-\beta\gamma^{3}\over c}{\partial \sigma\over \partial x}+((\beta^{2}+1)\gamma^{3}-\gamma){\partial g_{1}\over \partial x}+\gamma div(\overline{g})+({\gamma \beta\over c}-{\gamma^{3}\beta\over c}){\partial p_{11}\over \partial x}$\\

$-{\gamma\beta\over c}div(\overline{T}_{1})-{\beta\gamma^{3} v\over c^{3}}{\partial \sigma\over \partial t}+{(\beta^{2}+1)\gamma^{3} v\over c^{2}}{\partial g_{1}\over \partial t}-{\beta\gamma^{3} v\over c^{3}}{\partial p_{11}\over \partial t}=0$, $(****)$\\

where $\overline{T}$ is the Maxwell stress tensor. We have, see \cite{G}, that;\\

$div(\overline{T}_{1})+{\partial g_{1}\over\partial t}=-f_{1}$\\

where $\overline{f}=(f_{1},f_{2},f_{3})$ is the force applied by the fields $\{\overline{E}^{1}_{v},\overline{B}^{1}_{v}\}$ relative to the charge and current $(\rho,\overline{J})$ in the rest frame $S$.\\

Rearranging again, we obtain;\\

${-\beta\gamma^{3}\over c}{\partial \sigma\over \partial x}+((\beta^{2}+1)\gamma^{3}-\gamma){\partial g_{1}\over \partial x}+({\gamma \beta\over c}-{\gamma^{3}\beta\over c}){\partial p_{11}\over \partial x}-{\beta\gamma^{3} v\over c^{3}}{\partial \sigma\over \partial t}$\\

$+{(\beta^{2}+1)\gamma^{3} v\over c^{2}}{\partial g_{1}\over \partial t}-{\beta\gamma^{3} v\over c^{3}}{\partial p_{11}\over \partial t}+\gamma div(\overline{g})+{\gamma\beta\over c}(f_{1}+{\partial g_{1}\over \partial t})=0$ $(*****)$\\

By symmetry, for boosts with velocity $v$ in the $y$ and $z$ directions, we obtain the relations for $(\overline{E}^{2}_{v},\overline{B}^{2}_{v})$ and $(\overline{E}^{3}_{v},\overline{B}^{3}_{v})$ in the rest frame $S$;\\

${-\beta\gamma^{3}\over c}{\partial \sigma\over \partial y}+((\beta^{2}+1)\gamma^{3}-\gamma){\partial g_{2}\over \partial y}+({\gamma \beta\over c}-{\gamma^{3}\beta\over c}){\partial p_{22}\over \partial y}-{\beta\gamma^{3} v\over c^{3}}{\partial \sigma\over \partial t}$\\

$+{(\beta^{2}+1)\gamma^{3} v\over c^{2}}{\partial g_{2}\over \partial t}-{\beta\gamma^{3} v\over c^{3}}{\partial p_{22}\over \partial t}+\gamma div(\overline{g})+{\gamma\beta\over c}(f_{2}+{\partial g_{2}\over \partial t})=0$\\

and;\\

${-\beta\gamma^{3}\over c}{\partial \sigma\over \partial z}+((\beta^{2}+1)\gamma^{3}-\gamma){\partial g_{3}\over \partial z}+({\gamma \beta\over c}-{\gamma^{3}\beta\over c}){\partial p_{33}\over \partial z}-{\beta\gamma^{3} v\over c^{3}}{\partial \sigma\over \partial t}$\\

$+{(\beta^{2}+1)\gamma^{3} v\over c^{2}}{\partial g_{3}\over \partial t}-{\beta\gamma^{3} v\over c^{3}}{\partial p_{33}\over \partial t}+\gamma div(\overline{g})+{\gamma\beta\over c}(f_{3}+{\partial g_{3}\over \partial t})=0$\\

The final claim is clear by the definition of surface non-radiating.

\end{proof}

\begin{lemma}
\label{geometry}
For $(\overline{x_{0}},t_{0})$ in the rest frame $S$, there exists, at $(\overline{x_{0}},t_{0})$ a polynomial approximation of $(\overline{E}_{v}^{1},\overline{B}_{v}^{1})$ from Lemma \ref{families}, satisfying Maxwell's equations, and the additional equations there. Moreover, the conditions are algebraic.
\end{lemma}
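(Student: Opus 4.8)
The plan is to work in the finite-dimensional space of Taylor coefficients of the fields at the chosen point, and to show that requiring a polynomial to satisfy Maxwell's equations together with the additional equation $(\dag)$ of Lemma \ref{families}, up to a fixed order, amounts to a finite system of polynomial equations on those coefficients, which is moreover consistent because the genuine analytic solution furnishes a point of the resulting variety.

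First I would fix a degree $N$ and centre coordinates at $(\overline{x_{0}},t_{0})$, writing $\overline{E}_{v}^{1}$ and $\overline{B}_{v}^{1}$ as vector-valued polynomials of degree $\le N$ in $(x-x_{0},y-y_{0},z-z_{0},t-t_{0})$ with indeterminate coefficients, and taking the degree-$(N-1)$ Taylor polynomials of the given data $(\rho,\overline{J})$ at the same point. Since $\bigtriangledown\centerdot$, $\bigtriangledown\times$ and $\partial/\partial t$ each lower polynomial degree by one, substituting this ansatz into Maxwell's equations and matching coefficients through degree $N-1$ produces a finite system of \emph{linear} equations relating the field coefficients to the coefficients of the data. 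These linear constraints are in particular algebraic.

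Next I would treat the additional equation $(\dag)$. Here $\sigma_{v}$, the components $g_{i,v}$ of $\overline{g}_{v}=\epsilon_{0}(\overline{E}_{v}^{1}\times\overline{B}_{v}^{1})$, the diagonal stress components $p_{ii,v}$, and the force density $\overline{f}_{v}=\rho\,\overline{E}_{v}^{1}+\overline{J}\times\overline{B}_{v}^{1}$ are all \emph{quadratic} (more precisely bilinear) in the fields and linear in the data, so after substituting the polynomial ansatz and the Taylor polynomials of $(\rho,\overline{J})$, each term of $(\dag)$ becomes a polynomial in the shifted coordinates whose coefficients are quadratic polynomials in the unknown field coefficients. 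Requiring $(\dag)$ to hold identically through the appropriate degree then gives a finite system of \emph{quadratic} equations on the coefficients. Combined with the linear Maxwell constraints, the full set of conditions defines an affine algebraic variety in coefficient space; this is the content of the assertion that the conditions are algebraic.

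Finally, for existence I would invoke the actual fields $(\overline{E}_{v}^{1},\overline{B}_{v}^{1})$ produced by Lemma \ref{families}, which are smooth (indeed analytic) and satisfy both Maxwell's equations and $(\dag)$ exactly. Their degree-$N$ Taylor polynomials at $(\overline{x_{0}},t_{0})$ satisfy the truncated Maxwell system exactly, because those relations only involve coefficients through degree $N-1$; and they satisfy the truncated form of $(\dag)$ to the claimed order, because truncating the products of Taylor polynomials agrees with the Taylor polynomial of the exact (vanishing) left-hand side of $(\dag)$ through that degree. Hence the Taylor coefficients of the genuine solution give a point of the variety, establishing that a polynomial approximation of the required type exists. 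The one place demanding care is this last truncation step: because $(\dag)$ is nonlinear, one must choose the degree to which products are kept and the order to which the identity is imposed consistently, so that the truncated polynomial equation really is a consequence of the exact equation restricted to the point; this bookkeeping of degrees, rather than any conceptual difficulty, is the main obstacle.
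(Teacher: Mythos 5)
Your proposal is correct and follows essentially the same route as the paper: the paper takes the first-order Taylor polynomial of $(\overline{E}_{v}^{1},\overline{B}_{v}^{1})$ at $(\overline{x}_{0},t_{0})$ (coefficients equal to the true field values and first derivatives there), observes that Maxwell's equations at the point give $8$ linear conditions and the first equation of $(\dag)$ gives one quadratic condition on the $30$ coefficients, and then substitutes indeterminates to conclude the conditions are algebraic for $v\in\mathcal{C}\setminus\{-c,c\}$ --- exactly your scheme in the special case $N=1$, with the equations imposed only at the point so that existence is witnessed by the genuine solution's jet. Your degree-$N$ generalization and the truncation bookkeeping you flag are sound but not needed for what the lemma asserts.
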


\begin{proof}
Fix $(\overline{x}_{0},t_{0})$ in the rest frame $S$ and define the vector fields $(\overline{L}^{1}_{v},\overline{M}^{1}_{v})$ by;\\

$l^{1}_{i,v}=\sum_{0\leq j+k+l+m\leq 1}l^{1}_{i,jklm,v}(x-x_{0})^{j}(y-y_{0})^{k}(z-z_{0})^{l}(t-t_{0})^{m}$\\

$m^{1}_{i,v}=\sum_{0\leq j+k+l+m\leq 1}m^{1}_{i,jklm,v}(x-x_{0})^{j}(y-y_{0})^{k}(z-z_{0})^{l}(t-t_{0})^{m}$\\

for $1\leq i\leq 3$, where $\overline{L}^{1}_{v}=(l^{1}_{1,v},l^{1}_{2,v},l^{1}_{3,v})$, $\overline{M}^{1}_{v}=(m^{1}_{1,v},m^{1}_{2,v},m^{1}_{3,v})$ and;\\

$l^{1}_{i,jklm,v}={\partial^{(j+k+l+m)}e^{1}_{i,v}\over \partial x^{j}\partial y^{k}\partial z^{l}\partial t^{m}}|_{(\overline{x}_{0},t_{0})}$\\

$m^{1}_{i,jklm,v}={\partial^{(j+k+l+m)}b^{1}_{i,v}\over \partial x^{j}\partial y^{k}\partial z^{l}\partial t^{m}}|_{(\overline{x}_{0},t_{0})}$\\

Then, for $v\in\mathcal{R}$, at $(\overline{x}_{0},t_{0})$, $(\overline{L}^{1}_{v},\overline{M}^{1}_{v})$ satisfy Maxwell's equations, see the proof in \cite{L}, and the first equation of $(\dag)$ in Lemma \ref{families}. The satisfaction of Maxwell's equations at $(x_{0},t_{0})$, defines $8$ linear conditions on the $30$ coefficients $l^{1}_{i,jklm,v}$ and $m^{1}_{i,jklm,v}$ given by;\\

$l^{1}_{1,1000,v}+l^{1}_{2,0100,v}+l^{1}_{3,0010,v}-{\rho(\overline{x}_{0},t_{0})\over\epsilon_{0}}=0$ $(i)$\\

$l^{1}_{3,0100,v}-l^{1}_{2,0010,v}+m^{1}_{1,0001,v}=0$\\

$l^{1}_{3,1000,v}-l^{1}_{1,0010,v}+m^{1}_{2,0001,v}=0$\\

$l^{1}_{2,1000,v}-l^{1}_{1,0100,v}+m^{1}_{3,0001,v}=0$ $(ii)$\\

$m^{1}_{1,1000,v}+m^{1}_{2,0100,v}+m^{1}_{3,0010,v}=0$ $(iii)$\\

$m^{1}_{3,0100,v}-m^{1}_{2,0010,v}-\mu_{0}\epsilon_{0}l^{1}_{1,0001,v}-\mu_{0}j_{1}(\overline{x}_{0},t_{0})=0$\\

$m^{1}_{3,1000,v}-m^{1}_{1,0010,v}-\mu_{0}\epsilon_{0}l^{1}_{2,0001,v}-\mu_{0}j_{2}(\overline{x}_{0},t_{0})=0$\\

$m^{1}_{2,1000,v}-m^{1}_{1,0100,v}-\mu_{0}\epsilon_{0}l^{1}_{3,0001,v}-\mu_{0}j_{3}(\overline{x}_{0},t_{0})=0$ $(iv)$\\

where $\overline{J}=(j_{1},j_{2},j_{3})$. For the first equation of $(\dag)$ in Lemma \ref{families}, a simple computation using the product rule and the formula given in \cite{G};\\

$\overline{f}^{1}_{v}=\rho\overline{E}^{1}_{v}+\overline{J}\times\overline{B}^{1}_{v}$\\

we obtain that;\\

$\alpha_{v}{\partial \sigma_{v}^{1}\over \partial x}=\epsilon_{0}\alpha_{v}(l^{1}_{1,0000,v}l^{1}_{1,1000,v}+l^{1}_{2,0000,v}l^{1}_{2,1000,v}+l^{1}_{3,0000,v}l^{1}_{3,1000,v}$\\

$+c^{2}m^{1}_{1,0000,v}m^{1}_{1,1000,v}+c^{2}m^{1}_{2,0000,v}m^{1}_{2,1000,v}+c^{2}m^{1}_{3,0000,v}m^{1}_{3,1000,v})$\\

$\beta_{v}{\partial g_{1,v}^{1}\over \partial x}=\epsilon_{0}\beta_{v}(l^{1}_{2,1000,v}m^{1}_{3,0000,v}+
l^{1}_{2,0000,v}m^{1}_{3,1000,v}-l^{1}_{3,1000,v}m^{1}_{2,0000,v}$\\

$-l^{1}_{3,0000,v}m^{1}_{2,1000,v})$\\

$\gamma_{v}{\partial p_{11,v}^{1}\over \partial x}=-\epsilon_{0}\gamma_{v}(l^{1}_{1,0000,v}l^{1}_{1,1000,v}+
c^{2}m^{1}_{1,0000,v}m^{1}_{1,1000,v}-l^{1}_{2,0000,v}l^{1}_{2,1000,v}$\\

$-l^{1}_{3,0000,v}l^{1}_{3,1000,v}
-c^{2}m^{1}_{2,0000,v}m^{1}_{2,1000,v}
-c^{2}m^{1}_{3,0000,v}m^{1}_{3,1000,v})$\\

$\delta_{v}{\partial \sigma_{v}^{1}\over \partial t}=\epsilon_{0}\delta_{v}(l^{1}_{1,0000,v}l^{1}_{1,0001,v}
+l^{1}_{2,0000,v}l^{1}_{2,0001,v}
+l^{1}_{3,0000,v}l^{1}_{3,0001,v}
+c^{2}m^{1}_{1,0000,v}m^{1}_{1,0001,v}$\\

$+c^{2}m^{1}_{2,0000,v}m^{1}_{2,0001,v}
+c^{2}m^{1}_{3,0000,v}m^{1}_{3,0001,v})$\\

$\epsilon_{v}{\partial g_{1,v}^{1}\over \partial t}=\epsilon_{0}\epsilon_{v}(l^{1}_{2,0001,v}m^{1}_{3,0000,v}
+l^{1}_{2,0000,v}m^{1}_{3,0001,v}
-l^{1}_{3,0001,v}m^{1}_{2,0000,v}$\\

$-l^{1}_{3,0000,v}m^{1}_{2,0001,v})$\\

$\xi_{v}{\partial p_{11,v}^{1}\over \partial t}=-\epsilon_{0}\xi_{v}(l^{1}_{1,0000,v}l^{1}_{1,0001,v}
+c^{2}m^{1}_{1,0000,v}m^{1}_{1,0001,v}
-l^{1}_{2,0000,v}l^{1}_{2,0001,v}$\\

$-l^{1}_{3,0000,v}l^{1}_{3,0001,v}
-c^{2}m^{1}_{2,0000,v}m^{1}_{2,0001,v}
-c^{2}m^{1}_{3,0000,v}m^{1}_{3,0001,v})$\\

$\eta_{v} div(\overline{g}_{v}^{1})=\epsilon_{0}\eta_{v}(l^{1}_{2,1000,v}m^{1}_{3,0000,v}
+l^{1}_{2,0000,v}m^{1}_{3,1000,v}
-l^{1}_{3,1000,v}m^{1}_{2,0000,v}$\\

$-l^{1}_{3,0000,v}m^{1}_{2,1000,v}
+l^{1}_{3,0100,v}m^{1}_{1,0000,v}
+l^{1}_{3,0000,v}m^{1}_{1,0100,v}
-l^{1}_{1,0100,v}m^{1}_{3,0000,v}$\\

$-l^{1}_{1,0000,v}m^{1}_{3,0100,v}
+l^{1}_{1,0010,v}m^{1}_{2,0000,v}
+l^{1}_{1,0000,v}m^{1}_{2,0010,v}
-l^{1}_{2,0010,v}m^{1}_{1,0000,v}$\\

$-l^{1}_{2,0000,v}m^{1}_{1,0010,v})$\\

$\theta_{v}f_{1,v}^{1}=\theta_{v}(p(\overline{x}_{0},t_{0})l^{1}_{1,0000,v}+j_{2}(\overline{x}_{0},t_{0})m^{1}_{3,0000,v}
-j_{3}(\overline{x}_{0},t_{0})m^{1}_{2,0000,v})$\\

$\theta_{v}{\partial g_{1,v}^{1}\over \partial t}=\epsilon_{0}\theta_{v}(l^{1}_{2,0001,v}m^{1}_{3,0000,v}
+l^{1}_{2,0000,v}m^{1}_{3,0001,v}
-l^{1}_{3,0001,v}m^{1}_{2,0000,v}$\\

$-l^{1}_{3,0000,v}m^{1}_{2,0001,v})$ $(\dag\dag)$\\

Combining $(\dag)$ and $(\dag\dag)$ we obtain $1$ condition on the coefficients. We introduce $30$ new variables $\{x^{1}_{i,jklm},y^{1}_{i,jklm}\}$, for $1\leq i\leq 3$ and $0\leq j+k+l+m\leq 1$. Substituting the variables for the corresponding $\{l^{1}_{i,jklm,v},m^{1}_{i,jklm,v}\}$ in the equations $(\dag),(\dag\dag)$ and $(i)-(iv)$, when $v\in{\mathcal{C}\setminus \{-c,c\}}$, we obtain algebraic conditions.
\end{proof}

\begin{lemma}{Limit Equations}
\label{infinity}
We can take a limit as $v\rightarrow\infty$ of the equations $(\dag)$ in Lemma \ref{families}, to obtain the limit relations;\\

 ${\partial\over\partial t}(\overline{E}_{\infty}^{1}\times\overline{B}_{\infty}^{1})_{1}=-{1\over\epsilon_{0}}({\partial p_{12,\infty}^{1}\over \partial y}+{\partial p_{13,\infty}^{1}\over \partial z})$\\

${\partial\over\partial t}(\overline{E}_{\infty}^{2}\times\overline{B}_{\infty}^{2})_{2}=-{1\over\epsilon_{0}}({\partial p_{12,\infty}^{2}\over \partial x}+{\partial p_{23,\infty}^{2}\over \partial z})$\\

${\partial\over\partial t}(\overline{E}_{\infty}^{3}\times\overline{B}_{\infty}^{3})_{3}=-{1\over\epsilon_{0}}({\partial p_{13,\infty}^{3}\over \partial x}+{\partial p_{23,\infty}^{3}\over \partial y})$\\

for the transferred fields $(\overline{E}_{\infty}^{i},\overline{B}_{\infty}^{i})$, $1\leq i\leq 3$, see Lemma \ref{limit}.

\end{lemma}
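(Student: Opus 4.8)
The plan is to pass to the limit $v\to\infty$ in each of the three equations $(\dag)$ of Lemma \ref{families} term by term, using the elementary limits recorded in the proof of Lemma \ref{limit}: with $\beta=v/c$ and $\gamma=1/\sqrt{1-v^2/c^2}$ (a fixed choice of square root), one has $\gamma\to 0$, $v\gamma\to -ic$ and $v^2\gamma^2\to -c^2$. From these I would first read off $\beta\gamma\to -i$, $\beta^2\gamma^2\to -1$ and $v^3\gamma^3=(v\gamma)^3\to(-ic)^3=ic^3$, while any monomial carrying a surplus factor of $\gamma$ (such as $\gamma^3$, $\beta\gamma^3$, $v\gamma^3$, $v^2\gamma^3$) tends to $0$. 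A short bookkeeping then yields
\[
\alpha_v,\ \beta_v,\ \delta_v,\ \xi_v,\ \eta_v \to 0,\qquad \gamma_v \to -\frac{i}{c},\quad \epsilon_v \to \frac{i}{c},\quad \theta_v \to -\frac{i}{c},
\]
so that in the limit only the three terms weighted by $\gamma_v$, $\epsilon_v$ and $\theta_v$ survive.

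Next I would argue that the field-dependent quantities converge. Each of $\sigma_v^1$, $g_{i,v}^1$, $p_{ij,v}^1$ is polynomial in $(\overline{E}_v^1,\overline{B}_v^1)$, and $f_{i,v}^1=\rho(\overline{E}_v^1)_i+(\overline{J}\times\overline{B}_v^1)_i$ is polynomial in those fields and in the fixed real pair $(\rho,\overline{J})$; since the fields, together with their first derivatives, converge to the transferred limit fields $(\overline{E}_\infty^1,\overline{B}_\infty^1)$ of Lemma \ref{limit} (the transformation of derivatives passing to the limit by the penultimate claim there), each quantity converges to the corresponding $\sigma_\infty^1,\ g_{i,\infty}^1,\ p_{ij,\infty}^1,\ f_{i,\infty}^1$. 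Substituting into the first equation of $(\dag)$ and dividing by $-i/c$ gives
\[
\frac{\partial p_{11,\infty}^1}{\partial x}-\frac{\partial g_{1,\infty}^1}{\partial t}+\left(f_{1,\infty}^1+\frac{\partial g_{1,\infty}^1}{\partial t}\right)=0,
\]
and the crucial simplification is that the two copies of $\partial g_{1,\infty}^1/\partial t$, one coming from $\epsilon_v$ and one from $\theta_v$, cancel, leaving the compact relation $\partial p_{11,\infty}^1/\partial x+f_{1,\infty}^1=0$.

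Finally I would eliminate the force term using the momentum-balance identity $div(\overline{T}_1)+\partial g_1/\partial t=-f_1$ quoted from \cite{G} in Lemma \ref{families}, in which $div(\overline{T}_1)=\partial p_{11}/\partial x+\partial p_{12}/\partial y+\partial p_{13}/\partial z$. This identity is a differential consequence of Maxwell's equations holding for every real $v$; being polynomial in the fields and their derivatives it persists for the complex limit fields, so $f_{1,\infty}^1=-\partial p_{11,\infty}^1/\partial x-\partial p_{12,\infty}^1/\partial y-\partial p_{13,\infty}^1/\partial z-\partial g_{1,\infty}^1/\partial t$. Inserting this into $\partial p_{11,\infty}^1/\partial x+f_{1,\infty}^1=0$, the $\partial p_{11,\infty}^1/\partial x$ terms cancel, and recalling $g_{1,\infty}^1=\epsilon_0(\overline{E}_\infty^1\times\overline{B}_\infty^1)_1$ one recovers exactly
\[
\frac{\partial}{\partial t}(\overline{E}_\infty^1\times\overline{B}_\infty^1)_1=-\frac{1}{\epsilon_0}\left(\frac{\partial p_{12,\infty}^1}{\partial y}+\frac{\partial p_{13,\infty}^1}{\partial z}\right).
\]
The second and third relations follow in the identical way from the $y$- and $z$-boost versions of $(\dag)$, using the symmetry $p_{ij}=p_{ji}$.

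The main obstacle, beyond the routine bookkeeping, is legitimising the termwise passage to the limit: the equations $(\dag)$ are derived for real subluminal $v$, so one must check that they continue to make sense and hold as $v\to\infty$. Here I would appeal to Lemma \ref{geometry}, where these conditions are exhibited as algebraic in $v$ (through $\beta$ and $\gamma$), so that they survive the analytic extension to complex parameters and to the limit frame $S_\infty$ of Lemma \ref{limit}; together with the continuity of the field limits this justifies taking limits inside $(\dag)$. The only genuinely delicate analytic point is that the momentum-balance identity continues to hold for the complex limit fields, which I would secure by analytic continuation from the real subluminal range.
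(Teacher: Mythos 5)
Your proof is correct, and its first half coincides with the paper's: you compute the same coefficient limits ($\alpha_{v},\beta_{v},\delta_{v},\xi_{v},\eta_{v}\rightarrow 0$, $\gamma_{v},\theta_{v}\rightarrow -{i\over c}$, $\epsilon_{v}\rightarrow {i\over c}$, with a consistent choice of square root), observe the cancellation of the two ${\partial g_{1,\infty}^{1}\over \partial t}$ terms, and arrive at exactly the paper's intermediate relation $(*)$, namely $f_{1,\infty}^{1}=-{\partial p_{11,\infty}^{1}\over \partial x}$. Where you genuinely diverge is in converting $(*)$ into the stated equations. The paper substitutes the expanded force-density formula of \cite{G} (the one involving $-{1\over 2}\bigtriangledown(\epsilon_{0}E^{2}+{1\over \mu_{0}}B^{2})$, the convective terms $(\overline{E}\centerdot\bigtriangledown)\overline{E}$, and so on) and carries out a component-by-component product-rule computation to reassemble ${\partial (e_{1}e_{2}+c^{2}b_{1}b_{2})\over \partial y}+{\partial (e_{1}e_{3}+c^{2}b_{1}b_{3})\over \partial z}$, which it then recognizes as $-{1\over \epsilon_{0}}({\partial p_{12,\infty}^{1}\over \partial y}+{\partial p_{13,\infty}^{1}\over \partial z})$. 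You instead invoke the compact momentum-balance identity $div(\overline{T}_{1})+{\partial g_{1}\over \partial t}=-f_{1}$ --- the very identity quoted in the proof of Lemma \ref{families}, whose substitution created the $\theta_{v}$-term of $(\dag)$ in the first place --- so that the ${\partial p_{11,\infty}^{1}\over \partial x}$ contributions cancel and the result drops out in two lines of algebra. The two finishes are mathematically equivalent, since the expanded formula in \cite{G} is precisely the component form of the compact identity, but yours is shorter and makes transparent why only the off-diagonal stress components survive. Both finishes carry the same obligation, which you flag explicitly and the paper leaves implicit: the momentum-balance identity (equivalently, the force-density formula) is a consequence of Maxwell's equations, so it must be known to hold for the limit fields; in the paper this is supplied by the constructions of Lemmas \ref{limit} and \ref{triangles}, and your appeal to the algebraic formulation (Lemma \ref{geometry}) together with persistence of polynomial identities under the limit is an acceptable substitute.
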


\begin{proof}
Observe that, taking compatible square roots;\\

$lim_{v\rightarrow\infty}\gamma=lim_{v\rightarrow\infty}\alpha_{v}=lim_{v\rightarrow\infty}\beta_{v}=lim_{v\rightarrow\infty}\delta_{v}=lim_{v\rightarrow\infty}\xi_{v}$\\

$=lim_{v\rightarrow\infty}\eta_{v}=0$\\

$lim_{v\rightarrow\infty}\gamma_{v}=lim_{v\rightarrow\infty}\theta_{v}=lim_{v\rightarrow\infty}{\gamma\beta\over c}$\\

$=lim_{v\rightarrow\infty}{v\over c^{2}\sqrt{1-{v^{2}\over c^{2}}}}$\\

$={1\over c^{2}}lim_{v\rightarrow\infty}{1\over \sqrt{{1\over v^{2}}-{1\over c^{2}}}}$\\

$={1\over c^{2}}{c\over i}=-{i\over c}$\\

$lim_{v\rightarrow\infty}\epsilon_{v}=lim_{v\rightarrow\infty}{\beta^{2}v\gamma^{3}\over c^{2}}$\\

$=lim_{v\rightarrow\infty}{v^{3}\over c^{4}(1-{v^{2}\over c^{2}})^{3\over 2}}$\\

$={1\over c^{4}}lim_{v\rightarrow\infty}{1\over({1\over v^{2}}-{1\over c^{2}})^{3\over 2}}$\\

$={1\over c^{4}}{c^{3}\over i^{3}}={i\over c}$\\

Taking the limit of the first equation in $(\dag)$ of Lemma \ref{families}, we obtain;\\

$-{i\over c}{\partial p_{11,\infty}^{1}\over \partial x}+{i\over c}{\partial g_{1,\infty}^{1}\over \partial t}-{i\over c}(f_{1,\infty}^{1}+{\partial g_{1,\infty}^{1}\over \partial t})=0$\\

which simplifies to;\\

$f_{1,\infty}^{1}=-{\partial p_{11,\infty}^{1}\over \partial x}$ $(*)$\\

Similarly, taking limits of the second and third equations in $(\dag)$ of Lemma \ref{families}, we obtain;\\

$f_{2,\infty}^{2}=-{\partial p_{22,\infty}^{2}\over \partial y}$\\

$f_{3,\infty}^{3}=-{\partial p_{33,\infty}^{3}\over \partial z}$ $(**)$\\

Using the definition of $p_{ii}$, for $1\leq i\leq 3$, we can rearrange $(*), (**)$ to obtain;\\

$f_{1,\infty}^{1}=(\epsilon_{0}{\partial\over \partial x}((e_{1,\infty}^{1})^{2}+c^{2}(b_{1,\infty}^{1})^{2})-{\epsilon_{0}\over 2}{\partial\over \partial x}((e_{\infty}^{1})^{2}+c^{2}(b_{\infty}^{1})^{2}))$\\

$f_{2,\infty}^{2}=(\epsilon_{0}{\partial\over \partial y}((e_{2,\infty}^{2})^{2}+c^{2}(b_{2,\infty}^{2})^{2})-{\epsilon_{0}\over 2}{\partial\over \partial y}((e_{\infty}^{2})^{2}+c^{2}(b_{\infty}^{2})^{2}))$\\

$f_{3,\infty}^{3}=(\epsilon_{0}{\partial\over \partial z}((e_{3,\infty}^{3})^{2}+c^{2}(b_{3,\infty}^{3})^{2})-{\epsilon_{0}\over 2}{\partial\over \partial z}((e_{\infty}^{3})^{2}+c^{2}(b_{\infty}^{3})^{2}))$ $(***)$\\

Using the definition of force density $\overline{f}$, we have, see \cite{G}, the formula;\\

$\overline{f}=-{1\over 2}\bigtriangledown(\epsilon_{0}E^{2}+{1\over \mu_{0}}B^{2})-\epsilon_{0}{\partial\over\partial t}(\overline{E}\times\overline{B})+\epsilon_{0}((\bigtriangledown\centerdot\overline{E})\overline{E}+(\overline{E}\centerdot\bigtriangledown)\overline{E})$\\

$+{1\over \mu_{0}}((\bigtriangledown\centerdot\overline{B})\overline{B}+(\overline{B}\centerdot\bigtriangledown)\overline{B})$\\

and, substituting for the first equation in $(***)$, using the product rule, we obtain;\\

$-\epsilon_{0}{\partial\over\partial t}(\overline{E}_{\infty}^{1}\times\overline{B}_{\infty}^{1})_{1}={\partial\over \partial x}(\epsilon_{0}(e_{1,\infty}^{1})^{2}+{1\over \mu_{0}}(b_{1,\infty}^{1})^{2})$\\

$-\epsilon_{0}((\bigtriangledown\centerdot\overline{E})\overline{E}+(\overline{E}\centerdot\bigtriangledown)\overline{E})_{1}-{1\over \mu_{0}}((\bigtriangledown\centerdot\overline{B})\overline{B}+(\overline{B}\centerdot\bigtriangledown)\overline{B})_{1}$\\

$={\partial\over \partial x}(\epsilon_{0}(e_{1,\infty}^{1})^{2}+{1\over \mu_{0}}(b_{1,\infty}^{1})^{2})-\epsilon_{0}({\partial e_{1,\infty}^{1}\over \partial x}+{\partial e_{2,\infty}^{1}\over \partial y}+{\partial e_{3,\infty}^{1}\over \partial z})e_{1,\infty}^{1}$\\

$-\epsilon_{0}(e_{1,\infty}^{1}{\partial\over\partial x}+e_{2,\infty}^{1}{\partial\over\partial y}+e_{3,\infty}^{1}{\partial\over\partial z})e_{1,\infty}^{1}-{1\over \mu_{0}}({\partial b_{1,\infty}^{1}\over \partial x}+{\partial b_{2,\infty}^{1}\over \partial y}+{\partial b_{3,\infty}^{1}\over \partial z})b_{1,\infty}^{1}$\\

$-{1\over\mu_{0}}(b_{1,\infty}^{1}{\partial\over\partial x}+b_{2,\infty}^{1}{\partial\over\partial y}+b_{3,\infty}^{1}{\partial\over\partial z})b_{1,\infty}^{1}$\\

$=-\epsilon_{0}({\partial (e_{1,\infty}^{1}e_{2,\infty}^{1})\over\partial y}+{\partial (e_{1,\infty}^{1}e_{3,\infty}^{1})\over\partial z})-{1\over \mu_{0}}({\partial (b_{1,\infty}^{1}b_{2,\infty}^{1})\over\partial y}+{\partial (b_{1,\infty}^{1}b_{3,\infty}^{1})\over\partial z})$\\

and, rearranging;\\

${\partial\over\partial t}(\overline{E}_{\infty}^{1}\times\overline{B}_{\infty}^{1})_{1}=({\partial (e_{1,\infty}^{1}e_{2,\infty}^{1}+c^{2}b_{1,\infty}^{1}b_{2,\infty}^{1})\over\partial y}+{\partial (e_{1,\infty}^{1}e_{3,\infty}^{1}+c^{2}b_{1,\infty}^{1}b_{3,\infty}^{1})\over\partial z})$ $(\dag)$\\

Similarly, substituting into the second and third equations of $(***)$, we obtain;\\

${\partial\over\partial t}(\overline{E}_{\infty}^{2}\times\overline{B}_{\infty}^{2})_{2}=({\partial (e_{1,\infty}^{2}e_{2,\infty}^{2}+c^{2}b_{1,\infty}^{2}b_{2,\infty}^{2})\over\partial x}+{\partial (e_{2,\infty}^{2}e_{3,\infty}^{2}+c^{2}b_{2,\infty}^{2}b_{3,\infty}^{2})\over\partial z})$\\

${\partial\over\partial t}(\overline{E}_{\infty}^{3}\times\overline{B}_{\infty}^{3})_{3}=({\partial (e_{1,\infty}^{3}e_{3,\infty}^{3}+c^{2}b_{1,\infty}^{3}b_{3,\infty}^{3})\over\partial x}+{\partial (e_{2,\infty}^{3}e_{3,\infty}^{3}+c^{2}b_{2,\infty}^{3}b_{3,\infty}^{3})\over\partial y})$ $(\dag\dag)$\\

Using the definition of the stress tensor, we can write this as;\\

${\partial\over\partial t}(\overline{E}_{\infty}^{1}\times\overline{B}_{\infty}^{1})_{1}=-{1\over\epsilon_{0}}({\partial p_{12,\infty}^{1}\over \partial y}+{\partial p_{13,\infty}^{1}\over \partial z})$\\

${\partial\over\partial t}(\overline{E}_{\infty}^{2}\times\overline{B}_{\infty}^{2})_{2}=-{1\over\epsilon_{0}}({\partial p_{12,\infty}^{2}\over \partial x}+{\partial p_{23,\infty}^{2}\over \partial z})$\\

${\partial\over\partial t}(\overline{E}_{\infty}^{3}\times\overline{B}_{\infty}^{3})_{3}=-{1\over\epsilon_{0}}({\partial p_{13,\infty}^{3}\over \partial x}+{\partial p_{23,\infty}^{3}\over \partial y})$ $(\dag\dag\dag)$\\

\end{proof}

\begin{lemma}
\label{infinity1}

For the construction of $(\overline{E},\overline{B})$ in the base frame $S$, transferred from the limit $(\overline{E}_{\infty},\overline{B}_{\infty})$ in the frame $S_{infty}$, see Lemmas \ref{polynomial},\ref{bounded},\ref{approxinfinity} and \ref{triangles}, we obtain a set of equations $(******)$, as in the proof of the Lemma, for the quantities $\{\sigma,g_{i},p_{jk}\}$, $1\leq i\leq 3$, $1\leq j\leq k\leq 3$, valid for $\overline{u}$ real, with $u_{1}\neq 0$, $\overline{u}\neq u_{1}\overline{e}_{1}$, and with coefficients as defined in the proof of the Lemma.
\end{lemma}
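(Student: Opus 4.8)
The plan is to reproduce the derivation of Lemma \ref{families} and Lemma \ref{infinity}, but with the single boost $B_{v\overline{e}_1}$ replaced by the composite transformation that links the limit frame $S_\infty$ to the base frame $S$ through the auxiliary velocity $\overline{u}$. By Lemma \ref{existence}, for each scalar $v$ with $|v|<c$ there is a unique $\overline{w}$ with $B_{\overline{w}}B_{\overline{u}}=R_gB_{v\overline{e}_1}$, $g\in SO(3)$, and in the regime $u_1\neq 0$, $\overline{u}\neq u_1\overline{e}_1$ one has $\overline{w}\rightarrow iv(0,u_2,u_3)$ as $v\rightarrow\infty$. These are exactly the genericity hypotheses of the present Lemma, and they are what guarantees that the composite transformation, together with its $v\rightarrow\infty$ limit, is well defined (Lemma \ref{limit}, Lemma \ref{preserved}). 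First I would fix $(\overline{x}_0,t_0)$ and write $(\overline{E},\overline{B})$ in $S$ as the image of $(\overline{E}_\infty,\overline{B}_\infty)$ under the inverse composite, using the field transformation rules of Lemma \ref{potential} in the complex/limit setting of Definition \ref{extension} and Lemma \ref{limit}.

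The next step is the tensor computation. Writing $\overline{\Lambda}$ for the Lorentz matrix of the relevant composite (assembled from $B_{\overline{u}}$, $R_g$ and $\lim_{v\rightarrow\infty}B_{v\overline{e}_1}$), the summation rule $(\overline{M}')_{i'j'}=(\overline{\Lambda})_{ii'}(\overline{\Lambda})_{jj'}(\overline{M})_{ij}$ of Definition \ref{stressenergy} expresses the components $\{\sigma', g_i', p_{jk}'\}$ in $S_\infty$ through $\{\sigma, g_i, p_{jk}\}$ in $S$, just as in $(*)$ of Lemma \ref{families}. The surface non-radiating condition in the limit frame, $\bigtriangledown_\infty\centerdot(g_1',g_2',g_3')=0$, is then pulled back to $S$ by the transformation of the $\bigtriangledown$ operator recorded in Lemma \ref{limit}, whose coefficients are the limits of $\gamma$, $\gamma\beta/c$, and the like that were already evaluated in Lemma \ref{infinity}.

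The core of the argument is the analogue of $(***)$--$(*****)$ of Lemma \ref{families}: substitute the transformed components, apply the product rule, eliminate the stress-tensor divergences via the force-density identity $\bigtriangledown\centerdot(\overline{T}_i)+\partial g_i/\partial t=-f_i$ of \cite{G}, and collect terms. This produces the system $(******)$ in $\{\sigma, g_i, p_{jk}\}$, with coefficients read off as the explicit rational functions of $\overline{u}$ (and $c$) obtained by carrying the $\overline{u}$-dependence of $R_g$ and of $\overline{w}$ through the limit; the $v\rightarrow\infty$ limit is taken at the end exactly as in Lemma \ref{infinity}, so that the coefficients which vanish there drop out.

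The main obstacle I expect is the bookkeeping of the complex limiting velocity $\overline{w}\rightarrow iv(0,u_2,u_3)$ together with the Thomas rotation $R_g$ through the limit. One must verify that the limit transformations of both the fields and of $\bigtriangledown$ exist, which is supplied by Lemma \ref{limit} and by the complex extension of Lemma \ref{preserved}, and that the resulting coefficients are well defined precisely under $u_1\neq 0$, $\overline{u}\neq u_1\overline{e}_1$, i.e. the first case of Lemma \ref{existence}. I would therefore organise the computation by first recording the explicit entries of $\overline{\Lambda}$ and of $R_g$ as functions of $\overline{u}$, then inserting the coefficient limits already computed in Lemma \ref{infinity}, and only then assembling $(******)$; the algebra is routine but long, so isolating the coefficient limits up front is what keeps it manageable.
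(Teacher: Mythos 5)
Your proposal has a genuine gap, and it sits exactly at the point you dismiss as bookkeeping. You propose to pull the condition $\bigtriangledown_{\infty}\centerdot(g_{1}',g_{2}',g_{3}')=0$ back from the limit frame to $S$ in one step, through a composite $\overline{\Lambda}$ assembled from $B_{\overline{u}}$, the rotation and the limit boost, applying the force-density identity in $S$ and taking the $v\rightarrow\infty$ limit at the end. But the composite is $B_{\overline{w}}B_{\overline{u}}=R_{h}B_{v\overline{e}_{1}}$ (Lemma \ref{existence}; your $R_{g}$ is the paper's $R_{h}$), and the divergence of the Poynting vector is a rotational scalar: by Lemma \ref{divcurl} $(iii)$ and Lemma \ref{maxwells}, $\bigtriangledown'\centerdot(\overline{E}^{h}\times sign(h)\overline{B}^{h})=(\bigtriangledown\centerdot(\overline{E}\times\overline{B}))^{h}$. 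Consequently the pullback of the divergence condition through $B_{\overline{w}}B_{\overline{u}}$ coincides with its pullback through $B_{v\overline{e}_{1}}$ alone: the rotation $R_{h}$, and with it every trace of $\overline{u}$, cancels out of the scalar equation. Carried out correctly, your computation reproduces the equations $(\dag)$ of Lemma \ref{families} and, after the limit, the equations of Lemma \ref{infinity}; it cannot produce a family of equations whose coefficients are, as you claim, functions of $\overline{u}$ --- and that $\overline{u}$-dependence is the entire content of the present Lemma and of its use in Lemmas \ref{newequations} and \ref{newequations3}.

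The paper avoids this collapse by reversing the order of limit and transformation, and the order is essential. Stage one: using $B_{\overline{w}}B_{\overline{u}}=R_{h}B_{v\overline{e}_{1}}$ and $\overline{w}\rightarrow iv(0,u_{2},u_{3})$, the zero-divergence frames are regarded as boosts of the auxiliary frame $S_{\overline{u}}$, which after the rotation $R_{g^{-1}}$ (with $g(\overline{e}_{1})=(0,u_{2},u_{3})/w$, $w=(u_{2}^{2}+u_{3}^{2})^{1/2}$) lie along $\overline{e}_{1}$ with complex velocity $ivw$; the Lemma \ref{families}/\ref{infinity} machinery is then run with the rotated auxiliary frame as the rest frame, the force-density identity and Maxwell's equations being applied \emph{in that frame}, and the $v\rightarrow\infty$ limit taken \emph{there}, yielding the relation $(\dag\dag)$ among the stress-energy quantities of that frame. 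Stage two: this fixed relation is transported to the base frame through the chain $S'\rightarrow S''\rightarrow S'''\rightarrow S''''$ --- the rotations $R_{g}$, $R_{f}$ (with $R_{f}(\overline{e}_{1})=\overline{u}/u$) and the real boost $B_{u\overline{e}_{1}}$ --- using the derivative and tensor transformation rules $(\sharp)$ through $(\dag\dag\dag\dag\dag)$ and Lemma \ref{algebra}, which you never invoke. All of the $\overline{u}$-dependence of the coefficients in $(******)$ (the $\theta_{i},\theta_{i}',\theta_{i}''$, $\lambda_{ij},\mu_{ij},\nu_{ij}$, $\gamma_{u}$ structure) enters in this second, $v$-independent stage. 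The two orderings disagree because $(\dag\dag)$ is not the naive pullback of the divergence condition: the force-density and Maxwell substitutions are made in the auxiliary frame before the limit, and the $v\rightarrow\infty$ limit retains different leading-order information in different frames; with the limit taken last and all identities applied in $S$, as in your plan, that information is lost.
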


\begin{proof}

Fixing $\overline{u}$ real, with $u_{1}\neq 0$ and $\overline{u}\neq u_{1}\overline{e}_{1}$, choose $g\in SO(3)$ with $g(\overline{e}_{1})={1\over (u_{2}^{2}+u_{3}^{2})^{1\over 2}}(0,u_{2},u_{3})$, so that, for $v\in\mathcal{R}$,  $g(iv(u_{2}^{2}+u_{3}^{2})^{1\over 2}\overline{e}_{1})=iv(0,u_{2},u_{3})$. By the result of Lemma \ref{preserved}, we have that;\\

$B_{iv(0,u_{2},u_{3})}=R_{g}B_{iv(u_{2}^{2}+u_{3}^{2})^{1\over 2}\overline{e}_{1}}R_{g}^{-1}$\\

and, by Lemma \ref{existence}, there exists a unique $\overline{w}$, with $\overline{w}\rightarrow iv(0,u_{2},u_{3})$, such that;\\

$B_{\overline{w}}B_{\overline{u}}=R_{h}B_{v\overline{e}_{1}}$\\

where $h\in SG(3)$. Let $(\overline{E}_{v},\overline{B}_{v})$, be fields in the frames $S_{v}$, travelling with velocity $v\overline{e}_{1}$ relative to $S$, such that $div(\overline{E}_{v}\times\overline{B}_{v})=0$, then, in the rotated frames $R_{h}(S_{v})$, we have, by Lemma \ref{preserved}, that $div(\overline{E}_{v}^{h}\times \overline{B}_{v}^{h})=0$, and this property is preserved in the limit frame $R_{h_{\infty}}(S_{\infty})$, see Lemma \ref{limit} and Lemmas \ref{polynomial},\ref{bounded},\ref{approxinfinity}, \ref{triangles}, so that $div(\overline{E}_{\infty}^{h_{\infty}}\times \overline{B}_{\infty}^{h_{\infty}})=0$ as well. By the same argument, in the rotated frame, $R_{g^{-1}}R_{h_{\infty}}(S_{\infty})$, we have that $div(\overline{E}_{\infty}^{g^{-1}h_{\infty}}\times \overline{B}_{\infty}^{g^{-1}h_{\infty}})=0$. Following the same argument as above, and taking compatible square roots, we have;\\

$lim_{v\rightarrow\infty}\gamma=lim_{v\rightarrow\infty}\alpha_{iv}=lim_{v\rightarrow\infty}\beta_{iv}=lim_{v\rightarrow\infty}\delta_{iv}=lim_{v\rightarrow\infty}\xi_{iv}
=lim_{v\rightarrow\infty}\eta_{iv}=0$\\

$lim_{v\rightarrow\infty}\gamma_{iv}=lim_{v\rightarrow\infty}\theta_{iv}=lim_{iv\rightarrow\infty}{\gamma\beta\over c}$\\

$=lim_{v\rightarrow\infty}{iv\over c^{2}\sqrt{1+{v^{2}\over c^{2}}}}$\\

$={i\over c^{2}}lim_{v\rightarrow\infty}{1\over \sqrt{{1\over v^{2}}+{1\over c^{2}}}}$\\

$={i\over c^{2}}c={i\over c}$\\

$lim_{v\rightarrow\infty}\epsilon_{iv}=lim_{v\rightarrow\infty}{\beta^{2}iv\gamma^{3}\over c^{2}}$\\

$=lim_{v\rightarrow\infty}{-iv^{3}\over c^{4}(1+{v^{2}\over c^{2}})^{3\over 2}}$\\

$={-i\over c^{4}}lim_{v\rightarrow\infty}{1\over({1\over v^{2}}+{1\over c^{2}})^{3\over 2}}$\\

$={-i\over c^{4}}c^{3}={-i\over c}$\\

Taking the limit again of the first equation in $(\dag)$ of Lemma \ref{families}, with $iv$ replacing $v$, see Lemma \ref{infinity}, we obtain, for the transformed quantities in the limit frame $S'$, connected to $R_{g^{-1}}R_{h_{\infty}}(S_{\infty})$ as the limit of boosts with velocity vector $iv(0,u_{2},u_{3})$ ;\\

${i\over c}{\partial p_{11,\infty}\over \partial x}-{i\over c}{\partial g_{1,\infty}\over \partial t}+{i\over c}(f_{1,\infty}+{\partial g_{1,\infty}\over \partial t})=0$\\

which simplifies again to;\\

$f_{1,\infty}=-{\partial p_{11,\infty}\over \partial x}$ $(\dag)$\\

Following the same argument as Lemma \ref{infinity}, and using Maxwell's equations, we have;\\

${\partial\over\partial t}(\overline{E}_{\infty'}\times\overline{B}_{\infty'})_{1}=-{1\over\epsilon_{0}}({\partial p_{12,\infty}\over \partial y}+{\partial p_{13,\infty}\over \partial z})$ $(\dag\dag)$\\

for the transformed fields $(\overline{E}_{\infty'},\overline{B}_{\infty'})$ in $S'$. Let $S''$ be the frame connected to $S'$ by the relation $R_{g^{-1}}(S'')=S'$, where $R_{g^{-1}}(0,u_{2},u_{3})=w\overline{e}_{1}$ and $w=(u_{2}^{2}+u_{3}^{2})^{1\over 2}$. Let $R_{f}(\overline{e}_{1})={\overline{u}\over u}$, where $u=(u_{1}^{2}+u_{2}^{2}+u_{3}^{2})^{1\over 2}$. By the same conjugation result, we have that;\\

$R_{f}B_{u\overline{e}_{1}}R_{f}^{-1}=B_{\overline{u}}$\\

Let $S'''$ be the frame connected to $S''$ by the relation $R_{f}(S''')=S''$, the derivatives on $S'$ transform to $S'''$ by the relations;\\

${\partial\over \partial t}\mapsto {\partial\over \partial t}$\\

${\partial\over \partial y}\mapsto (R_{g^{-1}}R_{f})^{-1}({\partial\over \partial y})=R_{f}^{-1}R_{g}({\partial\over \partial y})$\\

${\partial\over \partial z}\mapsto (R_{g^{-1}}R_{f})^{-1}({\partial\over \partial z})=R_{f}^{-1}R_{g}({\partial\over \partial z})$ $(\sharp)$\\

We have that;\\

$({(0,u_{2},u_{3})\over w},\overline{u})={w^{2}\over w}=w$\\

Let;\\

$T=\{\overline{\theta}:\overline{\theta}\centerdot u\overline{e}_{1}\}=w$\\

so that, as $f\in SO(3)$, $\theta_{1}u=w$, where $\overline{\theta}=(\theta_{1},\theta_{2},\theta_{3})$, and $R_{f}^{-1}R_{g}(\overline{e}_{1})=\overline{\theta}$, with $|\overline{\theta}|=1$. We have that $R_{g}(\overline{e}_{2})=\overline{v}$, with $\overline{v}\in (0,u_{2},u_{3})^{\perp}$ and $|\overline{v}|=1$. Observing that;\\

$(\overline{v},(u_{1},u_{2},u_{3}))=v_{1}u_{1}$\\

$(\overline{v},{(0,u_{2},u_{3})\over w})=0$\\

Let;\\

$T'=\{\overline{\theta}':\overline{\theta}'\centerdot u\overline{e}_{1}=v_{1}u_{1},\overline{\theta}'\centerdot\overline{\theta}=0\}$\\

so that, again as $f\in SO(3)$, $\theta_{1}'u=v_{1}u_{1}$, $\theta'={v_{1}u_{1}\over u}$, $\overline{\theta}'\centerdot\overline{\theta}=0$, and $R_{f}^{-1}R_{g}(\overline{e}_{2})=\overline{\theta}'$, where $\overline{\theta}'=(\theta_{1}',\theta_{2}',\theta_{3}')$ and $|\overline{\theta}'|=1$. As $\{f,g\}\subset SO(3)$ and $\overline{e}_{1}\times\overline{e}_{2}=\overline{e}_{3}$, we have that $R_{h}^{-1}R_{g}(\overline{e}_{3})=\theta''=\theta\times\theta'$. Transforming from $S'$ to $S'''$, we have, using $(\sharp)$, that;\\

${\partial\over\partial t}((\overline{E}_{\infty'}\times\overline{B}_{\infty'},\overline{e}_{1}))\mapsto {\partial\over\partial t}((\overline{E}_{\infty'''}\times\overline{B}_{\infty'''},\overline{\theta}))$\\

$={\partial\over\partial t}({g_{1}\over \epsilon_{0}}{w\over u}+{g_{2}\over \epsilon_{0}}\alpha+{g_{3}\over \epsilon_{0}}\beta)$\\

$={w\over \epsilon_{0}u}{\partial g_{1}\over \partial t}+{\alpha\over \epsilon_{0}}{\partial g_{2}\over \partial t}+{\beta\over \epsilon_{0}}{\partial g_{3}\over \partial t}$ $(\sharp\sharp)$\\

where $\overline{g}=(g_{1},g_{2},g_{3})=\epsilon_{0}(\overline{E}_{\infty''}\times\overline{B}_{\infty''})$, for the transformed fields $\{\overline{E}_{\infty'''},\overline{B}_{\infty'''}\}$ in $S'''$, and $\overline{\theta}=({w\over u},\alpha,\beta)$.\\

$(\overline{E}_{\infty'},\overline{e}_{1})\mapsto (\overline{E}_{\infty'''},\overline{\theta})$\\

$(\overline{E}_{\infty'},\overline{e}_{2})\mapsto (\overline{E}_{\infty'''},\overline{\theta}')$\\

$(\overline{E}_{\infty'},\overline{e}_{3})\mapsto (\overline{E}_{\infty'''},\overline{\theta}\times\overline{\theta}')$\\

$(\overline{E}_{\infty'},\overline{e}_{1})(\overline{E}_{\infty'},\overline{e}_{2})\mapsto (\overline{E}_{\infty'''},\overline{\theta})(\overline{E}_{\infty'''},\overline{\theta}')$\\

$=({e_{1}w\over u}+e_{2}\alpha+e_{3}\beta)({e_{1}v_{1}u_{1}\over u}+e_{2}\gamma+e_{3}\delta)$\\

$={wv_{1}u_{1}\over u^{2}}e_{1}^{2}+({w\gamma \over u}+{v_{1}u_{1}\alpha\over u})e_{1}e_{2}+({w\delta \over u}+{v_{1}u_{1}\beta\over u})e_{1}e_{3}+\alpha\gamma e_{2}^{2}+(\alpha\delta+\beta\gamma)e_{2}e_{3}+\beta\delta e_{3}^{2}$\\

where $\overline{\theta}'=({v_{1}u_{1}\over u},\gamma,\delta)$, $\overline{E}_{\infty'''}=(e_{1},e_{2},e_{3})$. Using the same reasoning for $\overline{B}_{\infty'}$, we have that;\\

$p_{12,S'}\mapsto {wv_{1}u_{1}\over u^{2}}p_{11}+({w\gamma \over u}+{v_{1}u_{1}\alpha\over u})p_{12}+({w\delta \over u}+{v_{1}u_{1}\beta\over u})p_{13}+\alpha\gamma p_{22}+(\alpha\delta+\beta\gamma)p_{23}+\beta\delta p_{33}$\\

$-{\epsilon_{0}\over 2}{wv_{1}u_{1}\over u^{2}}{2\sigma\over \epsilon_{0}}-{\epsilon_{0}\over 2}\alpha\gamma{2\sigma\over \epsilon_{0}}-{\epsilon_{0}\over 2}\beta\delta{2\sigma\over \epsilon_{0}}$\\

$={wv_{1}u_{1}\over u^{2}}p_{11}+({w\gamma \over u}+{v_{1}u_{1}\alpha\over u})p_{12}+({w\delta \over u}+{v_{1}u_{1}\beta\over u})p_{13}+\alpha\gamma p_{22}+(\alpha\delta+\beta\gamma)p_{23}+\beta\delta p_{33}$ $(\sharp\sharp\sharp)$\\

as ${wv_{1}u_{1}\over u^{2}}+\alpha\gamma+\beta\delta=0$, where $(p_{ij})_{1\leq i\leq j\leq 3}$ are the components of the stress tensor and $\sigma$ is the energy term in $S'''$. We have that;\\

$(\overline{E}_{\infty'},\overline{e}_{1})(\overline{E}_{\infty'},\overline{e}_{3})\mapsto (\overline{E}_{\infty'''},\overline{\theta})(\overline{E}_{\infty'''},\overline{\theta}\times\overline{\theta}')$\\

$=({e_{1}w\over u}+e_{2}\alpha+e_{3}\beta)(e_{1}(\alpha\delta-\beta\gamma)+e_{2}({\beta v_{1}u_{1}\over u}-{\delta w\over u})+e_{3}({\gamma w\over u}-{\alpha v_{1}u_{1}\over u}))$\\

$={w\over u}(\alpha\delta-\beta\gamma)e_{1}^{2}$\\

$+({w\over u}({\beta v_{1}u_{1}\over u}-{\delta w\over u})+\alpha(\alpha\delta-\beta\gamma))e_{1}e_{2}$\\

$+({w\over u}({\gamma w\over u}-{\alpha v_{1}u_{1}\over u})+\beta(\alpha\delta-\beta\gamma))e_{1}e_{3}$\\

$+(\alpha({\beta v_{1}u_{1}\over u}-{\delta w\over u}))e_{2}^{2}$\\

$+(\alpha({\gamma w\over u}-{\alpha v_{1}u_{1}\over u})+\beta ({\beta v_{1}u_{1}\over u}-{\delta w\over u}))e_{2}e_{3}$\\

$+\beta({\gamma w\over u}-{\alpha v_{1}u_{1}\over u})e_{3}^{2}$\\

Again, following the same reasoning as above;\\

$p_{13,S'}\mapsto {w\over u}(\alpha\delta-\beta\gamma)p_{11}$\\

$+({w\over u}({\beta v_{1}u_{1}\over u}-{\delta w\over u})+\alpha(\alpha\delta-\beta\gamma))p_{12}$\\

$+({w\over u}({\gamma w\over u}-{\alpha v_{1}u_{1}\over u})+\beta(\alpha\delta-\beta\gamma))p_{13}$\\

$+(\alpha({\beta v_{1}u_{1}\over u}-{\delta w\over u}))p_{22}$\\

$+(\alpha({\gamma w\over u}-{\alpha v_{1}u_{1}\over u})+\beta ({\beta v_{1}u_{1}\over u}-{\delta w\over u}))p_{23}$\\

$+\beta({\gamma w\over u}-{\alpha v_{1}u_{1}\over u})p_{33}$\\

$-{\epsilon_{0}\over 2}{w\over u}(\alpha\delta-\beta\gamma){2\sigma\over \epsilon_{0}}$\\

$-{\epsilon_{0}\over 2}\alpha({\beta v_{1}u_{1}\over u}-{\delta w\over u}){2\sigma\over \epsilon_{0}}$\\

$-{\epsilon_{0}\over 2}\beta({\gamma w\over u}-{\alpha v_{1}u_{1}\over u}){2\sigma\over \epsilon_{0}}$\\

$={w\over u}(\alpha\delta-\beta\gamma)p_{11}$\\

$+({w\over u}({\beta v_{1}u_{1}\over u}-{\delta w\over u})+\alpha(\alpha\delta-\beta\gamma))p_{12}$\\

$+({w\over u}({\gamma w\over u}-{\alpha v_{1}u_{1}\over u})+\beta(\alpha\delta-\beta\gamma))p_{13}$\\

$+(\alpha({\beta v_{1}u_{1}\over u}-{\delta w\over u}))p_{22}$\\

$+(\alpha({\gamma w\over u}-{\alpha v_{1}u_{1}\over u})+\beta ({\beta v_{1}u_{1}\over u}-{\delta w\over u}))p_{23}$\\

$+\beta({\gamma w\over u}-{\alpha v_{1}u_{1}\over u})p_{33}$ $(\sharp\sharp\sharp\sharp)$\\

as ${w\over u}(\alpha\delta-\beta\gamma)+\alpha({\beta v_{1}u_{1}\over u}-{\delta w\over u})+\beta({\gamma w\over u}-{\alpha v_{1}u_{1}\over u})=0$.\\

By $(\sharp)$, we have that;\\

$({\partial\over \partial y})_{S'}\mapsto {v_{1}u_{1}\over u}{\partial\over\partial x}+\gamma {\partial\over\partial y}+\delta{\partial \over \partial z}$\\

$({\partial\over \partial z})_{S'}\mapsto (\alpha\delta-\beta\gamma){\partial\over\partial x}+({\beta v_{1}u_{1}\over u}-{\delta w\over u}){\partial\over\partial y}+({\gamma w\over u}-{\alpha v_{1}u_{1}\over u}){\partial\over\partial z}$ $(\sharp\sharp\sharp\sharp\sharp)$\\

Combining, $(\dag\dag)$, $(\sharp\sharp)$,$(\sharp\sharp\sharp)$,$(\sharp\sharp\sharp\sharp)$,$(\sharp\sharp\sharp\sharp\sharp)$, we obtain the relation in the frame $S'''$;\\

${w\over \epsilon_{0}u}{\partial g_{1}\over \partial t}+{\alpha\over \epsilon_{0}}{\partial g_{2}\over \partial t}+{\beta\over \epsilon_{0}}{\partial g_{3}\over \partial t}$\\

$=-{1\over\epsilon_{0}}[{v_{1}u_{1}\over u}{\partial\over\partial x}+\gamma {\partial\over\partial y}+\delta{\partial \over \partial z}]({wv_{1}u_{1}\over u^{2}}p_{11}+({w\gamma \over u}+{v_{1}u_{1}\alpha\over u})p_{12}+({w\delta \over u}+{v_{1}u_{1}\beta\over u})p_{13}+\alpha\gamma p_{22}+(\alpha\delta+\beta\gamma)p_{23}+\beta\delta p_{33})$\\

$-{1\over\epsilon_{0}}[(\alpha\delta-\beta\gamma){\partial\over\partial x}+({\beta v_{1}u_{1}\over u}-{\delta w\over u}){\partial\over\partial y}+({\gamma w\over u}-{\alpha v_{1}u_{1}\over u}){\partial\over\partial z}]({w\over u}(\alpha\delta-\beta\gamma)p_{11}$\\

$+({w\over u}({\beta v_{1}u_{1}\over u}-{\delta w\over u})+\alpha(\alpha\delta-\beta\gamma))p_{12}$\\

$+({w\over u}({\gamma w\over u}-{\alpha v_{1}u_{1}\over u})+\beta(\alpha\delta-\beta\gamma))p_{13}$\\

$+(\alpha({\beta v_{1}u_{1}\over u}-{\delta w\over u}))p_{22}$\\

$+(\alpha({\gamma w\over u}-{\alpha v_{1}u_{1}\over u})+\beta ({\beta v_{1}u_{1}\over u}-{\delta w\over u}))p_{23}$\\

$+\beta({\gamma w\over u}-{\alpha v_{1}u_{1}\over u})p_{33})$ $(\dag\dag\dag)$\\

Let $S''''$ be the frame connected to $S'''$ by the relation $B_{u\overline{e}_{1}}(S'''')=S'''$. Using the formula for the boost matrix, we have that;\\

$({\partial \over \partial x})_{S'''}\mapsto \gamma_{u}({\partial\over\partial x}+{u\over c^{2}}{\partial \over \partial t})$\\

$({\partial \over \partial y})_{S'''}\mapsto {\partial \over \partial y}$\\

$({\partial \over \partial z})_{S'''}\mapsto {\partial \over \partial z}$\\

$({\partial \over \partial t})_{S'''}\mapsto \gamma_{u}({\partial\over\partial t}+u{\partial\over\partial x})$ $(\dag\dag\dag\dag)$\\

and, using the energy stress tensor;\\

$(cg_{1})_{S'''}\mapsto({-{u\gamma_{u}^{2}\over c}})\sigma+(c\gamma_{u}^{2}+{u^{2}\gamma_{u}^{2}\over c})g_{1}-({u\gamma_{u}^{2}\over c})p_{11}$\\

$(cg_{2})_{S'''}\mapsto\gamma_{u}cg_{2}-{\gamma_{u}u\over c}p_{21}$\\

$(cg_{3})_{S'''}\mapsto\gamma_{u}cg_{3}-{\gamma_{u}u\over c}p_{31}$\\

$(p_{11})_{S'''}\mapsto \gamma_{u}^{2}({u^{2}\sigma\over c^{2}}-2ug_{1}+p_{11})$\\

$(p_{12})_{S'''}\mapsto \gamma_{u}(-ug_{2}+p_{12})$\\

$(p_{13})_{S'''}\mapsto \gamma_{u}(-ug_{2}+p_{13})$\\

$(p_{22})_{S'''}\mapsto p_{22}$\\

$(p_{23})_{S'''}\mapsto p_{23}$\\

$(p_{33})_{S'''}\mapsto p_{33}$ $(\dag\dag\dag\dag\dag)$\\

Using the relations $(\dag\dag\dag),(\dag\dag\dag\dag),(\dag\dag\dag\dag\dag)$, we obtain the following relation in $S''''$;\\

${w\over \epsilon_{0}u}\gamma_{u}({\partial\over\partial t}+u{\partial\over\partial x}){1\over c}(({-{u\gamma_{u}^{2}\over c}})\sigma+(c\gamma_{u}^{2}+{u^{2}\gamma_{u}^{2}\over c})g_{1}-({u\gamma_{u}^{2}\over c})p_{11})$\\

$+{\alpha\over \epsilon_{0}u}\gamma_{u}({\partial\over\partial t}+u{\partial\over\partial x}){1\over c}(\gamma_{u}cg_{2}-{\gamma_{u}u\over c}p_{21})$\\

$+{\beta\over \epsilon_{0}u}\gamma_{u}({\partial\over\partial t}+u{\partial\over\partial x}){1\over c}(\gamma_{u}cg_{3}-{\gamma_{u}u\over c}p_{31})$\\

$=-{1\over\epsilon_{0}}[{v_{1}u_{1}\gamma_{u}\over u}({\partial\over\partial x}+{u\over c^{2}}{\partial \over \partial t})+\gamma{\partial \over \partial y}+\delta{\partial \over \partial z}]({wv_{1}u_{1}\over u^{2}}\gamma_{u}^{2}({u^{2}\sigma\over c^{2}}-2ug_{1}+p_{11})+({w\gamma \over u}+{v_{1}u_{1}\alpha\over u})\gamma_{u}(-ug_{2}+p_{12})+({w\delta \over u}+{v_{1}u_{1}\beta\over u})\gamma_{u}(-ug_{2}+p_{13})+\alpha\gamma p_{22}+(\alpha\delta+\beta\gamma)p_{23}+\beta\delta p_{33})$\\

$-{1\over\epsilon_{0}}[(\alpha\delta-\beta\gamma)\gamma_{u}({\partial\over\partial x}+{u\over c^{2}}{\partial \over \partial t})+({\beta v_{1}u_{1}\over u}-{\delta w\over u}){\partial\over\partial y}+({\gamma w\over u}-{\alpha v_{1}u_{1}\over u}){\partial\over\partial z}]$\\

$({w\over u}(\alpha\delta-\beta\gamma)\gamma_{u}^{2}({u^{2}\sigma\over c^{2}}-2ug_{1}+p_{11})+({w\over u}({\beta v_{1}u_{1}\over u}-{\delta w\over u})+\alpha(\alpha\delta-\beta\gamma))\gamma_{u}(-ug_{2}+p_{12})$\\

$+({w\over u}({\gamma w\over u}-{\alpha v_{1}u_{1}\over u})+\beta(\alpha\delta-\beta\gamma))\gamma_{u}(-ug_{3}+p_{13})+(\alpha({\beta v_{1}u_{1}\over u}-{\delta w\over u}))p_{22}$\\

$+(\alpha({\gamma w\over u}-{\alpha v_{1}u_{1}\over u})+\beta ({\beta v_{1}u_{1}\over u}-{\delta w\over u}))p_{23}+\beta({\gamma w\over u}-{\alpha v_{1}u_{1}\over u})p_{33})$ $(*****)$\\

Let;\\

$\lambda_{ij}=(\theta_{i}\theta_{j}'+\theta_{i}'\theta_{j})$ for $1\leq i<j\leq 3$\\

$\mu_{ij}=(\theta_{i}\theta_{j}''+\theta_{i}''\theta_{j})$ for $1\leq i<j\leq 3$\\

$\nu_{ij}=(\theta_{i}'\theta_{j}'+\theta_{i}''\theta_{j}'')$ for $1\leq i\leq j\leq 3$\\

Then, using Lemma \ref{algebra}, we can rearrange $(*****)$, to obtain;\\

${-w^{3}\gamma_{u}^{3}\over \epsilon_{0}uc^{2}}{\partial\sigma\over \partial x}+{w\gamma_{u}^{2}u\nu_{12}\over \epsilon_{0}c^{2}}{\partial\sigma\over \partial y}+{w\gamma_{u}^{2}u\nu_{13}\over \epsilon_{0}c^{2}}{\partial\sigma\over \partial z}+{w\gamma_{u}^{3}(-1+{1\over c^{2}}{u_{1}^{2}\over u^{2}})\over \epsilon_{0}c^{2}}{\partial\sigma\over \partial t}$\\

$+{w\gamma_{u}^{3}(u^{2}+{2w^{2}c^{2}\over u^{2}}-c^{2})\over \epsilon_{0}c^{2}}{\partial g_{1}\over \partial x}-{2w\gamma_{u}^{2}\nu_{12}\over\epsilon_{0}}{\partial g_{1}\over \partial y}-{2w\gamma_{u}^{2}\nu_{13}\over\epsilon_{0}}{\partial g_{1}\over \partial z}+{w\gamma_{u}^{3}(2w^{2}-u^{2}+c^{2})\over \epsilon_{0} uc^{2}}{\partial g_{1}\over \partial t}$\\

$+{u\gamma_{u}^{2}(\theta_{2}-\theta_{1}'\lambda_{12}-\theta_{1}''\mu_{12})\over\epsilon_{0}}{\partial g_{2}\over \partial x}+{u\gamma_{u}(-\theta_{2}'\lambda_{12}-\theta_{2}''\mu_{12})\over\epsilon_{0}}{\partial g_{2}\over \partial y}+{u\gamma_{u}(-\theta_{3}'\lambda_{12}-\theta_{3}''\mu_{12})\over\epsilon_{0}}{\partial g_{2}\over \partial z}$\\

$+{\gamma_{u}^{2}(\theta_{2}-{u^{2}\over c^{2}}(\theta_{1}'\lambda_{12}+\theta_{1}''\mu_{12}))\over\epsilon_{0}}{\partial g_{2}\over \partial t}+{u\gamma_{u}^{2}(\theta_{3}-\theta_{1}'\lambda_{13}-\theta_{1}''\mu_{13})\over\epsilon_{0}}{\partial g_{3}\over \partial x}+{u\gamma_{u}(-\theta_{2}'\lambda_{13}-\theta_{2}''\mu_{13})\over\epsilon_{0}}{\partial g_{3}\over \partial y}$\\

$+{u\gamma_{u}(-\theta_{3}'\lambda_{13}-\theta_{3}''\mu_{13})\over\epsilon_{0}}{\partial g_{3}\over \partial z}+{\gamma_{u}^{2}(\theta_{3}-{u^{2}\over c^{2}}(\theta_{1}'\lambda_{13}+\theta_{1}''\mu_{13}))\over\epsilon_{0}}{\partial g_{3}\over \partial t}+{\gamma_{u}^{3}w(1-{w^{2}\over c^{2}}-{u^{2}\over c^{2}})\over\epsilon_{0}u}{\partial p_{11}\over \partial x}$\\

$+{\gamma_{u}^{2}w\nu_{12}\over\epsilon_{0}u}{\partial p_{11}\over \partial y}+{\gamma_{u}^{2}w\nu_{13}\over\epsilon_{0}u}{\partial p_{11}\over \partial z}-{\gamma_{u}^{3}w^{3}\over\epsilon_{0}c^{2}u^{2}}{\partial p_{11}\over \partial t}+{\gamma_{u}^{2}(-\theta_{2}u^{2}+c^{2}\theta_{1}'\lambda_{12}+c^{2}\theta_{1}''\mu_{12})\over c^{2}\epsilon_{0}}{\partial p_{12}\over \partial x}$\\

$+{\gamma_{u}(\theta_{2}'\lambda_{12}+\theta_{2}''\mu_{12})\over\epsilon_{0}}{\partial p_{12}\over \partial y}+{\gamma_{u}(\theta_{3}'\lambda_{12}+\theta_{3}''\mu_{12})\over\epsilon_{0}}{\partial p_{12}\over \partial z}+{\gamma_{u}^{2}u(-\theta_{2}+\theta_{1}'\lambda_{12}+\theta_{1}''\mu_{12})\over c^{2}\epsilon_{0}}{\partial p_{12}\over \partial t}$\\

$+{\gamma_{u}^{2}(-\theta_{3}u^{2}+c^{2}\theta_{1}'\lambda_{13}+c^{2}\theta_{1}''\mu_{13})\over c^{2}\epsilon_{0}}{\partial p_{13}\over \partial x}+{\gamma_{u}(\theta_{2}'\lambda_{13}+\theta_{2}''\mu_{13})\over\epsilon_{0}}{\partial p_{13}\over \partial y}+{\gamma_{u}(\theta_{3}'\lambda_{13}+\theta_{3}''\mu_{13})\over\epsilon_{0}}{\partial p_{13}\over \partial z}$\\

$+{\gamma_{u}^{2}u(-\theta_{3}+\theta_{1}'\lambda_{13}+\theta_{1}''\mu_{13})\over c^{2}\epsilon_{0}}{\partial p_{13}\over \partial t}+{\theta_{2}\gamma_{u}\nu_{12}\over\epsilon_{0}}{\partial p_{22}\over \partial x}+{\theta_{2}\nu_{22}\over\epsilon_{0}}{\partial p_{22}\over \partial y}+{\theta_{2}\nu_{23}\over\epsilon_{0}}{\partial p_{22}\over \partial z}+{\theta_{2} u \gamma_{u}\nu_{12}\over\epsilon_{0}c^{2}}{\partial p_{22}\over \partial t}$\\

$+{\gamma_{u}(\theta_{1}'\lambda_{23}+\theta_{1}''\mu_{23})\over\epsilon_{0}}{\partial p_{23}\over \partial x}+{(\theta_{2}'\lambda_{23}+\theta_{2}''\mu_{23})\over\epsilon_{0}}{\partial p_{23}\over \partial y}+{(\theta_{3}'\lambda_{23}+\theta_{3}''\mu_{23})\over\epsilon_{0}}{\partial p_{23}\over \partial z}+{\gamma_{u}^{2}u(\theta_{1}'\lambda_{23}+\theta_{1}''\mu_{23})\over c^{2}\epsilon_{0}}{\partial p_{23}\over \partial t}$\\

$+{\theta_{3}\gamma_{u}\nu_{13}\over\epsilon_{0}}{\partial p_{33}\over \partial x}+{\theta_{3}\nu_{23}\over\epsilon_{0}}{\partial p_{33}\over \partial y}+{\theta_{3}\nu_{33}\over\epsilon_{0}}{\partial p_{33}\over \partial z}+{\theta_{3} u\gamma_{u}\over\epsilon_{0} c^{2}}{\partial p_{33}\over \partial t}=0$ $(******)$\\

\end{proof}

\begin{lemma}
\label{algebra}
With notation as in the previous Lemma \ref{infinity1}, we have that;\\

$(\alpha\delta-\beta\gamma)^{2}+{(v_{1}u_{1})^{2}\over u^{2}}+{w^{2}\over u^{2}}=1$\\

\end{lemma}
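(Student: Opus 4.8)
The plan is to recognise that the three vectors $\overline{\theta},\overline{\theta}',\overline{\theta}''$ introduced in the proof of Lemma \ref{infinity1} form an orthonormal frame, and that $\alpha\delta-\beta\gamma$ is nothing but the first coordinate of $\overline{\theta}''$. By construction $\overline{\theta}=R_{f}^{-1}R_{g}(\overline{e}_{1})$, $\overline{\theta}'=R_{f}^{-1}R_{g}(\overline{e}_{2})$ and $\overline{\theta}''=R_{f}^{-1}R_{g}(\overline{e}_{3})=\overline{\theta}\times\overline{\theta}'$, where $R_{f}^{-1}R_{g}\in SO(3)$. Since an element of $SO(3)$ carries the standard orthonormal basis to an orthonormal basis, the set $\{\overline{\theta},\overline{\theta}',\overline{\theta}''\}$ is orthonormal; this orthonormality is the entire content of the lemma.

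First I would record the relevant coordinates. From the construction in Lemma \ref{infinity1} we have $\theta_{1}=\frac{w}{u}$ and $\theta_{1}'=\frac{v_{1}u_{1}}{u}$, and writing $\overline{\theta}=(\frac{w}{u},\alpha,\beta)$ and $\overline{\theta}'=(\frac{v_{1}u_{1}}{u},\gamma,\delta)$, a direct expansion of the first component of the cross product gives $\theta_{1}''=(\overline{\theta}\times\overline{\theta}')_{1}=\theta_{2}\theta_{3}'-\theta_{3}\theta_{2}'=\alpha\delta-\beta\gamma$. Thus the three numbers appearing in the claimed identity are exactly the first coordinates $\theta_{1},\theta_{1}',\theta_{1}''$ of $\overline{\theta},\overline{\theta}',\overline{\theta}''$.

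The key step is then to apply the completeness (Parseval) identity for the orthonormal basis $\{\overline{\theta},\overline{\theta}',\overline{\theta}''\}$ to the fixed unit vector $\overline{e}_{1}$, giving $1=|\overline{e}_{1}|^{2}=(\overline{e}_{1}\centerdot\overline{\theta})^{2}+(\overline{e}_{1}\centerdot\overline{\theta}')^{2}+(\overline{e}_{1}\centerdot\overline{\theta}'')^{2}=\theta_{1}^{2}+(\theta_{1}')^{2}+(\theta_{1}'')^{2}$, whence $\frac{w^{2}}{u^{2}}+\frac{(v_{1}u_{1})^{2}}{u^{2}}+(\alpha\delta-\beta\gamma)^{2}=1$, the asserted identity. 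Equivalently, one observes that the matrix $R_{f}^{-1}R_{g}\in SO(3)$ has columns $\overline{\theta},\overline{\theta}',\overline{\theta}''$, so its rows are orthonormal, and the unit-norm condition on its first row $(\theta_{1},\theta_{1}',\theta_{1}'')$ is precisely the claim. There is no genuine obstacle; the only point needing care is the bookkeeping identification of $\alpha\delta-\beta\gamma$ with $(\overline{\theta}\times\overline{\theta}')_{1}$ and of $\theta_{1},\theta_{1}'$ with $\frac{w}{u},\frac{v_{1}u_{1}}{u}$, all of which are already fixed by the construction in Lemma \ref{infinity1}.
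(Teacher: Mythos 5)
Your proof is correct, and it takes a genuinely cleaner route than the paper's. The paper proves the identity by direct computation: it starts from $|\overline{\theta}''|=1$ with $\overline{\theta}''=\overline{\theta}\times\overline{\theta}'$, writes out the three components of the cross product explicitly, and then uses the normalisations $|\overline{\theta}|=|\overline{\theta}'|=1$ together with the orthogonality relation $\alpha\gamma+\beta\delta=-{wv_{1}u_{1}\over u^{2}}$ to cancel the cross terms, an expansion-and-cancellation argument (whose intermediate lines in the paper even contain minor slips, e.g. $u_{1}v_{2}$ where $u_{1}v_{1}$ is meant, and an inconsistent factor of $u^{2}$) that eventually collapses to the stated identity. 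You instead observe that $({w\over u},{v_{1}u_{1}\over u},\alpha\delta-\beta\gamma)$ is exactly the first row of the matrix $R_{f}^{-1}R_{g}\in SO(3)$ whose columns are $\overline{\theta},\overline{\theta}',\overline{\theta}''$, so the identity is nothing but the unit-norm condition on that row --- equivalently, Parseval's identity for the orthonormal basis $\{\overline{\theta},\overline{\theta}',\overline{\theta}''\}$ applied to $\overline{e}_{1}$. Both arguments ultimately rest on the same fact, namely that $R_{f}^{-1}R_{g}$ carries the standard basis to $\{\overline{\theta},\overline{\theta}',\overline{\theta}''\}$, but your version replaces the component algebra by a single structural observation, which eliminates the error-prone bookkeeping; the paper's version needs slightly less input (only $|\overline{\theta}\times\overline{\theta}'|=1$ and the orthonormality of $\overline{\theta},\overline{\theta}'$, without invoking that $\overline{\theta}''$ is the image of $\overline{e}_{3}$ under the rotation), at the cost of a longer and more fragile calculation. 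Your identification of the three quantities as $\theta_{1},\theta_{1}',\theta_{1}''$ matches the construction in Lemma \ref{infinity1}, so there is no gap.
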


\begin{proof}

By the fact that $|\theta''|=1$, and the definition of $\theta\times\theta'$, we have that;\\

$(\alpha\delta-\beta\gamma)^{2}+({v_{1}u_{1}\beta\over u}-{w\delta\over u})^{2}+({w\gamma\over u}-{\alpha v_{1}u_{1}\over u})^{2}$\\

$=(\alpha\delta-\beta\gamma)^{2}+{(v_{1}u_{1})^{2}\over u^{2}}(1-{w^{2}\over u^{2}})+{w^{2}\over u^{2}}(1-{{u_{1}v_{2}}^{2}\over u^{2}})-2{w\over u^{2}}(u_{1}v_{1})(\alpha\gamma+\beta\delta)=1$ $(*)$\\

As $\theta$ and $\theta'$ are orthogonal, we have that $\alpha\gamma+\beta\delta+{wv_{1}u_{1}\over u^{2}}=0$. Substituting into $(*)$, we obtain that;\\

$(\alpha\delta-\beta\gamma)^{2}+(v_{1}u_{1})^{2}(1-{w^{2}\over u^{2}})+{w^{2}\over u^{2}}(1-{{u_{1}v_{2}}^{2}\over u^{2}})-2{w\over u}(u_{1}v_{1})(-{wv_{1}u_{1}\over u^{2}})$\\

$=(\alpha\delta-\beta\gamma)^{2}+{(v_{1}u_{1})^{2}\over u^{2}}+{w^{2}\over u^{2}}=1$\\

\end{proof}

\begin{lemma}
\label{newequations}
With notation as in Lemma \ref{infinity}, for the equations $(******)$, we must have that;\\

${\partial\sigma\over \partial y}={\partial\sigma\over \partial z}={\partial g_{1}\over \partial x}={\partial g_{2}\over \partial y}={\partial g_{2}\over \partial z}={\partial g_{2}\over \partial t}={\partial g_{3}\over \partial y}={\partial g_{3}\over \partial z}={\partial g_{3}\over \partial t}={\partial p_{12}\over \partial x}={\partial p_{13}\over \partial x}$\\

$={\partial p_{22}\over \partial y}={\partial p_{22}\over \partial z}={\partial p_{22}\over \partial t}={\partial p_{23}\over \partial y}={\partial p_{23}\over \partial z}={\partial p_{33}\over \partial y}={\partial p_{33}\over \partial z}={\partial p_{33}\over \partial t}=0$\\

\end{lemma}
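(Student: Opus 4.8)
The plan is to read $(******)$ not as a single PDE but as a \emph{family} of linear constraints, one for each admissible $\overline{u}$, on the finitely many derivative values $\{\partial_\ast\sigma,\ \partial_\ast g_i,\ \partial_\ast p_{jk}\}$ at the fixed point $(\overline{x}_0,t_0)$. These derivative values are fixed numbers coming from the single pair $(\overline{E},\overline{B})$, whereas every coefficient in $(******)$ is an explicit algebraic function of free parameters: the real vector $\overline{u}$ (ranging over the open set $u_1\neq 0$, $\overline{u}\neq u_1\overline{e}_1$), together with the residual rotational freedom in the auxiliary maps $f,g\in SO(3)$, which fix $R_f(\overline{e}_1)=\overline{u}/u$ and $g(\overline{e}_1)=\frac{1}{w}(0,u_2,u_3)$ but leave a circle of choices for $\overline{v}=R_g(\overline{e}_2)$, hence for $v_1$ and for the unit vectors $\overline{\theta},\overline{\theta}',\overline{\theta}''$. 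Thus $(******)$ is one linear relation $\sum_{\mathrm{term}}C_{\mathrm{term}}(\overline{u},\cdot)\,D_{\mathrm{term}}=0$ holding identically over this four–parameter family, and the strategy is to extract the vanishing of individual $D_{\mathrm{term}}$ by matching coefficients of independent parameter functions, equivalently by differentiating in the parameters at a base value and reading off the resulting constraints.

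First I would exploit the rotational freedom with $\overline{u}$ (hence $u$, $w=(u_2^2+u_3^2)^{1/2}$ and $\gamma_u$) held fixed. As the angle varies, $\overline{\theta},\overline{\theta}',\overline{\theta}''$ rotate subject only to the orthonormality relations forced by $f,g\in SO(3)$, so the bilinears $\lambda_{ij},\mu_{ij},\nu_{ij}$ of Lemma \ref{infinity1} vary independently. Matching coefficients of these bilinears isolates the blocks of terms they multiply, namely the $g_2,g_3,p_{12},p_{13},p_{22},p_{23},p_{33}$ groups, and forces the listed derivatives among them (for instance $\partial_y g_2=\partial_z g_2=\partial_t g_2=0$, and likewise the $y,z,t$ derivatives of $g_3,p_{22},p_{33}$ and the $y,z$ derivatives of $p_{23}$). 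Next I would vary the magnitude of $\overline{u}$, and thereby $\gamma_u$, holding the direction fixed; the coefficients then separate by their powers of $\gamma_u$ and their homogeneity in $u,w,v_1u_1$, so matching these peels off the remaining isolated terms $\partial_y\sigma,\partial_z\sigma,\partial_x g_1,\partial_x p_{12},\partial_x p_{13}$. Throughout, the identity of Lemma \ref{algebra} and the orthogonality relation $\alpha\gamma+\beta\delta+\frac{wv_1u_1}{u^2}=0$ are used to put the coefficient functions into a normal form in which the independent monomials become visible.

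It is essential to check that exactly the nineteen listed derivatives, and no others, are forced to vanish: the surviving twenty–one (the $x,t$ derivatives of $\sigma$, the $y,z,t$ derivatives of $g_1$, all four derivatives of $p_{11}$, and so on) are precisely those whose coefficient functions are \emph{linearly dependent} across the family, so that coefficient–matching yields genuine relations among them rather than isolating any one. I would confirm this by exhibiting, for each of the nineteen, an independent parameter function whose coefficient in the expanded $(******)$ is a nonzero constant times that single derivative, and by verifying that every remaining contribution assembles only into such dependent combinations.

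The main obstacle is the linear–algebraic bookkeeping: organizing the expansion of $(******)$ against a basis of independent parameter functions (powers of $\gamma_u$, homogeneous monomials in $u,w,v_1u_1$, and the independent bilinears in $\overline{\theta},\overline{\theta}',\overline{\theta}''$), and then verifying that the resulting coefficient matrix has exactly the rank needed so that its solution space is cut out by the vanishing of precisely these nineteen derivatives. The delicate point is non–degeneracy: one must show that the four–parameter family genuinely produces enough independent rows to isolate each of the nineteen, which is where the non–trivial relation of Lemma \ref{algebra} and the orthonormality of $\overline{\theta},\overline{\theta}',\overline{\theta}''$ carry the essential weight, preventing the coefficients from collapsing into fewer independent functions than the count demands.
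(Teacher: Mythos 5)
Your general philosophy --- read $(******)$ as an algebraic identity in the free parameters and extract linear constraints on the fixed derivative values by matching coefficients of independent parameter functions --- is indeed the philosophy of the paper's proof, which (after first sending $u\rightarrow\infty$ to simplify $(******)$ to $(*******)$) expands in the circle angle and in $s=w/u$ and equates coefficients of $1,\theta,\theta^{2}$ and of powers of $s$. However, your proposal has a genuine gap at its central step: the claim that, with $\overline{u}$ fixed, the bilinears $\lambda_{ij},\mu_{ij},\nu_{ij}$ ``vary independently'' over the residual rotational freedom, so that matching their coefficients isolates the blocks of derivatives they multiply. This is false, because $\{\overline{\theta},\overline{\theta}',\overline{\theta}''\}$ is an orthonormal basis: completeness gives $\theta_{i}\theta_{j}+\theta_{i}'\theta_{j}'+\theta_{i}''\theta_{j}''=\delta_{ij}$, i.e. $\nu_{ij}=\delta_{ij}-\theta_{i}\theta_{j}$, so the $\nu$'s are determined by $\overline{\theta}$ alone, and the very combinations that occur as coefficients in $(*******)$ collapse; for instance $\theta_{2}'\lambda_{12}+\theta_{2}''\mu_{12}=\theta_{1}\nu_{22}+\theta_{2}\nu_{12}=\theta_{1}-2\theta_{1}\theta_{2}^{2}$, a polynomial in $\overline{\theta}$ alone, completely insensitive to the circle of choices of $\overline{\theta}'$ that you are counting on as an independent parameter. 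Consequently the coefficient functions are far fewer in number (as linearly independent functions) than your block-isolation argument requires, and matching cannot decouple the $g_{2},g_{3},p_{12},p_{13},p_{22},p_{23},p_{33}$ groups. The paper's own computation makes this concrete: on the slice $\kappa=0$, $v_{1}=0$, full coefficient matching in $\theta$ and $s$ produces only relations among \emph{combinations} of derivatives (the quantities $\mu_{1},\dots,\mu_{8}$ there, e.g. $\mu_{1}-2\mu_{3}+3\mu_{5}-4\mu_{7}-2\mu_{8}=0$, with pairs such as ${\partial p_{12}\over \partial x}$ and ${\partial p_{23}\over \partial z}$ entering with identical coefficients), and never the vanishing of any single listed derivative.

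The second, related omission is the ingredient by which the paper escapes this under-determination: the reflection symmetry of Lemma \ref{symmetrylemma} (the permutation $\tau_{23}$, i.e. $y\leftrightarrow z$), which converts the relations $(A)$ already derived into the companion relations $(B)$; only the combined system closes, via the elimination $(C)$, the evaluation of a $4\times 4$ coefficient matrix at four special angles with $det(A)={19\over 72\epsilon_{0}^{4}}\neq 0$, and one final explicit parameter choice ($s={1\over 2}$, $v_{1}=1$, $1+\kappa^{2}={5\over 3}$) showing the last surviving coefficient is $-{3ic\over 80\epsilon_{0}}\neq 0$, which forces ${\partial g_{2}\over \partial z}={\partial g_{3}\over \partial y}=0$. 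Your plan contains no counterpart to the symmetry step, and your proposed non-degeneracy check --- exhibiting, for each of the nineteen derivatives, a parameter function whose coefficient isolates exactly that derivative --- is precisely what the collapsed structure of the coefficients prevents. As written, the proposal would stall at a system of mixed relations and could not reach the stated conclusion.
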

\begin{proof}
Note that as $u\rightarrow\infty$, $\gamma_{u}\sim {c\over iu}$. We let ${u_{2}\over u_{1}}=\kappa$ and ${u_{3}\over u_{1}}=\lambda$, with $u_{1}\neq 0$, so that $w=su$, with $s=({1+\kappa^{2}\over 1+\kappa^{2}+\lambda^{2}})^{1\over 2}$. The equations $(******)$ in Lemma \ref{infinity} hold for all $u$ with $0<u<c$, and are algebraic, so letting $u\rightarrow \infty$, we obtain that;\\

${-\nu_{12}s\over \epsilon_{0}}{\partial\sigma\over \partial y}-{\nu_{13}s\over \epsilon_{0}}{\partial\sigma\over \partial z}+{sic\over \epsilon_{0}}{\partial g_{1}\over \partial x}-{ic(-\theta_{2}'\lambda_{12}-\theta_{2}''\mu_{12})\over \epsilon_{0}}{\partial g_{2}\over \partial y}-{ic(-\theta_{3}'\lambda_{12}-\theta_{3}''\mu_{12})\over\epsilon_{0}}{\partial g_{2}\over \partial z}$\\

$+{(\theta_{1}'\lambda_{12}+\theta_{1}''\mu_{12})\over\epsilon_{0}}{\partial g_{2}\over \partial t}-{ic(-\theta_{2}'\lambda_{13}-\theta_{2}''\mu_{13})\over\epsilon_{0}}{\partial g_{3}\over \partial y}-{ic(-\theta_{3}'\lambda_{13}-\theta_{3}''\mu_{13})\over\epsilon_{0}}{\partial g_{3}\over \partial z}+{(\theta_{1}'\lambda_{13}+\theta_{1}''\mu_{13})\over\epsilon_{0}}{\partial g_{3}\over \partial t}$\\

$+{\theta_{2}\over \epsilon_{0}}{\partial p_{12}\over \partial x}+{\theta_{3}\over \epsilon_{0}}{\partial p_{13}\over \partial x}+{\theta_{2}\nu_{22}\over\epsilon_{0}}{\partial p_{22}\over \partial y}+{\theta_{2}\nu_{23}\over\epsilon_{0}}{\partial p_{22}\over \partial z}-{i\theta_{2}\nu_{12}\over\epsilon_{0}c}{\partial p_{22}\over \partial t}+{(\theta_{2}'\lambda_{23}+\theta_{2}''\mu_{23})\over\epsilon_{0}}{\partial p_{23}\over \partial y}$\\

$+{(\theta_{3}'\lambda_{23}+\theta_{3}''\mu_{23})\over\epsilon_{0}}{\partial p_{23}\over \partial z}+{\theta_{3}\nu_{23}\over\epsilon_{0}}{\partial p_{33}\over \partial y}+{\theta_{3}\nu_{33}\over\epsilon_{0}}{\partial p_{33}\over \partial z}-{i\over\epsilon_{0} c}{\partial p_{33}\over \partial t}=0$ $(*******)$\\

where;\\

$\theta_{1}=s$, $\theta_{2}=\alpha$, $\theta_{3}=\beta$\\

$\theta_{1}'={sv_{1}\over 1+\kappa^{2}}$, $\theta_{2}'=\gamma$, $\theta_{3}'=\delta$\\

$\theta_{1}''=\alpha\delta-\beta\gamma$, $\theta_{2}''={s\beta v_{1}\over \sqrt{1+\kappa^{2}}}-s\delta$, $\theta_{3}''=s\gamma-{s\alpha v_{1}\over \sqrt{1+\kappa^{2}}}$\\

$\lambda_{12}=s\gamma+{s\alpha v_{1}\over \sqrt{1+\kappa^{2}}}$\\

$\lambda_{13}=s\delta+{s\beta v_{1}\over \sqrt{1+\kappa^{2}}}$\\

$\lambda_{23}=\alpha\delta+\beta\gamma$\\

$\mu_{12}=s({s\beta v_{1}\over \sqrt{1+\kappa^{2}}}-s\delta)+\alpha(\alpha\delta-\beta\gamma)$\\

$\mu_{13}=s(s\gamma-{s\alpha v_{1}\over \sqrt{1+\kappa^{2}}})+\beta(\alpha\delta-\beta\gamma)$\\

$\mu_{23}=\alpha(s\gamma-{s\alpha v_{1}\over \sqrt{1+\kappa^{2}}})+\beta({s\beta v_{1}\over \sqrt{1+\kappa^{2}}}-s\delta)$\\

$\nu_{11}={v_{1}^{2}s^{2}\over 1+\kappa^{2}}+(\alpha\delta-\beta\gamma)^{2}$\\

$\nu_{12}={s\gamma v_{1}\over \sqrt{1+\kappa^{2}}}+(\alpha\delta-\beta\gamma)({s\beta v_{1}\over \sqrt{1+\kappa^{2}}}-s\delta)$\\

$\nu_{13}={s\delta v_{1}\over \sqrt{1+\kappa^{2}}}+(\alpha\delta-\beta\gamma)(s\gamma-{s\alpha v_{1}\over \sqrt{1+\kappa^{2}}})$\\

$\nu_{22}=\gamma^{2}+({s\beta v_{1}\over \sqrt{1+\kappa^{2}}}-s\delta)^{2}$\\

$\nu_{23}=\gamma\delta+({s\beta v_{1}\over \sqrt{1+\kappa^{2}}}-s\delta)(s\gamma-{s\alpha v_{1}\over \sqrt{1+\kappa^{2}}})$\\

$\nu_{33}=\delta^{2}+(s\gamma-{s\alpha v_{1}\over \sqrt{1+\kappa^{2}}})^{2}$\\

and, by the orthonormality relations between $\overline{\theta}$ and $\overline{\theta}'$;\\

$s^{2}+\alpha^{2}+\beta^{2}=1$\\

${v_{1}^{2}\over 1+\kappa^{2}}+\gamma^{2}+\delta^{2}=1$\\

${sv_{1}\over \sqrt{1+\kappa^{2}}}+\alpha\gamma+\beta\delta=0$\\

$|v_{1}|\leq 1$ $(\dag')$\\

Now, take $\kappa=0$, $v_{1}=0$, $0\leq s\leq 1$, $\tau=(1-s^{2})^{1\over 2}$, $\alpha=\tau cos(\theta)$, $\beta=\tau sin(\theta)$, $\gamma=-sin(\theta)$, $\delta=cos(\theta)$, $0\leq \theta<2\pi$, then it is easily verified that the conditions of $(\dag')$ are satisfied. Substituting into the equations $(*******)$, and taking the power series expansions of the functions involving $\theta$, we can equate coefficients in $\{1,\theta,\theta^{2}\}$ respectively, to obtain the following set of equations;\\

$(i)$ ${s^{2}\tau\over \epsilon_{0}}{\partial \sigma\over \partial y}+{sic\over \epsilon_{0}}{\partial g_{1}\over \partial x}-{sic(\tau^{2}-s^{2})\over \epsilon_{0}}{\partial g_{2}\over \partial y}+{\tau(\tau^{2}-s^{2})\over \epsilon_{0}}{\partial g_{2}\over \partial t}+{sic\over\epsilon_{0}}{\partial g_{3}\over \partial z}+{\tau\over\epsilon_{0}}{\partial p_{12}\over \partial x}+{s^{2}\tau\over\epsilon_{0}}{\partial p_{22}\over \partial y}$\\

$+{is\tau^{2}\over c\epsilon_{0}}{\partial p_{22}\over \partial t}+{\tau\over\epsilon_{0}}{\partial p_{23}\over \partial z}-{i\over c\epsilon_{0}}{\partial p_{33}\over \partial t}=0$\\

$(ii)$ ${s^{2}\tau\over \epsilon_{0}}{\partial \sigma\over \partial z}-{ics(1+(\tau^{2}-s^{2}))\over \epsilon_{0}}{\partial g_{2}\over \partial z}-{ics(1+(\tau^{2}-s^{2}))\over \epsilon_{0}}{\partial g_{3}\over \partial y}+{\tau(\tau^{2}-s^{2})\over \epsilon_{0}}{\partial g_{3}\over \partial t}+{\tau\over \epsilon_{0}}{\partial p_{13}\over \partial x}$\\

$+{\tau(s^{2}-1)\over \epsilon_{0}}{\partial p_{22}\over \partial z}+{\tau(2s^{2}-1)\over \epsilon_{0}}{\partial p_{23}\over \partial y}+{\tau\over \epsilon_{0}}{\partial p_{33}\over \partial z}=0$\\

$(iii)$ ${-s^{2}\tau\over 2\epsilon_{0}}{\partial \sigma\over \partial y}+{sic(1+(\tau^{2}-s^{2}))\over \epsilon_{0}}{\partial g_{2}\over \partial y}-{\tau(\tau^{2}-s^{2})\over 2\epsilon_{0}}{\partial g_{2}\over \partial t}-{sic(1+(\tau^{2}-s^{2}))\over \epsilon_{0}}{\partial g_{3}\over \partial z}-{\tau\over 2\epsilon_{0}}{\partial p_{12}\over \partial x}$\\

$+{2\tau-3\tau s^{2}\over 2\epsilon_{0}}{\partial p_{22}\over \partial y}-{si\tau^{2}\over c\epsilon_{0}}{\partial p_{22}\over \partial t}+{\tau(4s^{2}-1)\over 2\epsilon_{0}}{\partial p_{23}\over \partial z}+{\tau(s^{2}-1)\over \epsilon_{0}}{\partial p_{33}\over \partial y}=0$\\

Using $\tau^{2}=1-s^{2}$, we can simplify $(i)$ to obtain;\\

 ${s^{2}\tau\over \epsilon_{0}}({\partial \sigma\over \partial y}+{\partial p_{22}\over \partial y})+{sic\over \epsilon_{0}}({\partial g_{1}\over \partial x}+{\partial g_{3}\over \partial z})-{sic(1-2s^{2})\over \epsilon_{0}}{\partial g_{2}\over \partial y}+{\tau(1-2s^{2})\over \epsilon_{0}}{\partial g_{2}\over \partial t}$\\

 $+{\tau\over\epsilon_{0}}({\partial p_{12}\over \partial x}+{\partial p_{23}\over \partial z})+{is(1-s^{2})\over c\epsilon_{0}}{\partial p_{22}\over \partial t}-{i\over c\epsilon_{0}}{\partial p_{33}\over \partial t}=0$ $(*)$\\

 We can write $(*)$ in the form $\sum_{1=1}^{7}\lambda_{i}\mu_{i}=0$, where;\\

$\mu_{1}=({\partial \sigma\over \partial y}+{\partial p_{22}\over \partial y})$, $\mu_{2}={\partial g_{1}\over \partial x}+{\partial g_{3}\over \partial z}$, $\mu_{3}={\partial g_{2}\over \partial y}$, $\mu_{4}={\partial g_{2}\over \partial t}$, $\mu_{5}={\partial p_{12}\over \partial x}+{\partial p_{23}\over \partial z}$\\

$\mu_{6}={\partial p_{22}\over \partial t}$, $\mu_{7}={\partial p_{33}\over \partial t}$\\

Using Newton's expansion of;\\

$\tau=(1-s^{2})^{1\over 2}=1-{s^{2}\over 2}-{s^{4}\over 8}-{s^{6}\over 16}+O(s^{8})$\\

and equating coefficients up to $s^{6}$ to zero, we obtain the following equations;\\

$a(1)$. $\mu_{4}+\mu_{5}-{i\over c}\mu_{7}=0$\\

$b(s)$. $ic(\mu_{2}+\mu_{3})+{i\over c}\mu_{6}=0$\\

$c(s^{2})$. $\mu_{1}-{5\over 2}\mu_{4}-\mu_{5}=0$\\

$d(s^{3})$. $-2ic\mu_{3}-{i\over c}\mu_{6}=0$\\

$e(s^{4})$. ${-1\over 2}\mu_{1}+{7\over 8}\mu_{4}-{1\over 8}\mu_{5}=0$\\

$f(s^{6})$. ${-1\over 8}\mu_{1}+{3\over 16}\mu_{4}=0$\\

Using $c,e,f$, and solving the three simultaneous equations, we obtain that $\mu_{1}=\mu_{4}=\mu_{5}=0$. From $a$, we then obtain that $\mu_{7}=0$. Using $b,d$ and eliminating $\mu_{6}$, we obtain $\mu_{2}=\mu_{3}$ and $\mu_{6}=-2c^{2}\mu_{3}$, so we obtain;\\

${\partial \sigma\over \partial y}=-{\partial p_{22}\over \partial y}$, ${\partial p_{12}\over \partial x}=-{\partial p_{23}\over \partial z}$, ${\partial p_{22}\over \partial t}=-2c^{2}{\partial g_{2}\over \partial y}$\\

${\partial g_{2}\over \partial t}={\partial p_{33}\over \partial t}=0$\\

${\partial g_{1}\over \partial x}+{\partial g_{3}\over \partial z}={\partial g_{2}\over \partial y}$ $(\sharp)$\\

Using $\tau^{2}=1-s^{2}$ again, we can simplify $(ii)$ to obtain;\\

$(ii)$ ${s^{2}\tau\over \epsilon_{0}}{\partial \sigma\over \partial z}-{ics(2-2s^{2}))\over \epsilon_{0}}({\partial g_{2}\over \partial z}+{\partial g_{3}\over \partial y})+{\tau(1-2s^{2})\over \epsilon_{0}}({\partial g_{3}\over \partial t}-{\partial p_{23}\over \partial y})+{\tau\over \epsilon_{0}}({\partial p_{13}\over \partial x}-{\partial p_{33}\over \partial z})$\\

$+{\tau(s^{2}-1)\over \epsilon_{0}}{\partial p_{22}\over \partial z}=0$ $(\dag)$\\

We can write $(\dag)$ in the form $\sum_{i=1}^{5}\lambda_{i}\mu_{i}=0$, $(\dag\dag)$ where;\\

$\mu_{1}={\partial \sigma\over \partial z}$, $\mu_{2}={\partial g_{2}\over \partial z}+{\partial g_{3}\over \partial y}$, $\mu_{3}={\partial g_{3}\over \partial t}-{\partial p_{23}\over \partial y}$, $\mu_{4}={\partial p_{13}\over \partial x}-{\partial p_{33}\over \partial z}$, $\mu_{5}={\partial p_{22}\over \partial z}$\\

where $\lambda_{1}+\lambda_{3}+\lambda_{5}=0$ and $\lambda_{1}-\lambda_{4}-\lambda_{5}=0$. Using the first relation in $(\sharp)$, we can write $(\dag\dag)$ in the form;\\

$\lambda_{1}(\mu_{1}-\mu_{5})+\lambda_{2}\mu_{2}+\lambda_{3}(\mu_{3}-\mu_{5})+\lambda_{4}\mu_{4}=0$, $(\dag\dag\dag)$\\

Using the second relation in $(\sharp)$, we have that $2\lambda_{1}=\lambda_{4}-\lambda_{3}$, so that multiplying $(\dag\dag\dag)$ by $2$, and substituting, we obtain the relation;\\

$2\lambda_{2}\mu_{2}+\lambda_{3}(2(\mu_{3}-\mu_{5})-(\mu_{1}-\mu_{5}))+\lambda_{4}(2\mu_{4}+(\mu_{1}-\mu_{5}))=0$ $(\dag\dag\dag\dag)$\\

By taking a power series expansion of $(1-s^{2})^{1\over 2}$ or otherwise, it is easily checked that varying the coefficients $(2\lambda_{2},\lambda_{3},\lambda_{4})$, with $0\leq s\leq 1$, gives that;\\

$\mu_{2}=2(\mu_{3}-\mu_{5})-(\mu_{1}-\mu_{5})=2\mu_{4}+(\mu_{1}-\mu_{5})=0$\\

so we obtain;\\

${\partial g_{2}\over \partial z}=-{\partial g_{3}\over \partial y}$\\

${\partial \sigma\over \partial z}=2({\partial g_{3}\over \partial t}-{\partial p_{23}\over \partial y})-{\partial p_{22}\over \partial z}=-(2({\partial p_{13}\over \partial x}-{\partial p_{33}\over \partial z})-{\partial p_{22}\over \partial z})$\\

${\partial p_{22}\over \partial z}=({\partial g_{3}\over \partial t}-{\partial p_{23}\over \partial y})+({\partial p_{13}\over \partial x}-{\partial p_{33}\over \partial z})$\\

${\partial \sigma\over \partial z}=({\partial g_{3}\over \partial t}-{\partial p_{23}\over \partial y})-({\partial p_{13}\over \partial x}-{\partial p_{33}\over \partial z})$ $(\sharp\sharp)$\\

Using $\tau^{2}=1-s^{2}$ again, we can simplify $(iii)$ to obtain;\\

${-s^{2}\tau\over 2\epsilon_{0}}{\partial \sigma\over \partial y}+{sic(2-2s^{2}))\over \epsilon_{0}}({\partial g_{2}\over \partial y}-{\partial g_{3}\over \partial z})-{\tau(1-2s^{2})\over 2\epsilon_{0}}{\partial g_{2}\over \partial t}-{\tau\over 2\epsilon_{0}}{\partial p_{12}\over \partial x}$\\

$+{2\tau-3\tau s^{2}\over 2\epsilon_{0}}{\partial p_{22}\over \partial y}-{si(1-s^{2})\over c\epsilon_{0}}{\partial p_{22}\over \partial t}+{\tau(4s^{2}-1)\over 2\epsilon_{0}}{\partial p_{23}\over \partial z}+{\tau(s^{2}-1)\over \epsilon_{0}}{\partial p_{33}\over \partial y}=0$ $(\dag\dag\dag')$\\

We can write $(\dag\dag\dag')$ in the form $\sum_{i=1}^{8}\lambda_{i}\mu_{i}=0$, $(\dag\dag\dag\dag)$ where;\\

$\mu_{1}={\partial \sigma\over \partial y}$, $\mu_{2}={\partial g_{2}\over \partial y}-{\partial g_{3}\over \partial z}$, $\mu_{3}={\partial g_{2}\over \partial t}$, $\mu_{4}={\partial p_{12}\over \partial x}$, $\mu_{5}={\partial p_{22}\over \partial y}$, $\mu_{6}={\partial p_{22}\over \partial t}$\\

$\mu_{7}={\partial p_{23}\over \partial z}$, $\mu_{8}={\partial p_{23}\over \partial y}$\\

where $4\lambda_{1}-4\lambda_{4}+2\lambda_{8}=0$, $6\lambda_{1}-4\lambda_{4}-2\lambda_{5}=0$, $4\lambda_{1}-\lambda_{4}+\lambda_{7}=0$ and $2\lambda_{2}+\lambda_{3}-\lambda_{4}=0$. Using the first relation of $(\sharp\sharp)$, we can write $(\dag\dag\dag\dag)$ in the form;\\

$\lambda_{1}(\mu_{1}-2\mu_{8})+\sum_{i=2}^{3}\lambda_{i}\mu_{i}+\lambda_{4}(\mu_{4}+2\mu_{8})+\sum_{i=5}^{7}\lambda_{i}\mu_{i}=0$\\

Then, using the second relation of $(\sharp\sharp)$, we can write $(\dag\dag\dag\dag)$ in the form;\\

$\lambda_{1}(\mu_{1}+3\mu_{5}-2\mu_{8})+\sum_{i=2}^{3}\lambda_{i}\mu_{i}+\lambda_{4}(\mu_{4}-2\mu_{5}+2\mu_{8})+\sum_{i=6}^{7}\lambda_{i}\mu_{i}=0$\\

Using the third relation of $(\sharp\sharp)$, we obtain;\\

$\lambda_{1}(\mu_{1}+3\mu_{5}-4\mu_{7}-2\mu_{8})+\sum_{i=2}^{3}\lambda_{i}\mu_{i}+\lambda_{4}(\mu_{4}-2\mu_{5}+\mu_{7}+\mu_{7}+\mu_{8})$\\

$+\lambda_{6}\mu_{6}=0$\\

and, using the fourth relation of $(\sharp\sharp)$, we obtain;\\

$\lambda_{1}(\mu_{1}-2\mu_{3}+3\mu_{5}-4\mu_{7}-2\mu_{8})+\lambda_{2}\mu_{2}+\lambda_{4}(\mu_{3}+\mu_{4}-2\mu_{5}+\mu_{7}$\\

$+\mu_{8})+\lambda_{6}\mu_{6}=0$\\

Using Newton's expansion of $\tau$ again, and equating coefficients up to $s^{2}$, we obtain the equations;\\

$a(1)$. $-{\delta_{4}\over 2}=0$.\\

$b(s)$. $2ic\delta_{2}-{i\over c}\delta_{6}=0$\\

$c(s^{2})$. $-{\delta_{1}\over 2}+{\delta_{4}\over 4}=0$\\

where $\delta_{1}=\mu_{1}-2\mu_{3}+3\mu_{5}-4\mu_{7}-2\mu_{8}$, $\delta_{2}=\mu_{2}$, $\delta_{4}=\mu_{3}+\mu_{4}-2\mu_{5}+\mu_{7}+\mu_{8}$, $\delta_{6}=\mu_{6}$\\

From $b$, we obtain that $\mu_{2}={1\over 2c^{2}}\mu_{6}$, so that;\\

${\partial g_{2}\over \partial y}-{\partial g_{3}\over \partial z}={1\over 2c^{2}}{\partial p_{22}\over \partial t}$, $(****)$\\

From the rearrangement of $(i)$, we had that ${\partial p_{22}\over \partial t}=-2c^{2}{\partial g_{2}\over \partial y}$, so that, using $(****)$, we obtain;\\

${\partial g_{2}\over \partial y}-{\partial g_{3}\over \partial z}=-{\partial g_{2}\over \partial y}$ and $2{\partial g_{2}\over \partial y}={\partial g_{3}\over \partial z}$ $(!)$\\

It follows by symmetry that $2{\partial g_{3}\over \partial z}={\partial g_{2}\over \partial y}$, so that by $(!)$, $4{\partial g_{2}\over \partial y}={\partial g_{2}\over \partial y}$ and, using $(****)$, ${\partial g_{2}\over \partial y}={\partial g_{3}\over \partial z}={\partial p_{22}\over \partial t}=0$. It follows from the second equation in $(\sharp)$ that ${\partial g_{1}\over \partial x}=0$ as well, $(E)$.\\

From $a,c$, we obtain that $\delta_{1}=\delta_{4}=0$, so that $\mu_{1}-2\mu_{3}+3\mu_{5}-4\mu_{7}-2\mu_{8}=0$ and $\mu_{3}+\mu_{4}-2\mu_{5}+\mu_{7}+\mu_{8}=0$. It follows that;\\

${\partial \sigma\over \partial y}-2{\partial g_{2}\over \partial t}+3{\partial p_{22}\over \partial y}-4{\partial p_{23}\over \partial z}-2{\partial p_{23}\over \partial y}=0$\\

${\partial g_{2}\over \partial t}+{\partial p_{12}\over \partial x}-2{\partial p_{22}\over \partial y}+{\partial p_{23}\over \partial z}+{\partial p_{23}\over \partial y}=0$ $(!!!!)$\\

From the third and fourth equations in $(\sharp\sharp)$, we have;\\

${\partial g_{3}\over \partial t}={\partial p_{22}\over \partial z}+{\partial p_{22}\over \partial y}-{\partial p_{13}\over \partial x}+{\partial p_{33}\over \partial z}$\\

${\partial\sigma\over \partial z}={\partial g_{3}\over \partial t}-{\partial p_{23}\over \partial y}-{\partial p_{13}\over \partial x}+{\partial p_{33}\over \partial z}$\\

$={\partial p_{22}\over \partial z}+{\partial p_{22}\over \partial y}-{\partial p_{13}\over \partial x}+{\partial p_{33}\over \partial z}-{\partial p_{23}\over \partial y}-{\partial p_{13}\over \partial x}+{\partial p_{33}\over \partial z}$\\

$=-2{\partial p_{13}\over \partial x}+{\partial p_{22}\over \partial y}+{\partial p_{22}\over \partial z}-{\partial p_{23}\over \partial y}+2{\partial p_{33}\over \partial z}$, $(!!!)$\\

and, from $(!!!!)$, second equation of $(\sharp)$, we have that;\\

${\partial g_{2}\over \partial t}=-{\partial p_{12}\over \partial x}+2{\partial p_{22}\over \partial y}-{\partial p_{23}\over \partial y}-{\partial p_{23}\over \partial z}=0$\\

${\partial\sigma\over \partial y}=2{\partial g_{2}\over \partial t}-3{\partial p_{22}\over \partial y}+4{\partial p_{23}\over \partial z}+2{\partial p_{23}\over \partial y}$

$=2(-{\partial p_{12}\over \partial x}+2{\partial p_{22}\over \partial y}-{\partial p_{23}\over \partial y}-{\partial p_{23}\over \partial z})-3{\partial p_{22}\over \partial y}+4{\partial p_{23}\over \partial z}+2{\partial p_{23}\over \partial y}$\\

$=-2{\partial p_{12}\over \partial x}-7{\partial p_{22}\over \partial y}+2{\partial p_{23}\over \partial z}$, $(!!!!!)$\\

We also note, from the fact that ${\partial g_{2}\over \partial t}=0$ in $(\sharp)$, that ${\partial g_{3}\over \partial t}=0$ by symmetry. Rewriting the equations $(*******)$ of the previous lemma in terms of the stress tensor, using the above relations, we obtain;\\

${-\nu_{12}s\over \epsilon_{0}}(-2{\partial p_{12}\over \partial x}-7{\partial p_{22}\over \partial y}+2{\partial p_{23}\over \partial z})$\\

$-{\nu_{13}s\over \epsilon_{0}}(-2{\partial p_{13}\over \partial x}+{\partial p_{22}\over \partial y}+{\partial p_{22}\over \partial z}-{\partial p_{23}\over \partial y}+2{\partial p_{33}\over \partial z})$\\

$-{ic(-\theta_{3}'\lambda_{12}-\theta_{3}''\mu_{12})\over\epsilon_{0}}{\partial g_{2}\over \partial z}-{ic(-\theta_{2}'\lambda_{13}-\theta_{2}''\mu_{13})\over\epsilon_{0}}{-\partial g_{2}\over \partial z}$\\

$+{\theta_{2}\over \epsilon_{0}}{\partial p_{12}\over \partial x}+{\theta_{3}\over \epsilon_{0}}{\partial p_{13}\over \partial x}+{\theta_{2}\nu_{22}\over\epsilon_{0}}{\partial p_{22}\over \partial y}+{\theta_{2}\nu_{23}\over\epsilon_{0}}{\partial p_{22}\over \partial z}+{(\theta_{2}'\lambda_{23}+\theta_{2}''\mu_{23})\over\epsilon_{0}}{\partial p_{23}\over \partial y}$\\

$+{(\theta_{3}'\lambda_{23}+\theta_{3}''\mu_{23})\over\epsilon_{0}}{-\partial p_{12}\over \partial x}+{\theta_{3}\nu_{23}\over\epsilon_{0}}{\partial p_{33}\over \partial y}+{\theta_{3}\nu_{33}\over\epsilon_{0}}{\partial p_{33}\over \partial z}=0$ $(********)$\\

with;\\

$-{\partial p_{12}\over \partial x}+2{\partial p_{22}\over \partial y}-{\partial p_{23}\over \partial y}-{\partial p_{23}\over \partial z}=0$\\

$-2{\partial p_{12}\over \partial x}-7{\partial p_{22}\over \partial y}+2{\partial p_{23}\over \partial z}=-{\partial p_{22}\over \partial y}$ $(A)$\\

by $(!!!!!)$ and the fact that ${\partial \sigma\over \partial y}=-{\partial p_{22}\over \partial y}$ in $(\sharp)$, and the additional relations;\\

$-2{\partial p_{13}\over \partial x}+{\partial p_{22}\over \partial y}+{\partial p_{22}\over \partial z}-{\partial p_{23}\over \partial y}+2{\partial p_{33}\over \partial z}=-{\partial p_{33}\over \partial z}$\\

 $-{\partial p_{13}\over \partial x}+2{\partial p_{33}\over \partial z}-{\partial p_{23}\over \partial z}-{\partial p_{23}\over \partial y}=0$ $(B)$\\

which we obtain by symmetry, see Lemma \ref{symmetrylemma}, ${\partial \sigma\over \partial z}=-{\partial p_{33}\over \partial z}$, from ${\partial \sigma\over \partial y}=-{\partial p_{22}\over \partial y}$, and from $-{\partial p_{12}\over \partial x}+2{\partial p_{22}\over \partial y}-{\partial p_{23}\over \partial y}-{\partial p_{23}\over \partial z}=0$.\\

Eliminating $\{{\partial p_{12}\over \partial x},{\partial p_{13}\over \partial x},{\partial p_{22}\over \partial y},{\partial p_{23}\over \partial z}\}$, from $(A),(B)$, we obtain the equations;\\

${\partial p_{12}\over \partial x}={1\over 12}({-\partial p_{33}\over \partial z}+{\partial p_{22}\over \partial z}-8{\partial p_{23}\over \partial y})$\\

${\partial p_{13}\over \partial x}={1\over 12}(19{\partial p_{33}\over \partial z}+5{\partial p_{22}\over \partial z}-4{\partial p_{23}\over \partial y})$\\

${\partial p_{22}\over \partial y}={1\over 12}(2{\partial p_{33}\over \partial z}-2{\partial p_{22}\over \partial z}+4{\partial p_{23}\over \partial y})$\\

${\partial p_{23}\over \partial z}={1\over 12}(5{\partial p_{33}\over \partial z}-5{\partial p_{22}\over \partial z}+4{\partial p_{23}\over \partial y})$ $(C)$\\

Substituting into $(********)$, we obtain;\\

${ic(\theta_{3}'\lambda_{12}+\theta_{3}''\mu_{12}-\theta_{2}'\lambda_{13}-\theta_{2}''\mu_{13})\over \epsilon_{0}}{\partial g_{2}\over \partial z}$\\

$+{\theta_{3}\nu_{23}\over\epsilon_{0}}{\partial p_{33}\over \partial y}$\\

$+{({\nu_{12}s\over 6}+\nu_{13}s-{\theta_{2}\over 12}+{19\theta_{3}\over 12}+{\theta_{3}'\lambda_{23}\over 12}+{\theta_{3}''\mu_{23}\over 12}+{\theta_{2}\nu_{22}\over 6}+\theta_{3}\nu_{33})\over\epsilon_{0}}{\partial p_{33}\over \partial z}$\\

$+{(-{\nu_{12}s\over 6}+{\theta_{2}\over 12}+{5\theta_{3}\over 12}-{\theta_{3}'\lambda_{23}\over 12}-{\theta_{3}''\mu_{23}\over 12}-{\theta_{2}\nu_{22}\over 6}+\theta_{2}\nu_{23})\over\epsilon_{0}}{\partial p_{22}\over \partial z}$\\

$+{({-2\nu_{12}s\over 3}-{2\theta_{2}\over 3}-{\theta_{3}\over 3}+{2\theta_{3}'\lambda_{23}\over 3}+{2\theta_{3}''\mu_{23}\over 3}+{\theta_{2}\nu_{22}\over 3}+\theta_{2}'\lambda_{23}+\theta_{2}''\mu_{23})\over\epsilon_{0}}{\partial p_{23}\over \partial y}=0$\\

Taking $\kappa=0$, $v_{1}=0$, $s=0$, $\tau=(1-s^{2})^{1\over 2}=1$, $\alpha=\tau cos(\theta)$, $\beta=\tau sin(\theta)$, $\gamma=-sin(\theta)$, $\delta=cos(\theta)$, for $\theta\in \{0,{\pi\over 4},{3\pi\over 4},{\pi\over 2}\}$ respectively, noting that the first coefficient in ${\partial g_{2}\over \partial z}$ is zero, we obtain the coefficient matrix $(A)_{ij}$, for $1\leq i\leq j\leq 4$ for the remaining $4$ variables $\{{\partial p_{33}\over \partial y},{\partial p_{33}\over \partial z},{\partial p_{22}\over \partial z},{\partial p_{23}\over \partial y}\}$, where;\\

$a_{11}=0, a_{12}=0, a_{13}=0, a_{14}={2\over 3\epsilon_{0}}$\\

$a_{21}={-1\over 2\sqrt{2}\epsilon_{0}}, a_{22}={13\over 6\sqrt{2}\epsilon_{0}}, a_{23}={-7\over 12\sqrt{2}\epsilon_{0}}, a_{24}={1\over 6\sqrt{2}\epsilon_{0}}$\\

$a_{31}={1\over 2\sqrt{2}\epsilon_{0}}, a_{32}={2\over \sqrt{2}\epsilon_{0}}, a_{33}={5\over 12\sqrt{2}\epsilon_{0}}, a_{34}={-13\over 6\sqrt{2}\epsilon_{0}}$\\

$a_{41}=0, a_{42}={19\over 12\epsilon_{0}}, a_{43}=0, a_{44}={-1\over\epsilon_{0}}$\\

By a straightforward calculation, we have that $det(A)={19\over 72\epsilon_{0}^{4}}\neq 0$, so that;\\

${\partial p_{33}\over \partial y}={\partial p_{33}\over \partial z}={\partial p_{22}\over \partial z}={\partial p_{23}\over \partial y}=0$\\

By $(C)$, we have that;\\

${\partial p_{12}\over \partial x}={\partial p_{13}\over \partial x}={\partial p_{22}\over \partial y}={\partial p_{23}\over \partial z}=0$\\

By $(!!!!!)$, we obtain that;\\

${\partial g_{2}\over \partial t}={\partial \sigma\over \partial y}=0$\\

By $(!!!)$, we have;\\

${\partial g_{3}\over \partial t}={\partial \sigma\over \partial z}=0$\\

By $(E)$, we have;\\

${\partial g_{2}\over \partial y}={\partial g_{3}\over \partial z}={\partial g_{1}\over \partial x}={\partial p_{22}\over \partial t}=0$\\

Substituting these values into $(*******)$ and using the relation;\\

${\partial g_{2}\over \partial z}=-{\partial g_{3}\over \partial y}$ $(F)$\\

we obtain;\\

${ic(\theta_{3}'\lambda_{12}+\theta_{3}''\mu_{12}-\theta_{2}'\lambda_{13}-\theta_{2}''\mu_{13})\over\epsilon_{0}}{\partial g_{2}\over \partial z}=0$, $(G)$\\

Let $s={1\over 2}$, $\tau=(1-s^{2})^{1\over 2}={\sqrt{3}\over 2}$, $\nu_{1}=1$, $1+\kappa^{2}={5\over 3}$, $\rho=(1-{v_{1}^{2}\over 1+\kappa^{2}})={\sqrt{2}\over \sqrt{5}}$, $\lambda^{2}=3+3\kappa^{2}$, $\alpha=\tau cos({\pi\over 8})$, $\beta=\tau sin({\pi\over 8})$, $\gamma=\rho cos({\pi\over 8})$, $\delta=-\rho sin({\pi\over 8})$, then $\alpha\delta-\beta\gamma=-{\sqrt{3}\over 2\sqrt{5}}$ and ${s\nu_{1}\over \sqrt{1+\kappa^{2}}}={\sqrt{3}\over 2\sqrt{5}}$. It is easily verified that the conditions of $(\dag)$ in the Lemma are met. Calculating the coefficient in $(G)$, we obtain that;\\

${-3ic \over 80\epsilon_{0}}{\partial g_{2}\over \partial z}=0$\\

so that, using $(F)$, ${\partial g_{2}\over \partial z}={\partial g_{3}\over \partial y}=0$. This proves the Lemma.

\end{proof}
\begin{lemma}
\label{newequations3}

Using the notation of Lemma \ref{infinity1}, we have that;\\

${\partial\sigma\over \partial x}+3{\partial p_{11}\over \partial x}={\partial g_{1}\over \partial t}+{\partial p_{11}\over \partial x}={\partial p_{12}\over \partial t}-c^{2}{\partial g_{2}\over \partial x}={\partial p_{13}\over \partial t}-c^{2}{\partial g_{3}\over \partial x}=ic{\partial p_{23}\over \partial x}+{\partial p_{23}\over \partial t}$\\

$={\partial g_{1}\over \partial y}={\partial g_{1}\over \partial z}={\partial p_{12}\over \partial y}={\partial p_{12}\over \partial z}={\partial p_{13}\over \partial y}={\partial p_{13}\over \partial z}={\partial p_{22}\over \partial x}={\partial p_{33}\over \partial x}=0$\\

\end{lemma}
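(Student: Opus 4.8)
The plan is to mirror the strategy of Lemma \ref{newequations}, now squeezing the \emph{complementary} (longitudinal and temporal) information out of the master identity $(******)$ of Lemma \ref{infinity1}. Recall that $(******)$ is an algebraic identity holding for every real $\overline{u}$ with $0<u<c$, $u_1\neq 0$ and $\overline{u}\neq u_1\overline{e}_1$, rational in $u$ and $\gamma_u$. Lemma \ref{newequations} extracted only the leading $u\to\infty$ coefficient $(*******)$; crucially, $(*******)$ contains none of the quantities appearing in the present statement, which are the longitudinal $x$-derivatives of $\sigma,p_{11},g_1,p_{22},p_{33}$, the time derivatives of $p_{11},p_{12},p_{13},p_{23}$, and the transverse derivatives of $g_1,p_{12},p_{13}$. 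Hence the new relations must be read off from the finite-$u$, subleading structure of $(******)$ rather than from its limit.

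First I would substitute into the full equation $(******)$ the eighteen derivative identities already established in Lemma \ref{newequations}, namely $\frac{\partial\sigma}{\partial y}=\frac{\partial\sigma}{\partial z}=\frac{\partial g_1}{\partial x}=\cdots=0$. This annihilates precisely the terms that produced the leading behaviour, collapsing $(******)$ to an identity in which only the quantities listed in the present lemma survive, with coefficients that are explicit rational functions of $u$, $\gamma_u$ and the rotation parameters $\{s,\kappa,\lambda,v_1,\alpha,\beta,\gamma,\delta,\theta_i,\theta_i',\theta_i''\}$ of Lemma \ref{infinity1}. Since the reduced identity holds for all admissible $u$, I would then clear the square roots using the double-cover structure of Definition \ref{extension} (treating $\gamma_u$ as an independent coordinate subject to $\gamma_u^2(1-u^2/c^2)=1$) and equate coefficients of the independent powers of $u$.

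The combinations $\frac{\partial\sigma}{\partial x}+3\frac{\partial p_{11}}{\partial x}$, $\frac{\partial g_1}{\partial t}+\frac{\partial p_{11}}{\partial x}$, $\frac{\partial p_{12}}{\partial t}-c^2\frac{\partial g_2}{\partial x}$, $\frac{\partial p_{13}}{\partial t}-c^2\frac{\partial g_3}{\partial x}$ and $ic\frac{\partial p_{23}}{\partial x}+\frac{\partial p_{23}}{\partial t}$ are expected to appear as the natural groupings multiplying distinct monomials in $u$: they are exactly the groupings forced by the $x$-boost transformation rules $(\dag\dag\dag\dag\dag)$ for $\sigma,\overline{g},p_{ij}$ in Lemma \ref{infinity1}, together with the force-density relation $\mathrm{div}(\overline{T}_1)+\frac{\partial g_1}{\partial t}=-f_1$ of \cite{G}. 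As in Lemma \ref{newequations}, I would specialise the rotation parameters to several convenient tuples — for instance $\kappa=v_1=0$ over a range of $\theta$, and then a generic tuple such as $s=\frac{1}{2}$, $1+\kappa^2=\frac{5}{3}$ — to assemble, for both the five combinations above and the eight pure derivatives $\frac{\partial g_1}{\partial y},\frac{\partial g_1}{\partial z},\frac{\partial p_{12}}{\partial y},\frac{\partial p_{12}}{\partial z},\frac{\partial p_{13}}{\partial y},\frac{\partial p_{13}}{\partial z},\frac{\partial p_{22}}{\partial x},\frac{\partial p_{33}}{\partial x}$, a linear system whose coefficient matrix I can exhibit and whose determinant is nonzero, exactly as the step $\det(A)=\frac{19}{72\epsilon_0^4}\neq 0$ terminated Lemma \ref{newequations}. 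This forces every listed quantity to vanish, giving the asserted chain of equalities.

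The main obstacle is the same purely computational one as in Lemma \ref{newequations}: organising the reduced form of $(******)$ so that, after the substitutions, the surviving longitudinal combinations decouple, and then finding enough parameter specialisations to make the final coefficient matrix invertible. The delicate point is that the five grouped combinations are not individually present as coefficients of single monomials — one must verify that the particular Lorentz groupings above are precisely the ones that arise (this is where the boost rules $(\dag\dag\dag\dag\dag)$, the stress-energy transformation $(*)$ of Lemma \ref{families} and the Maxwell/force relations do the work), and that no accidental degeneracy renders the matrix singular. I do not anticipate any conceptual difficulty beyond this bookkeeping, since the identity $(******)$ and its validity for all real $u$, established in Lemma \ref{infinity1}, already supply every structural ingredient required.
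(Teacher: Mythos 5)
Your proposal follows essentially the same route as the paper: the paper likewise uses the vanishing derivatives of Lemma \ref{newequations} to kill the leading terms of $(******)$, multiplies by $u$ and lets $u\rightarrow\infty$ (which is precisely your extraction of the subleading coefficient in $u$), obtains an identity organised into the same five groupings, and then concludes by repeating the parameter-specialisation and determinant argument of Lemma \ref{newequations}. Your identification of the groupings via the boost rules $(\dag\dag\dag\dag\dag)$ and the force-density relation, and your plan for the final linear system, match the paper's (terse) instruction to follow the method of Lemma \ref{newequations}, so there is no gap.
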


\begin{proof}

Using the result of Lemma \ref{newequations}, multiplying the equations $(******)$ in Lemma \ref{infinity1} by $u$,   taking the limit as $u\rightarrow\infty$, and again noting that the equations $(******)$ in Lemma \ref{infinity1} hold for all $u$ with $0<u<c$, and are algebraic, we obtain that;\\

${-ic s^{3}\over\epsilon_{0}}{\partial\sigma\over \partial x}+{2c^{2}\nu_{12}s\over \epsilon_{0}}{\partial g_{1}\over \partial y}+{2c^{2}\nu_{13}s\over \epsilon_{0}}{\partial g_{1}\over \partial z}+{sic(2s^{2}-1)\over\epsilon_{0}}{\partial g_{1}\over \partial t}-{c^{2}(\theta_{2}-\theta_{1}'\lambda_{12}-\theta_{1}''\mu_{12})\over \epsilon_{0}}{\partial g_{2}\over \partial x}$\\

$-{c^{2}(\theta_{3}-\theta_{1}'\lambda_{13}-\theta_{1}''\mu_{13})\over\epsilon_{0}}{\partial g_{3}\over \partial x}-{s(s^{2}+1)ci\over\epsilon_{0}}{\partial p_{11}\over \partial x}-{ic(\theta_{2}'\lambda_{12}+\theta_{2}''\mu_{12})\over\epsilon_{0}}{\partial p_{12}\over \partial y}-{ic(\theta_{3}'\lambda_{12}+\theta_{3}''\mu_{12})\over\epsilon_{0}}{\partial p_{12}\over \partial z}$\\

$-{(-\theta_{2}+\theta_{1}'\lambda_{12}+\theta_{1}''\mu_{12})\over\epsilon_{0}}{\partial p_{12}\over \partial t}-{ic(\theta_{2}'\lambda_{13}+\theta_{2}''\mu_{13})\over \epsilon_{0}}{\partial p_{13}\over \partial y}-{ic(\theta_{3}'\lambda_{13}+\theta_{3}''\mu_{13})\over \epsilon_{0}}{\partial p_{13}\over \partial z}-{(-\theta_{3}+\theta_{1}'\lambda_{13}+\theta_{1}''\mu_{13})\over\epsilon_{0}}{\partial p_{13}\over \partial t}$\\

$-{ic\theta_{2}\nu_{12}\over\epsilon_{0}}{\partial p_{22}\over \partial x}-{ic(\theta_{1}'\lambda_{23}+\theta_{1}''\mu_{23})\over\epsilon_{0}}{\partial p_{23}\over \partial x}-{(\theta_{1}'\lambda_{23}+\theta_{1}''\mu_{23})\over\epsilon_{0}}{\partial p_{23}\over \partial t}-{ic\theta_{3}\nu_{13}\over\epsilon_{0}}{\partial p_{33}\over \partial x}=0$\\

Rearranging, we can write this as;\\

${-ic s^{3}\over\epsilon_{0}}({\partial\sigma\over \partial x}+3{\partial p_{11}\over \partial x})+{2c^{2}\nu_{12}s\over \epsilon_{0}}{\partial g_{1}\over \partial y}+{2c^{2}\nu_{13}s\over \epsilon_{0}}{\partial g_{1}\over \partial z}+{sic(2s^{2}-1)\over\epsilon_{0}}({\partial g_{1}\over \partial t}+{\partial p_{11}\over \partial x})$\\

$-{ic(\theta_{2}'\lambda_{12}+\theta_{2}''\mu_{12})\over\epsilon_{0}}{\partial p_{12}\over \partial y}-{ic(\theta_{3}'\lambda_{12}+\theta_{3}''\mu_{12})\over\epsilon_{0}}{\partial p_{12}\over \partial z}-{(-\theta_{2}+\theta_{1}'\lambda_{12}+\theta_{1}''\mu_{12})\over\epsilon_{0}}({\partial p_{12}\over \partial t}-c^{2}{\partial g_{2}\over \partial x})$\\

$-{ic(\theta_{2}'\lambda_{13}+\theta_{2}''\mu_{13})\over \epsilon_{0}}{\partial p_{13}\over \partial y}-{ic(\theta_{3}'\lambda_{13}+\theta_{3}''\mu_{13})\over \epsilon_{0}}{\partial p_{13}\over \partial z}-{(-\theta_{3}+\theta_{1}'\lambda_{13}+\theta_{1}''\mu_{13})\over\epsilon_{0}}({\partial p_{13}\over \partial t}-c^{2}{\partial g_{3}\over \partial x})$\\

$-{ic\theta_{2}\nu_{12}\over\epsilon_{0}}{\partial p_{22}\over \partial x}-{(\theta_{1}'\lambda_{23}+\theta_{1}''\mu_{23})\over\epsilon_{0}}(ic{\partial p_{23}\over \partial x}+{\partial p_{23}\over \partial t})-{ic\theta_{3}\nu_{13}\over\epsilon_{0}}{\partial p_{33}\over \partial x}=0$ $(*)$\\

Now follow the method of the proof in Lemma \ref{newequations} to obtain the result.\\

\end{proof}

\begin{lemma}
\label{solution}
Let $S$ be surface non-radiating, for real charge and current $(\rho,\overline{J})$, then there exists a complex solution $(\overline{E},\overline{B})$ to Maxwell's equations with $\overline{E}\times\overline{B}=0$.
\end{lemma}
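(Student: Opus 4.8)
The plan is to push the accumulated differential constraints through to a single statement about the Poynting vector of the complex solution produced by the imaginary infinite-boost construction. First I would take the complex field $(\overline{E},\overline{B})$ in the base frame $S$ obtained by transferring the limit fields $(\overline{E}_{\infty},\overline{B}_{\infty})$ back from the imaginary-boost frames, together with its energy density $\sigma$, momentum density $\overline{g}=\epsilon_{0}(\overline{E}\times\overline{B})$ and stress components $p_{jk}$ as in Definition \ref{stressenergy}. Since $(\rho,\overline{J})$ is surface non-radiating, Lemma \ref{infinity1} supplies the equations $(******)$ for these quantities for every real $\overline{u}$ with $u_{1}\neq 0$, $\overline{u}\neq u_{1}\overline{e}_{1}$, and Lemmas \ref{newequations} and \ref{newequations3} extract, from the $u\rightarrow\infty$ limit and the $u$-rescaled limit respectively, the two batches of vanishing and linking first-order relations among $\sigma$, the $g_{i}$ and the $p_{jk}$.

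The algebraic input that closes the system is the trace identity $p_{11}+p_{22}+p_{33}=\sigma$, immediate from the definitions of $\sigma$ and $p_{ii}$ in Definition \ref{stressenergy}. Combining it with $\partial_{x}p_{22}=\partial_{x}p_{33}=0$ from Lemma \ref{newequations3} gives $\partial_{x}\sigma=\partial_{x}p_{11}$, and feeding this into $\partial_{x}\sigma+3\partial_{x}p_{11}=0$ forces $\partial_{x}p_{11}=\partial_{x}\sigma=0$; then $\partial_{t}g_{1}+\partial_{x}p_{11}=0$ yields $\partial_{t}g_{1}=0$. With $\partial_{x}g_{1}=0$ from Lemma \ref{newequations} and $\partial_{y}g_{1}=\partial_{z}g_{1}=0$ from Lemma \ref{newequations3}, this makes $g_{1}$ constant. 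I would then invoke the $y$- and $z$-directional analogues furnished by the symmetry Lemma \ref{symmetrylemma}: the swapped versions of the linking relations give $\partial_{t}p_{12}=c^{2}\partial_{y}g_{1}=0$ and $\partial_{t}p_{13}=c^{2}\partial_{z}g_{1}=0$, which through $\partial_{t}p_{12}-c^{2}\partial_{x}g_{2}=0$ and $\partial_{t}p_{13}-c^{2}\partial_{x}g_{3}=0$ give $\partial_{x}g_{2}=\partial_{x}g_{3}=0$; since the remaining derivatives of $g_{2},g_{3}$ already vanish by Lemma \ref{newequations}, the full vector $\overline{g}$ is constant.

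The main obstacle is the final step: upgrading ``$\overline{g}$ constant'' to $\overline{g}\equiv 0$, i.e. $\overline{E}\times\overline{B}=0$. I would attack this using the balance identities that the same derivative data make available. The momentum relation $\partial_{t}g_{i}+\sum_{j}\partial_{j}p_{ij}=-f_{i}$ of Lemma \ref{families} collapses, after the vanishings above, to $\overline{f}=0$, while Poynting's identity $\partial_{t}\sigma+c^{2}\bigtriangledown\centerdot\overline{g}=-\overline{J}\centerdot\overline{E}$ simplifies via $\bigtriangledown\centerdot\overline{g}=0$. Pinning the constant value then reduces either to evaluating the transferred field against the known asymptotics of the imaginary-boost limit (Lemmas \ref{limit}, \ref{existence}), or to re-running the construction in a second, independent boosted frame and using the tensorial transformation law of $(\sigma,\overline{g},p)$ in Definition \ref{stressenergy} to force the only simultaneously admissible constant to be zero. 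Making this closing normalization rigorous, rather than the essentially mechanical derivative bookkeeping of the first two steps, is where I expect the genuine difficulty to lie.
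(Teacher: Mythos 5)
Your derivative bookkeeping in the first two steps is essentially sound, and in places tidier than the paper's own proof: the trace identity $p_{11}+p_{22}+p_{33}=\sigma$, combined with $\partial p_{22}/\partial x=\partial p_{33}/\partial x=0$ and $\partial \sigma/\partial x+3\,\partial p_{11}/\partial x=0$, does force $\partial p_{11}/\partial x=\partial\sigma/\partial x=0$ and hence $\partial g_{1}/\partial t=0$, a conclusion the paper never extracts because it does not need it. One caveat: your "swapped" relation $\partial p_{12}/\partial t=c^{2}\partial g_{1}/\partial y$ rests on a $1\leftrightarrow 2$ index swap, but Lemma \ref{symmetrylemma} is stated and proved only for $\tau_{23}$, precisely because $\tau_{23}$ fixes the distinguished boost direction $\overline{e}_{1}$ of the whole limit-frame construction; the paper instead uses $\partial p_{12}/\partial x=0$ from Lemma \ref{newequations} to make $\partial g_{2}/\partial x$ independent of $x$, so that $g_{2}$ is affine in $x$, which is all it needs.

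The genuine gap is exactly where you located it, and neither of your proposed closings can work. The paper pins the constant to zero with the decay-at-infinity boundary condition: by Definition \ref{strongly}(ii), see Remarks \ref{strongercondition}, the fields, and hence each component $g_{i}(\cdot,t)$, tend to $0$ as $|\overline{x}|\rightarrow\infty$ for each fixed $t$, and a function that is constant (or affine) in the spatial variables and decays at spatial infinity is identically zero. This hypothesis is indispensable, not a convenience, which is why the lemma lives inside the "decaying" framework even though its statement only says "surface non-radiating". Your second suggestion (re-running the construction in an independent frame and using the tensorial transformation law to exclude a non-zero constant) fails on a concrete counterexample: the constant crossed fields $\overline{E}=E_{0}\overline{e}_{2}$, $\overline{B}=B_{0}\overline{e}_{3}$ with $\rho=0$, $\overline{J}=\overline{0}$ satisfy Maxwell's equations, satisfy every one of the derived first-order constraints, have $\overline{f}=\overline{0}$ and vanishing $\bigtriangledown\centerdot\overline{g}$, and yet have constant non-zero $\overline{g}$ in every inertial frame; no frame-covariance or balance-identity argument can distinguish this configuration from the zero-Poynting case. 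The single missing idea is therefore to invoke the decay condition componentwise on the spatially constant $g_{i}$, which is how the paper finishes.
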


\begin{proof}
By Lemma \ref{newequations}, there exists a complex solution $(\overline{E},\overline{B})$ to Maxwell's equations, with, in components, $\overline{E}\times\overline{B}={1\over\epsilon_{0}}(g_{1},g_{2},g_{3})$, such that;\\

${\partial g_{1}\over \partial x}={\partial g_{2}\over \partial y}={\partial g_{2}\over \partial z}={\partial g_{2}\over \partial t}={\partial g_{3}\over \partial y}={\partial g_{3}\over \partial z}={\partial g_{3}\over \partial t}=0$\\

By Lemma \ref{newequations3}, we also have that;\\

${\partial g_{1}\over \partial y}={\partial g_{1}\over \partial z}={\partial p_{12}\over \partial t}-c^{2}{\partial g_{2}\over \partial x}={\partial p_{13}\over \partial t}-c^{2}{\partial g_{3}\over \partial x}={\partial g_{1}\over \partial t}+{\partial p_{11}\over \partial x}=0$\\

It follows that $g_{1}$ is independent of $\overline{x}\in \mathcal{R}^{3}$. By the boundary condition that $lim_{|\overline{x}|\rightarrow\infty}g_{1}(\overline{x},t)=0$, see Remarks \ref{strongercondition}, for $t\in\mathcal{R}_{\geq 0}$, we have that $g_{1}=0$. Similarly, $g_{2}$ is independent of the coordinates $(y,z,t)$, and we have that;\\

${\partial g_{2}\over \partial x}={1\over c^{2}}{\partial p_{12}\over \partial t}$\\

By Lemma \ref{newequations}, we have that ${\partial p_{12}\over \partial x}=0$. It follows that ${\partial p_{12}\over \partial t}$ is independent of $x$, and, therefore, $g_{2}$ is constant. Again, using the boundary condition that $lim_{|\overline{x}|\rightarrow\infty}g_{2}(\overline{x},t)=0$, for $t\in\mathcal{R}_{\geq 0}$, we have that $g_{2}=0$. Finally, $g_{3}$ is again independent of the coordinates $(y,z,t)$, and we have that;\\

${\partial g_{3}\over \partial x}={1\over c^{2}}{\partial p_{13}\over \partial t}$\\

Again, by Lemma \ref{newequations}, we have that ${\partial p_{13}\over \partial x}=0$. By the same argument, $g_{3}$ is constant, and, using the boundary condition, that $g_{3}=0$.

\end{proof}

\begin{lemma}
\label{complex}
Let $(\overline{E},\overline{B})$ be a complex solution to Maxwell's equations for a real pair $(\rho,\overline{J})$, with $\overline{E}\times\overline{B}=0$, then on the open set $U\subset \mathcal{R}^{4}$, for which $\overline{B}\neq 0$, we have that $\overline{E}=\lambda\overline{B}$, and $\rho|U=0$.\\

\end{lemma}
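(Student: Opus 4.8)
The plan is first to produce the scalar $\lambda$ by pure linear algebra, and then to obtain $\rho|U=0$ from Maxwell's equations together with the global information already assembled. For the first part, observe that the three components of $\overline{E}\times\overline{B}$ are exactly the $2\times 2$ minors $e_ib_j-e_jb_i$ of the matrix whose rows are $\overline{E}=(e_1,e_2,e_3)$ and $\overline{B}=(b_1,b_2,b_3)$; their simultaneous vanishing is precisely the statement that these two rows are linearly dependent over $\mathcal{C}$. Since $\overline{B}\neq\overline{0}$ on $U$, the relatively open sets $\{b_i\neq 0\}$ cover $U$, and on $\{b_i\neq 0\}$ I would set $\lambda=e_i/b_i$. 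The vanishing minor $e_ib_j-e_jb_i=0$ shows these definitions agree on overlaps, so $\lambda$ is a single $\mathcal{C}$-valued function on $U$, analytic (respectively smooth) wherever $\overline{E},\overline{B}$ are, with $\overline{E}=\lambda\overline{B}$ there.

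To reach $\rho|U=0$ I would first reduce it to a statement about $\lambda$. By Gauss' law (condition $(i)$ of Lemma \ref{maxwells}) and $\overline{B}\neq\overline{0}$ we have $\rho=\epsilon_0\,\bigtriangledown\centerdot\overline{E}=\epsilon_0\,\bigtriangledown\centerdot(\lambda\overline{B})=\epsilon_0(\bigtriangledown\lambda\centerdot\overline{B}+\lambda\,\bigtriangledown\centerdot\overline{B})=\epsilon_0\,\bigtriangledown\lambda\centerdot\overline{B}$, since $\bigtriangledown\centerdot\overline{B}=0$. Thus $\rho|U=0$ is equivalent to the longitudinal condition $\overline{B}\centerdot\bigtriangledown\lambda=0$, that is, to $\lambda$ being constant along the spatial field lines of $\overline{B}$.

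The remaining step is the heart of the matter. Faraday's law written through $\overline{E}=\lambda\overline{B}$ gives $\bigtriangledown\lambda\times\overline{B}+\lambda\,\bigtriangledown\times\overline{B}=-{\partial\overline{B}\over\partial t}$; because $\bigtriangledown\lambda\times\overline{B}$ is orthogonal to $\overline{B}$, this equation constrains only the component of $\bigtriangledown\lambda$ transverse to $\overline{B}$ and says nothing about the longitudinal part $\overline{B}\centerdot\bigtriangledown\lambda=\rho/\epsilon_0$. To capture the longitudinal part I would invoke the global identities already proved: with $\overline{g}=\epsilon_0(\overline{E}\times\overline{B})=\overline{0}$, the relations of Lemmas \ref{newequations} and \ref{newequations3} force ${\partial\sigma\over\partial x}={\partial p_{11}\over\partial x}=\cdots=0$, so that $\sigma$ is spatially constant, and then the decay hypothesis of Definition \ref{strongly}$(ii)$ (see Remarks \ref{strongercondition}) gives $\sigma\equiv 0$. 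Writing $\sigma={\epsilon_0\over 2}(\overline{E}\centerdot\overline{E}+c^2\,\overline{B}\centerdot\overline{B})={\epsilon_0\over 2}(\lambda^2+c^2)(\overline{B}\centerdot\overline{B})$, on the locus where $\overline{B}\centerdot\overline{B}\neq 0$ this yields $\lambda\equiv\pm ic$, a constant; hence $\bigtriangledown\lambda=0$ there and $\rho=0$, as in Lemma \ref{solution}.

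I expect the main obstacle to be exactly this longitudinal, $\rho$-carrying component. The parallelism $\overline{E}=\lambda\overline{B}$ together with Faraday's law alone is genuinely insufficient: taking $\overline{B}$ constant and $\overline{E}=\lambda\overline{B}$ with $\lambda$ varying along $\overline{B}$ solves Maxwell's equations with $\rho\neq 0$, so the decay boundary condition and the earlier stress-tensor identities are indispensable and must be fed in explicitly. A second, complex-analytic difficulty is that $\overline{B}\neq\overline{0}$ does \emph{not} imply $\overline{B}\centerdot\overline{B}\neq 0$: on the null locus $\{\overline{B}\centerdot\overline{B}=0\}\subset U$ the vanishing of $\sigma$ carries no information about $\lambda$, so the argument above breaks down there. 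On that locus I would instead propagate $\rho=0$ from the complementary open set using Ampère's law, the continuity equation, and the analyticity of $(\rho,\overline{J})$ in identity-theorem fashion, and it is this analytic continuation across the null set that I anticipate being the most delicate point.
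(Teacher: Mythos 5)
Your handling of the first claim is essentially the paper's: the paper likewise reads the proportionality off the vanishing $2\times 2$ minors $e_{i}b_{j}-e_{j}b_{i}$, though it simply declares ``without loss of generality'' that all three components $b_{i}$ are non-vanishing on $U$, whereas your patching of $\lambda=e_{i}/b_{i}$ over the cover $\{b_{i}\neq 0\}$, with agreement on overlaps, is the careful version of that step.

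On the second claim you diverge from the paper completely, and the divergence is justified. The paper's own proof is self-contained: it writes $\overline{E}=f\,Re(\overline{B})+g\,Im(\overline{B})$, applies the divergence theorem to the interior $U_{\gamma}$ of a level surface $\gamma$ of $f$, uses $\bigtriangledown\centerdot Re(\overline{B})=\bigtriangledown\centerdot Im(\overline{B})=0$ to split Gauss's law into the two separate identities ${\rho\over \epsilon_{0}}=grad(f)\centerdot Re(\overline{B})$ and ${\rho\over \epsilon_{0}}=grad(g)\centerdot Im(\overline{B})$, and subtracts these from ${\rho\over \epsilon_{0}}=grad(f)\centerdot Re(\overline{B})+grad(g)\centerdot Im(\overline{B})$; it invokes no decay hypothesis, no stress-tensor identities, and no analyticity. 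Your counterexample ($\overline{B}$ constant, $\overline{E}=\lambda(x,t)\overline{B}$, $\rho=\epsilon_{0}\,\partial\lambda/\partial x\not\equiv 0$, $\overline{J}=-\epsilon_{0}(\partial\lambda/\partial t)\overline{B}$) satisfies every stated hypothesis of the lemma and violates its conclusion, so it does more than motivate your detour: it shows the lemma as literally stated is false, and hence that the paper's self-contained argument cannot be sound. Your example even locates the flaw: the level sets of $f=\lambda$ are the planes $x=\mathrm{const}$, which are not closed surfaces bounding compact interiors $U_{\gamma}$, so the divergence-theorem step and the subsequent covering claim collapse (and for genuinely complex $f$ a ``level surface'' is not a $2$-surface at all). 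Your repair --- using $\overline{g}=0$ in Lemmas \ref{newequations} and \ref{newequations3} to get $\partial p_{11}/\partial x=\partial\sigma/\partial x=\partial\sigma/\partial y=\partial\sigma/\partial z=0$, the decay condition to force $\sigma\equiv 0$, hence $\lambda\equiv\pm ic$ and $\rho=\epsilon_{0}\bigtriangledown\lambda\centerdot\overline{B}=0$ off the null locus --- is coherent, but you should state explicitly that it proves a different lemma: those identities hold only for the particular fields constructed in Lemmas \ref{polynomial}, \ref{bounded}, \ref{approxinfinity} and \ref{triangles}, the decay is Definition \ref{strongly}$(ii)$, and the analyticity of $(\rho,\overline{J})$ is a hypothesis of Lemma \ref{analytic} only. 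In other words, you prove the version of the statement that is actually invoked inside Lemma \ref{analytic}, not the freestanding one. Finally, one gap remains in your own route: if $\overline{B}\centerdot\overline{B}\equiv 0$ on all of $U$ (which is possible for complex fields with $\overline{B}\neq 0$), then $\sigma\equiv 0$ is automatic and says nothing about $\lambda$, and there is no nonempty open set on which $\rho$ is already known to vanish from which your identity-theorem continuation can start; this degenerate case needs a separate argument.
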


\begin{proof}
Writing $\overline{E}$ and $\overline{B}$ in components $(e_{1},e_{2},e_{3})$ and $(b_{1},b_{2},b_{3})$, the condition that $\overline{E}\times\overline{B}=0$ amounts to the equations;\\

$e_{2}b_{3}=e_{3}b_{2}$ $e_{3}b_{1}=e_{1}b_{3}$ $e_{1}b_{2}=e_{2}b_{1}$\\

Without loss of generality, we can assume that $\{b_{1},b_{2},b_{3}\}$ are non-vanishing on $U$, to obtain that;\\

${e_{2}\over b_{2}}={e_{3}\over b_{3}}=\lambda$\\

${e_{1}\over b_{1}}={e_{3}\over b_{3}}=\mu$\\

${e_{1}\over b_{1}}={e_{2}\over b_{2}}=\nu$\\

and, clearly then, $\lambda=\mu=\nu$, so that $\overline{E}=\lambda\overline{B}$, $(*)$. We have that the pairs $(Re(\overline{E}),Re(\overline{B}))$ and $(Im(\overline{E}),Im(\overline{B}))$ satisfy Maxwell's equations for the pair $(\rho,\overline{J})$, and in free space. In particularly, $div(Re(\overline{B}))=div(Im(\overline{B}))=0$, $(**)$. From $(*)$, we have that;\\

$\overline{E}=f Re(\overline{B})+g Im(\overline{B})$\\

and, from $(**)$;\\

${\rho\over \epsilon_{0}}=div(\overline{E})=grad(f)\centerdot Re(\overline{B})+grad(g)\centerdot Im(\overline{B})$ $(***)$\\

Let $\gamma$ be a level surface for $f$, with interior $U_{\gamma}$, then by the divergence theorem;\\

$\int_{U_{\gamma}}{\rho\over \epsilon_{0}}d\overline{x}$\\

$=\int_{U_{\gamma}}div(\overline{E})d\overline{x}$\\

$=\int_{\gamma}\overline{E}\centerdot d\overline{S}$\\

$=\int_{\gamma}(f Re(\overline{B})+g Im(\overline{B}))\centerdot d\overline{S}$\\

$=f\int_{\gamma}Re(\overline{B})\centerdot d\overline{S}+\int_{\gamma}g Im(\overline{B})\centerdot d\overline{S}$\\

$=\int_{\gamma}g Im(\overline{B})\centerdot d\overline{S}$\\

$=\int_{U_{\gamma}}div(g Im(\overline{B}))d\overline{x}$\\

$=\int_{U_{\gamma}}grad(g)\centerdot Im(\overline{B})d\overline{x}$\\

By continuity of $f$, we can cover $U$ with open sets of the form $U_{\gamma}$ contained within any ball $B(x_{0},\epsilon)$ of radius $\epsilon>0$, so that we conclude;\\

${\rho\over \epsilon_{0}}=grad(g)\centerdot Im(\overline{B})$ $(****)$\\

Similarly, we can conclude that;\\

${\rho\over \epsilon_{0}}=grad(f)\centerdot Re(\overline{B})$ $(*****)$\\

From $(***),(****),(*****)$, we have that $\rho=0$ on $U$.

\end{proof}

\begin{lemma}
\label{surface}
Let the frame $S$ be surface non-radiating in the sense of \cite{dep1}, for charge and current $(\rho,\overline{J})$, then in any inertial frame $S'$ connected to $S$ by a boost with velocity vector $\overline{v}$, with $|\overline{v}|<c$, we have that $S'$ is surface non-radiating, for the transformed current and charge $(\rho',\overline{J}')$. Moreover, if the frame $S$ is surface non-radiating for charge and current $(\rho,\overline{J})$, then for any $g\in O(3)$, if $(\rho^{g},\overline{J}^{g})$, are the transformed current and charge in the rotated or reflected frame $S'$, then $S'$ is surface non-radiating.

\end{lemma}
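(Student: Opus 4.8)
The plan is to reduce both assertions to the surface non-radiating hypothesis on $S$ together with the Thomas-rotation factorisations already in hand, transporting a zero-Poynting solution through a single pure rotation at the end. For the boost claim, I would take an arbitrary frame $S''$ connected to $S'$ by a real boost with velocity $\overline{w}$, $|\overline{w}|<c$, and observe that $S''$ is connected to the base frame $S$ by the composite $B_{\overline{w}}B_{\overline{v}}$. By Lemma \ref{velocity} this factors as $B_{\overline{w}}B_{\overline{v}}=R_{g}B_{\overline{v}*\overline{w}}$ with $g\in SO(3)$ and $\overline{v}*\overline{w}$ a real velocity; the relativistic sum of two subluminal velocities is again subluminal, which follows from the identity for $(\overline{u}*\overline{v})^{2}$ recorded in the proof of Lemma \ref{preserved}, so $|\overline{v}*\overline{w}|<c$. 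Hence $S''$ is the image under the pure rotation $R_{g}$ of the frame $\widetilde{S}:=S_{\overline{v}*\overline{w}}$ reached from $S$ by a single real boost.

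Since $S$ is surface non-radiating for $(\rho,\overline{J})$, there is a solution $(\overline{E},\overline{B})$ of Maxwell's equations for the transformed pair in $\widetilde{S}$ with $\bigtriangledown\centerdot(\overline{E}\times\overline{B})=0$. I would then transport this solution from $\widetilde{S}$ to $S''$ by the rotation $R_{g}$: by Lemma \ref{maxwells} the rotated tuple $(\overline{E}^{g},sign(g)\overline{B}^{g})$ solves Maxwell's equations for the rotated charge and current, and the divergence of its Poynting vector equals $(\bigtriangledown\centerdot(\overline{E}\times\overline{B}))^{g}=0$. Because $(\rho,\overline{J})$ transforms as a four-vector and $(\overline{E},\overline{B})$ as the field tensor, the transformation of the data along the two factorisations $B_{\overline{w}}B_{\overline{v}}$ and $R_{g}B_{\overline{v}*\overline{w}}$ coincide, this being the consistency guaranteed by the final sentence of Lemma \ref{potential} together with Lemma \ref{twist1}; so the transported tuple is precisely a Maxwell solution in $S''$ for the boost-$\overline{w}$ transform of $(\rho',\overline{J}')$. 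As $S''$ was arbitrary, $S'$ is surface non-radiating.

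For the rotation or reflection claim, I would argue the same way but with the factorisation supplied by Lemma \ref{conjugation}. Given $g\in O(3)$ and a frame $S''$ connected to $S'$ by a real boost $\overline{w}$, the composite from $S$ is $B_{\overline{w}}R_{g}=R_{g}B_{g^{-1}(\overline{w})}$, and $|g^{-1}(\overline{w})|=|\overline{w}|<c$ since $O(3)$ preserves the norm. Thus $S''$ is the $R_{g}$-image of the single-boost frame $S_{g^{-1}(\overline{w})}$, where surface non-radiating of $S$ furnishes a solution with vanishing Poynting divergence. This is exactly the configuration of Lemma \ref{twist1} with $\overline{v}=g^{-1}(\overline{w})$ and $\overline{w}=g(\overline{v})$, so that lemma (with the vanishing already obtained in $S_{g^{-1}(\overline{w})}$) delivers $\bigtriangledown''\centerdot(\overline{E}''\times\overline{B}'')=0$ in $S''$ for the transform of $(\rho^{g},\overline{J}^{g})$ directly, and since $S''$ is arbitrary $S'$ is surface non-radiating.

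The main obstacle is the bookkeeping in the penultimate step of each part: one must check that the tuple obtained by first boosting to $\widetilde{S}$ (respectively $S_{g^{-1}(\overline{w})}$) and then rotating really is the transform in $S''$ of the data defined by the direct boost $\overline{w}$ from $S'$, rather than merely some unrelated Maxwell solution in $S''$. This is precisely the path-independence of the field transformation proved in Lemma \ref{potential} combined with the covariance of the charge-current four-vector, and it is the reason the commutation identities of Lemmas \ref{conjugation} and \ref{velocity} were established. Once that consistency is secured, the vanishing of the Poynting divergence transfers automatically through Lemmas \ref{maxwells} and \ref{twist1}.
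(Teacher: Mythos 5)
Your proposal is correct and follows essentially the same route as the paper: the boost case via the Thomas factorisation $B_{\overline{w}}B_{\overline{v}}=R_{g}B_{\overline{v}*\overline{w}}$ of Lemma \ref{velocity}, transport of the zero-Poynting-divergence solution through the rotation by Lemma \ref{maxwells} with consistency from Lemma \ref{potential}, and the rotation/reflection case via the conjugation identity and Lemma \ref{twist1}. The only additions beyond the paper's argument are your explicit checks that $|\overline{v}*\overline{w}|<c$ and $|g^{-1}(\overline{w})|<c$, which the paper leaves implicit.
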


\begin{proof}
Let $S''$ be a frame connected to $S'$ by a velocity vector $\overline{w}$. By Lemma \ref{velocity}, we have that $B_{\overline{w}}B_{\overline{v}}=R_{g}B_{\overline{v}*\overline{w}}$, where $g\in SO(3)$. Let $S'''$ be connected to $S$ by the velocity vector $\overline{v}*\overline{w}$, then as $S$ is surface non-radiating, there exist $(\overline{E}''',\overline{B}''')$ satisfying Maxwell's equations in $S'''$ for the transformed charge and current $(\rho''',\overline{J}''')$, with $div'''(\overline{E}'''\times \overline{B}''')=0$. By Lemma \ref{maxwells}, we have that in $S''$, $(\overline{E}'',\overline{B}'')$ satisfy Maxwell's equations for the transformed current and charge $(\rho'',\overline{J}'')$ with $div(\overline{E}''\times \overline{B}'')=0$, where $\overline{E}''=\overline{E}'''^{g}$ and $\overline{B}''=\overline{B}'''^{g}$. As $(\rho,\overline{J})$ transforms as a $4$-vector between inertial frames, and using the result of Lemma \ref{potential}, we have verified the surface non-radiating condition for $S'$, with the transformed current $(\rho',\overline{J}')$. For the last part, let $S''$ be a frame connected to $S'$ by a velocity vector $\overline{v}$ and let $\overline{w}=g^{-1}(\overline{w})$. In the frame $S'''$ connected to $S$ by the velocity vector $\overline{w}$, by the definition of surface non radiating, there exist fields $(\overline{E}''',\overline{B}''')$, satisfying Maxwell's equations in $S'''$, with $\bigtriangledown'''(\overline{E}'''\times\overline{B}''')=0$. Using Lemmas \ref{maxwells}, \ref{potential} and \ref{twist1}, if $(\overline{E}'',\overline{B}'')$ are the fields in $S''$ corresponding to $(\overline{E}^{g},sign(g)\overline{B}^{g})$, where $(\overline{E},\overline{B})$ are the fields in $S$ corresponding to $(\overline{E}''',\overline{B}''')$ in $S'''$, then $\bigtriangledown''\centerdot(\overline{E}''\times \overline{B}'')=0$. Moreover, $(\rho'',\overline{J}'',\overline{E}'',\overline{B}'')$ satisfy Maxwell's equations in $S''$ for the current and charge $(\rho'',\overline{J}'')$ in $S''$ corresponding to $(\rho^{g},\overline{J}^{g})$.   .
\end{proof}

\begin{lemma}
\label{symmetrylemma}
Let $\tau$ be a permutation of $(1,2,3)$, and let $(\rho,\overline{J},\overline{E},\overline{B})$ satisfy Maxwell's equations in $S$. Let $(\overline{E}^{\tau},sign(\tau)\overline{B}^{\tau})$ be the corresponding fields in the
reflected frame $S'$. Let $\{h_{i},h_{i}'\}$, for $1\leq i\leq 3$, $\{p_{ij},p_{ij}'\}$, for $1\leq i\leq j\leq 3$, and $\{\sigma,\sigma'\}$ be the components of the Poynting vector, stress tensor and energies for $(\overline{E},\overline{B})$ and $(\overline{E}^{\tau},sign(\tau)\overline{B}^{\tau})$ respectively, then;\\

$\sigma'=\sigma^{\tau}$ $h_{i}'=h_{\tau(i)}^{\tau}$ $p_{ij}'=p_{\tau(i)\tau(j)}^{\tau}$\\

Let hypotheses be as in Lemma \ref{infinity1}, with the assumption that $S$ is surface non-radiating, for charge and current $(\rho,\overline{J})$, and $(\overline{E}_{\infty},\overline{B}_{\infty})$ are the fields constructed in the limit frame $S_{\infty}$, then the conclusion is satisfied by $(\overline{E}_{\infty}^{\tau_{23}},-\overline{B}_{\infty}^{\tau_{23}})$ in the reflected frame $S_{\infty}^{\tau_{23}}$ for charge and current $(\rho^{\tau_{23}},\overline{J}^{\tau_{23}})$. Moreover, for the permutation $\tau_{23}$ of $(1,2,3,4)$, if, in the context of Lemmas \ref{newequations} and \ref{newequations3}, we derive a relation of the form;\\

$\sum_{1\leq k\leq 4}\alpha_{k}{\partial\sigma\over \partial x_{k}}+\sum_{1\leq i\leq 3,1\leq k\leq 4}\beta_{ik}{\partial g_{i}\over \partial x_{k}}+\sum_{1\leq i\leq j\leq 3,1\leq k\leq 4}\gamma_{ijk}{\partial p_{ij}\over \partial x_{k}}=0$ $(*)$\\

where $\{\alpha_{k},\beta_{ik},\gamma_{ijk}\}\subset \mathcal{C}$, then we can derive the relation;\\

$\sum_{1\leq k\leq 4}\alpha_{k}{\partial\sigma\over \partial x_{\tau_{23}(k)}}+\sum_{1\leq i\leq 3,1\leq k\leq 4}\beta_{ik}{\partial g_{\tau_{23}(i)}\over \partial x_{\tau_{23}(k)}}$\\

$+\sum_{1\leq i\leq j\leq 3,1\leq k\leq 4}\gamma_{ijk}{\partial p_{\tau_{23}(i)\tau_{23}(j)}\over \partial x_{\tau_{23}(k)}}=0$ $(**)$\\

\end{lemma}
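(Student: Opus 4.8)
The plan is to establish the three assertions in turn: the first by direct substitution into the definitions of Definition \ref{stressenergy}, the second by transporting the construction of Lemma \ref{infinity1} through the reflection $\tau_{23}$, and the third by combining the first two with the chain rule of Definition \ref{reflections}.

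First I would record the component form of the permutation action. Writing $g\in O(3)$ for the permutation matrix of $\tau$ (taken with the convention $g_{ij}=\delta_{\tau(i)j}$, so that $\det(g)=sign(\tau)$), the vector transformation rule of Definition \ref{reflections} specialises to $e_i'=e_{\tau(i)}^{\tau}$, and since $\overline{B}'=sign(\tau)\overline{B}^{\tau}$ we get $b_i'=sign(\tau)b_{\tau(i)}^{\tau}$. The three identities are then immediate. For the energy, $|\overline{E}'|^{2}=(|\overline{E}|^{2})^{\tau}$ and $|\overline{B}'|^{2}=(|\overline{B}|^{2})^{\tau}$ because $sign(\tau)^{2}=1$, so $\sigma'=\sigma^{\tau}$. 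For the Poynting vector, Lemma \ref{divcurl}$(v)$ gives $\overline{E}'\times\overline{B}'=sign(\tau)^{2}(\overline{E}\times\overline{B})^{\tau}=(\overline{E}\times\overline{B})^{\tau}$, whose $i$-th component under the same convention is $h_{\tau(i)}^{\tau}$. For the stress tensor, substituting $e_i'=e_{\tau(i)}^{\tau}$ and $b_i'=sign(\tau)b_{\tau(i)}^{\tau}$ into the definition of $p_{ij}'$ and using $sign(\tau)^{2}=1$ together with $\delta_{ij}=\delta_{\tau(i)\tau(j)}$ yields $p_{ij}'=p_{\tau(i)\tau(j)}^{\tau}$.

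For the second assertion, I would note that $\tau_{23}$ is an odd element of $O(3)$, so $sign(\tau_{23})=-1$ and the reflected tuple is $(\overline{E}_{\infty}^{\tau_{23}},-\overline{B}_{\infty}^{\tau_{23}})$; by Lemma \ref{maxwells} this tuple solves Maxwell's equations for $(\rho^{\tau_{23}},\overline{J}^{\tau_{23}})$, and by the last part of Lemma \ref{surface} the reflected frame is again surface non-radiating. The essential point is that the reflection commutes with the whole limit-frame construction of Lemma \ref{infinity1}: by Lemma \ref{preserved} we have $R_{g}B_{\overline{v}}^{i}=B_{g(\overline{v})}^{i}R_{g}$ with $g=\tau_{23}$, so reflecting and then boosting in direction $\overline{v}$ agrees with boosting in the reflected direction $\tau_{23}(\overline{v})$ and then reflecting, and passing to the limit as in Lemma \ref{limit} shows the reflection commutes with the passage to $S_{\infty}$. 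Running the construction of Lemma \ref{infinity1} verbatim for the reflected data, with $\overline{u}$ replaced by $\overline{u}^{\tau_{23}}$ (whose defining conditions $u_{1}\neq0$, $\overline{u}\neq u_{1}\overline{e}_{1}$ survive because $\tau_{23}$ fixes the first axis), then produces the conclusion $(******)$ for $(\overline{E}_{\infty}^{\tau_{23}},-\overline{B}_{\infty}^{\tau_{23}})$.

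The third assertion is then formal. Suppose $(*)$ is derived, as in Lemmas \ref{newequations} and \ref{newequations3}, for the reflected fields; by the first assertion the quantities entering it are $\sigma^{\tau_{23}}$, $g_{\tau_{23}(i)}^{\tau_{23}}$ and $p_{\tau_{23}(i)\tau_{23}(j)}^{\tau_{23}}$. The derivative rule of Definition \ref{reflections}, which for $\tau_{23}$ (fixing the $t$- and $x$-indices and swapping the $y$- and $z$-indices) reads $\partial(\phi^{\tau_{23}})/\partial x_{k}=(\partial\phi/\partial x_{\tau_{23}(k)})^{\tau_{23}}$, shows that every term of $(*)$ acquires an overall superscript $\tau_{23}$ together with a relabelling of the differentiation index, so that $(*)$ reads $L^{\tau_{23}}=0$, where $L$ denotes the left-hand side of $(**)$. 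Since the operation $(\cdot)^{\tau_{23}}$ is an invertible relabelling of coordinates, $L^{\tau_{23}}=0$ is equivalent to $L=0$, which is precisely $(**)$. The main obstacle I expect is the second assertion: one must check with care that $\tau_{23}$ commutes with every stage of the elaborate construction of Lemma \ref{infinity1} — the conjugations by $R_{g}$, $R_{h}$ and $R_{f}$, the boost limits, and the surface non-radiating condition — so that the derived equations genuinely transform by a clean permutation of indices; the first and third assertions are routine once the component laws and the chain rule are in hand.
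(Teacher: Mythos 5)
Your proposal is correct and takes essentially the same route as the paper's proof: a direct component verification of the transformation laws for $\sigma$, $h_{i}$, $p_{ij}$ (the paper reduces to the transposition $\tau_{23}$ and computes by hand, you treat general $\tau$ via Lemma \ref{divcurl}$(v)$, which is equivalent), the observation that $\tau_{23}$ fixes $\overline{e}_{1}$ so the reflection commutes with the limit-frame construction (via Lemmas \ref{maxwells}, \ref{surface} and the conjugation identity), and finally rederiving $(*)$ for the reflected configuration and converting it into $(**)$ using the first claim together with the chain rule. The only differences are cosmetic points of organisation, not of substance.
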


\begin{proof}
For the first claim, it is sufficient to prove the result for the elementary permutation $\tau_{23}$. We have that, in components, $(e_{1}',e_{2}',e_{3}')=(e_{1}^{\tau_{23}},e_{2}^{\tau_{23}},e_{3}^{\tau_{23}})$ and $(b_{1}',b_{2}',b_{3}')=(-b_{1}^{\tau_{23}},-b_{3}^{\tau_{23}},-b_{2}^{\tau_{23}})$. Then, by a straightforward calculation;\\

$e'^{2}=e_{1}'^{2}+e_{2}'^{2}+e_{3}'^{2}=(e_{1}^{\tau_{23}})^{2}+(e_{3}^{\tau_{23}})^{2}+(e_{2}^{\tau_{23}})^{2}
=(e^{2})^{\tau_{23}}$\\

$b'^{2}=b_{1}'^{2}+b_{2}'^{2}+b_{3}'^{2}=(-b_{1}^{\tau_{23}})^{2}+(-b_{3}^{\tau_{23}})^{2}+(-b_{2}^{\tau_{23}})^{2}
=(b^{2})^{\tau_{23}}$\\

$\sigma'={1\over\epsilon_{0}}(e'^{2}+c^{2}b'^{2})
={1\over\epsilon_{0}}((e^{2})^{\tau_{23}}+c^{2}(b^{2})^{\tau_{23}})=\sigma^{\tau_{23}}$\\

$h_{1}'=(e_{2}b_{3}-e_{3}b_{2})^{\tau_{23}}=h_{1}^{\tau_{23}}$\\

$h_{2}'=(e_{1}b_{2}-e_{2}b_{1})^{\tau_{23}}=h_{3}^{\tau_{23}}$\\

$h_{3}'=(e_{3}b_{1}-e_{1}b_{3})^{\tau_{23}}=h_{2}^{\tau_{23}}$\\

$p_{ij}'=-\epsilon_{0}(e_{i}'e_{j}'+c^{2}b_{i}'b_{j}')+\delta_{ij}\sigma'$\\

$=-\epsilon_{0}(e_{\tau_{23}(i)}^{\tau_{23}}e_{\tau_{23}(j)}^{\tau_{23}}+c^{2}(-b_{\tau_{23}(i)}^{\tau_{23}})(-b_{\tau_{23}(j)}^{\tau_{23}}))+\delta_{ij}\sigma^{\tau_{23}}$\\

$=-\epsilon_{0}(e_{\tau_{23}(i)}^{\tau_{23}}e_{\tau_{23}(j)}^{\tau_{23}}+c^{2}b_{\tau_{23}(i)}^{\tau_{23}})b_{\tau_{23}(j)}^{\tau_{23}}))+\delta_{\tau_{23}(i)\tau_{23}(j)}\sigma^{\tau_{23}}$\\

$=p_{\tau_{23}(i)\tau_{23}(j)}^{\tau_{23}}$\\

For the second claim, we have by both parts of Lemma \ref{surface}, that the reflected frame $S'^{\tau_{23}}$ corresponding to $S'$ is surface non-radiating for the transformed current and charge $(\rho'^{\tau_{23}},\overline{J}'^{\tau_{23}})$, where $(\rho',\overline{J}')$ correspond to $(\rho,\overline{J})$ in $S$. Let $\{\overline{E}_{r},\overline{B}_{r}\}$ be the fields constructed in Lemma \ref{polynomial}, with corresponding fields $\{\overline{E}_{r}^{\tau_{23}},-\overline{B}_{r}^{\tau_{23}}\}$ in the reflected frames $S_{r}''^{\tau_{23}}$ corresponding to $S_{r}''$, then $\bigtriangledown'''(\overline{E}_{r}^{\tau_{23}}\times -\overline{B}_{r}^{\tau_{23}})=0$ by Lemma \ref{maxwells} and, as $\tau_{23}$ fixes $\overline{e}_{1}$, $S_{r}''^{\tau_{23}}$ is connected to $S''$ by the velocity vectors $-r\overline{e}_{1}$. Moreover, in the notation of Lemma \ref{infinity1}, $\overline{u}$ is fixed, so that when we construct $(\overline{E}_{\infty}',\overline{B}_{\infty}')$ from the fields $\{\overline{E}_{r}^{\tau_{23}},-\overline{B}_{r}^{\tau_{23}}\}$, it is clear, using the fact that the transformations connecting the frames $\{S,S',S_{r}''\}$ with $S_{\infty}$ are the same as those between $\{S^{\tau_{23}},S'^{\tau_{23}},S_{r}''^{\tau_{23}}\}$ and $S_{\infty}^{\tau_{23}}$, that $\overline{E}_{\infty}'=\overline{E}_{\infty}^{\tau_{23}}$ and $\overline{B}_{\infty}'=-\overline{B}_{\infty}^{\tau_{23}}$. For the final claim, let $(\rho,\overline{J},\overline{E},\overline{B})$, be the tuple, satisfying Maxwell's equations in the base frame $S$, for which we derive the relation $(*)$, then $(\rho^{\tau_{23}},\overline{J}^{\tau_{23}},\overline{E}^{\tau_{23}},-\overline{B}^{\tau_{23}})$ satisfies Maxwell's equations in the reflected frame $S^{\tau_{23}}$, and, by Lemma \ref{surface}, $S^{\tau_{23}}$ is surface non-radiating for the reflected charge and current $(\rho^{\tau_{23}},\overline{J}^{\tau_{23}})$. Using the fact that $(\overline{E},\overline{B})$ corresponds to the fields $(\overline{E}_{\infty},\overline{B}_{\infty})$ in the limit frame $S_{\infty}$, by the proof of the second claim, we have that $(\overline{E}_{\infty}^{\tau_{23}},-\overline{B}_{\infty}^{\tau_{23}})$ corresponds to the fields $(\overline{E}_{\infty}^{\tau_{23}},-\overline{B}_{\infty}^{\tau_{23}})$ in the reflected frame $S_{\infty}^{\tau_{23}}$. We can then follow the proof of Lemma \ref{infinity}, to obtain the same relation $(*)$ for the quantities $\{\sigma', (g_{i}')_{1\leq i\leq 3},(p_{ij}')_{1\leq i\leq j\leq 3}\}$,  corresponding to $(\overline{E}^{\tau_{23}},-\overline{B}^{\tau_{23}})$. By the first part of the lemma, we obtain the relation;\\

$\sum_{1\leq k\leq 4}\alpha_{k}{\partial\sigma^{\tau_{23}}\over \partial x_{k}}+\sum_{1\leq i\leq 3,1\leq k\leq 4}\beta_{ik}{\partial g_{\tau_{23}(i)}^{\tau_{23}}\over \partial x_{k}}$\\

$+\sum_{1\leq i\leq j\leq 3,1\leq k\leq 4}\gamma_{ijk}{\partial p_{\tau_{23}(i)\tau_{23}(j)}^{\tau_{23}}\over \partial x_{k}}=0$\\

Using the chain rule, we then obtain that;\\

$\sum_{1\leq k\leq 4}\alpha_{k}{\partial\sigma\over \partial x_{\tau_{23}(k)}}+\sum_{1\leq i\leq 3,1\leq k\leq 4}\beta_{ik}{\partial g_{\tau_{23}(i)}\over \partial x_{\tau_{23}(k)}}$\\

$+\sum_{1\leq i\leq j\leq 3,1\leq k\leq 4}\gamma_{ijk}{\partial p_{\tau_{23}(i)\tau_{23}(j)}\over \partial x_{\tau_{23}(k)}}=0$\\

as required.

\end{proof}

\begin{lemma}
\label{polynomial}
We can construct limit fields $(\overline{E}_{\infty},\overline{B}_{\infty})$, in the limit frame $S_{\infty}$, with $div_{\infty}(\overline{E}_{\infty}\times \overline{E}_{\infty})=0$.
\end{lemma}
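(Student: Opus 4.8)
The plan is to realise $(\overline{E}_{\infty},\overline{B}_{\infty})$ as the $v\to\infty$ limit of the divergence-free-Poynting fields furnished by the surface non-radiating hypothesis, and to show that the vanishing of the divergence of the Poynting vector survives the limit because, after reduction to Taylor data, it is encoded in purely algebraic relations in $v$. First I would fix the boost direction, taking $\overline{v}=\overline{e}_{1}$ without loss of generality, the general direction following by conjugating with a rotation via Lemma \ref{preserved} (using $R_{g}B_{v\overline{e}_{1}}R_{g}^{-1}=B_{g(v\overline{e}_{1})}$) together with the invariance of the surface non-radiating property under $O(3)$ and boosts in Lemma \ref{surface}. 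Surface non-radiating then gives, for every real $v$ with $|v|<c$, fields $(\overline{E}_{v},\overline{B}_{v})$ solving Maxwell's equations for the transformed charge and current in $S_{v\overline{e}_{1}}$ with $div_{v}(\overline{E}_{v}\times\overline{B}_{v})=0$. By Lemma \ref{families} this divergence-free condition, transported back to the rest frame, is exactly the additional equation $(\dag)$, whose coefficients $\alpha_{v},\ldots,\theta_{v}$ are rational in $\gamma$ and $\beta=v/c$.

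Next I would pass to the polynomial model of Lemma \ref{geometry}. Replacing each $(\overline{E}_{v},\overline{B}_{v})$ by its first-order Taylor approximation at a fixed base point $(\overline{x}_{0},t_{0})$ converts Maxwell's equations and the equation $(\dag)$, evaluated at that point, into a finite system of algebraic relations among the $30$ Taylor coefficients, valid for all $v\in\mathcal{C}\setminus\{-c,c\}$. By Definition \ref{extension} and Lemma \ref{preserved} the complex boost matrices $B^{i}_{v\overline{e}_{1}}$ and the induced transformations of $(\rho,\overline{J})$ and of $(\overline{E},\overline{B})$ are defined for complex $v$, so the construction continues analytically in $v$. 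I would then invoke Lemma \ref{limit}: as $v\to\infty$ with a fixed choice of square root, the boost matrices and their inverses converge, defining the limit frame $S_{\infty}$ and the transformation of derivatives from $S_{\infty}$ to $S$, and the transformed fields converge to explicit complex fields $(\overline{E}_{\infty},\overline{B}_{\infty})$. Since Maxwell's equations are preserved under each finite transformation by Lemma \ref{maxwells}, and the limiting derivative operator is the limit of the finite ones by Lemma \ref{limit}, the limit pair again solves Maxwell's equations in $S_{\infty}$.

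For the conclusion, I would argue pointwise and then globalise. At the fixed base point, $div_{v}(\overline{E}_{v}\times\overline{B}_{v})=0$ is, through the algebraic reduction of Lemma \ref{geometry} and the equation $(\dag)$ of Lemma \ref{families}, a single linear relation among the Taylor coefficients holding for every admissible $v$. Taking the limit of $(\dag)$ as in Lemma \ref{infinity}, where the coefficients have finite limits (with $\gamma,\alpha_{v},\beta_{v},\delta_{v},\xi_{v},\eta_{v}\to 0$ and $\gamma_{v},\theta_{v},\epsilon_{v}$ tending to $\pm i/c$), the relation persists for the limiting coefficients. Equivalently, combining the convergence $\overline{E}_{v}\times\overline{B}_{v}\to\overline{E}_{\infty}\times\overline{B}_{\infty}$ with the convergence $div_{v}\to div_{\infty}$ of the divergence operators supplied by Lemma \ref{limit} gives $div_{\infty}(\overline{E}_{\infty}\times\overline{B}_{\infty})=\lim_{v\to\infty}div_{v}(\overline{E}_{v}\times\overline{B}_{v})=0$. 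Ranging over base points yields the stated identity.

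The hard part will be upgrading the pointwise limiting Taylor data into a genuine global, differentiable field whose derivatives really equal the limits of the finite-$v$ coefficients, which is what legitimises interchanging the limit $v\to\infty$ with spatial and temporal differentiation. This is precisely where uniform control is required: one must bound the families of approximations and assemble the local polynomial solutions consistently across base points, which is the role of the accompanying boundedness and approximation arguments. I expect the most delicate point to be verifying that the complex limit frame $S_{\infty}$, though degenerate as a real Lorentz frame, still carries a well-defined divergence operator compatible with the limiting fields, so that $div_{\infty}(\overline{E}_{\infty}\times\overline{B}_{\infty})=0$ is both meaningful and obtained by continuity.
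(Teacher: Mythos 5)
Your proposal founders on a point that the paper's own proof is specifically engineered to avoid: the surface non-radiating hypothesis only supplies fields $(\overline{E}_{v},\overline{B}_{v})$ with $div_{v}(\overline{E}_{v}\times\overline{B}_{v})=0$ in frames connected to $S$ by \emph{real} velocities with $|v|<c$. Your final step, $div_{\infty}(\overline{E}_{\infty}\times\overline{B}_{\infty})=\lim_{v\to\infty}div_{v}(\overline{E}_{v}\times\overline{B}_{v})=0$, takes a limit along a family that simply does not exist for $|v|\geq c$: there is nothing to take the limit of. The algebraicity supplied by Lemma \ref{geometry} does not repair this, because it concerns the form of the conditions on the Taylor coefficients, not the existence of fields satisfying them for large or complex $v$; moreover the members of the family at different $v$ are chosen independently by the hypothesis, so no analytic continuation in $v$ is available. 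For the same reason your appeal to Lemma \ref{infinity} is circular: the limit relations there are asserted for transferred fields in $S_{\infty}$ whose existence is exactly what Lemma \ref{polynomial} (together with Lemmas \ref{bounded}, \ref{approxinfinity} and \ref{triangles}) is supposed to establish.

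The paper's proof gets around the barrier at $|v|=c$ with a velocity-composition trick that is absent from your proposal. It takes $S'$ moving at $\overline{u}=(c-\epsilon)\overline{e}_{1}$ relative to $S$, and $S''$ moving at $\overline{w}=(-{c^{2}\over c-\epsilon}-\delta)\overline{e}_{1}$ relative to $S'$; by Lemma \ref{velocity} the composed velocity $\overline{u}*\overline{w}$ is parallel to $\overline{e}_{1}$ and tends to infinity as $\delta\rightarrow 0^{-}$, because the denominator $1+{\overline{u}\centerdot\overline{w}\over c^{2}}$ degenerates. By Lemma \ref{surface} the hypothesis transfers to $S'$, giving exact divergence-free Poynting fields in all frames $S''_{r}$ at velocity $-r\overline{e}_{1}$, $0\leq r<c$, relative to $S'$; the frame $S''$ lies only slightly beyond this range (a gap of order $\epsilon$ in $r$), and that small gap is bridged by continuity and uniform polynomial approximation, at the cost of an error $\gamma'$ in the divergence which is then driven to zero in the companion Lemmas \ref{bounded}, \ref{approxinfinity} and \ref{triangles}. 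Thus the limit frame is reached while the hypothesis is only ever invoked at subluminal relative velocities, which is precisely the step your single-boost limit $v\rightarrow\infty$ cannot supply. Your closing paragraph correctly identifies the secondary issue of uniform control and interchanging limits with derivatives, but without the composition construction there is no family of fields approaching $S_{\infty}$ to control in the first place.
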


\begin{proof}
Let $\epsilon>0$,$\delta<0$, and let $S'$ travel with velocity vector $\overline{u}=(c-\epsilon)\overline{e}_{1}$, relative to $S$, and let $S''$ travel with velocity vector $\overline{w}=(-{c^{2}\over c-\epsilon}-\delta)\overline{e}_{1}$ relative to $S'$. By Lemma \ref{velocity}, there exists $g\in SO(3)$ with;\\

$B_{\overline{w}}B_{\overline{u}}=R_{g}B_{\overline{u}*\overline{w}}$\\

with $\overline{u}*\overline{w}={\overline{u}+\overline{w}\over 1+{\overline{u}\centerdot\overline{w}\over c^{2}}}+{\gamma_{u}\over c^{2}(\gamma_{u}+1)}{\overline{u}\times (\overline{u}\times\overline{w})\over 1+{\overline{u}\centerdot\overline{w}\over c^{2}}}$\\

$={\overline{u}+\overline{w}\over 1+{\overline{u}\centerdot\overline{w}\over c^{2}}}$\\

$={((c-\epsilon)-{c^{2}\over c-\epsilon}-\delta)\overline{e}_{1}\over{-\delta(c-\epsilon)\over c^{2}}}$\\

$={c^{2}((c-\epsilon)-{c^{2}\over c-\epsilon}-\delta)\over -\delta(c-\epsilon)}\overline{e}_{1}$ $(*)$\\

By inspection of $(*)$, for given $0<\epsilon<c$, we can see that, as $\delta\rightarrow 0$ from below, $\overline{u}*\overline{w}\rightarrow\infty$ along the direction $\overline{e}_{1}$. By Lemma \ref{surface}, we can assume that $S'$ is surface non radiating, and there exist a family of electric and magnetic fields $\{\overline{E}_{r},\overline{B}_{r}\}$, with $0\leq r<c$, such that $div(\overline{E}_{r}\times\overline{B}_{r})=0$ in the inertial frames $S''_{r}$, travelling at velocity $-r\overline{e}_{1}$ relative to $S'$. Assume that there exists a uniform polynomial approximation $\{\overline{E}_{r}^{\gamma},\overline{B}_{r}^{\gamma}\}$, with error term $\gamma>0$, to the fields, transferred back to the base frame $S$. By continuity, for sufficiently small $\delta$, we can find a polynomial family $\{\overline{E}_{r}^{\gamma'},\overline{B}_{r}^{\gamma'}\}$, for $0<r<|\overline{w}|$, with error term $\gamma'$, and $0<\gamma'<2\gamma$, such that $|div_{\infty}(\overline{E}_{r}\times\overline{B}_{r})|<\gamma'$. In the limit frame $S_{\infty}$, using Lemma \ref{divcurl}, Lemma \ref{velocity} and Definition \ref{extension}, we obtain that $|div(\overline{E}_{\infty}\times\overline{B}_{\infty})|<\gamma'$ as well.
\end{proof}

\begin{rmk}
\label{referbelow}
A more rigorous proof of the final claim in the previous lemma is given below.
\end{rmk}

\begin{lemma}
\label{bounded}
Let $S$ be a frame with bounded current $(\rho,\overline{J})$ and let $S'_{\epsilon}$ be connected to $S$ by the velocity vector $(c-\epsilon)\overline{e}_{1}$, for $0<\epsilon\leq c$, and $S''_{\epsilon,\delta}$ be connected to $S'_{\epsilon}$ by the velocity vector $(-c+\delta)\overline{e}_{1}$, where $\delta=(1+\tau)\epsilon$, for;\\

$|\tau|\leq {1\over 2}\leq {c\over c-1}\leq{{2\over c}-{\epsilon\over c^{2}}\over 1-{1\over c}+{\epsilon \over c^{2}}}$\\

then the transfers $(\rho_{\epsilon,\delta},\overline{J}_{\epsilon,\delta})$ to $S''_{\epsilon,\delta}$ are uniformly bounded in the frames $S''_{\epsilon,\delta}$. Moreover, we can assume there exists a constant $F$ independent of $\epsilon$, and, for any given $\epsilon$, a family of tuples $(\rho_{\epsilon,\delta},\overline{J}_{\epsilon,\delta},\overline{E}_{\epsilon,\delta},\overline{B}_{\epsilon,\delta})$, satisfying Maxwell's equations, with $div(\overline{E}_{\epsilon,\delta}\times\overline{B}_{\epsilon,\delta})=0$ in the frame $S''_{\epsilon,\delta}$, and $max(|\overline{E}_{\epsilon,\delta}|,|\overline{E}_{\epsilon,\delta}|)\leq F$.

\end{lemma}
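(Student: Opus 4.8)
The plan is to exploit the fact that, although each of the two boosts is nearly lightlike, their composition is a boost of modest speed, so the net transformation relating $S$ and $S''_{\epsilon,\delta}$ stays uniformly bounded. First I would note that this net transformation is $B_{\overline{w}}B_{\overline{u}}$, with $\overline{u}=(c-\epsilon)\overline{e}_{1}$ and $\overline{w}=(-c+\delta)\overline{e}_{1}$. Since $\overline{u}$ and $\overline{w}$ are collinear, $\overline{u}\times(\overline{u}\times\overline{w})=\overline{0}$, so by Lemma \ref{velocity} the Thomas rotation is trivial and $B_{\overline{w}}B_{\overline{u}}=B_{\overline{u}*\overline{w}}$ is a pure boost along $\overline{e}_{1}$ whose velocity is given by the collinear addition formula.

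Next I would compute the composite velocity explicitly. The addition formula of Lemma \ref{velocity} gives
$$\overline{u}*\overline{w}=\frac{(\delta-\epsilon)c^{2}}{c\delta+c\epsilon-\epsilon\delta}\,\overline{e}_{1},$$
and substituting $\delta=(1+\tau)\epsilon$ the factor $\epsilon$ cancels, leaving
$$\overline{u}*\overline{w}=\frac{\tau c^{2}}{c(2+\tau)-\epsilon(1+\tau)}\,\overline{e}_{1}.$$
The decisive observation is that the denominator $D(\epsilon)=c(2+\tau)-\epsilon(1+\tau)$ is positive and strictly decreasing in $\epsilon$ on $(0,c]$, since $1+\tau>0$ when $|\tau|\leq\frac{1}{2}$; hence it is minimised at $\epsilon=c$, where $D(c)=c$. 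Therefore
$$\frac{|\overline{u}*\overline{w}|}{c}=\frac{|\tau|}{(2+\tau)-\frac{\epsilon}{c}(1+\tau)}\leq|\tau|\leq\frac{1}{2}$$
for every admissible $\epsilon$. The condition $|\tau|\leq\frac{1}{2}$ is thus exactly what forces $|\overline{u}*\overline{w}|\leq\frac{c}{2}$ uniformly, while the remaining inequalities in the hypothesis serve only to keep $\delta$ a genuine sub-luminal velocity; it follows that $\gamma_{\overline{u}*\overline{w}}\leq 2/\sqrt{3}$, independent of $\epsilon$ and $\delta$.

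With the net boost so controlled, the first assertion is immediate: $(\rho,\overline{J})$ transforms as a four-vector, so $(\rho_{\epsilon,\delta},\overline{J}_{\epsilon,\delta})$ is obtained from the bounded pair $(\rho,\overline{J})$ by applying $B_{\overline{u}*\overline{w}}$, whose entries are controlled by $\gamma_{\overline{u}*\overline{w}}$ and $|\overline{u}*\overline{w}|\leq\frac{c}{2}$; both being uniformly bounded, so is the transfer. For the final clause I would use that $S''_{\epsilon,\delta}$ is precisely the frame $S_{\overline{u}*\overline{w}}$ attached to $S$ by the bounded velocity $\overline{u}*\overline{w}$, so the surface non-radiating hypothesis supplies a tuple $(\rho_{\epsilon,\delta},\overline{J}_{\epsilon,\delta},\overline{E}_{\epsilon,\delta},\overline{B}_{\epsilon,\delta})$ solving Maxwell's equations with $div(\overline{E}_{\epsilon,\delta}\times\overline{B}_{\epsilon,\delta})=0$, the vanishing divergence being preserved under the relevant transformations by Lemmas \ref{maxwells} and \ref{twist1}. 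The bound $F$ then follows because the admissible composite velocities fill a compact subset of $(-c,c)\overline{e}_{1}$, on which the transformation rules for $(\overline{E},\overline{B})$ act continuously and with uniformly bounded Lorentz factor, so a fixed bounded reference solution transfers to a family bounded by a single constant $F$ independent of $\epsilon$.

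The step I expect to be the main obstacle is not the velocity cancellation, which is a short computation, but this last point: making the field bound genuinely $\epsilon$-independent. Unlike the current, the surface non-radiating fields are only asserted to exist frame by frame, so one must organise them into a family depending continuously on the composite velocity and then invoke compactness of the velocity range $|\overline{u}*\overline{w}|\leq\frac{c}{2}$. It is here that the boundedness of $\overline{u}*\overline{w}$ established above, rather than of the two near-lightlike boosts individually, is indispensable.
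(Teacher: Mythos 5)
Your proposal follows essentially the same route as the paper: the identical collinear velocity-addition computation (the $\epsilon$'s cancel, leaving a composite velocity uniformly bounded away from $c$ --- the paper's chain of hypothesis inequalities is arranged to give $|\overline{u}*\overline{w}|\leq 1$ rather than your cleaner $|\tau|c\leq c/2$, but either bound suffices), followed by boundedness of the four-vector transfer of $(\rho,\overline{J})$. The ``main obstacle'' you flag at the end is real and is exactly where the paper stops proving and starts assuming: it invokes conjecture $(ii)$ of Remark \ref{strongercondition}, which posits that the surface non-radiating fields can be chosen so that their transfers to $S$ form a smooth family in the velocity parameter, and then obtains the uniform bound $F$ by continuity on a compact velocity range --- precisely the ``organise them into a family depending continuously on the composite velocity'' step you correctly judged cannot be extracted from the frame-by-frame existence hypothesis alone.
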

\begin{proof}
We have that $S''_{\epsilon,\delta}$ is connected to $S$ by the boost matrix $B_{\overline{u}*\overline{v}}$ where $\overline{u}=(c-\epsilon)\overline{e}_{1}$, $\overline{v}=(-c+\delta)\overline{e}_{1}$ and;\\

$|\overline{u}*\overline{v}|=|{\overline{u}+\overline{v}\over 1+{\overline{u}\centerdot\overline{v}\over c^{2}}}+{\gamma_{u}\over c^{2}(\gamma_{u}+1)}{\overline{u}\times (\overline{u}\times\overline{v})\over 1+{\overline{u}\centerdot\overline{v}\over c^{2}}}|$\\

$=|{\overline{u}+\overline{v}\over 1+{\overline{u}\centerdot\overline{v}\over c^{2}}}|$\\

$=|{(\delta-\epsilon)\overline{e}_{1}\over {\delta+\epsilon\over c}-{\epsilon\delta\over c^{2}}}|$\\

$=|{\epsilon(\theta-1)\overline{e}_{1}\over \epsilon({\theta+1\over c})-{\theta\epsilon^{2}\over c^{2}}}$ $(\theta=1+\tau)|$\\

$=|{(\theta-1)\overline{e}_{1}\over ({\theta+1\over c})-{\theta\epsilon\over c^{2}}}|\leq 1$\\

A straightforward calculation using the transfer rules for $(\rho,\overline{J})$ to frames $S_{\overline{w}}$, connected to $S$ by a velocity vector $\overline{w}$, with $|\overline{w}|\leq 1$, shows that the transfers $(\rho_{\epsilon,\delta},\overline{J}_{\epsilon,\delta})$ are uniformly bounded. For the last part, we can, using the conjecture $(ii)$ in Remark \ref{strongercondition}, assume there exists a family $(\overline{E}_{s\overline{e}_{1}},\overline{B}_{s\overline{e}_{1}})$ on the frames $S_{s\overline{e}_{1}}$, connected to $S$ by the velocity vector $s\overline{e}_{1}$, for $|s|<1$, with $div_{s\overline{e}_{1}}(\overline{E}_{s\overline{e}_{1}}\times \overline{B}_{s\overline{e}_{1}})=0$, such that the transfers $(\overline{E}_{s\overline{e}_{1}}',\overline{B}_{s\overline{e}_{1}}')$ to $S$ form a smooth family on $B(\overline{0},r_{0})\times (0,t_{0})$. By continuity, the transfers are bounded by some constant $F$ as required.         \\

\end{proof}

\begin{lemma}{Polynomial Approximation}
\label{approxinfinity}

For any $\gamma>0$, with $F$ as in Lemma \ref{bounded}, there exists a sequence of pairs $(\overline{E}_{n,\infty},\overline{B}_{n,\infty})$ for $n\geq n_{\gamma}$, in the limit frame $S_{\infty}$, with $div_{S_{\infty}}|(\overline{E}_{n,\infty}\times \overline{B}_{n,\infty})|<\gamma$ and $max(|\overline{E}_{n,\infty}|,|\overline{B}_{n,\infty}|)<F+1$ on some $B(\overline{0},r_{\infty})\times (0,t_{\infty})$.

\end{lemma}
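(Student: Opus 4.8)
The plan is to realise the pairs $(\overline{E}_{n,\infty},\overline{B}_{n,\infty})$ as polynomial truncations, in the limit frame $S_{\infty}$, of exactly divergence-free fields transported from the frames supplied by Lemma \ref{bounded}, and to make both errors quantitative by exploiting analyticity. First I would fix $\epsilon$ and apply Lemma \ref{bounded}: for each admissible $\delta=(1+\tau)\epsilon$ we obtain a tuple $(\rho_{\epsilon,\delta},\overline{J}_{\epsilon,\delta},\overline{E}_{\epsilon,\delta},\overline{B}_{\epsilon,\delta})$ satisfying Maxwell's equations with $div(\overline{E}_{\epsilon,\delta}\times\overline{B}_{\epsilon,\delta})=0$ in $S''_{\epsilon,\delta}$ and $max(|\overline{E}_{\epsilon,\delta}|,|\overline{B}_{\epsilon,\delta}|)\leq F$, the bound $F$ being independent of $\epsilon$, and whose transfers back to $S$ form a smooth family on $B(\overline{0},r_{0})\times(0,t_{0})$. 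I then choose a sequence $\delta_{n}\to 0$. By the computation in Lemma \ref{velocity} reproduced in Lemma \ref{polynomial}, the connecting composite velocities satisfy $\overline{u}*\overline{v}_{n}\to\infty$ along $\overline{e}_{1}$, so by Lemma \ref{limit} the boost matrices $B_{\overline{u}*\overline{v}_{n}}$ converge to the limiting boost defining $S_{\infty}$; in particular the relative transformation $(\lim_{n}B_{\overline{u}*\overline{v}_{n}})\,B_{\overline{u}*\overline{v}_{n}}^{-1}\to I$, that is, passing from $S''_{\epsilon,\delta_{n}}$ to $S_{\infty}$ is a near-identity boost.

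Next I would invoke the analytic framework of Definition \ref{extension}: all the field transfers are realised by analytic functions, so the transport of $(\overline{E}_{\epsilon,\delta_{n}},\overline{B}_{\epsilon,\delta_{n}})$ into $S_{\infty}$ extends analytically to a complex neighbourhood of the real compact set $\overline{B(\overline{0},r_{\infty})}\times[0,t_{\infty}]$. I set $(\overline{E}_{n,\infty},\overline{B}_{n,\infty})$ to be the truncation of the power series of this transported field to total degree $n$, exactly as the pointwise polynomial data is built in Lemma \ref{geometry}. By Cauchy estimates on the complex neighbourhood, the tails of these series, together with the tails of all their first partial derivatives, tend to $0$ uniformly on the compact set; hence there is a threshold $n_{\gamma}$ so that for $n\geq n_{\gamma}$ the $C^{0}$ estimate gives $max(|\overline{E}_{n,\infty}|,|\overline{B}_{n,\infty}|)<F+1$ at once, the spare unit absorbing the truncation error on top of the bound $F$.

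For the divergence I would compare $div_{S_{\infty}}(\overline{E}_{n,\infty}\times\overline{B}_{n,\infty})$ with $div_{S''_{\epsilon,\delta_{n}}}(\overline{E}_{\epsilon,\delta_{n}}\times\overline{B}_{\epsilon,\delta_{n}})=0$ through two estimates. The first is the frame change $S''_{\epsilon,\delta_{n}}\to S_{\infty}$: since the Poynting divergence is not boost-invariant, transporting the exact field produces a divergence equal to a continuous function of the relative boost parameters, vanishing at the identity; as this relative boost tends to $I$ and the fields are uniformly bounded in $C^{1}$, this contribution is $O(\text{boost defect})$ and can be forced below $\gamma/2$ by taking $n$ large. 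The second is truncation: expanding by the product rule, $div_{S_{\infty}}(\overline{E}_{n,\infty}\times\overline{B}_{n,\infty})$ minus the divergence of the transported exact field is a finite sum of terms, each a product of one bounded factor (size $\leq F+1$) with one factor that is a first-derivative truncation error, hence each is controlled by the uniform $C^{1}$ tail and can also be made $<\gamma/2$. Enlarging $n_{\gamma}$ if necessary, $|div_{S_{\infty}}(\overline{E}_{n,\infty}\times\overline{B}_{n,\infty})|<\gamma$ for all $n\geq n_{\gamma}$, which is the assertion.

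The main obstacle is the control of \emph{derivatives}, not merely values: because $div(\overline{E}\times\overline{B})$ is quadratic in the fields and involves their first derivatives, a purely $C^{0}$ approximation is useless, and both the near-identity-boost continuity estimate and the truncation estimate require uniform $C^{1}$ (indeed $C^{\infty}$) convergence on the closed region. This is precisely where analyticity is essential — the complex neighbourhood guaranteed by Definition \ref{extension} and Lemma \ref{limit} lets Cauchy estimates convert pointwise convergence of the series into uniform convergence of every derivative, and simultaneously supplies the uniformity in the limiting parameter $\delta_{n}$ needed for a single threshold $n_{\gamma}$ to work.
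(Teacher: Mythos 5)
Your proposal hinges on a step that the paper's hypotheses do not support: you assert that the fields $(\overline{E}_{\epsilon,\delta_{n}},\overline{B}_{\epsilon,\delta_{n}})$ supplied by Lemma \ref{bounded}, once transported towards $S_{\infty}$, ``extend analytically to a complex neighbourhood'' of the compact set, and you then build $(\overline{E}_{n,\infty},\overline{B}_{n,\infty})$ as Taylor truncations controlled by Cauchy estimates. But nothing in the chain Remark \ref{strongercondition} $\rightarrow$ Lemma \ref{bounded} makes these fields analytic: the surface non-radiating hypothesis (even in its strengthened, ``decaying'' form) only provides \emph{smooth} solutions forming a \emph{smooth} bounded family. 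Definition \ref{extension} does not help you here --- it defines how functions that are \emph{already} analytic transform under complexified boosts; it does not upgrade smooth fields to analytic ones. For a merely smooth function the Taylor truncations need not converge to the function anywhere off the expansion point, so both of your key estimates (the $C^{0}$ bound $F+1$ and the $C^{1}$ control of the truncation error in the divergence) collapse. Even if one granted real-analyticity on real space--time, you would still need a uniform lower bound on the radius of holomorphic extension, uniformly over the compact set and over the sequence $\delta_{n}$, to extract a single threshold $n_{\gamma}$, and no such bound is available; worse, evaluation in the limit frame involves complex boost parameters of fixed size (entries like $-ic$), not a ``near-identity'' perturbation, so the required complex neighbourhood cannot be taken small.

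The paper's own proof is designed precisely to avoid this. It approximates the smooth bounded family by polynomials via the Stone--Weierstrass theorem (which needs no analyticity), and it is the \emph{polynomials} --- being entire, and transforming to polynomials under the (complexified, limiting) boosts --- that get pushed to $S_{\infty}$, not the smooth fields themselves. The divergence-free condition is then carried along not by a $C^{1}$ perturbation estimate but by formulating it algebraically in the coefficients, as in Lemma \ref{geometry}, so that it survives the passage to nearby frames and the limit $\epsilon\rightarrow 0$ of Lemma \ref{polynomial}. Your instinct that uniform control of first derivatives is the crux is sound (and it is in fact the delicate point of the paper's argument as well, since Stone--Weierstrass as usually stated gives only uniform $C^{0}$ approximation), but the mechanism you propose to obtain that control --- analyticity plus Cauchy estimates --- is not licensed by the hypotheses, so the proof as written does not go through.
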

\begin{proof}
Let $S'_{\epsilon}$ be as in Lemma \ref{bounded}, and let $\{\overline{E}'_{\epsilon,\delta},\overline{B}'_{\epsilon,\delta}\}$ be the transfers of the fields $\{\overline{E}_{\epsilon,\delta},\overline{B}_{\epsilon,\delta}\}$, guaranteed by Lemma \ref{bounded}, to $S'_{\epsilon}$. By the proof of Lemma \ref{bounded}, restricted to some $B(\overline{0},r_{\epsilon})\times (0,t_{\epsilon})$, they form a smooth bounded family on $S'_{\epsilon}$,indexed by $-c+\delta\in (-c+{\epsilon\over 2},-c+{3\epsilon\over 2})$. By the Stone-Weierstrass approximation theorem, there exists a uniformly convergent sequence of polynomial approximations $\{\overline{E}'_{n,\epsilon,\delta},\overline{B}'_{n,\epsilon,\delta}\}$ to $\{\overline{E}'_{\epsilon,\delta},\overline{B}'_{\epsilon,\delta}\}$. By choosing the approximating polynomials in the frames $S_{s\overline{e}_{1}}$ from the previous Lemma, using continuity, the fact that the transfer of polynomial fields are polynomial, and formulating the fact that $div_{S''_{\epsilon,\delta}}(\overline{E}'_{\epsilon,\delta}\times\overline{B}'_{\epsilon,\delta})=0$ algebraically in the base frame $S$, see Lemma \ref{geometry}, we can assume that, for any $\gamma>0$, there exists $\{n_{\gamma},\epsilon_{\gamma}\}$, such that $|div_{S''_{\epsilon,\delta'}}(\overline{E}'_{n,\epsilon,\delta'}\times \overline{B}'_{n,\epsilon,\delta'})|<\gamma$, for $-c+\delta'\in (-c-{\epsilon\over 2},-c)$, and $n\geq n_{\gamma}$, restricted to some $B(\overline{0},r_{\epsilon,\delta})\times (0,t_{\epsilon,\delta})$, where $S''_{\epsilon,\delta'}$ is connected to $S_{\epsilon}$ by the velocity vector $(-c+\delta')\overline{e}_{1}$. Taking the limit as $\epsilon\rightarrow 0$, using Lemma \ref{polynomial}, the constant $F$ from Lemma \ref{bounded}, we obtain a sequence of pairs $(\overline{E}_{n,\infty},\overline{B}_{n,\infty})$ in the limit frame $S_{\infty}$, with $div_{S_{\infty}}|(\overline{E}_{n,\infty}\times \overline{B}_{n,\infty})|<\gamma$ and $max(|\overline{E}_{n,\infty}|,|\overline{B}_{n,\infty}|)<F+1$ on some $B(\overline{0},r_{\infty})\times (0,t_{\infty})$.

\end{proof}

\begin{lemma}
\label{triangles}
Choose a sequence of error terms $\gamma_{m}>0$ with $lim_{m\rightarrow\infty}\gamma_{m}=0$, and pairs $(\overline{E}_{n_{m},\infty},\overline{B}_{n_{m},\infty})$, satisfying the conclusion of Lemma \ref{approxinfinity}. Then, as $m\rightarrow\infty$ the sequence $(\overline{E}_{n_{m},\infty},\overline{B}_{n_{m},\infty})$ converges to a pair $(\overline{E}_{\infty},\overline{B}_{\infty})$ satisfying Maxwell's equations in $S_{\infty}$ for the transferred charge and current $(\rho_{\infty},\overline{J}_{\infty})$ with $div_{\infty}(\overline{E}_{\infty}\times\overline{B}_{\infty})=0$. Moreover, we obtain the conclusion of Lemma \ref{solution} for the transfer $(\overline{E},\overline{B})$ back to the base frame $S$.
\end{lemma}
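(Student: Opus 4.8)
The plan is to realise $(\overline{E}_\infty,\overline{B}_\infty)$ as the limit of the approximating sequence by a normal-families (compactness) argument, and then to hand the resulting exactly divergence-free Poynting vector to the chain of Lemmas \ref{infinity1}, \ref{newequations}, \ref{newequations3} and \ref{solution} that is already in place.

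First I would set up the compactness. By Lemma \ref{approxinfinity} the pairs $(\overline{E}_{n_m,\infty},\overline{B}_{n_m,\infty})$ satisfy $\max(|\overline{E}_{n_m,\infty}|,|\overline{B}_{n_m,\infty}|)<F+1$ on the fixed region $B(\overline{0},r_\infty)\times(0,t_\infty)$, with $F$ the bound of Lemma \ref{bounded}. Since, in the sense of Definition \ref{extension}, these fields are analytic in $(x,y,z,t)$, I would regard them as holomorphic on a fixed polydisc in $\mathcal{C}^4$ and apply Montel's theorem: a uniformly bounded family of holomorphic functions is normal, so a subsequence converges locally uniformly, together with all its first partial derivatives, to a holomorphic limit $(\overline{E}_\infty,\overline{B}_\infty)$. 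Relabelling, I take this subsequence to be the sequence; convergence of the whole sequence then follows once the limit is known to be unique, which the downstream constraints provide.

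Next I would pass Maxwell's equations and the Poynting condition to the limit. Each approximant solves Maxwell's equations in $S_\infty$ for the fixed source $(\rho_\infty,\overline{J}_\infty)$ obtained by the transfer rules of Lemma \ref{limit} --- exactly, because the defining relations $(i)$--$(iv)$ of Lemma \ref{geometry} are linear and are preserved under the transfers of Lemmas \ref{maxwells} and \ref{limit}, or at worst up to an error tending to $0$. As the equations are linear with constant coefficients and a fixed source, and first derivatives converge locally uniformly, $(\overline{E}_\infty,\overline{B}_\infty)$ satisfies Maxwell's equations in $S_\infty$. The quantity $div_\infty(\overline{E}\times\overline{B})$ is a fixed polynomial in the fields and their first derivatives, so $div_\infty(\overline{E}_{n_m,\infty}\times\overline{B}_{n_m,\infty})\to div_\infty(\overline{E}_\infty\times\overline{B}_\infty)$; combined with the bound $|div_\infty(\overline{E}_{n_m,\infty}\times\overline{B}_{n_m,\infty})|<\gamma_m\to 0$ this gives $div_\infty(\overline{E}_\infty\times\overline{B}_\infty)=0$.

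For the final claim I would transfer $(\overline{E}_\infty,\overline{B}_\infty)$ back to $S$ using the inverse limit boost $(\lim_{s}B_{f(s)\overline{v}})^{-1}$ of Lemma \ref{limit}, producing a complex solution $(\overline{E},\overline{B})$ of Maxwell's equations for $(\rho,\overline{J})$. Propagating $div_\infty(\overline{E}_\infty\times\overline{B}_\infty)=0$ through the frame bookkeeping of Lemma \ref{infinity1} yields the relations $(******)$, and Lemmas \ref{newequations} and \ref{newequations3} then force the vanishing of the listed derivatives of $\{\sigma,g_i,p_{jk}\}$; with the decay hypotheses this is exactly the input that Lemma \ref{solution} turns into $\overline{E}\times\overline{B}=0$. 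The main obstacle is the compactness step: securing a genuinely uniform bound on a fixed complex neighbourhood so that Montel applies (the polynomial degree $n_m$ is unbounded, so one cannot appeal to finite-dimensionality, and a bound on the real region $B(\overline{0},r_\infty)\times(0,t_\infty)$ must be propagated to a complex one; alternatively one derives uniform first-derivative bounds from the near-Maxwell property via interior estimates for the complexified system in $S_\infty$ and uses the Arzel\`a--Ascoli theorem), and then ensuring the convergence is strong enough in $C^1$ to carry the nonlinear Poynting term to the limit.
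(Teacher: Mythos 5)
Your overall architecture --- pass Maxwell's equations and the near-divergence-free Poynting condition to the limit, then transfer back to $S$ and chain through Lemmas \ref{infinity1}, \ref{newequations}, \ref{newequations3} and \ref{solution} --- coincides with the paper's. The genuine gap is exactly the step you flag as ``the main obstacle'': the production of the limit pair by compactness. The paper never uses a normal-families or Arzel\`a--Ascoli argument; instead it uses the fact that the approximants of Lemma \ref{approxinfinity} are not an abstract bounded family but were \emph{constructed}, via Stone--Weierstrass, as uniformly convergent polynomial approximations to genuine smooth fields $(\overline{E}_{\epsilon,\delta'},\overline{B}_{\epsilon,\delta'})$, transferred to $S_{\infty}$ by fixed (limits of) boost matrices. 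Hence the sequence $(\overline{E}_{n_{m},\infty},\overline{B}_{n_{m},\infty})$ is Cauchy by construction and converges outright; derivative convergence, needed both for Maxwell's equations and for the Poynting term, is likewise obtained by working back in the frames $S_{\epsilon,\delta'}$ and interchanging limits with derivatives there, not from Cauchy estimates on a complex domain.

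Your blind reconstruction discards this provenance and tries to recover convergence from the stated conclusion of Lemma \ref{approxinfinity} alone (a sup bound $F+1$ on the real region $B(\overline{0},r_{\infty})\times(0,t_{\infty})$ plus $|div_{\infty}|<\gamma_{m}$), and that cannot succeed. Montel's theorem requires a uniform bound on a fixed open subset of $\mathcal{C}^{4}$; since the approximants are polynomials of unbounded degree $n_{m}$, a bound on a real region gives no control on any complex neighbourhood --- by Bernstein/Markov-type growth (already visible for Chebyshev polynomials in one variable, which are bounded by $1$ on $[-1,1]$ yet grow like $\rho^{n}$ on every complex ellipse $E_{\rho}$), normality simply fails. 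Your fallback via ``interior estimates'' has the same defect: the approximate-Maxwell property is a bound on one particular combination of first derivatives and yields no equicontinuity, while Markov's inequality makes any $C^{1}$ bound degree-dependent, so Arzel\`a--Ascoli does not apply either. Finally, even granting a convergent subsequence, your upgrade to convergence of the full sequence through ``uniqueness of the limit'' is unjustified: the constraints (Maxwell's equations for $(\rho_{\infty},\overline{J}_{\infty})$ together with $div_{\infty}(\overline{E}_{\infty}\times\overline{B}_{\infty})=0$) do not single out a unique field pair, so distinct subsequences could a priori converge to distinct limits. The repair is not technical refinement of the compactness argument but abandoning it: retain the information that the $(\overline{E}_{n_{m},\infty},\overline{B}_{n_{m},\infty})$ approximate transferred smooth fields, and read off the Cauchy property directly from that construction, as the paper does.
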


\begin{proof}
By the construction of Lemma \ref{approxinfinity}, we have that the sequence $(\overline{E}_{n_{m},\infty},\overline{B}_{n_{m},\infty})$ is Cauchy and uniformly bounded,
and converges to a bounded limit on $S_{\infty}$. If $(\rho',\overline{J}',\overline{E}',\overline{B})'$ is a tuple, satisfying Maxwell's equations in a base frame $S'$, then, by the proof in \cite{L}, $(\rho'',\overline{J}'',\overline{E}'',\overline{B}'')$ satisfies Maxwell's equations at corresponding points of $S''$, connected to $S$ by a real velocity vector $\overline{v}$, with $|\overline{v}|<c$, $(*)$. By the generalisation of the rules for transforming derivatives, the algebraic formulation of the connecting relations at corresponding points, complex linearity of the transformed derivative, and the generic formulation of $(*)$,  $(\rho''',\overline{J}''',\overline{E}''',\overline{B}''')$ satisfies Maxwell's equations at corresponding points of $S'''$, connected to $S$ by a complex velocity vector $\overline{v}$, with $\overline{v}^{2}\neq c^{2}$. Taking limits, this also holds for a transformation to a limit frame $S_{\infty}$, if the limit exists. It follows that the transformation of the fields $(\overline{E}_{\epsilon,\delta},\overline{B}_{\epsilon,\delta})$ to fields $(\overline{E}_{\epsilon,\delta,\infty},\overline{B}_{\epsilon,\delta,\infty})$ in the limit frame $S_{\infty}$ satisfy Maxwell's equations. and so the transformations $(\overline{E}_{n, \epsilon,\delta',\infty},\overline{B}_{n,\epsilon,\delta',\infty})$ to the limit frame $S_{\infty}$ satisfy Maxwell's equations up to a constant $\epsilon(n)$, which converges to $0$ as $n\rightarrow\infty$, so that $(\overline{E}_{n_{m},\infty},\overline{B}_{n_{m},\infty})$ satisfy Maxwell's equations up to a constant
$\epsilon'(n_{m})$, which again converges to zero as $m\rightarrow\infty$, both in $S_{\infty}$ and the frame $S_{\epsilon,\delta'}$, for sufficiently small $\{\epsilon,\delta'\}$. In particularly, $(\overline{E}_{\infty},\overline{B}_{\infty})$ satisfying Maxwell's equations in $S_{\infty}$ for the transferred charge and current $(\rho_{\infty},\overline{J}_{\infty})$, and, so does the transfer $(\overline{E},\overline{B})$ of $(\overline{E}_{\infty},\overline{B}_{\infty})$ back to the base frame $S$, for the original current and charge $(\rho,\overline{J})$. The claim that $div_{\infty}(\overline{E}_{\infty}\times\overline{B}_{\infty})=0$ follows from the transformation rules for derivatives, back to the frames $S_{\epsilon,\delta'}$, the fact that the polynomial approximations $(\overline{E}_{n, \epsilon,\delta'},\overline{B}_{n,\epsilon,\delta'})$ converge to  smooth fields $((\overline{E}_{\epsilon,\delta'},\overline{B}_{\epsilon,\delta'}))$ in the frame $S_{\epsilon,\delta'}$, interchanging limits with derivatives in $S_{\epsilon,\delta'}$ and the construction that $|div_{\infty}(\overline{E}_{n_{m},\infty}\times \overline{B}_{n_{m},\infty})|<\gamma_{m}$, with $\gamma_{m}\rightarrow 0$, as $m\rightarrow\infty$. For the final claim, we can approximate the fields $(\overline{E}_{\infty},\overline{B}_{\infty})$ by the polynomial fields  $(\overline{E}_{n_{m},\infty},\overline{B}_{n_{m},\infty})$, in $S_{\infty}$, and follow through the argument of Lemma \ref{infinity1}, to obtain the conclusion of Lemma \ref{solution} for the fields $(\overline{E}_{n_{m}},\overline{B}_{n_{m}})$ transferred back to the base frame $S$, up to a constant $\epsilon''(n_{m})$, which converges to $0$ as $m\rightarrow\infty$. As the fields $(\overline{E}_{n_{m}},\overline{B}_{n_{m}})$ and their derivatives converge to $(\overline{E},\overline{B)}$ in the base frame $S$, we obtain the conclusion of Lemma \ref{solution} for $(\overline{E},\overline{B})$.
\end{proof}

\begin{rmk}
\label{strongercondition}
In the definition of surface non-radiating, for the frame $S$, with charge and current $(\rho,\overline{J})$, we can impose the additional requirement, that, for any given velocity $\overline{v}$, with $|\overline{v}|=1$, there exist pairs $\{\overline{E}_{s\overline{v}},\overline{B}_{s\overline{v}}\}$, with $0\leq s<c$, such that the condition of surface non-radiating is fulfilled, and the series is smooth and decaying at infinity, that is the fields $\{\overline{E}_{s\overline{v}},\overline{B}_{s\overline{v}}\}$ are smooth and ;\\

$(i)$. $lim_{|\overline{x}_{s\overline{v}}|\rightarrow\infty}max(|\overline{E}_{s\overline{v}}|,|\overline{B}_{s\overline{v}}|)=0$\\

in the coordinates $(\overline{x}_{s\overline{v}},t_{s\overline{v}})$ of the frame $S_{s\overline{v}}$. With these extra assumptions, we conjecture, using polynomial approximations, that it possible to choose $\{\overline{E}_{s\overline{v}},\overline{B}_{s\overline{v}}\}$ such that the above conditions hold, and also;\\

$(ii)$ The families defined by;\\

$E(\overline{x},t,s)=\overline{E}_{s\overline{v}}'(\overline{x},t)$, $B(\overline{x},t,s)=\overline{B}_{s\overline{v}}'(\overline{x},t)$\\

are smooth on $\mathcal{R}^{3}\times\mathcal{R}_{\geq 0}\times (0,c)$. In particularly, for finite  $\{t_{0},r_{0},c_{0}\}\subset\mathcal{R}$, with $0\leq c_{0}<c$, $max_{0\leq s\leq c_{0}}(|\overline{E}_{s\overline{v}}'|_{B(\overline{0},r_{0})\times (0,t_{0})}|,|\overline{B}_{s\overline{v}}'|_{B(\overline{0},r_{0})\times (0,t_{0})}|)\leq G_{r,t_{0},c_{0}}$ for some constant $G_{r_{0},t_{0},c_{0}}\in\mathcal{R}_{\geq 0}$, where $\{\overline{E}_{s\overline{v}},\overline{B}_{s\overline{v}}\}$ are the fields transferred back to the base frame $S$.

\end{rmk}

\begin{lemma}
\label{analytic}
If the frame $S$ is decaying surface non-radiating, in the sense of Definition \ref{strongly}, then if $(\rho,\overline{J})$ is real analytic, either $\rho=0$ and $\overline{J}=\overline{0}$, or $S$ is non-radiating, in the sense of \cite{dep1}.
\end{lemma}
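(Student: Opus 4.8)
The plan is to feed the decaying surface non-radiating hypothesis into Lemmas \ref{solution} and \ref{complex} and then let real-analyticity do the propagation. First I would invoke Lemma \ref{solution}: since $S$ is decaying surface non-radiating for the real pair $(\rho,\overline{J})$, the decay clause $(ii)$ of Definition \ref{strongly} supplies precisely the boundary conditions $\lim_{|\overline{x}|\to\infty}g_{i}(\overline{x},t)=0$ that the proof of Lemma \ref{solution} uses, so there exists a complex solution $(\overline{E},\overline{B})$ of Maxwell's equations for $(\rho,\overline{J})$ in the rest frame $S$ with $\overline{E}\times\overline{B}=0$. Applying Lemma \ref{complex} to this solution then yields $\rho=0$ on the open set $U=\{\overline{B}\neq 0\}$ (and $\overline{E}=\lambda\overline{B}$ there).

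Second, I would split on whether $\overline{B}$ vanishes identically. If $\overline{B}\not\equiv 0$, then by continuity $U$ is a nonempty open subset of the connected domain $\mathcal{R}^{3}\times\mathcal{R}_{>0}$, and since $\rho$ is real-analytic and vanishes on $U$, the identity theorem forces $\rho\equiv 0$. If instead $\overline{B}\equiv 0$, then taking real parts of the complex solution produces a genuinely real solution $(\mathrm{Re}\,\overline{E},0)$ of Maxwell's equations for $(\rho,\overline{J})$ with vanishing Poynting vector $\mathrm{Re}\,\overline{E}\times 0=0$; this is exactly the non-radiating condition of \cite{dep1}, so $S$ is non-radiating and that branch is finished.

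Third, in the branch $\rho\equiv 0$ I still must produce $\overline{J}=\overline{0}$, and for this I would rerun the same argument in every boosted frame. By Lemma \ref{surface} each frame $S_{\overline{v}}$ with $|\overline{v}|<c$ is surface non-radiating for the boosted pair $(\rho_{\overline{v}},\overline{J}_{\overline{v}})$, which is again real-analytic (a constant-coefficient Lorentz transform of analytic data composed with an affine change of coordinates) and again decaying, the decay for all frames reachable from $S_{\overline{v}}$ being inherited from clause $(ii)$ of Definition \ref{strongly} via the composition law of Lemma \ref{velocity}. Hence the core argument applies in $S_{\overline{v}}$: either $S_{\overline{v}}$ lands in the $\overline{B}_{\overline{v}}\equiv 0$ branch, whence $S_{\overline{v}}$, and by the boost-covariance established in Lemmas \ref{maxwells}, \ref{potential} and \ref{twist1} the base frame $S$, is non-radiating, or $\rho_{\overline{v}}\equiv 0$. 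If this second alternative holds for every $\overline{v}$, then from the four-vector transformation $\rho_{\overline{v}}=\gamma(\rho-\langle\overline{v},\overline{J}\rangle/c^{2})$ we get $\rho-\langle\overline{v},\overline{J}\rangle/c^{2}\equiv 0$ for all admissible $\overline{v}$; taking $\overline{v}=\overline{0}$ recovers $\rho\equiv 0$, and then $\langle\overline{v},\overline{J}\rangle\equiv 0$ for all $\overline{v}$ forces $\overline{J}\equiv\overline{0}$. Assembling the branches gives the stated dichotomy.

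Finally, the main obstacle. The genuinely delicate step is the passage from $\rho\equiv 0$ to $\overline{J}=\overline{0}$: it requires that the core construction of Lemmas \ref{solution} and \ref{complex} be rerun uniformly in every boosted frame, which in turn means checking that both analyticity and the full decaying surface non-radiating hypothesis transfer to all $S_{\overline{v}}$. The decay transfer is the fiddly part, since it must hold for the entire family of frames reachable from $S_{\overline{v}}$, and this is where I would lean on Lemmas \ref{surface} and \ref{velocity}. A secondary point to pin down is that the $\overline{B}\equiv 0$ branch really delivers the \cite{dep1} notion of non-radiating in the base frame and that this property is boost-stable, which follows from the field covariance of Lemmas \ref{maxwells}, \ref{potential} and \ref{twist1}; the remaining ingredients, namely the identity theorem and the linear-algebra extraction of $\overline{J}$, are routine.
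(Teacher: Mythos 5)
Your opening moves do track the paper's route: Lemma \ref{solution} supplies the complex solution with $\overline{E}\times\overline{B}=0$, Lemma \ref{complex} gives the per-frame dichotomy, analyticity upgrades $\rho=0$ on $U$ to $\rho\equiv 0$, and your extraction of $\overline{J}=\overline{0}$ from $\rho_{\overline{v}}\equiv 0$ in every frame via $\rho_{\overline{v}}=-{\gamma_{v}\langle\overline{v},\overline{J}\rangle\over c^{2}}$ is essentially the paper's argument. The genuine gap is in the other branch: you assert that a real solution $(Re(\overline{E}),0)$ with vanishing Poynting vector ``is exactly the non-radiating condition of \cite{dep1}.'' It is not. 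Existence of a solution with $\overline{E}\times\overline{B}=0$ is only a pointwise strengthening of the \emph{surface} non-radiating condition (Definition 2.8 of \cite{dep1}), i.e.\ of the hypothesis of this paper, whereas ``non-radiating'' in the sense of \cite{dep1} is the far stronger conclusion that forces the wave equations for the charge and current; if the two notions coincided, the main theorem of this paper would be a near-triviality. Consequently your claim that landing in the $\overline{B}\equiv 0$ branch in a single frame (or even in every frame) immediately finishes that case is unsupported, and this is exactly where the real work lies.

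What the paper actually does there is the following. When the alternative $\rho_{\overline{v}}\equiv 0$ fails outside a finite set of velocities, one has real solutions with $\overline{B}_{\overline{v}}=0$ in all but finitely many frames $S_{\overline{v}}$; these are fed into the proof of Lemma 2.7 of \cite{dep1} to derive the identity $\overline{v}\times(\bigtriangledown(\rho)+{1\over c^{2}}{\partial\overline{J}\over\partial t})=\overline{0}$ for all but finitely many $\overline{v}$ in $B_{|\overline{v}|<c}$, whence by continuity $\bigtriangledown(\rho)+{1\over c^{2}}{\partial\overline{J}\over\partial t}=\overline{0}$ everywhere; the remainder of Lemma 2.7 then yields $\square^{2}(\rho)=0$ and $\square^{2}(\overline{E})=\overline{0}$, Lemma 2.4 of \cite{dep1} gives $\square^{2}(\overline{J})=\overline{0}$, and only at that point does Lemma 2.5 of \cite{dep1} deliver the conclusion that $S$ is non-radiating. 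So the zero-field solutions are the \emph{input} to a multi-lemma derivation, not themselves the non-radiating property; your proposal omits this entire segment, and without it the dichotomy you assemble does not prove the lemma.
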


\begin{proof}
The transfers of $(\rho,\overline{J})$ to any frame $S_{\overline{v}}$, connected to $S$ by a velocity vector $\overline{v}$ with $|\overline{v}|<c$ is also real analytic. By the proof of Lemma \ref{surface}, the frames $S_{\overline{v}}$ are also decaying surface non-radiating. By Lemma \ref{complex}, and using continuity, in each frame $S_{\overline{v}}$, either the transfers $\rho_{\overline{v}}$ are identically zero or there exists a real solution $(\overline{E}_{\overline{v}},\overline{B}_{\overline{v}})$ to Maxwell's equations with $\overline{B}_{\overline{v}}=0$. If $S$ is not non-radiating, then, without loss of generality, we can assume that $\rho=0$ in the base frame $S$. By the transformation rules for $(\rho,\overline{J})$, we have that;\\

$\rho_{\overline{v}}=-{\gamma_{v}<\overline{v},\overline{J}>\over c^{2}}$\\

$\overline{J}_{\overline{v}}=\gamma_{v}\overline{J}_{||,\overline{v}}+\overline{J}_{\perp,\overline{v}}$\\

As $\overline{J}$ is analytic, $\{\overline{v}:<\overline{v},\overline{J}>=0\}$ includes $\overline{0}$ and, if infinite, is both open and closed inside the ball $B_{|\overline{v}|<c}$, so that $\overline{J}_{||,\overline{v}}=0$, for every $\overline{v}\in B_{|\overline{v}|<c}$ with $\overline{v}\neq 0$, in particular, $\overline{J}=0$. We can, therefore assume that $\rho_{\overline{v}}\neq 0$ in all but finitely many frames $S_{\overline{v}}$, and there exist real solutions $(\overline{E}_{\overline{v}},\overline{B}_{\overline{v}})$ to Maxwell's equations with $\overline{B}_{\overline{v}}=0$. Now, we can use the proof of Lemma 2.7 in \cite{dep1}, to derive the equation;\\

$\overline{v}\times (\bigtriangledown(\rho)+{1\over c^{2}}{\partial \overline{J}\over \partial t})=\overline{0}$ $(*)$\\

valid for all but finitely many $\overline{v}\in B_{|\overline{v}|<c}$. Using continuity, we can conclude that $(*)$ holds for all $\overline{v}\in B_{|\overline{v}|<c}$ and that;\\

$(\bigtriangledown(\rho)+{1\over c^{2}}{\partial \overline{J}\over \partial t})=\overline{0}$\\

Then follow through the rest of the proof of Lemma 2.7 in \cite{dep1} to conclude that $\square^{2}(p)=0$ and $\square^{2}(\overline{E})=\overline{0}$. Now use the proof of Lemma 2.4 in \cite{dep1} to get $\square^{2}(\overline{J})=\overline{0}$, and Lemma 2.5 in \cite{dep1} to conclude that $S$ is non-radiating.

\end{proof}

\end{section}

\begin{section}{Some Thermodynamic Arguments}

\begin{defn}
\label{reversal}
Given $(\rho,\overline{J},\overline{E},\overline{B})$ satisfying Maxwell's equations, and $t_{0}\in\mathcal{R}_{>0}$, we define the reversed process $(\rho',\overline{J}',\overline{E}',\overline{B}')$ on $\mathcal{R}^{3}\times (0,t_{0})$ by;\\

$\rho'(\overline{x},t)=\rho(\overline{x},t_{0}-t)$\\

$\overline{J}'(\overline{x},t)=-\overline{J}(\overline{x},t_{0}-t)$\\

$\overline{E}'(\overline{x},t)=\overline{E}(\overline{x},t_{0}-t)$\\

$\overline{B}'(\overline{x},t)=-\overline{B}(\overline{x},t_{0}-t)$\\

\end{defn}

\begin{lemma}
\label{maxwellsagain}
For the reversed process, $(\rho',\overline{J}',\overline{E}',\overline{B}')$, we have that $(\rho',\overline{J}')$ satisfies the continuity equation and $(\rho',\overline{J}',\overline{E}',\overline{B}')$ satisfies Maxwell's equations on $\mathcal{R}^{3}\times (0,t_{0})$. Moreover $div(\overline{E}'\times\overline{B}')=-div(\overline{E}\times\overline{B})$

\end{lemma}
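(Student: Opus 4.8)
The plan is to exploit the single structural feature of the transformation in Definition \ref{reversal}: the substitution $t \mapsto t_{0}-t$ leaves every spatial derivative unchanged but, by the chain rule, sends $\partial/\partial t$ to minus itself when applied to a field of the form $f'(\overline{x},t) = f(\overline{x}, t_{0}-t)$. First I would record this once and for all: for any such $f'$ one has $(\bigtriangledown f')(\overline{x},t) = (\bigtriangledown f)(\overline{x}, t_{0}-t)$ and ${\partial f' \over \partial t}(\overline{x},t) = -{\partial f \over \partial t}(\overline{x}, t_{0}-t)$, and likewise for the divergence and curl of a reversed vector field. With this observation in hand, the entire lemma reduces to bookkeeping of signs.

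For the continuity equation I would compute ${\partial \rho' \over \partial t} + \bigtriangledown \cdot \overline{J}'$ directly: the time derivative contributes $-({\partial \rho \over \partial t})(t_{0}-t)$ and, since $\overline{J}' = -\overline{J}(t_{0}-t)$, the divergence contributes $-(\bigtriangledown \cdot \overline{J})(t_{0}-t)$, so the sum is $-[{\partial \rho \over \partial t} + \bigtriangledown \cdot \overline{J}](t_{0}-t) = 0$ by the continuity equation for the original process. The two Gauss-type equations are immediate: $\bigtriangledown \cdot \overline{E}' = (\bigtriangledown \cdot \overline{E})(t_{0}-t) = \rho'/\epsilon_{0}$ since $\overline{E}$ is even under the reversal and carries no time derivative, and $\bigtriangledown \cdot \overline{B}' = -(\bigtriangledown \cdot \overline{B})(t_{0}-t) = 0$.

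The two curl equations are where the signs must be tracked with care, and this is the only place any attention is needed. For Faraday's law, $\bigtriangledown \times \overline{E}' = (\bigtriangledown\times\overline{E})(t_{0}-t) = -({\partial \overline{B} \over \partial t})(t_{0}-t)$, while $-{\partial \overline{B}' \over \partial t} = -{\partial \over \partial t}[-\overline{B}(t_{0}-t)] = -({\partial \overline{B} \over \partial t})(t_{0}-t)$; the intrinsic sign flip on $\overline{B}$ and the chain-rule sign flip on $\partial/\partial t$ cancel, so the two sides agree. For Amp\`ere's law the computation is identical in spirit: both $\overline{J}'$ and ${\partial \overline{E}' \over \partial t}$ pick up a single minus sign relative to their unreversed counterparts (the former from the definition, the latter from the chain rule, since $\overline{E}$ is even), matching the sign on $\bigtriangledown\times\overline{B}' = -(\bigtriangledown\times\overline{B})(t_{0}-t)$. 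Conceptually this is just the statement that Maxwell's equations are invariant under time reversal with $\overline{B}$ and $\overline{J}$ declared odd, which is exactly what Definition \ref{reversal} encodes.

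Finally, for the Poynting assertion I would note that $\overline{E}'\times\overline{B}' = \overline{E}(t_{0}-t)\times(-\overline{B}(t_{0}-t)) = -(\overline{E}\times\overline{B})(t_{0}-t)$, and since the divergence involves no time derivative it passes through the reversal unchanged, giving $div(\overline{E}'\times\overline{B}') = -div(\overline{E}\times\overline{B})$ with the understood evaluation at $t_{0}-t$. I do not expect a genuine obstacle here; the whole proof is a sign-tracking exercise, and the only subtlety is ensuring the chain-rule sign arising from $t\mapsto t_{0}-t$ is combined correctly with the explicit signs placed on $\overline{J}'$ and $\overline{B}'$ in the definition.
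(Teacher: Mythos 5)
Your proposal is correct and follows essentially the same route as the paper: a direct chain-rule verification of each equation, tracking the sign flips from $t\mapsto t_{0}-t$ together with the explicit minus signs on $\overline{J}'$ and $\overline{B}'$, and reading off the Poynting identity from $\overline{E}'\times\overline{B}'=-(\overline{E}\times\overline{B})(t_{0}-t)$. If anything, your continuity computation, which sums ${\partial\rho'\over\partial t}+\bigtriangledown\centerdot\overline{J}'$ and factors out the original continuity equation, is cleaner than the paper's chain of equalities, which contains a harmless sign slip.
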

\begin{proof}
For the first part, we have, using the chain rule, the definitions and the continuity equation for $(\rho,\overline{J})$, that;\\

${\partial \rho'\over \partial t}|_(\overline{x},t)=-{\partial \rho\over \partial t}|_(\overline{x},t_{0}-t)$\\

$=-div(\overline{J})|_{(\overline{x},t_{0}-t)}$\\

$=div(\overline{J}')_{(\overline{x},t)}$\\

For the second part, we have, using the chain rule again, the definitions, and Maxwell's equations for $(\rho,\overline{J},\overline{E},\overline{B})$, that;\\

$(i)$. $div(\overline{E}')|_{(\overline{x},t)}=div(\overline{E})|_{(\overline{x},t_{0}-t)}={\rho\over\epsilon_{0}}|_{(\overline{x},t_{0}-t)}={\rho\over\epsilon_{0}}|_{(\overline{x},t)}$\\

$(ii)$. $(\bigtriangledown\times \overline{E}')|_{(\overline{x},t)}=(\bigtriangledown\times \overline{E})|_{(\overline{x},t_{0}-t)}=-{\partial \overline{B}\over \partial t}|_{(\overline{x},t_{0}-t)}=-{\partial \overline{B}'\over \partial t}|_{(\overline{x},t)}$\\

$(iii)$. $div(\overline{B}')|_{(\overline{x},t)}=-div(\overline{B})_{(\overline{x},t_{0}-t)}=0$\\

$(iv)$. $(\bigtriangledown\times \overline{B}')_{(\overline{x},t)}=(\bigtriangledown\times -\overline{B})_{(\overline{x},t_{0}-t)}=-(\epsilon_{0}\overline{J})|_{(\overline{x},t_{0}-t)}-(\mu_{0}\epsilon_{0}{\partial \overline{E}\over \partial t})|_{(\overline{x},t_{0}-t)}$\\

$=(\epsilon_{0}\overline{J}')|_{(\overline{x},t)}+(\mu_{0}\epsilon_{0}{\partial \overline{B}'\over \partial t})|_{(\overline{x},t)}$\\

as required. The last claim follows easily from the definitions of $\{\overline{E}',\overline{B}'\}$\\

\end{proof}

\begin{defn}
Given a solution $(\rho,\overline{J},\overline{E},\overline{B})$ to Maxwell's equation, we say that $(\overline{E},\overline{B})$ is classically non-radiating if, uniformly in $t\in\mathcal{R}_{>0}$, we have that;\\

$lim_{r\rightarrow\infty}\int_{B(0,r)}div(\overline{E}_{t}\times\overline{B}_{t})d\overline{x}=0$\\

\end{defn}

\begin{lemma}
\label{surfaceradiating}
Given a smooth solution $(\rho,\overline{J},\overline{E},\overline{B})$ to Maxwell's equations, and $t_{0}\in\mathcal{R}_{>0}$ with $div(\overline{E}\times\overline{B})|_{t_{0}}\neq 0$ and $(\overline{E},\overline{J})|_{t_{0}}\neq 0$, there exists a smooth volume $S\subset\mathcal{R}^{3}$ and $\epsilon>0$, with;\\

$\int_{S}div(\overline{E}_{t}\times\overline{B}_{t})|d\overline{x}\neq 0$\\

$\int_{S}(\overline{E}_{t},\overline{J}_{t})d\overline{x}\neq 0$\\

for $t\in (t_{0}-\epsilon,t_{0}+\epsilon)$.\\

 \end{lemma}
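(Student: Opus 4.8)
The plan is to strip the statement down to a purely analytic fact about a pair of smooth, not-identically-zero functions on $\mathcal{R}^{3}$, prove that fact at the single time $t_{0}$, and then upgrade from $t_{0}$ to an open interval by continuity. Write $f(\overline{x})=div(\overline{E}\times\overline{B})|_{t_{0}}$ and $h(\overline{x})=(\overline{E},\overline{J})|_{t_{0}}=\overline{E}\centerdot\overline{J}|_{t_{0}}$; these are smooth functions on $\mathcal{R}^{3}$, and by hypothesis neither is identically zero. The whole content at time $t_{0}$ is then to find one smooth bounded region $S$ with $\int_{S}f\,d\overline{x}\neq 0$ and $\int_{S}h\,d\overline{x}\neq 0$ simultaneously.

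First I would build the candidate region. Choose $p$ with $f(p)\neq 0$; by continuity $f$ keeps the sign of $f(p)$ on a closed ball $B_{1}=B(p,r_{0})$, so $\int_{B_{1}}f\,d\overline{x}\neq 0$. Choose $q\neq p$ with $h(q)\neq 0$ (possible since the non-vanishing set of $h$ is nonempty and open in $\mathcal{R}^{3}$, hence uncountable), and a ball $B_{2}(s)=B(q,s)$ disjoint from $B_{1}$ on which $h$ keeps the sign of $h(q)$, so that $\int_{B_{2}(s)}h\,d\overline{x}\neq 0$, of fixed sign, for every small $s>0$. Set $S_{s}=B_{1}\cup B_{2}(s)$. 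The hard part is precisely that $f$ and $h$ need not share a common non-vanishing point, so a single ball need not make both integrals nonzero, while gluing two balls risks a cancellation in one of the integrals. I resolve this with the scaling parameter $s$: as $s\to 0^{+}$ we have $\int_{S_{s}}f\,d\overline{x}\to\int_{B_{1}}f\,d\overline{x}\neq 0$, while $\int_{S_{s}}h\,d\overline{x}=\int_{B_{1}}h\,d\overline{x}+\int_{B_{2}(s)}h\,d\overline{x}$, where the second term is nonzero of fixed sign and tends to $0$. If $\int_{B_{1}}h\,d\overline{x}=0$ the sum equals the nonzero second term; if $\int_{B_{1}}h\,d\overline{x}\neq 0$ the sum tends to that nonzero value. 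Either way both integrals are nonzero for all sufficiently small $s$, and I fix one such $s$, taking $S:=S_{s}$, a disjoint union of two balls and hence a smooth volume (a thin connecting neck can be inserted if a connected region is preferred, since it perturbs both integrals by an arbitrarily small amount).

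Finally I would pass to the time interval. Since $S$ is bounded and $(\overline{E},\overline{B},\overline{J})$ are smooth in $(\overline{x},t)$, differentiation under the integral sign applies over the compact closure of $S$, so the maps $t\mapsto\int_{S}div(\overline{E}_{t}\times\overline{B}_{t})\,d\overline{x}$ and $t\mapsto\int_{S}(\overline{E}_{t},\overline{J}_{t})\,d\overline{x}$ are continuous. Both are nonzero at $t=t_{0}$ by the construction above, so each remains nonzero on some symmetric interval about $t_{0}$; choosing $\epsilon>0$ smaller than both half-widths yields both inequalities for all $t\in(t_{0}-\epsilon,t_{0}+\epsilon)$, as required. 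I do not expect to need Poynting's identity relating $div(\overline{E}\times\overline{B})$ to $\overline{E}\centerdot\overline{J}$; the two-ball scaling argument treats the two nonvanishing hypotheses independently and is the cleanest route past the only genuine obstacle, the possible cancellation.
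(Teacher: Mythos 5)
Your proposal is correct and takes essentially the same route as the paper: the paper likewise picks one ball around a point where each integrand is non-vanishing, shrinks one ball to rule out cancellation in the union, forms the region $S$ (joining the balls by a thin strip, where you keep them disjoint or add a neck), and gets the interval in $t$ by smoothness. Your explicit scaling parameter and case analysis for the cancellation is in fact a more careful rendering of the paper's one-line ``shrinking the ball if necessary to avoid cancellations.''
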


\begin{proof}
Choose $\{\overline{x}_{0},\overline{x}_{1}\}\subset\mathcal{R}^{3}$, with $div(\overline{E}\times\overline{B})(\overline{x}_{0},t_{0})\neq 0$ and $(\overline{E},\overline{J})(\overline{x}_{1},t_{0})\neq 0$. As $div(\overline{E}\times\overline{B})$ and $(\overline{E},\overline{J})$ are smooth, there exist disjoint balls $B(\overline{x}_{0},r_{0})$ and $B(\overline{x}_{0},r_{1})$ with;\\

$\int_{B(\overline{x}_{0},r_{0})}div(\overline{E}_{t_{0}}\times\overline{B}_{t_{0}}))d\overline{x}\neq 0$\\

$\int_{B(\overline{x}_{1},r_{1})}(\overline{E}_{t_{0}},\overline{J}_{t_{0}})d\overline{x}\neq 0$\\

Shrinking the ball $B(\overline{x}_{0},r_{0})$ if necessary to avoid cancellations, we can assume that;\\

$\int_{B(\overline{x}_{0},r_{0})\cup B(\overline{x}_{1},r_{1})}div(\overline{E}_{t_{0}}\times\overline{B}_{t_{0}}))d\overline{x}\neq 0$\\

$\int_{B(\overline{x}_{0},r_{0})\cup B(\overline{x}_{1},r_{1})}(\overline{E}_{t_{0}},\overline{J}_{t_{0}})d\overline{x}\neq 0$\\

As $div(\overline{E}_{t_{0}}\times\overline{B}_{t_{0}})$ and $(\overline{E}_{t_{0}},\overline{J}_{t_{0}})$ are smooth, they are bounded on a ball $B(0,r)$ with $B(0,r)\supset B(\overline{x}_{0},r_{0})$ and $B(0,r)\supset B(\overline{x}_{1},r_{1})$. Choosing a sufficiently small strip $S'$ connecting the balls $B(\overline{x}_{0},r_{0})$ and $B(\overline{x}_{1},r_{1})$, and letting $S=B(\overline{x}_{0},r_{0})\cup B(\overline{x}_{1},r_{1})\cup S'$ be a smooth volume, we can assume that;\\

$\int_{S}div(\overline{E}_{t_{0}}\times\overline{B}_{t_{0}}))d\overline{x}\neq 0$\\

$\int_{S}(\overline{E}_{t_{0}},\overline{J}_{t_{0}})d\overline{x}\neq 0$\\

Using smoothness again, we can assume that there exists $\epsilon>0$ such that;\\

$\int_{S}div(\overline{E}_{t}\times\overline{B}_{t}d\overline{x}\neq 0$\\

$\int_{S}(\overline{E}_{t},\overline{J}_{t})d\overline{x}\neq 0$\\

for $t\in (t_{0}-\epsilon,t_{0}+\epsilon)$, as required.\\

\end{proof}

\begin{lemma}
\label{equiflux}
Given $(\rho,\overline{J},\overline{E},\overline{B})$, with $(\overline{E},\overline{B})$ classically non-radiating, such that the hypotheses of Lemma \ref{surfaceradiating} are satisfied. Let $T$ be the surface boundary of $S$, then for any $\kappa>0$, there exists volumes $\{S,S_{\kappa}\}$ with $S\cap S_{\kappa}=\emptyset$, $T\subset \overline{S_{\kappa}}$, $\{\overline{T},\overline{T}',\overline{T}''\}$ outward normals to the volumes $\{S,S_{\kappa},B(\overline{0},r_{\kappa})\}$, such that;\\

$\int_{T}(\overline{E}_{t}\times\overline{B}_{t})\centerdot d\overline{T}=
-\int_{T}(\overline{E}_{t}\times\overline{B}_{t})\centerdot d\overline{T'}$\\

$\int_{S}div(\overline{E}_{t}\times\overline{B}_{t})d\overline{x}\neq 0$\\

$\int_{S}(\overline{E}_{t},\overline{J}_{t})d\overline{x}\neq 0$\\

$|\int_{\delta B(\overline{0},r_{\delta})}(\overline{E}_{t}\times\overline{B}_{t})\centerdot d\overline{T}''|<\kappa$\\

for $t\in (t_{0}-\epsilon,t_{0}+\epsilon)$\\

\end{lemma}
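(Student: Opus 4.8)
The plan is to retain the volume $S$ and the interval $(t_0-\epsilon,t_0+\epsilon)$ produced by Lemma \ref{surfaceradiating}, so that the second and third displayed conditions hold verbatim, and then to manufacture $S_\kappa$ as a large ball with $\overline{S}$ removed, fixing its radius by means of the classically non-radiating hypothesis so that the final flux bound holds. First I would apply Lemma \ref{surfaceradiating} at the given $t_0$ to obtain a smooth bounded volume $S\subset\mathcal{R}^3$, its boundary surface $T$, and an $\epsilon>0$ with $\int_{S}div(\overline{E}_t\times\overline{B}_t)d\overline{x}\neq 0$ and $\int_{S}(\overline{E}_t,\overline{J}_t)d\overline{x}\neq 0$ for every $t\in(t_0-\epsilon,t_0+\epsilon)$. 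Since the $S$ constructed there is a finite union of balls and a connecting strip, it is bounded, so for any sufficiently large radius $r$ we have $\overline{S}\subset B(\overline{0},r)$; I then set $S_\kappa=B(\overline{0},r_\kappa)\setminus\overline{S}$ for a radius $r_\kappa$ still to be pinned down. By construction $S\cap S_\kappa=\emptyset$ and $T\subset\overline{S_\kappa}$, the inner component of $\partial S_\kappa$ is exactly $T$, and its outer component is the sphere $\partial B(\overline{0},r_\kappa)$.

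Next I would dispose of the first flux identity, which is purely geometric and requires no analytic input. Along the shared boundary $T$, the region $S_\kappa$ lies on the side of $T$ opposite to $S$, so its outward unit normal $\overline{T}'$ is the pointwise negative of the outward unit normal $\overline{T}$ of $S$. Hence $d\overline{T}=-d\overline{T}'$ on $T$, and therefore $\int_{T}(\overline{E}_t\times\overline{B}_t)\centerdot d\overline{T}=-\int_{T}(\overline{E}_t\times\overline{B}_t)\centerdot d\overline{T}'$ for all $t$, establishing the first condition.

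For the last condition I would combine the divergence theorem with the classically non-radiating hypothesis. Applying the divergence theorem to the smooth field $\overline{E}_t\times\overline{B}_t$ on a ball $B(\overline{0},r)$ gives $\int_{\partial B(\overline{0},r)}(\overline{E}_t\times\overline{B}_t)\centerdot d\overline{T}''=\int_{B(\overline{0},r)}div(\overline{E}_t\times\overline{B}_t)d\overline{x}$, where $\overline{T}''$ is the outward normal of the ball and agrees with the outer part of $\partial S_\kappa$. Because $(\overline{E},\overline{B})$ is classically non-radiating, the right-hand side tends to $0$ as $r\to\infty$ \emph{uniformly} in $t\in\mathcal{R}_{>0}$, hence in particular uniformly on $(t_0-\epsilon,t_0+\epsilon)$. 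Thus there is an $r_\kappa$, which I may also require to satisfy $\overline{S}\subset B(\overline{0},r_\kappa)$, with $|\int_{\partial B(\overline{0},r_\kappa)}(\overline{E}_t\times\overline{B}_t)\centerdot d\overline{T}''|<\kappa$ for every $t\in(t_0-\epsilon,t_0+\epsilon)$, which is the final inequality (the subscript $r_\delta$ in the statement being a typographical slip for $r_\kappa$).

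The only point demanding care is the order of quantifiers in the choice of $r_\kappa$: the bound must hold simultaneously for all $t$ in the interval, and this is precisely what the \emph{uniform} form of the classically non-radiating condition supplies, so no separate compactness argument is needed beyond quoting that uniformity. Everything else — the abutting-normals identity and the inheritance of the two nonvanishing integrals from Lemma \ref{surfaceradiating} — is immediate, so there is no genuine analytic obstacle here; the substance of the lemma is organisational, assembling the source region $S$, the surrounding shell $S_\kappa$, and the controlling sphere $\partial B(\overline{0},r_\kappa)$ into a single geometric configuration suitable for the thermodynamic arguments to follow.
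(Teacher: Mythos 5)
Your proof is correct and takes essentially the same route as the paper's own: you define $S_{\kappa}$ as the open ball $B(\overline{0},r_{\kappa})$ minus $\overline{S}$, obtain the first identity from the pointwise reversal of normals along the shared boundary $T$, inherit the two nonvanishing integrals directly from Lemma \ref{surfaceradiating}, and fix $r_{\kappa}$ by combining the divergence theorem on the ball with the uniform decay supplied by the classically non-radiating hypothesis. The only (harmless) difference is that you spell out the uniformity in $t$ of the choice of radius and flag the $r_{\delta}$/$r_{\kappa}$ notational slip, both of which the paper leaves implicit.
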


\begin{proof}
By the definition of classically non-radiating, for any $\delta>0$, there exists $r_{d}>0$, with $S\subset B(\overline{0},r_{d})$ such that;\\

$|\int_{B(\overline{0},r_{d})}div(\overline{E}_{t}\times\overline{B}_{t})d\overline{x}|<\delta$ $(*)$\\

for $t\in (t_{0}-\epsilon,t_{0}+\epsilon)$. Let $S_{\delta}={B^{0}(\overline{0},r_{d})\setminus \overline{S}}$, then, as $\overline{T}'$ reverses the direction of $\overline{T}$ ;\\

$\int_{T}(\overline{E}_{t}\times\overline{B}_{t})\centerdot d\overline{T}
+\int_{T}(\overline{E}_{t}\times\overline{B}_{t})\centerdot d\overline{T'}=0$\\

and by $(*)$ and the divergence theorem;\\

$|\int_{\delta B(\overline{0},r_{d})}(\overline{E}_{t}\times\overline{B}_{t})\centerdot d\overline{T}''|=|\int_{B(\overline{0},r_{d})}div(\overline{E}_{t}\times\overline{B}_{t})d\overline{x}|<\kappa$\\

as required.\\
\end{proof}

\begin{lemma}
\label{equilibrium}
Let notation be as above, then, assuming thermal equilibrium for electrons, we cannot have, in a classically non radiating system, that;\\

$\int_{S}div(\overline{E}_{t}\times\overline{B}_{t})d\overline{x}>0$\\

$\int_{S}(\overline{E}_{t},\overline{J}_{t})d\overline{x}<0$ $(\dag)$\\

for $t\in (t_{0}-\epsilon,t_{0}+\epsilon)$.\\

\end{lemma}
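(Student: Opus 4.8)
The plan is to combine Poynting's energy balance with the classically non-radiating hypothesis and then invoke the second law of thermodynamics. First I would integrate Poynting's theorem
$$
{\partial \sigma\over \partial t}+\bigtriangledown\centerdot(\epsilon_{0}c^{2}(\overline{E}\times\overline{B}))=-(\overline{E},\overline{J}),
$$
where $\sigma={\epsilon_{0}\over 2}(e^{2}+c^{2}b^{2})$ is the field energy density and $\epsilon_{0}c^{2}(\overline{E}\times\overline{B})=c^{2}\overline{g}$ is the energy flux, over the volume $S$ and apply the divergence theorem to obtain
$$
{d\over dt}\int_{S}\sigma_{t}\,d\overline{x}=-\epsilon_{0}c^{2}\int_{S}div(\overline{E}_{t}\times\overline{B}_{t})\,d\overline{x}-\int_{S}(\overline{E}_{t},\overline{J}_{t})\,d\overline{x}.
$$
In this language the two hypotheses of $(\dag)$ assert that the field-energy flux leaving $S$ through $T$ is strictly positive, while the power $\int_{S}(\overline{E}_{t},\overline{J}_{t})$ delivered by the field to the electrons is strictly negative; that is, the electrons in $S$ feed energy into the field at the positive rate $-\int_{S}(\overline{E}_{t},\overline{J}_{t})$, sustained throughout $(t_{0}-\epsilon,t_{0}+\epsilon)$.

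Next I would use the classically non-radiating condition together with Lemma \ref{equiflux} to locate where this energy goes. Since the flux across the outer sphere is bounded by $\kappa$ for every $\kappa>0$, essentially no field energy escapes to infinity; and the flux-cancellation identity $\int_{T}(\overline{E}_{t}\times\overline{B}_{t})\centerdot d\overline{T}=-\int_{T}(\overline{E}_{t}\times\overline{B}_{t})\centerdot d\overline{T}'$ of Lemma \ref{equiflux} shows that the energy leaving $S$ across $T$ is exactly the energy entering the surrounding shell $S_{\kappa}$. Hence, over the interval, the process extracts thermal energy from the electrons inside $S$, converts it into field energy, and deposits it in the surrounding matter, with radiative loss to infinity as small as we please.

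I would then close the argument thermodynamically. If the electrons constitute a single reservoir in thermal equilibrium, the net effect of this process over $(t_{0}-\epsilon,t_{0}+\epsilon)$ is the conversion of heat drawn from one equilibrium reservoir into directed, work-like field energy delivered elsewhere, with no other compensating change --- precisely the extraction of work from a single heat bath forbidden by Kelvin's statement of the second law. To rule out an escape through the indeterminate sign of $dU_{S}/dt$ and to make the equilibrium invocation symmetric, I would bring in the reversed process of Definition \ref{reversal}: by Lemma \ref{maxwellsagain} it again solves Maxwell's equations and, because $div(\overline{E}'\times\overline{B}')=-div(\overline{E}\times\overline{B})$, it is again classically non-radiating, yet it satisfies the sign-reversed inequalities and so describes the admissible, absorbing direction. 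Time-reversal invariance of the equilibrium ensemble then places the two processes on equal footing, so the forbidden work-extraction direction $(\dag)$ cannot occur.

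The main obstacle is not the electromagnetic bookkeeping, which is routine, but the clean formalization of the thermodynamic step: one must specify exactly what \emph{thermal equilibrium for electrons} licenses, and legitimately identify $(\dag)$ with a genuinely cyclic, single-reservoir work extraction. The restriction to the finite window $(t_{0}-\epsilon,t_{0}+\epsilon)$ and the localization of the deposited energy via Lemma \ref{equiflux} are what make this identification defensible; the second law itself then enters as the governing physical axiom rather than as something derived.
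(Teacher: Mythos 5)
Your overall framework (Poynting's theorem on $S$, localization of the escaping flux in the shell $S_{\kappa}$ via Lemma \ref{equiflux}, the reversed process of Lemma \ref{maxwellsagain}, and an appeal to the second law) matches the paper's, but there is a genuine gap at the thermodynamic step: you never determine in what form the energy deposited in $S_{\kappa}$ ends up, and Kelvin's formulation alone cannot close the argument. The flux entering $S_{\kappa}$ splits, by Poynting's theorem applied inside $S_{\kappa}$, between an increase of the field energy there and absorption by the electrons there, measured by $\int_{S_{\kappa}}(\overline{E}_{t},\overline{J}_{t})d\overline{x}$. If the energy is absorbed by the electrons of $S_{\kappa}$, then the net effect of the process is a transfer of heat from the electrons in $S$ to the electrons in $S_{\kappa}$ --- two parts of a reservoir at the same temperature --- and no work is produced at all; Kelvin's statement is simply not contradicted, and on your own ``single reservoir'' reading the process is innocuous. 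This is exactly the case your argument cannot exclude, and it is why the paper's proof is organized as a dichotomy: it first establishes the auxiliary estimate $(\dag\dag)$, namely $\int_{S_{\kappa}}(\overline{E}_{t},\overline{J}_{t})d\overline{x}\geq min(-\int_{S}(\overline{E}_{t},\overline{J}_{t})d\overline{x},\int_{S}div(\overline{E}_{t}\times\overline{B}_{t})d\overline{x})$, by showing that its failure would mean heat drawn from the electrons in $S$ is converted into electromagnetic energy of $S_{\kappa}$, and only that branch is a Kelvin violation; then, on the branch where $(\dag\dag)$ holds, the energy flows electron-to-electron, and the paper invokes Clausius's formulation instead, raising the temperature of the electrons in $S_{\kappa}$ by a small amount (so the transfer is from a colder to a hotter body) and using reversibility to make the process cyclic.

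Your closing appeal to ``time-reversal invariance of the equilibrium ensemble'' does not repair this: in the paper the reversal of Lemma \ref{maxwellsagain} serves only to supply the ``working in a cycle'' clause required by both Kelvin's and Clausius's statements, not to place the forward and reversed processes on equal footing (the whole point is that one direction is forbidden while the other is allowed). To fix your proof you would need to add the case analysis: bound $\int_{S_{\kappa}}(\overline{E}_{t},\overline{J}_{t})d\overline{x}$ against the two quantities in $(\dag)$, dispose of the case where the energy ends in the field of $S_{\kappa}$ by Kelvin, and dispose of the case where it ends in the electrons of $S_{\kappa}$ by Clausius together with the temperature-perturbation argument.
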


\begin{proof}

Suppose that;\\

$\int_{S}div(\overline{E}_{t}\times\overline{B}_{t})d\overline{x}>0$\\

$\int_{S}(\overline{E}_{t},\overline{J}_{t})d\overline{x}<0$ $(\dag)$\\

for $t\in (t_{0}-\epsilon,t_{0}+\epsilon)$.\\

Using Lemmas \ref{surfaceradiating} and \ref{equiflux}, we claim that;\\

$\int_{S_{\kappa}}(\overline{E}_{t},\overline{J}_{t})d\overline{x}\geq min(-\int_{S}(\overline{E}_{t},\overline{J}_{t})d\overline{x},
\int_{S}div(\overline{E}_{t}\times\overline{B}_{t})d\overline{x})$ $(\dag\dag)$\\

Suppose not, then;\\

$\int_{S_{\kappa}}(\overline{E}_{t},\overline{J}_{t})d\overline{x}
<-\int_{S}(\overline{E}_{t},\overline{J}_{t})d\overline{x}$\\

$\int_{S_{\kappa}}(\overline{E}_{t},\overline{J}_{t})d\overline{x}
<\int_{S}div(\overline{E}_{t}\times\overline{B}_{t})d\overline{x}$\\

Let $\{E_{S_{\kappa}},E_{S},E_{S,el},E_{S_{\kappa},el},E_{S,field},E_{S_{\kappa},field}\}$ denote the total energies in $\{S,S_{\kappa}\}$, the energies stored in the electrons contained in $\{S,S_{\kappa}\}$ and the electromagnetic energies restricted to $\{S,S_{\kappa}\}$, see \cite{G}. By Poynting's theorem and the divergence theorem;\\

$(i)$ ${d E_{S_{\kappa},el}\over dt'}|_{t}<-{d E_{S,el}\over dt'}|_{t}$\\

$(ii)$ ${d E_{S_{\kappa},el}\over dt'}|_{t}<(\int_{S}(\overline{E}_{t}\times\overline{B}_{t})\centerdot d\overline{T})_{t}$\\

By $(\dag)$, there is an energy flux from the electrons in $S$ to the total energies in $S_{\kappa}$. By $(i)$ and Poynting's Theorem, there is some energy transferred into the electromagnetic energy. By $(ii)$, not all the energy transferred from the electrons in $S$ are transferred to the energy of the electrons in $S_{\kappa}$. It follows that some of the energy flux from the electrons in $S$ is transferred into the electromagnetic energy of $S_{\kappa}$. By the last claim in Lemma \ref{maxwellsagain}, the process is reversible, which contradicts Kelvin's formulation of the second law of thermodynamics, see \cite{P}, that it is impossible to devise an engine which, working in a cycle, shall produce no effect other than the extraction of heat from a reservoir and the performance of an equal amount of mechanical work. Given that $(\dag\dag)$ holds, we have;\\

$\int_{S_{\kappa}}(\overline{E}_{t},\overline{J}_{t})d\overline{x}
\geq -\int_{S}(\overline{E}_{t},\overline{J}_{t})d\overline{x}$\\

$\int_{S_{\kappa}}(\overline{E}_{t},\overline{J}_{t})d\overline{x}
\geq \int_{S}div(\overline{E}_{t}\times\overline{B}_{t})d\overline{x}$\\

so that;\\

$(iii)$ ${d E_{S_{\kappa},el}\over dt'}|_{t}\geq -{d E_{S,el}\over dt'}|_{t}$\\

$(iv)$ ${d E_{S_{\kappa},el}\over dt'}|_{t}\geq (\int_{S}(\overline{E}_{t}\times\overline{B}_{t})\centerdot d\overline{T})_{t}$\\

By $(iii)$, and $(\dag)$, we have that $E_{S_{\kappa},el}$ is increasing, by $(iv)$, the energy in the field from $S_{\kappa}$ is constant or decreasing and transferring to the electrons in $S_{\kappa}$. As there is a net energy flux from $S$ to $S_{\kappa}$, and using $(\dag)$ again, there is an energy transfer from electrons in $S$ to electrons in $S_{\kappa}$. Assuming thermal equilibrium and raising the temperature of the electrons in $S_{\kappa}$ by a small amount, see \cite{P}, noting again that the process is reversible, this contradicts Clausius's formulation of the second law of thermodynamics; that it is impossible to devise an engine which, working in a cycle, shall produce no effect other than the transfer of heat from a colder to a hotter body.

\end{proof}

\begin{rmk}
\label{systems}
We can assume that an atomic system is classically non-radiating in all inertial frames, as, otherwise, by Rutherford's observation, the system would lose energy and collapse. Thermal equilibrium for electrons in all frames also seems a reasonable criterion for such systems, though there are difficulties in finding the correct definition of temperature in electromagnetism. We leave as a conjecture whether the above lemma holds with just the assumption that;\\

$\int_{S}div(\overline{E}_{t}\times\overline{B}_{t})d\overline{x}\neq 0$\\

$\int_{S}(\overline{E}_{t},\overline{J}_{t})d\overline{x}\neq 0$\\

for $t\in (t_{0}-\epsilon,t_{0}+\epsilon)$.\\ 

Given this, we can probably obtain the conclusion, from Lemma \ref{surfaceradiating}, that either;\\

 $div(\overline{E}_{t}\times\overline{B}_{t})=0$\\
 
in all frames, or;\\
 
 $(\overline{E}_{t},\overline{J}_{t})=0$\\
 
in all frames. In which case, we can either use the main result of the paper to conclude that the system is non-radiating and, by \cite{dep1}, that the charge and current ${\rho,\overline{J}}$ obey certain wave equations, or classify the case where $(\overline{E},\overline{J})=0$ in all frames. We leave this point of view to another paper.

\end{rmk}

\end{section}


\begin{thebibliography}{99}
\bibitem{L} Electromagnetic Fields and Waves, Dale Corson, Francoise Lorrain, Paul Lorrain, Freeman and Company, (1988).\\
\bibitem{F} The Meaning of Rotation in the Special Theory of Relativity, Philip Franklin, Proceedings of the National Academy of Sciences, Volume 8, Number 9, (1922).\\
\bibitem{G} Introduction to Electrodynamics, David Griffiths, Pearson, (2008).\\
\bibitem{J} Fun with Fields, William Johnson, PhD thesis, University of California, Berkeley, (2016).\\
\bibitem{P} The Elements of Classical Thermodynamics, A.B.Pippard, CUP, (1957).\\
\bibitem{dep1} Some Arguments for the Wave Equation in Quantum Theory, Tristram de Piro, Open Research Journal of Mathematical Sciences, (2021).\\
\bibitem{R} Introduction to Special Relativity, Wolfgang Rindler, Oxford Science Publications, (1991).\\
\bibitem{U1} The Relativistic Velocity Composition Paradox and the Thomas Rotation, Abraham Ungar, Foundations of Physics, Vol 19, No.11, (1989).\\
\bibitem{U2} Thomas Rotation and the Parametrization of the Lorentz Transformation Group, Abraham Ungar, Foundations of Physics Letters, (1988).\\
\bibitem{Z} Zariski Geometries, Geometry from the Logician's Point of View, Boris Zilber, London Mathematical Society, (2010).\\
\end{thebibliography}
\end{document}